\newtheorem{thm}{Theorem}[section]
\newtheorem{prp}[thm]{Proposition}
\newtheorem{lem}[thm]{Lemma}
\newtheorem{cor}[thm]{Corollary}
\newtheorem{thm-intro}{Theorem}
\newtheorem{cor-intro}[thm-intro]{Corollary}
\theoremstyle{definition}
\newtheorem{dfn}[thm]{Definition}
\newtheorem{exm}[thm]{Example}
\theoremstyle{remark}
\newtheorem{rmk}[thm]{Remark}
\newcommand{\vb}{\,|\,}
\newcommand{\uline}{\rule{0.3cm}{0.1mm}}
\newcommand{\Z}{\mathbb{Z}}
\newcommand{\Q}{\mathbb{Q}}
\newcommand{\R}{\mathbb{R}}
\newcommand{\RP}{\mathbb{RP}}
\newcommand{\cA}{\mathcal{A}}
\newcommand{\cB}{\mathcal{B}}
\newcommand{\cC}{\mathcal{C}}
\newcommand{\cD}{\mathcal{D}}
\newcommand{\cE}{\mathcal{E}}
\newcommand{\cK}{\mathcal{K}}
\newcommand{\cW}{\textup{WFuk}}
\newcommand{\cM}{\mathcal{M}}
\newcommand{\cN}{\mathcal{N}}
\newcommand{\cY}{\mathcal{Y}}
\newcommand{\op}{^\textup{op}}
\newcommand{\colim}{\textup{colim}}
\newcommand{\holim}{\textup{holim}}
\newcommand{\hocolim}{\textup{hocolim}}
\newcommand{\Mod}{\textup{Mod}}
\newcommand{\mSh}{\mu\textup{Sh}}
\newcommand{\Hom}{\textup{Hom}}
\newcommand{\RHom}{\textup{RHom}}
\newcommand{\Perf}{\textup{Perf}}
\newcommand{\Tw}{\textup{Tw}}
\newcommand{\Cyl}{\textup{Cyl}}
\newcommand{\dgCat}{\textup{dgCat}_k}
\newcommand{\Ob}{\textup{Ob}}
\newcommand{\Ho}{\textup{Ho}}
\newcommand{\Top}{\textup{Top}}
\newcommand{\pt}{\textup{pt}}
\newcommand{\obj}{\textup{obj}}
\newcommand{\dgqe}{\dgCat^{\textup{qe}}}
\newcommand{\dgeq}{\dgCat^{\textup{tr}}}
\newcommand{\dgmo}{\dgCat^{\textup{mo}}}
\title{Homotopy Colimits of DG Categories and Fukaya Categories}
\author[$\dagger$]{Dogancan Karabas}
\author[$\star$]{Sangjin Lee}
\affil[$\dagger$]{Department of Mathematics, Northwestern University, 2033 Sheridan Rd, Evanston, IL 60208, United States} 
\affil[$\star$]{Center for Geometry and Physics, Institute for Basic Science (IBS), Pohang 37673, Korea}
\begin{document}

\maketitle

\begin{abstract}	
	We construct a new cylinder object for semifree differential graded (dg) categories in the category of dg categories. 
	Using this, we give a practical formula computing homotopy colimits of semifree dg categories.
	Combining it with the result of Ganatra, Pardon, and Shende, we get a formula computing wrapped Fukaya categories of Weinstein manifolds using their sectorial coverings.
	This formula has lots of applications including a practical computation of the wrapped Fukaya category of any cotangent bundle or plumbing space.
	In this paper, we compute wrapped Fukaya categories of cotangent bundles of lens spaces using their Heegaard decomposition.
	From the computation, we show that the endomorphism algebra of the cotangent fibre is a full invariant of the homotopy type of lens spaces.
\end{abstract}

\tableofcontents

\section{Introduction}

\subsection{Algebraic Introduction}
\label{subsect algebraic introduction}
In this paper, we work with the category of differential graded (dg) categories after inverting quasi-equivalences, pretriangulated equivalences, or Morita equivalences. Many facts are known about these localisations of the category of dg categories. In particular, there is a nice description for homotopy limits when quasi-equivalences are inverted. However, homotopy colimits (even colimits) remained hard to compute in practice. Here, we give a practical formula for the homotopy colimit of semifree dg categories, which are roughly dg categories whose underlying algebras are free.
		
There is the notion of cylinder object, which can be used to describe homotopy colimits and other constructions in localised categories. For an object $C$ in a category $\cC$, a cylinder object for $C$ is defined as another object in $\cC$ satisfying the properties given in Definition \ref{dfn:cylinder}. Because of those properties, cylinder objects are very useful when calculating homotopy colimits.
	
In this paper, we give a construction of a cylinder object for a given semifree dg category. With our construction, we can formulate a homotopy colimit formula when the inputs of the homotopy colimit are semifree dg categories.

Not only is our gluing formula easy to compute but also it presents the homotopy colimit as a semifree dg category. The semifreeness of the resulting category allows us to study it combinatorially assuming it has finitely many generators. A demonstration of this remark can be seen in the second part of this paper, where we study semifree dg categories (associated to lens spaces) via dg functors from them to a simpler dg category. One needs ``zig-zags'' of dg functors instead of just dg functors if the categories are not semifree.

\subsection{Symplectic Introduction}
\label{subsect introduction}
In \cite{gps2}, Ganatra, Pardon, and Shende introduced a way to compute the wrapped Fukaya category of a given Weinstein manifold $W$ by using a Weinstein sectorial covering of $W$. 
One can glue the wrapped Fukaya categories of the Weinstein sectors in the covering, then the resulting category is pretriangulated equivalent to the wrapped Fukaya category of the original Weinstein manifold $W$.

Here, the categories are glued via homotopy colimit. As described in Section \ref{subsect algebraic introduction}, there is a computational difficulty when calculating homotopy colimits. We resolve it by using our cylinder objects and provide a computable homotopy colimit formula.

In Section \ref{sec:wrap-lens}, we give an application of our homotopy colimit formula. We calculate the wrapped Fukaya category of the cotangent bundle of $L(p,q)$ for any relatively prime $p>q\geq 1$ using its Heegaard decomposition, where $L(p,q)$ is a lens space, which is a quotient of $S^3$ by a $\Z_p$-action.

We describe the endomorphism algebra of a cotangent fibre of $L(p,q)$ in the wrapped Fukaya category. This description provides some results, including that the quasi-equivalence class of the endomorphism algebra of a cotangent fibre of $L(p,q)$ is a full invariant of the homotopy type of $L(p,q)$, and that the wrapped Fukaya category of $T^*L(p,q)$ is an invariant of the homotopy type of $L(p,q)$ up to pretriangulated equivalence.
We also get that the $A_\infty$-structure of the based loop space of $L(p,q)$ distinguishes non-homotopic lens spaces.

More detailed statements of our results will appear in Section \ref{subsect structure and results}.

\subsection{Results and the Structure of the Paper}
\label{subsect structure and results}
Here, we list the main results of this paper. For more precise statements and the proofs, see the corresponding sections.

\begin{thm}[Definition \ref{dfn:cylinder-dg}, Theorem \ref{thm:cylinder}]\label{thm:cylinder-intro}
	Let $\cC$ be a semifree dg category (see Definition \ref{dfn:semifree-dg-category}) with generating morphisms, and let $\cC_1, \cC_2$ be two copies of $\cC$. We define a cylinder object $\Cyl(\cC)$ for $\cC$, which is a semifree dg category obtained by adding morphisms freely to $\cC_1\amalg\cC_2$ in algebra level. The added morphisms consist of
	\begin{itemize}
		\item closed degree zero morphisms $t_A\colon A_1\to A_2$ for each $A\in\cC$ where $A_1$ and $A_2$ are corresponding objects in $\cC_1$ and $\cC_2$, respectively,
		\item degree $|f|-1$ morphisms $t_f\colon A_1\to B_2$ for each generating morphism $f\in\hom^*_{\cC}(A,B)$,
	\end{itemize}
	and then we invert the morphism $t_A$ for each $A\in\cC$. The differential of the added morphisms $t_f$ are given in Definition \ref{dfn:cylinder-pre-dg}.
\end{thm}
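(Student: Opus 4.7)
The plan is to verify the three defining properties of a cylinder object for $\cC$ as in Definition \ref{dfn:cylinder}: namely, that $\Cyl(\cC)$ sits in a factorization $\cC_1 \amalg \cC_2 \xrightarrow{i} \Cyl(\cC) \xrightarrow{p} \cC$ of the fold map with $i$ a cofibration and $p$ a weak equivalence (in the relevant localisation of $\dgCat$).

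First, I would construct the two maps. The map $i$ is the tautological inclusion built into the construction of $\Cyl(\cC)$. The map $p$ is defined on objects by $A_1, A_2 \mapsto A$, on the morphisms of $\cC_1 \amalg \cC_2$ by the fold, and on the new generators by $t_A \mapsto \mathrm{id}_A$ and $t_f \mapsto 0$. The inversion of $t_A$ causes no problem since $p(t_A) = \mathrm{id}_A$ is already invertible. The only substantive verification here is that $p$ respects differentials, which reduces to checking that every summand of $d(t_f)$ (as prescribed in Definition \ref{dfn:cylinder-pre-dg}) contains at least one factor of $t_A$, $t_B$, or $t_g$ for some $g$, so that the relation becomes tautological after applying $p$. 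By construction, $p \circ i$ is the fold map.

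Second, I would show that $p$ is a quasi-equivalence. Essential surjectivity on homotopy categories is immediate. For the hom-complex quasi-isomorphism, the natural approach is to construct an explicit chain-level deformation retract of $\Cyl(\cC)$ onto the image of the section $\cC \simeq \cC_1 \hookrightarrow \Cyl(\cC)$ of $p$. The key mechanism is that after inverting $t_A$ each generating morphism $f_2 \in \cC_2$ becomes homotopic to $t_B \circ f_1 \circ t_A^{-1}$, with explicit homotopy built from $t_f$; iterating, every morphism of $\Cyl(\cC)$ is chain-homotopic to one supported entirely in $\cC_1$. Concretely, I would filter words in $\Cyl(\cC)$ by the number of $t$-generators appearing and run a contracting-homotopy/spectral-sequence argument. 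To handle the localisation of $t_A$ cleanly, I would first replace ``inverting $t_A$'' by the semifree adjunction of an inverse $t_A^{-1}$ together with the standard pair of contracting homotopies, which does not affect the quasi-equivalence class.

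Third, I would show that $i$ is a cofibration. Freely adjoining each $t_A$ and each $t_f$ to $\cC_1 \amalg \cC_2$ is a pushout along a standard generating cofibration of the Tabuada model structure on $\dgCat$, and the closed-morphism inversion can likewise be presented as a semifree extension as in the previous paragraph. Since cofibrations are closed under (transfinite) composition, $i$ is a cofibration.

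The main obstacle is the second step: controlling hom complexes of a semifree dg category that has been further localised. The argument hinges on the precise form chosen for $d(t_f)$ in Definition \ref{dfn:cylinder-pre-dg}, which must simultaneously ensure $d^2 = 0$, make $p$ a dg functor, and provide the explicit chain homotopies that contract $\Cyl(\cC)$ onto the image of $\cC_1$. All of the delicate bookkeeping — signs, composites of $t_f$'s with $f_1, f_2, t_A$ — is packaged into that single definition, and verifying that it is correctly balanced is where the real work lies.
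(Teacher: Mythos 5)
Your overall skeleton is the same as the paper's: factor the fold map as $\cC\amalg\cC\xrightarrow{i}\Cyl(\cC)\xrightarrow{p}\cC$ with $p(t_A)=1_A$, $p(t_f)=0$, observe that $i$ is a cofibration because it is a transfinite composite of pushouts of generating cofibrations (the paper packages this as ``semifree extensions are cofibrations,'' Proposition \ref{prp:cofibration}, and handles the inversion of the $t_A$ exactly as you suggest, by Proposition \ref{prp:localisation-semifree}), and then reduce everything to showing $p$ is a quasi-equivalence. Where you genuinely diverge is in that last step. The paper first uses the invertible $t_C$ to identify $C_1$ with $C_2$, reducing to the semifree extension $\cC'$ of $\cC$ by the pairs $(f_2,t_f)$; it then performs an explicit change of variables (an elementary automorphism in the sense of \cite[Section 2.6]{subcritical}) replacing $f_2$ by $(-1)^{|f|}f_2$ plus the lower-order terms of $dt_f$, after which $df_2=0$ and $dt_f=f_2$ exactly, so each pair cancels by destabilisation. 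You instead propose a chain-level deformation retract onto $\cC_1$ organised by a spectral sequence for the filtration by the number of $t$-generators in a word. That particular filtration will not do the job as stated: every summand of $dt_f$ (including the correction terms $f_2^{i,n_i}\cdots t_{f^{i,j}}\cdots f_1^{i,1}$) contains exactly one $t$-generator, so $d$ preserves rather than strictly decreases the $t$-count and the associated graded differential is essentially $d$ itself. The filtration that actually makes the iteration terminate is the one inherited from the semifree filtration of $\cC$ (the correction terms only involve generators $f^{i,j}$ strictly earlier in that filtration), and this is precisely what licenses the paper's change of variables. So your strategy is sound and would succeed once the filtration is corrected, but the paper's elementary-automorphism-plus-destabilisation argument does the same cancellation with far less homological bookkeeping, which is exactly the part you flag as ``where the real work lies.'' One further small imprecision: $p(dt_f)=0$ is not because each summand individually dies under $p$ --- the leading term gives $(-1)^{|f|}(f\cdot 1 - 1\cdot f)$, which vanishes only by cancellation between the two halves.
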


\begin{proof}[Idea of proof]
	A cylinder object for the dg category $\cC$ can be obtained by adding morphisms to $\cC_1\amalg\cC_2$ freely in algebra level, and they can have nontrivial differentials (this is called a semifree extension). Also, this new dg category must be quasi-equivalent to $\cC$. For that reason, we add an invertible morphism $t_A$ for each $A\in\cC$ to identify the objects of $\cC_1$ and $\cC_2$. To identify the morphisms in $\cC_1$ and $\cC_2$, we impose the condition
	\[f_2 t_A = t_B f_1,\]
	where $f\in\hom^*_{\cC}(A,B)$, and $f_1$ and $f_2$ are corresponding morphisms in $\cC_1$ and $\cC_2$, respectively. However, this contradicts with freeness. Hence, we add the morphisms $t_f$ such that
	\[dt_f=(-1)^{|f|}(f_2 t_A - t_B f_1) .\]
	Then we have to impose more commutativeness conditions involving the morphisms $t_f$ to identify the morphisms in $\cC_1$ and $\cC_2$. Hence, we would add more morphisms (possibly infinitely many) to preserve freeness (compare with the definition of $A_{\infty}$-natural transformations). But instead of adding more new morphisms, we perturb the differential of $t_f$'s as in Definition \ref{dfn:cylinder-pre-dg} so that there is no need for more morphisms. This is thanks to the semifreeness of $\cC$ whose morphisms are freely generated in algebra level by some collection of morphisms. Hence, by adding $t_A$'s and $t_f$'s, the morphisms of $\cC_1$ and $\cC_2$ are identified, and $\Cyl(\cC)$ is indeed quasi-equivalent to $\cC$.
\end{proof}

Two immediate uses of the cylinder object are as follows:

Note that there are the inclusion functors
\begin{align*}
	i_1&\colon \cC=\cC_1\hookrightarrow \cC_1\sqcup \cC_2 \hookrightarrow \Cyl(\cC),\\
	i_2&\colon \cC=\cC_2\hookrightarrow \cC_1\sqcup \cC_2 \hookrightarrow \Cyl(\cC) .
\end{align*}

Assume that we invert quasi-equivalences in the category of dg categories $\dgCat$ to get a new category $\dgqe$. Consider two dg functors
\[F_1, F_2\colon \cC\to\cD,\]
where $\cC$ is a semifree dg category. If there exists a dg functor
\[H\colon\Cyl(\cC)\to\cD,\]
making the following diagram commute
\[\begin{tikzcd}
	\cC\ar[r,"i_1"]\ar[rd,"F_1"'] & \Cyl(\cC)\ar[d,"H"] & \cC\ar[l,"i_2"']\ar[ld,"F_2"]\\
	& \cD
\end{tikzcd}\]
then $F_1$ and $F_2$ are naturally isomorphic in $\dgqe$. If we invert pretriangulated equivalences (resp.\ Morita equivalences) instead, we need to replace $\cD$ with its pretriangulated closure (resp.\ idempotent completion of its pretriangulated closure) of $\cD$ for this statement to be true.

Since $\Cyl(\cC)$ is a semifree dg category, to construct $H$, we only need to state the image of the objects and generating morphisms of $\cC$. Since our $\Cyl(\cC)$ construction does not involve too many extra morphisms, it is relatively easy to analyse and construct the possible $H$'s. 

The second use is our homotopy colimit formula:

\begin{thm}[Theorem \ref{thm:hocolim}]\label{thm:hocolim-intro}
	Let $\cA, \cB, \cC$ be semifree dg categories, $\alpha\colon\cC\to\cA$ and $\beta\colon\cC\to\cB$ be dg functors. The homotopy colimit
	\begin{equation}\label{eq:hocolim-intro}
		\hocolim\left(
		\begin{tikzcd}
			\cA & & \cB\\
			& \cC\ar[lu,"\alpha"]\ar[ru,"\beta"']
		\end{tikzcd}
		\right)
	\end{equation}
	can be given by a semifree dg category which is obtained by adding morphisms freely to $\cA\amalg\cB$ consisting of
	\begin{itemize}
		\item closed degree zero morphisms $t_C\colon \alpha(C)\to \beta(C)$ for each $C\in\cC$,
		\item degree $|f|-1$ morphisms $t_f\colon \alpha(A)\to \beta(B)$ for each generating morphism $f\in\hom^*_{\cC}(A,B)$
	\end{itemize}
	then inverting the morphisms in $\{t_C\vb C\in\cC\}$. The differentials of the added morphisms $t_f$ are given in Theorem \ref{thm:hocolim}.
	
	Moreover, taking homotopy colimit commutes with the localisation in the following sense (see Theorem \ref{thm:hocolim-loc}): If a collection of morphisms $S_{\cA}$, $S_{\cB}$, and $S_{\cC}$ are inverted in $\cA$, $\cB$, and $\cC$, respectively, then the new homotopy colimit is given by the homotopy colimit (\ref{eq:hocolim-intro}), and then by inverting the morphisms in $S_{\cA}$ and $S_{\cB}$.
\end{thm}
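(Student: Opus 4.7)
The plan is to realise the homotopy colimit as a strict pushout in $\dgCat$ by using the cylinder object $\Cyl(\cC)$ supplied by Theorem \ref{thm:cylinder-intro}. In Tabuada's model structure on $\dgCat$ (with quasi-equivalences as the weak equivalences), semifree extensions are cofibrations, so the two inclusions $i_1, i_2\colon \cC \hookrightarrow \Cyl(\cC)$ are trivial cofibrations. The standard fact that a cylinder object factors the codiagonal as a cofibration followed by a weak equivalence then tells us that the homotopy pushout of the span $\cA \xleftarrow{\alpha} \cC \xrightarrow{\beta} \cB$ is represented by the strict pushout
\[
\cA \sqcup_{\cC} \Cyl(\cC) \sqcup_{\cC} \cB,
\]
where the left copy of $\cC$ maps into $\cA$ via $\alpha$ and into $\Cyl(\cC)$ via $i_1$, and the right copy maps into $\cB$ via $\beta$ and into $\Cyl(\cC)$ via $i_2$.

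Step two is a direct calculation of this strict pushout. Starting from the coproduct $\cA \amalg \Cyl(\cC) \amalg \cB$, the pushout identifies, for every object $C \in \cC$, the objects $C_1$ and $C_2$ in $\Cyl(\cC)$ with $\alpha(C) \in \cA$ and $\beta(C) \in \cB$, and, for every generating morphism $f$ of $\cC$, identifies $f_1$ with $\alpha(f)$ and $f_2$ with $\beta(f)$. What survives from $\Cyl(\cC)$ is precisely the family $\{t_A\}$, which becomes the announced $t_C\colon \alpha(C) \to \beta(C)$, and the family $\{t_f\}$, together with the differentials inherited from Definition \ref{dfn:cylinder-pre-dg} (with every $f_1, f_2$ replaced by $\alpha(f), \beta(f)$ as appropriate; for instance the leading term $dt_f = (-1)^{|f|}(f_2 t_A - t_B f_1)$ becomes $dt_f = (-1)^{|f|}(\beta(f) t_A - t_B \alpha(f))$, whose source and target match those of $t_f$). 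The inversion of the $t_A$'s in $\Cyl(\cC)$ descends to the stated inversion of the $t_C$'s.

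For the localisation statement, I would express localisation itself as a homotopy colimit: inverting a set $S$ of morphisms in a dg category $\cD$ is the homotopy pushout of $\cD$ and a dg category obtained from the free dg category on $S$ by adjoining inverses, taken over the free dg category on $S$. Because homotopy colimits commute with homotopy colimits, the two procedures in the moreover part agree. Concretely, after performing the construction above for the un-localised span and then inverting the images of $S_\cA$ and $S_\cB$, the images of $S_\cC$ are automatically inverted since the already-invertible morphisms $t_C$ conjugate $\alpha(S_\cC)$ into $\beta(S_\cC)$.

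The main obstacle is the first step. Our $\Cyl(\cC)$ is not a plain semifree extension but a semifree extension followed by the inversion of the morphisms $\{t_A\}$, so I must justify that this hybrid still plays the role of a cylinder for homotopy pushout computations. I would dispose of this either by representing the inversion of the $t_A$'s as itself a further homotopy pushout (against a free invertibility category) and reducing to the standard semifree case by Fubini for homotopy colimits, or by verifying the required universal property directly in $\dgqe$ using the tools discussed after Theorem \ref{thm:cylinder-intro}. Everything else — matching differentials, checking degrees, reading off the pushout — is a routine bookkeeping exercise once this point is settled.
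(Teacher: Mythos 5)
Your main line of argument is exactly the paper's: replace the homotopy pushout by the strict pushout $\cA\sqcup_{\cC}\Cyl(\cC)\sqcup_{\cC}\cB$ (Proposition \ref{prp:hocolim-to-colim}), then read off the result using the explicit description of pushouts along semifree extensions (Proposition \ref{prp:colimit}) and of $\Cyl(\cC)$ (Theorem \ref{thm:cylinder}); your bookkeeping of which generators survive and how the differentials of the $t_f$ transform is correct. The one place you diverge is worth noting. First, the ``main obstacle'' you flag --- that $\Cyl(\cC)$ is a semifree extension \emph{followed by} an inversion, hence possibly not a cofibration over $\cC\amalg\cC$ --- is already dissolved in the paper by Proposition \ref{prp:localisation-semifree} (Drinfeld's trick): the dg localisation at the $t_A$'s is itself realised as a semifree extension by the morphisms $t'_C,\hat t_C,\check t_C,\bar t_C$, so $\cC\amalg\cC\to\Cyl(\cC)$ is an honest semifree extension and no Fubini detour or direct verification of a universal property is needed. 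Your proposed workaround via representing the inversion as a further homotopy pushout would work, but it essentially re-derives that proposition in a more roundabout way. Second, for the ``moreover'' clause the paper does not commute homotopy colimits; it instead upgrades the cylinder object to the localised setting (Theorem \ref{thm:cylinder-loc} shows $\Cyl(\cC)[\bar S^{-1}]$ is a very good cylinder object for $\cC[S^{-1}]$) and reruns the same pushout computation. Your Fubini argument is viable but requires the hypothesis of Theorem \ref{thm:hocolim-loc} --- that $\alpha(S_{\cC})$ and $\beta(S_{\cC})$ are already invertible in $H^0$ of the localised targets --- to discard the redundant inversion of the images of $S_{\cC}$; be sure to state and use that hypothesis explicitly, since without it the two sides genuinely differ.
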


\begin{proof}[Idea of proof]
	One can convert a homotopy colimit diagram to a usual colimit diagram via
	\[\hocolim\left(
	\begin{tikzcd}
		\cA & & \cB\\
		& \cC\ar[lu,"\alpha"]\ar[ru,"\beta"']
	\end{tikzcd}
	\right)
	\simeq
	\colim\left(
	\begin{tikzcd}[column sep=0.3cm]
		\cA & & \Cyl(\cC) & & \cB\\
		& \cC\ar[lu,"\alpha"]\ar[ru,"i_1"'] & & \cC\ar[lu,"i_2"]\ar[ru,"\beta"'] 
	\end{tikzcd}
	\right) .
	\]
	The calculation of colimits is described by Proposition \ref{prp:colimit}. Using the cylinder object $\Cyl(\cC)$ given in Theorem \ref{thm:cylinder-intro}, we get the result by direct computation.
\end{proof}

This is an algebraic tool, but it also applies to symplectic geometry thanks to the following result.

\begin{thm}[\cite{gps2}]\label{thm:gps-intro}
	Let $W=W_1\cup W_2$ be a Liouville manifold such that $W_1$ and $W_2$ are Weinstein sectors meeting along a hypersurface in $W$. If the neighboorhood of the hypersurface is $F\times T^*[0,1]$ where $F$ is a Weinstein sector up to a deformation, then we have
	\[\cW(W)\simeq\hocolim\left(\begin{tikzcd}[column sep=0.1cm]
		\cW(W_1) & & \cW(W_2)\\
		& \cW(F)\ar[lu]\ar[ru]
	\end{tikzcd}\right),\]
	up to pretriangulated equivalence, where $\cW(X)$ is the wrapped Fukaya category of $X$.
\end{thm}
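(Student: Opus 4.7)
The plan is to establish a descent (cosheaf-type) statement for the wrapped Fukaya category along the two-piece sectorial cover $\{W_1,W_2\}$, of which the stated pushout formula is the simplest non-trivial case. The three ingredients that have to be assembled are: (i) a K\"unneth-type identification for the collar neighborhood, (ii) functoriality of $\cW$ under sectorial inclusions, and (iii) a generation statement allowing one to compare morphism complexes on both sides.

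First, I would set up the restriction/corestriction functors. The hypothesis that $W_1$ and $W_2$ are Weinstein sectors meeting along a hypersurface with a product collar $F\times T^*[0,1]$ means that $\{W_1,W_2\}$ is a sectorial cover of $W$ in the sense of \cite{gps2}, and that the ``double intersection'' is (after Weinstein deformation) Liouville-deformation-equivalent to $F\times T^*[0,1]$. Using the K\"unneth formula for wrapped Fukaya categories of Liouville sectors together with the fact that $T^*[0,1]$ is a unit for the K\"unneth product on the pretriangulated level, one obtains $\cW(F\times T^*[0,1])\simeq \cW(F)$. Combining this with the covariant functoriality of $\cW$ under inclusions of sectors provides the two restriction functors $\cW(F)\to\cW(W_i)$ and a homotopy-commuting square, hence a canonical dg functor $\Phi$ from the homotopy pushout into $\cW(W)$.

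Next, I would show $\Phi$ is a pretriangulated equivalence. The key step, which I expect to be the main obstacle, is the generation plus full-faithfulness package. One chooses a Weinstein skeleton of $W$ adapted to the cover, so that each cocore/linking disk is either contained in $W_1$, contained in $W_2$, or supported in a neighborhood of the interface (and hence arises from a Lagrangian in $F$); this exhibits the image of $\Phi$ as containing a generating set of $\cW(W)$. For full-faithfulness one must identify morphism complexes, and this is where the cylindrical collar structure is used essentially: Reeb chords crossing the interface can be isolated, and a Mayer-Vietoris-type long exact sequence for Floer complexes shows that the hom-space in $\cW(W)$ between generators is computed by the corresponding hom-space in the homotopy pushout, with the connecting map given by the ``interface'' contributions from $\cW(F)$. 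Combined with Theorem \ref{thm:hocolim-intro}, which makes the homotopy pushout computable as an explicit semifree dg category, this yields the equivalence up to pretriangulated equivalence.

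Since the statement is quoted from \cite{gps2}, in the paper itself I would only recall this outline and refer the reader to that paper for the full descent machinery; our contribution is to make the resulting homotopy colimit combinatorially accessible via Theorems \ref{thm:cylinder-intro} and \ref{thm:hocolim-intro}.
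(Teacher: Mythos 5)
The paper does not prove this statement: it is imported verbatim from \cite{gps2} (restated in the body as Theorem \ref{thm:wrapped-hocolim}), and no argument for it appears anywhere in the text. Your final paragraph correctly identifies this, so on the level of what the paper actually does, you and the paper agree --- the ``proof'' is a citation.

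As a summary of the external argument, your outline captures the right ingredients (the identification $\cW(F\times T^*[0,1])\simeq\cW(F)$, covariant functoriality under sectorial inclusions giving the comparison functor $\Phi$, and generation of $\cW(W)$ by cocores and linking disks distributed among the pieces of the cover). One caveat: the full-faithfulness step in \cite{gps2} is not literally a Mayer--Vietoris long exact sequence for Floer complexes isolating interface Reeb chords; it proceeds through the stop-removal/localization formula and a ``forward stopped'' criterion ensuring that wrapping in a sector computes wrapping in the ambient manifold. So that portion of your sketch is a plausible heuristic rather than a faithful account of the actual mechanism, and it would not survive as written if you were asked to supply the details. Also note that your appeal to Theorem \ref{thm:hocolim-intro} at the end of the full-faithfulness discussion is circular in spirit: the descent theorem is an input to the paper's computations, not something whose proof uses the paper's homotopy colimit formula. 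Since the statement is only cited here, none of this affects the paper, but it is worth keeping the two roles --- the geometric descent theorem (external) and the algebraic computability of its right-hand side (this paper's contribution) --- cleanly separated.
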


Theorem \ref{thm:gps-intro} also induces a gluing property for the chains on the based loop spaces using Abouzaid's work (Theorem \ref{thm:wrapped-loops}).

\begin{thm}[Theorem \ref{thm:loop-hocolim}]\label{thm:loop-hocolim-intro}
	Let $M=M_1\cup M_2$ be a connected smooth manifold such that $M_1$, $M_2$, and $M_1\cap M_2$ are connected smooth manifolds and open in $M$. Let $x\in M_1\cap M_2$ be a point. Then we have
	\[C_{-*}(\Omega_x M)\simeq\hocolim\left(\begin{tikzcd}[column sep=0.1cm]		
		C_{-*}(\Omega_x M_1) & & C_{-*}(\Omega_x M_2)\\
		& C_{-*}(\Omega_x (M_1\cap M_2))\ar[lu]\ar[ru]
	\end{tikzcd}\right),\]
	up to quasi-equivalence, where $C_{-*}(\Omega_xM)$ is the chains on the based loop space of $M$ (at $x\in M$), which is a dg algebra (dga).
\end{thm}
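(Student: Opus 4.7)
The plan is to deduce the statement from Theorem \ref{thm:gps-intro} (the GPS sectorial gluing) applied to $T^*M$, together with Theorem \ref{thm:wrapped-loops} (Abouzaid) to translate the resulting symplectic hocolim into a hocolim of loop-space dgas, and finally Theorem \ref{thm:hocolim-intro} to give an explicit model.

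First I would realise $T^*M = T^*M_1 \cup T^*M_2$ as a Weinstein sectorial covering: since $M_1 \cap M_2$ is open and connected, one can pick a hypersurface in $M_1 \cap M_2$ whose collar has the form $(M_1 \cap M_2) \times [0,1]$, so the corresponding separating hypersurface in $T^*M$ has a neighbourhood $T^*(M_1 \cap M_2) \times T^*[0,1]$ up to deformation. Theorem \ref{thm:gps-intro} then yields
\[\cW(T^*M) \simeq \hocolim\left(\begin{tikzcd}[column sep=0.1cm]\cW(T^*M_1) & & \cW(T^*M_2)\\ & \cW(T^*(M_1\cap M_2))\ar[lu]\ar[ru]\end{tikzcd}\right)\]
up to pretriangulated equivalence. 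Since $x \in M_1 \cap M_2$, the cotangent fiber $F_x$ is a common object of all four wrapped Fukaya categories and is preserved by every arrow in the diagram; by Theorem \ref{thm:wrapped-loops} it is a generator with $\textup{End}(F_x) \simeq C_{-*}(\Omega_x U)$ for each $U \in \{M_1, M_2, M_1 \cap M_2, M\}$, and the induced dga maps agree with those coming from inclusions of based loop spaces.

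Applying Theorem \ref{thm:hocolim-intro} to (semifree models of) the three loop-space dgas produces an explicit semifree dg category $\mathcal{Q}$ in which the three copies of the generator $F_x$ are linked by invertible morphisms $t_{F_x}$. Sending each of these three objects to $F_x \in \cW(T^*M)$ and each $t_{F_x}$ to the identity defines a canonical dg functor $\Phi\colon \mathcal{Q} \to C_{-*}(\Omega_x M)$, with target viewed as $\textup{End}_{\cW(T^*M)}(F_x)$; the compatibility of $\Phi$ with the generating morphisms (and with the differentials of the $t_f$ from Theorem \ref{thm:hocolim-intro}) follows from the naturality of Abouzaid's identification under open inclusions.

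The main obstacle is showing that $\Phi$ is a quasi-equivalence, since a priori GPS only yields a pretriangulated equivalence. I would argue using that $\mathcal{Q}$ is itself quasi-equivalent to the endomorphism dga of any one of its objects (because they are all linked by invertible morphisms). Taking pretriangulated closures, $\Perf(\Phi)$ fits into a commutative diagram identifying $\Perf(\mathcal{Q})$ with $\cW(T^*M) \simeq \Perf(C_{-*}(\Omega_x M))$; since $\Phi$ sends a generator of $\mathcal{Q}$ to the generator $F_x$ of the target, $\Perf(\Phi)$ carries the free rank-one module to the free rank-one module, and the induced map on their endomorphism complexes recovers $\Phi$ itself, which must therefore be a quasi-isomorphism.
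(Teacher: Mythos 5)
Your proposal is correct and follows essentially the same route as the paper: the paper's proof of Theorem \ref{thm:loop-hocolim} is precisely the combination of the sectorial descent of Theorem \ref{thm:wrapped-hocolim} applied to $T^*M=T^*M_1\cup T^*M_2$ with Abouzaid's identification $\textup{End}(T^*_xM)\simeq C_{-*}(\Omega_xM)$ from Theorem \ref{thm:wrapped-loops}. The only difference is one of detail: the paper's argument is a two-sentence sketch, whereas you additionally justify the passage from the pretriangulated equivalence supplied by GPS to a genuine quasi-isomorphism of endomorphism dgas of the cotangent fibre (generator goes to generator, and quasi-fully-faithfulness of $\Perf(\Phi)$ restricts to the generators), a point the paper leaves implicit.
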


One important remark is that there is a procedure called ``arborealisation'' (see \cite{arboreal}, \cite{arboreal-starkston}, \cite{arboreal-alvarez}), which allows us to describe the wrapped Fukaya categories of the pieces we glue by some semifree dg categories. Assuming that the gluing maps can also be described by morphisms of semifree dg categories, the problem of calculating wrapped Fukaya category reduces to taking homotopy colimit of semifree dg categories, which is exactly what our Theorem \ref{thm:hocolim-intro} handles. This means that, our homotopy colimit formula directly applies to, at least, cotangent bundles and plumbing spaces.  In this paper, we focused on the cotangent bundles of lens spaces, and we got the following result.

\begin{thm}[Theorem \ref{thm:wrap-lens}, Theorem \ref{thm:loop-lens}]\label{thm:lens-dga-intro}
	If $L(p,q)$ is a lens space with $p>q\geq 1$ and $(p,q)=1$, then
	\[\cW(T^*L(p,q))\]
	is generated by a cotangent fibre, whose endomorphism algebra $\cC_{p,q}$ is a semifree dg algebra generated by the three morphisms $x,y,z$ with the degrees
	\[|x|=0,\qquad |y|=-1,\qquad |z|=-2 ,\]
	and with the differentials
	\begin{align*}
		dx&=0,\\
		dy&=1-x^p,\\
		dz&=x^q y-yx^q .
	\end{align*}

	Also, the chains $C_{-*}(\Omega_s L(p,q))$ on the based loop space of the lens space $L(p,q)$ is quasi-equivalent to $\cC_{p,q}$.
\end{thm}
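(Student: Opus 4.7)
The plan is to use the genus-one Heegaard decomposition of $L(p,q)$ to reduce the loop-space computation to a homotopy colimit of semifree dg algebras, and then to apply Theorem~\ref{thm:hocolim-intro} followed by an explicit simplification. By Abouzaid's theorem, $\cW(T^*M)$ is generated by a cotangent fibre whose endomorphism algebra is $C_{-*}(\Omega_x M)$, so both statements of the theorem reduce to establishing the quasi-equivalence $C_{-*}(\Omega_s L(p,q))\simeq\cC_{p,q}$. Writing $L(p,q)=V_1\cup V_2$ as two solid tori glued along a Heegaard torus $T^2$, Theorem~\ref{thm:loop-hocolim-intro} presents the left-hand side as the homotopy colimit of $C_{-*}(\Omega V_1)\leftarrow C_{-*}(\Omega T^2)\rightarrow C_{-*}(\Omega V_2)$.

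I use semifree models for the inputs. Each $C_{-*}(\Omega V_i)\simeq k\langle x_i\rangle[x_i^{-1}]$ with $|x_i|=0$, and $C_{-*}(\Omega T^2)$ is modelled by the cobar/Adams--Hilton dga with generators $a,b$ in degree $0$ and $c$ in degree $-1$, subject to $dc=ab-ba$, after inverting $a$ and $b$. The inclusions $T^2\hookrightarrow V_i$ kill the meridians; fixing a meridian/longitude basis so that the Heegaard matrix in $SL_2(\Z)$ sends the meridian of $V_2$ to the $(p,q)$-curve on $\partial V_1$ (possible because $(p,q)=1$), the maps become $\alpha(a)=1,\ \alpha(b)=x_1,\ \alpha(c)=0$ and $\beta(a)=x_2^{-p},\ \beta(b)=x_2^q,\ \beta(c)=0$ on the semifree models.

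Plugging into Theorem~\ref{thm:hocolim-intro} yields a semifree dga with generators $x_1^{\pm 1},x_2^{\pm 1}$, an inverted closed degree-$0$ morphism $t$, degree-$(-1)$ morphisms $t_a,t_b$, and a degree-$(-2)$ morphism $t_c$; the first two differentials are $dt_a=x_2^{-p}t-t$ and $dt_b=x_2^q t-tx_1$, while $dt_c$ is prescribed by Theorem~\ref{thm:hocolim}, combining a primary term $-(\beta(c)t-t\alpha(c))$ with correction terms in $t_a,t_b$ encoding $dc=ab-ba$. After inverting $t$ and setting $\tilde{x}_2:=t^{-1}x_2 t$ and $\tilde{t}_\bullet:=t^{-1}t_\bullet$, the relation $d\tilde{t}_b=\tilde{x}_2^q-x_1$ exhibits $(\tilde{t}_b,\,x_1-\tilde{x}_2^q)$ as a Koszul-contractible pair: a standard Tietze move on semifree dgas eliminates them, replacing $x_1$ by $\tilde{x}_2^q$ throughout. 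Renaming $x:=\tilde{x}_2^{-1}$, $y:=-\tilde{t}_a$, and $z:=\pm\tilde{t}_c$, the differential $d\tilde{t}_a=\tilde{x}_2^{-p}-1$ becomes $dy=1-x^p$, and since $x^p\sim 1$ in cohomology the leftover formal inverses collapse by a further quasi-isomorphism. The prescription for $dt_c$, combined with the Tietze elimination and the passage to $(x,y,z)$, collapses to the graded commutator $dz=x^q y-yx^q$, delivering the desired quasi-isomorphism $\hocolim\xrightarrow{\sim}\cC_{p,q}$.

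The main obstacle is this final collapse: the $dt_c$ formula produced by Theorem~\ref{thm:hocolim} (via the perturbation construction sketched after Theorem~\ref{thm:cylinder-intro} and Definition~\ref{dfn:cylinder-pre-dg}) involves several correction terms pairing $t_a,t_b$ with the images of $a,b$ under $\alpha,\beta$, and verifying that after the Koszul-pair elimination and renaming these assemble \emph{exactly} into $x^q y-yx^q$ --- with the correct exponent $q$ and sign, and no leftover terms --- is a bookkeeping task requiring careful tracking of Heegaard exponents and the signs in Definition~\ref{dfn:cylinder-pre-dg}. Once this identification is confirmed, the differentials of $y$ and $z$ are exactly as stated, the loop-space statement follows, and the wrapped Fukaya category statement then follows from Abouzaid's theorem.
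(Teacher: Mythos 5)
Your proposal follows essentially the same route as the paper: a genus-one Heegaard splitting into two solid tori, the gluing theorem of Ganatra--Pardon--Shende (respectively its loop-space analogue), semifree models $k\langle x_i\rangle[x_i^{-1}]$ for the solid tori and $k\langle m,n,h\rangle[\{m,n\}^{-1}]$ with $dh=mn-nm$ for the torus, determination of the gluing maps from the Heegaard data, and then Theorem \ref{thm:hocolim-loc} followed by elimination of contractible pairs. The bookkeeping you defer is exactly what the paper's Proposition \ref{prp:wrapped-heegaard} and the change-of-basis argument (using $pr+qs=1$ to justify $i_2(m)=x^q$, $i_2(n)=x^p$, and the substitution $z:=t_h+t_nt_m$ that turns $dt_h$ into the clean commutator $i_2(m)t_n-t_ni_2(m)$) carry out; your choice of exponents $(-p,q)$ rather than the paper's normalised $(p,q)$ means you must take care that the final renaming yields $x^{q}$ rather than $x^{-q}$ in $dz$, but this is a convention adjustment, not a gap.
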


\begin{proof}[Idea of proof]
	Lens spaces can be presented by a gluing of two solid tori along their boundaries. Then, Theorem \ref{thm:gps-intro} allows us to calculate it using the wrapped Fukaya category of solid tori and a torus. Consequently, our homotopy colimit formula (Theorem \ref{thm:hocolim-intro}) gives the result after some simplification.
\end{proof}

\begin{thm}[Theorem \ref{thm homotopy type to quasi}, Theorem \ref{thm quasi to homotopy type}]
	\label{thm:lens space}
	If we work with $\Z$ coefficient, the endomorphism algebras $\cC_{p,q_1}$ and $\cC_{p,q_2}$ (of $L(p,q_1)$ and $L(p,q_2)$, respectively) are quasi-equivalent if and only if $L(p,q_1)$ and $L(p,q_2)$ are homotopy equivalent.
	
	In particular, the endomorphism algebra completely distinguishes homotopy type of lens spaces, but it doesn't detect their simple homotopy type.
	
	This also means that the loop spaces of non-homotopic lens spaces are not isomorphic as $A_{\infty}$-spaces.
	
	If we work with field coefficients with characteristic not equal to $p$, the endomorphism algebras $\cC_{p,q_1}$ and $\cC_{p,q_2}$ are quasi-equivalent for any $L(p,q_1)$ and $L(p,q_2)$.
\end{thm}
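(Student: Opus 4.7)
The theorem packages three claims, which I would treat as separate propositions: (i) over $\Z$, if $L(p,q_1)\simeq L(p,q_2)$ then $\cC_{p,q_1}\simeq\cC_{p,q_2}$; (ii) the converse over $\Z$; and (iii) over a field $k$ with $\mathrm{char}(k)\nmid p$, the dgas $\cC_{p,q}\otimes k$ are all quasi-equivalent. Part (i) is immediate from Theorem~\ref{thm:lens-dga-intro}: this identifies $\cC_{p,q}\simeq C_{-*}(\Omega_s L(p,q))$, and the based-loop chain dga is a homotopy functor, so a homotopy equivalence of lens spaces induces a quasi-equivalence of the $\cC_{p,q}$'s.

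Part (ii) is the main content. By the Whitehead--Olum classification, $L(p,q_1)\simeq L(p,q_2)$ iff $q_1q_2\equiv\pm n^2\pmod p$ for some $n$, so the target is a quasi-equivalence invariant of $\cC_{p,q}$ valued in $(\Z/p)^\times/\bigl(\{\pm 1\}\cdot((\Z/p)^\times)^2\bigr)$. The cohomology ring $H^*(\cC_{p,q})\cong\Z[\Z_p]\otimes\Z[u]$ (with $|u|=-2$) is $q$-independent --- it reproduces the Pontryagin ring of $\Omega L(p,q)$, whose $\pi_1$-action on the base component is homologically trivial --- so the invariant must live in the higher $A_\infty$-structure. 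My plan is to extract it as a specific Massey product built from the cocycle representatives supplied by the presentation: the defining relation $dz=x^q y-yx^q$ together with the computation $d(y^2)=yx^p-x^p y$ provide two different witnesses that the commutators $[x^q,y]$ and $[x^p,y]$ are exact, and the class measuring the discrepancy between $z$ and $-y^2$ (suitably rescaled using $\gcd(p,q)=1$) is the natural candidate; it remains to verify that its indeterminacy is exactly $\{\pm 1\}\cdot((\Z/p)^\times)^2$. An alternative route is to use Hochschild cohomology $HH^*(\cC_{p,q})\cong H^*(\Lambda L(p,q))$, which is a quasi-equivalence invariant known to separate homotopy classes of lens spaces.

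Part (iii) is an explicit construction. When $\mathrm{char}(k)\nmid p$, the group algebra $k[\Z_p]$ is semisimple. I would replace $y$ by the averaged generator $\tilde y=\frac{1}{p}\sum_{i=0}^{p-1}x^{-i}yx^i$ and modify $z$ analogously, then verify that the new differentials no longer depend on $q$, producing an explicit quasi-equivalence $\cC_{p,q}\otimes k\simeq\cC_{p,1}\otimes k$. Equivalently, one shows $\cC_{p,q}\otimes k$ is formal with a $q$-independent minimal model via the idempotent decomposition of $k[\Z_p]$.

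The main obstacle is part (ii): identifying the right $A_\infty$-invariant and controlling its indeterminacy so that it matches $\{\pm 1\}\cdot((\Z/p)^\times)^2$ exactly --- neither so coarse as to collapse distinct Whitehead classes, nor so fine as to be ill-defined. That the invariant must see $\pm q$ up to squares but not $\pm q^{\pm 1}$ (distinguishing homotopy type but not simple homotopy type) is a useful built-in sanity check for the calculation.
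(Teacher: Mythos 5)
Your part (ii) --- the implication ``quasi-equivalent $\Rightarrow$ homotopy equivalent'' over $\Z$ --- is the real content of the theorem, and as written it is a research plan rather than a proof. You name a candidate Massey-product-type invariant (the discrepancy between $z$, which bounds $[x^q,y]$, and $-y^2$, which bounds $[x^p,y]$) but explicitly defer the decisive step of showing it is well defined with indeterminacy exactly $\{\pm1\}\cdot((\Z/p)^\times)^2$; nothing in the proposal forces the value of this class to transform by a square under an arbitrary quasi-equivalence, and since any genuine quasi-equivalence $\cC_{p,q_1}\to\cC_{p,q_2}$ need not respect your chosen cochain representatives, controlling the indeterminacy is precisely where all the work lies. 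Your fallback via $HH^*(\cC_{p,q})\cong H^*(\Lambda L(p,q))$ rests on the assertion that free loop space cohomology is ``known to separate homotopy classes of lens spaces,'' which you do not justify and which is far from obvious --- note that $H^*(\cC_{p,q})$ itself, even with its ring structure, is $q$-independent (Remark \ref{rmk:product-lens}), so there is no a priori reason $HH^*$ should do better. The paper's mechanism is quite different and concretely avoids the indeterminacy problem: it maps $\cC_{p,q}$ to the free graded algebra $\cD=k\langle\beta,\gamma\rangle$ with zero differential, classifies all dga morphisms $\cC_{p,q}\to\cD$ (they are the $\mu_{a,b,c}$ of Equation (\ref{eqn dga morphism}), and any one factors through $\mu_{1,1,0}$ by Lemma \ref{lemma 1}), and proves via the elementary $\Z$-module bookkeeping of Lemma \ref{lemma 3} that every \emph{closed} degree $-2$ element is sent by $\mu_{1,1,0}$ to $kq\beta^2+kp\gamma$ (Lemma \ref{lemma 5}); evaluating $\mu_2\circ F$ on a cocycle representing $[\chi_2]$ then yields $\pm q_2\equiv a^2q_1\pmod p$ directly. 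Your proposal contains no substitute for Lemma \ref{lemma 5}, which is the step that injects $q$ into a quasi-equivalence invariant.

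For the record, your part (i) is a legitimate shortcut that differs from the paper: since Theorem \ref{thm:loop-lens} identifies $\cC_{p,q}$ with $C_{-*}(\Omega_s L(p,q))$ up to quasi-equivalence and the Moore loop chain dga is functorial for based homotopy equivalences, the forward implication follows formally; the paper instead builds an explicit dga morphism $F(x_1)=x_2^a$, $F(y_1)=y_2f_a(x_2^p)$, $F(z_1)=\bigl(\sum_i x_2^{aq_1-i}\Lambda_2x_2^{i-1}\bigr)f_a(x_2^p)+(c\bar q_2-br_2)x_2^{aq_1}\chi_2$ and verifies it is a quasi-equivalence through the quasi-faithful map $\pi$, which buys an explicit formula but costs the computation of Sections \ref{subsect properties of chi}--\ref{subsect generators}. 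Your part (iii) is plausible but unverified (and note $x^{-1}$ does not literally exist in $\cC_{p,q}$, only in cohomology, and you need $p$ invertible in $k$, not merely $\mathrm{char}(k)\neq p$ when $p$ is composite); the paper's route is simply that $\pi\colon\cC_{p,q}\to\cE$ becomes a quasi-equivalence when $p$ is a unit (Corollary \ref{cor 1}) and $\cE$ is manifestly $q$-independent.
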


\begin{rmk}
	The cohomology of $\cC_{p,q}$ (even with the product structure) does not distinguish non-homotopic lens spaces for fixed $p$. See Remark \ref{rmk:cohomology-lens} and Remark \ref{rmk:product-lens}.
\end{rmk}

\begin{proof}[Idea of proof]
	Since we presented the endomorphism algebra $\cC_{p,q}$ as a semifree dg algebra, any functor from $\cC_{p,q}$ can be regarded as a dg functor (so, there is no need to deal with quasi-functors/$A_{\infty}$-functors). We look at the dg functors from $\cC_{p,q}$ to some simple dga (see $\cD$ in Section \ref{sec:notation-convention}) and by analysing the possible dg functors between them, we distinguish non-homotopic lens spaces.
	
	As for homotopic lens spaces, we can explicitly create a dg functor between their endomorphism algebras. Since this dg functor is quite complicated, we construct a quasi-faithful dg functor from $\cC_{p,q}$ to another simple dga (see $\cE$ in Section \ref{sec:notation-convention}) to analyse this dg functor and show that it is a quasi-equivalence.
\end{proof}

\begin{cor}[Corollary \ref{cor Fukaya category}]
	The wrapped Fukaya category of $T^*L(p,q)$ is an invariant of the homotopy type of $L(p,q)$.
\end{cor}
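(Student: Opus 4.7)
The plan is to chain together Theorem \ref{thm:lens-dga-intro} and Theorem \ref{thm:lens space}. Suppose $L(p,q_1)$ and $L(p,q_2)$ are homotopy equivalent lens spaces. By Theorem \ref{thm:lens space}, the dg algebras $\cC_{p,q_1}$ and $\cC_{p,q_2}$ are then quasi-equivalent. I want to promote this quasi-equivalence of endomorphism algebras to a pretriangulated equivalence of the ambient wrapped Fukaya categories.

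The key step will be a standard generation argument. By Theorem \ref{thm:lens-dga-intro}, the wrapped Fukaya category $\cW(T^*L(p,q))$ is split-generated by a single object $F$, namely a cotangent fibre, whose endomorphism dga is $\cC_{p,q}$. Consequently, the pretriangulated closure (resp.\ idempotent completion of the pretriangulated closure) of $\cW(T^*L(p,q))$ is quasi-equivalent to $\Tw(\cC_{p,q})$ (resp.\ $\Perf(\cC_{p,q})$), via the Yoneda-type embedding sending $F$ to the free module. This is the standard fact that a pretriangulated dg category generated by one object is determined, up to pretriangulated equivalence, by the endomorphism dga of that generator; since the endomorphism dgas are already given to us by Theorem \ref{thm:lens-dga-intro}, no further computation is required here.

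Combining these, the quasi-equivalence $\cC_{p,q_1}\simeq\cC_{p,q_2}$ induces a chain of pretriangulated equivalences
\[
\cW(T^*L(p,q_1))\simeq\Tw(\cC_{p,q_1})\simeq\Tw(\cC_{p,q_2})\simeq\cW(T^*L(p,q_2)),
\]
giving the desired homotopy invariance.

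The only real subtlety, rather than a true obstacle, is making precise the sense in which $\cW(T^*L(p,q))$ is an invariant. Theorem \ref{thm:gps-intro} only identifies $\cW$ up to pretriangulated equivalence, and Theorem \ref{thm:lens-dga-intro} delivers the dga of a generator rather than $\cW$ on the nose, so the natural formulation is invariance up to pretriangulated equivalence (and, after idempotent completion, Morita equivalence). I do not anticipate needing any further geometric or $A_\infty$ input; the corollary should follow formally from the two theorems listed and the generation statement.
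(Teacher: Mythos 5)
Your proposal is correct and follows essentially the same route as the paper: Theorem \ref{thm:wrap-lens} already states that $\cW(T^*L(p,q))$ is pretriangulated equivalent to $\cC_{p,q}$ (which is precisely your generation-by-the-cotangent-fibre step), and the paper then concludes, as you do, by noting that the quasi-equivalence $\cC_{p,q_1}\simeq\cC_{p,q_2}$ supplied by Theorem \ref{thm homotopy type to quasi} is in particular a pretriangulated equivalence. Your remark that the invariance is naturally formulated up to pretriangulated equivalence matches the paper's statement as well.
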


Remark \ref{rmk:wrapped-lens} explains the meaning of Theorem \ref{thm:lens space} in symplectic topology and homological mirror symmetry.

The current paper consists of two parts.
Sections \ref{sec:cylinder-object and homotopy-colimit} and \ref{sec:wrap-lens} are in the first part.
We give our construction of cylinder objects and homotopy colimits in Section \ref{sec:cylinder-object and homotopy-colimit}. The preliminaries for model categories, dg categories, semifree extensions are also given in Section \ref{sec:cylinder-object and homotopy-colimit}. The reader can find proofs of Theorems \ref{thm:cylinder-intro} and \ref{thm:hocolim-intro} there. The preliminaries for wrapped Fukaya categories and chains on based loop spaces are given in Section \ref{sec:wrap-lens}. Then, Section \ref{sec:wrap-lens} uses the homotopy colimit formula (Theorem \ref{thm:hocolim-intro}) on the wrapped Fukaya category of cotangent bundles of some 3-manifolds, and on the chains on based loop spaces. In particular, we get Theorems \ref{thm:loop-hocolim-intro} and \ref{thm:lens-dga-intro}.

The second part studies the differential graded algebras given in Theorem \ref{thm:lens-dga-intro}.
As a result, we prove Theorem \ref{thm:lens space}.

\subsection{Notations and Conventions}
\label{sec:notation-convention}

The following notations will be introduced in the paper as they are required, but here we will list the important ones for the sake of easy reference. Note that we will read compositions of morphisms right-to-left.

\begin{itemize}
	\item dg: differential graded,
	\item dga: differential graded algebra,
	\item $\Z_p$: multiplicative group of integers modulo $p$,
	\item $\Ob\,\cC$: objects of a category $\cC$,
	\item $\Hom_{\cC}(A,B)$ (or just $\Hom(A,B)$): morphisms from the object $A$ to $B$ in a category $\cC$,
	\item $\hom^*_{\cC}(A,B)$: the (co)chain complex of the morphisms from the object $A$ to $B$ in a dg category $\cC$,
	\item $\Hom^*_{\cC}(A,B)$: the cohomology of $\hom^*_{\cC}(A,B)$,
	\item $|f|$: degree of a morphism $f$,
	\item $df$: differential of a morphism $f$,
	\item $1_C$ (or just $1$): the identity morphism from $C$ to $C$,
	\item $H^*\cC$: the graded homotopy category of a dg category $\cC$,
	\item $H^*F\colon H^*\cC\to H^*\cD$: the induced functor between graded homotopy categories for a given dg functor $F$,
	\item $\dgCat$: the category of (small, $k$-linear) dg categories,
	\item $\dgqe$: a model structure for $\dgCat$, where the weak equivalences are quasi-equivalences,
	\item $\dgeq$: a model structure for $\dgCat$, where the weak equivalences are pretriangulated equivalences,
	\item $\dgmo$: a model structure for $\dgCat$, where the weak equivalences are Morita equivalences,
	\item $\cC[S^{-1}]$: the (dg) localisation of a (dg) category $\cC$ at a collection of morphisms $S$,
	\item $\Ho(\cC)$: the homotopy category of a category $\cC$ with the weak equivalences, i.e. $\cC[W^{-1}]$ where $W$ is the set of weak equivalences,,
	\item $[A,B]$: morphisms from the object $A$ to $B$ in the homotopy category of a model category,
	\item $\Mod\,k$: the dg category of (co)chain complexes of $k$-modules, localised at quasi-isomorphisms,
	\item $\RHom(\cC,\cD)$: the internal Hom between the dg categories $\cC$ and $\cD$,
	\item $\Mod\,\cC$: the internal Hom $\RHom(\cC^{\op},\Mod\,k)$,
	\item $\Tw\,\cC$: the dg category of twisted complexes in a dg category $\cC$,
	\item $\Perf\,\cC$: the split-closure of $\Tw\,\cC$,
	\item $\Cyl(\cC)$: a cylinder object for $\cC$,
	\item $\hocolim$: homotopy colimit,
	\item $\holim$: homotopy limit,
	\item $T^*M$: the cotangent bundle of a smooth manifold $M$,
	\item $\cW(W)$: the wrapped Fukaya category of a Liouville sector $W$,
	\item $\mSh(W)$: the dg category of (unbounded) microlocal sheaves on the skeleton of a Weinstein manifold $W$,
	\item $C_{-*}(\Omega_xM)$: chains on the based loop space of $M$ at $x$,
	\item $S^n$: $n$-dimensional sphere,
	\item $\Sigma_g$: genus $g$ surface,
	\item $L(p,q)$: a lens space with $p>q\geq 1$ and $(p,q)=1$,
	\item $k\langle x_1,\ldots,x_n\rangle$: a semifree dga with the generating morphisms $x_1,\ldots,x_n$,
	\item $k\langle x_1,\ldots,x_n\rangle[\{y_1,\ldots,y_m\}^{-1}]$: a semifree dga $k\langle x_1,\ldots,x_n\rangle$ where the morphisms $y_1,\ldots,y_m$ are inverted,
	\item $\cC_{p,q}$: the endomorphism algebra of a cotangent fibre of the lens space $L(p,q)$ in the wrapped Fukaya category,
	\item $H_{p,q}$: the cohomology of $\cC_{p,q}$,
	\item $\chi, \Lambda, f_n(x)$: elements of $\cC_{p,q}$ defined in Section \ref{subsect notation},
	\item $\mathcal{E}$: the dga $k\langle \alpha,\gamma\rangle/\{\alpha^p=1,\alpha\gamma=\gamma\alpha\}$ with $|\alpha|=0$, $|\gamma|=-2$, and zero differential,
	\item $\pi\colon\cC_{p,q}\to\mathcal{E}$: the dga morphism defined by Equation (\ref{eqn pi}),
	\item $B_N$: a basis for $\cC_{p,q}^{-N}$ given by Equation (\ref{eqn basis}),
	\item $\mathfrak{A}_i,\mathfrak{B}_{m,i},\mathfrak{C}_i,\mathfrak{D}_{m,i}$: the conditions defined in Definition \ref{dfn conditions},
	\item $\Psi,\Phi$: $\Z$-module morphisms defined in Definition \ref{def Psi and Phi},
	\item $\cD$: the semifree dga $k\langle\beta,\gamma\rangle$ with $|\beta|=-1$, $|\gamma|=-2$, and zero differential,
	\item $\mu_{a,b,c}\colon\cC_{p,q}\to\cD$: the dga morphism defined by Equation (\ref{eqn dga morphism}),
	\item $\mu$: the dga morphism $\mu_{1,1,0}$.
\end{itemize}

\subsection{Acknowledgements}
\label{subsect acknoledgment} 
We thank Emmy Murphy, Eric Zaslow, and Dongwook Choa for helpful discussions and valuable comments. We also thank Shanon Rubin for his careful reading of the paper and pointing out mistakes.

The first (resp.\ second) named author had (resp.\ has) been supported by the Institute for Basic Science (IBS-R003-D1).

\clearpage

\part{Cylinder Objects and Homotopy Colimits}

\section{Cylinder Objects and Homotopy Colimits in the Category of Dg Categories}
\label{sec:cylinder-object and homotopy-colimit}
Let $k$ be a commutative ring (with a unit) throughout the chapter. Our goal is to describe a simple formula for cylinder objects and homotopy colimits for semifree dg categories. Also, we will show that our construction commutes with the localisation of dg categories.

\subsection{Model Categories}

We will recall some facts in the theory of model categories. Our main references are \cite{hovey} and \cite{hirschhorn-model}. The main use of model categories for us is understanding categorical constructions in the localisations of categories at ``weak equivalences'' via the initial category and the auxillary data, cofibrations and fibrations, which can be sometimes roughly thought as ``nice inclusions'' and ``nice surjections'', respectively. We will be concerned with the category of dg categories localised at various collections of morphisms, hence this understanding is crucial.

\begin{dfn}\mbox{}
	\begin{enumerate}
		\item A \textit{(closed) model category} $\cM$ is a complete and cocomplete category with three classes of morphisms, called \textit{weak equivalences}, \textit{cofibrations} and \textit{fibrations}, satisfying the axioms given in \cite{hirschhorn-model}.
		\item We call the data of weak equivalences, cofibrations, and fibrations a \textit{model structure} on $\cM$.
		\item A cofibration (resp.\ fibration) which is also a weak equivalence is called an \textit{acyclic cofibration} (resp.\ \textit{acyclic fibration}).
	\end{enumerate}
\end{dfn}

\begin{rmk}
	The axioms in \cite{hirschhorn-model} imply that any two of the three classes of the morphisms (weak equivalences, fibrations, cofibrations) determine the third.
\end{rmk}

\begin{rmk}
	If $\cM$ is a model category, $\cM^{\op}$ is also a model category with the same weak equivalences. Cofibrations of $\cM^{\op}$ are fibrations of $\cM$ and vice versa.
\end{rmk}

\begin{dfn}
	Let $C$ be an object in a model category $\cM$. $C$ is called \textit{cofibrant} (resp.\ \textit{fibrant}) if the morphism from the initial (resp.\ final) object of $\cM$ to $C$ is a cofibration (resp.\ fibration).
\end{dfn}

\begin{rmk}
	By the model category axioms, for any object $C$ in $\cM$, there is a weak equivalence from a cofibrant object to $C$, and a weak equivalence from $C$ to a fibrant object.
\end{rmk}

\begin{dfn}
	A model category $\cM$ is \textit{cofibrantly generated} if there is a set of cofibrations, called \textit{generating cofibrations}, and a set of acyclic cofibrations, called \textit{generating acyclic cofibrations}, which satisfy the conditions given in \cite{hovey} and determine the model structure of $\cM$. In particular, any cofibration in $\cM$ is given by a retract of a transfinite composition of cobase changes of coproducts of generating cofibrations.
\end{dfn}

\begin{exm}\label{exm:top-model}
	Let $\cM=\Top$ be the category of topological spaces. It has a cofibrantly generated model structure where the weak equivalences are the weak homotopy equivalences, and generating cofibrations are the boundary inclusions $S^{n-1}\to D^n$ for all $n\geq 0$. Here, all objects are fibrant, and in particular, CW complexes are cofibrant.
\end{exm}

\begin{dfn}
	A \textit{localisation} of a category $\cM$ at a class of morphisms $W$ in $\cM$ is obtained from $\cM$ by inverting the morphisms in $W$. More precisely, it is the category $\cM[W^{-1}]$ with the functor $l\colon \cM\to \cM[W^{-1}]$ such that for any category $\cN$, 
	\begin{itemize}
		\item the induced morphism
		\[l^*\colon \Hom(\cM[W^{-1}],\cN)\to\Hom(\cM,\cN)\]
		is injective, and
		\item the image of $l^*$ consists of all functors sending each morphism in $W$ to an isomorphism $\cN$.
	\end{itemize} 
\end{dfn}

\begin{rmk}
	If a localisation exists, it is unique up to a unique isomorphism.
\end{rmk}

\begin{prp}
	If $\cM$ is a model category with the weak equivalences $W$, then the localisation $\cM[W^{-1}]$ exists, and it is unique up to a unique isomorphism.
\end{prp}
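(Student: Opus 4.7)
Uniqueness is the easy half. The universal property of $\cM[W^{-1}]$ is a representability condition (functors out of $\cM[W^{-1}]$ are identified with $W$-inverting functors out of $\cM$), and any object representing the same functor is canonically isomorphic to any other. So the plan is to spend all the effort on existence, and to note uniqueness in one line.

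For existence, the plan is to follow Quillen's homotopy-category construction. First I would review the basic homotopy calculus in $\cM$: the existence of cylinder and path objects (given by factoring the fold map $C\amalg C\to C$ and the diagonal $C\to C\times C$ through acyclic (co)fibrations), and the induced notions of left and right homotopy of morphisms. The key technical facts I would invoke are: (i) when the source is cofibrant, left homotopy is an equivalence relation on $\Hom_\cM(A,B)$; (ii) when the target is fibrant, right homotopy is an equivalence relation on $\Hom_\cM(A,B)$; (iii) when both source is cofibrant and target is fibrant, the two relations coincide and are compatible with composition. Together these let one define a category $\pi(\cM_{cf})$ whose objects are the cofibrant–fibrant objects of $\cM$ and whose morphisms are homotopy classes of maps in $\cM$.

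Next, I would use the functorial (or at worst, just existent) cofibrant and fibrant replacement functors $Q$ and $R$ provided by the factorisation axioms to extend this to all of $\cM$. Set $\Ho(\cM)$ to have the same objects as $\cM$ and morphisms $[A,B] := \Hom_{\pi(\cM_{cf})}(RQA,RQB)$, with composition induced from $\pi(\cM_{cf})$. There is a canonical functor $l\colon \cM\to\Ho(\cM)$ sending $f\colon A\to B$ to the homotopy class of any lift $RQA\to RQB$ of $f$; the Whitehead-type theorem (weak equivalences between cofibrant–fibrant objects are homotopy equivalences) shows that different choices of lift are homotopic and that $l$ carries every weak equivalence to an isomorphism.

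Finally, I would verify the universal property. Given any functor $F\colon\cM\to\cN$ inverting $W$, I need to produce a unique $\overline F\colon\Ho(\cM)\to\cN$ with $\overline F\circ l = F$. The point is that $F$ sends the two endpoint inclusions $i_0,i_1\colon C\to\Cyl(C)$ to the same morphism (both are right inverses to the weak equivalence $\Cyl(C)\to C$, which $F$ sends to an isomorphism), so $F$ identifies left-homotopic maps and hence passes to the homotopy category on cofibrant–fibrant objects. The replacements $R$ and $Q$ are weak equivalences, so $F$ converts them to isomorphisms and one obtains $\overline F$ by the obvious formula on objects and morphisms; uniqueness is forced because $l$ is bijective on objects and surjective on morphisms modulo homotopy. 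The main obstacle, and what consumes most of the effort, is step (iii) above — establishing that left and right homotopy agree on $\Hom_\cM(A,B)$ for $A$ cofibrant and $B$ fibrant, and that this common relation is compatible with composition on both sides — since everything else is a fairly formal consequence of factorisation and lifting, while this is where the interaction between cylinder and path objects really has to be controlled. Having stated the result I would simply cite the standard references \cite{hovey,hirschhorn-model} for the details of these verifications.
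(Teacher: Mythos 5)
Your outline is correct and is precisely the standard Quillen construction found in the references \cite{hovey} and \cite{hirschhorn-model} that the paper cites for this fact; the paper itself states the proposition without proof. No discrepancies to report.
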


There is a way of obtaining a new model category from an initial model category by extending the class of weak equivalences.

\begin{dfn}\label{dfn:bousfield}
	Let $\cM$ be a model category with the weak equivalences $W$, and let $\overline W\supset W$ be a class of morphisms in $\cM$ given by $S$-local equivalences for some class of morphisms $S$ in $\cM$ (see \cite{hirschhorn-model} for the definition and the details). A \textit{left Bousfield localisation} of $\cM$ is a new model structure on $\cM$ with the same cofibrations and with the weak equivalences $\overline W$.
\end{dfn}

Now we are ready to define the category of our interest.

\begin{dfn}\label{dfn:homotopy-category}
	Let $\cM$ be a model category with the weak equivalences $W$. 
	\begin{enumerate}
		\item The \textit{homotopy category} of the model category $\cM$, denoted by $\Ho(\cM)$, is given by the localisation $\cM[W^{-1}]$.
		\item We write
		\[[A,B]:=\Hom_{\Ho(\cM)}(A,B) .\]
		\item We call the objects $A$ and $B$ in $\cM$ \textit{weakly equivalent} (or \textit{the same up to weak equivalence}) if they are isomorphic in $\Ho(\cM)$.
	\end{enumerate}
\end{dfn}

\begin{prp}\label{prp:weak-equiv}
	Let $f\colon A\to B$ be a morphism in a model category $\cM$. $f$ is isomorphism in $\Ho(\cM)$ if and only if $f$ is a weak equivalence in $\cM$.
\end{prp}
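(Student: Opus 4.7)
The plan is to handle the two directions separately. The ``if'' direction is essentially formal: by the definition of the localisation $\cM[W^{-1}]$, the canonical functor $l\colon \cM\to \Ho(\cM)$ sends every weak equivalence to an isomorphism, so if $f$ is a weak equivalence then $l(f)$ is an isomorphism. This is immediate from Definition \ref{dfn:homotopy-category} together with the universal property of localisations.

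For the ``only if'' direction, I would follow the classical Quillen--Whitehead strategy. First, using the functorial factorisations guaranteed by the model category axioms, I would replace $f\colon A\to B$ by a weakly equivalent map $\tilde f\colon \tilde A\to\tilde B$ between cofibrant-fibrant objects; this replacement does not affect whether $f$ is a weak equivalence nor whether it is an isomorphism in $\Ho(\cM)$, since weak equivalences become isomorphisms in $\Ho(\cM)$ by the ``if'' direction already established. Second, I would invoke the fundamental theorem of model categories, which identifies $\Ho(\cM)$ restricted to cofibrant-fibrant objects with the category whose morphisms are left/right homotopy classes of maps (here the notions of left and right homotopy coincide and the relation is an equivalence). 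Under this identification, an isomorphism in $\Ho(\cM)$ between cofibrant-fibrant objects corresponds to a homotopy equivalence, i.e.\ a map $\tilde f$ admitting a map $g$ with $g\tilde f\sim 1$ and $\tilde f g\sim 1$.

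The key step is then Whitehead's theorem in a model category: a map between cofibrant-fibrant objects is a weak equivalence if and only if it is a homotopy equivalence. I would prove the direction we need (homotopy equivalence implies weak equivalence) by a two-out-of-three argument applied to the cylinder/path object factorisations defining the homotopies. Concretely, if $\tilde f g\sim 1_{\tilde B}$ via a right homotopy $H\colon \tilde B\to \Path(\tilde B)$, the two projections $\Path(\tilde B)\to \tilde B$ are weak equivalences, so two-out-of-three forces $\tilde f g$ to be a weak equivalence; similarly for $g\tilde f$; another application of two-out-of-three then shows $\tilde f$ itself is a weak equivalence. Pulling the conclusion back along the cofibrant-fibrant replacement yields that the original $f$ is a weak equivalence.

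The main obstacle is not the two-out-of-three juggling but the identification of $\Ho(\cM)$ with homotopy classes of maps on cofibrant-fibrant objects; this requires verifying that left/right homotopy is a well-defined equivalence relation on morphisms and that the quotient functor inverts exactly the weak equivalences between such objects. This is standard (see \cite{hovey}, \cite{hirschhorn-model}), and I would simply cite it rather than redevelop the machinery, since the proposition is a structural consequence of the general theory of model categories rather than a claim specific to dg categories.
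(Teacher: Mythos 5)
The paper states this proposition without proof, as standard background recalled from \cite{hovey} and \cite{hirschhorn-model}, so there is no in-paper argument to compare against; your proposal must stand on its own, and as written it has a genuine gap at the decisive step. The ``if'' direction and the reduction to a map $\tilde f$ between cofibrant-fibrant objects are fine, as is the observation that a self-map homotopic to the identity is a weak equivalence (the $\Path(\tilde B)$ argument via two-out-of-three is correct). The problem is the final clause: from the facts that $\tilde f g$ and $g\tilde f$ are weak equivalences, two-out-of-three does \emph{not} let you conclude that $\tilde f$ is one. The axiom applies to a single composite $h\circ k$ when two of $k$, $h$, $hk$ are known to be weak equivalences; here neither $\tilde f$ nor $g$ is known individually, and any attempt to bootstrap (e.g.\ via $\tilde f g\tilde f$) only yields ``$\tilde f$ is a weak equivalence iff $\tilde f$ is a weak equivalence.'' What you are implicitly invoking is the two-out-of-six (saturation) property of weak equivalences, which in a model category is a theorem essentially equivalent to the proposition you are trying to prove, so using it here would be circular.

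The standard repair — the actual content of Whitehead's theorem in a model category — uses more of the axioms than two-out-of-three. One factors $\tilde f$ as an acyclic cofibration $q\colon \tilde A\to C$ followed by a fibration $p\colon C\to \tilde B$; since $q$ is a weak equivalence between cofibrant-fibrant objects it is a homotopy equivalence (the easy direction), so $p$ is a homotopy equivalence as well. One then proves separately that a fibration between cofibrant-fibrant objects which is a homotopy equivalence is an acyclic fibration: lift the homotopy $p g\sim 1_{\tilde B}$ against the fibration $p$ to replace the homotopy inverse $g$ by a genuine section $s$ with $ps=1$, and then use the homotopy $sp\sim 1_C$ together with a lifting/retract argument to show $p$ has the right lifting property against all cofibrations. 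This gives $p$, hence $\tilde f=pq$, as a weak equivalence. Either supply this factorisation argument or cite the Whitehead theorem itself from \cite{hovey} or \cite{hirschhorn-model} (alongside Proposition \ref{prp:model-hom}), rather than claiming it follows from two-out-of-three alone.
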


To understand the homotopy category, the following will be our main tool.

\begin{dfn}\label{dfn:cylinder}
	Let $\cM$ be a model category and $C,C'\in\cM$. $C'$ is a \textit{cylinder object} for $C$ if we have a decomposition of the codiagonal map
	\[\nabla\colon C\amalg C\xrightarrow{i}C'\xrightarrow{p}C,\]
	where $i$ is a cofibration, $p$ is a weak equivalence.
\end{dfn}

\begin{rmk}\label{rmk:very-good-cyl}
	Sometimes, this cylinder object is called a \textit{``good'' cylinder object}. If, in addition, $p$ is a fibration, then $C'$ is called a \textit{``very good'' cylinder object} for $C$. The existence of a ``very good'' cylinder object for any given object is guaranteed by model category axioms.
\end{rmk}

\begin{dfn}
	Let $\cM$ be a model category, and $f_1,f_2\in\Hom_{\cM}(A,B)$.
	\begin{enumerate}
		\item We say $f_1$ and $f_2$ are \textit{left homotopic} if there exists a morphism $h\colon A'\to B$ for a cylinder object $A'$ for $A$ such that the diagram
		\[\begin{tikzcd}
			A\ar[r,"i_1"]\ar[rd,"f_1"'] & A'\ar[d,"h"] & A\ar[l,"i_2"']\ar[ld,"f_2"]\\
			& B
		\end{tikzcd}\]
		commutes, where $i_1\amalg i_2\colon A\amalg A\to A'$ is the cofibration for the cylinder object $A'$.
		\item The morphism $h$ above is called a \textit{left homotopy} from $f_1$ to $f_2$,
	\end{enumerate} 
\end{dfn}

In general, localisations are hard to describe. However, for a model category, we have a nice description of its localisation at weak equivalences, i.e.\ its homotopy category, in terms of the initial category.

\begin{prp}
	Let $\cM$ be a model category, and let $\cM_{cf}$ be its full subcategory of objects which are both cofibrant and fibrant in $\cM$. Then, left homotopy gives an equivalence relation on morphisms in $\cM_{cf}$. Moreover, the homotopy category $\Ho(\cM)$ of $\cM$ is equivalent to the quotient of $\cM_{cf}$ by the left homotopy.
\end{prp}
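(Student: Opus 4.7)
The plan is to follow Quillen's classical argument: first verify that left homotopy is an equivalence relation on $\Hom_{\cM}(A,B)$ for $A,B\in\cM_{cf}$, and then construct a functor $\cM_{cf}/\sim \;\to\Ho(\cM)$ and show it is an equivalence of categories.

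For the equivalence relation, fix $A,B\in\cM_{cf}$. Reflexivity is immediate: given any cylinder object $A\amalg A\xrightarrow{i_1\amalg i_2}A'\xrightarrow{p}A$ and any $f\colon A\to B$, the composite $f\circ p\colon A'\to B$ is a left homotopy from $f$ to itself. Symmetry is equally easy, since if $(A',i_1\amalg i_2,p)$ is a cylinder for $A$, then so is $(A',i_2\amalg i_1,p)$, because the codiagonal is symmetric. For transitivity, given left homotopies $h\colon A'\to B$ from $f_1$ to $f_2$ and $h'\colon A''\to B$ from $f_2$ to $f_3$, I would form the pushout $A'''$ of the diagram $A'\xleftarrow{i_2}A\xrightarrow{i_1'}A''$ identifying the second end of $A'$ with the first end of $A''$. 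The maps $h$ and $h'$ agree on the common $A$ and hence glue to a morphism $A'''\to B$. I would then check that $A'''$ is itself a cylinder for $A$: the inclusion $A\amalg A\to A'''$ built from $i_1\colon A\to A'$ and $i_2'\colon A\to A''$ is a cofibration (using that $A$ is cofibrant, so the individual end-inclusions are cofibrations, and cofibrations are closed under pushout and composition), and the common retraction down to $A$ is a weak equivalence by two-out-of-three.

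For the equivalence of categories, the inclusion $\cM_{cf}\hookrightarrow\cM$ composed with the localisation $\cM\to\Ho(\cM)$ factors through $\cM_{cf}/\sim$, because whenever $h$ is a left homotopy from $f_1$ to $f_2$ with cylinder data $(A',i_1,i_2,p)$, Proposition \ref{prp:weak-equiv} makes $p$ invertible in $\Ho(\cM)$, forcing $i_1=i_2=p^{-1}$ there, and therefore $f_1=h\circ i_1=h\circ i_2=f_2$ in $\Ho(\cM)$. Essential surjectivity of the induced functor follows from applying the factorisation axiom twice to any $X\in\cM$ to produce a cofibrant-fibrant replacement weakly equivalent to $X$, hence isomorphic to it in $\Ho(\cM)$. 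For full faithfulness, the key input is the Whitehead-type lemma that any weak equivalence between cofibrant-fibrant objects is a left homotopy equivalence; combined with the fact that any morphism in $\Ho(\cM)$ between such objects can be represented by a zig-zag which can be straightened into an honest roof using factorisation and this lemma, one deduces that $\Hom_{\cM_{cf}}(A,B)/\sim \;\to [A,B]$ is a bijection.

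The main technical obstacles are transitivity of left homotopy and the faithfulness half of the equivalence. Both rest on the delicate interplay between cofibrancy of the source, the factorisation axioms, and the lifting properties, and both fail without the cofibrant-fibrant hypothesis (for instance, the pushed-out cylinder in the transitivity argument can fail to be a cylinder when $A$ is not cofibrant, because the new end-inclusions need not be cofibrations). Rather than reproducing the full technicalities, I would cite Chapter 1 of \cite{hovey} or Chapter 7 of \cite{hirschhorn-model}, which are already referenced in the preliminaries.
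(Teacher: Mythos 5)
The paper states this proposition without proof, as a standard background fact quoted from its model-category references (\cite{hovey}, \cite{hirschhorn-model}), so there is no in-paper argument to compare against. Your sketch is the classical Quillen argument found in those references and is correct in outline: reflexivity and symmetry are as you say, the glued cylinder in the transitivity step is indeed a cylinder precisely because $A$ is cofibrant, and the equivalence $\cM_{cf}/\!\sim\;\to\Ho(\cM)$ is established exactly via the three steps you name (left-homotopic maps are identified in $\Ho(\cM)$ since $i_1=i_2=p^{-1}$ there, essential surjectivity via cofibrant--fibrant replacement, and faithfulness via the Whitehead-type lemma). The only imprecision worth flagging is in transitivity: knowing that each end-inclusion $A\to A'''$ is separately a cofibration does not by itself make the map $A\amalg A\to A'''$ a cofibration; the standard fix is to factor it as $A\amalg A\xrightarrow{i_1\amalg\mathrm{id}}A'\amalg A\to A'''$, where the second map is a cobase change of the cofibration $A\amalg A\to A''$, so the ingredients you cite (cofibrancy of $A$, closure of cofibrations under coproducts, pushouts, and composition) do suffice once assembled in that order. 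Deferring the faithfulness half to \cite{hovey} or \cite{hirschhorn-model} is consistent with the paper's own treatment, which defers the entire statement.
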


\begin{prp}\label{prp:model-hom}
	Let $\cM$ be a model category. If $A$ is cofibrant and $B$ is fibrant in $\cM$, then there is a natural isomorphism
	\[[A,B]\simeq \Hom_{\cM}(A,B)/\!\sim,\]
	where the equivalence relation $\sim$ is given by $f\sim g$ if $f$ and $g$ are left homotopic.
\end{prp}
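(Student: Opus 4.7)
The plan is to construct the comparison map
\[
\Phi\colon \Hom_{\cM}(A,B)/\!\sim \,\longrightarrow\, [A,B]
\]
induced by the localisation functor $\gamma\colon\cM\to\Ho(\cM)$, verify it is well-defined, and prove it is a bijection (natural in both $A$ and $B$) by reducing to the preceding proposition for cofibrant-fibrant objects.

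First I would verify that left homotopy is an equivalence relation on $\Hom_{\cM}(A,B)$. Reflexivity is witnessed, for any $f$ and any cylinder $A'$ with projection $p\colon A'\to A$, by the morphism $f\circ p\colon A'\to B$. Symmetry follows from interchanging the roles of the two inclusions $i_1,i_2\colon A\to A'$, which is formally a relabelling of the same cylinder object. Transitivity is the subtle step: given left homotopies $A'\to B$ and $A''\to B$, one forms the pushout $A'\cup_A A''$ along a shared copy of $A$, obtains a new cylinder object for $A$, and uses fibrancy of $B$ together with the lifting axiom to assemble the two homotopies into a single one. Next I would show that $\gamma$ identifies left homotopic morphisms. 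Given a left homotopy $h\colon A'\to B$ from $f_1$ to $f_2$ with cofibration $i_1\amalg i_2\colon A\amalg A\to A'$ and weak equivalence $p\colon A'\to A$, Proposition \ref{prp:weak-equiv} makes $\gamma(p)$ an isomorphism in $\Ho(\cM)$. The relations $p\circ i_1=p\circ i_2=1_A$ force $\gamma(i_1)=\gamma(i_2)=\gamma(p)^{-1}$, whence $\gamma(f_1)=\gamma(h)\gamma(i_1)=\gamma(h)\gamma(i_2)=\gamma(f_2)$. This produces the well-defined map $\Phi$.

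For bijectivity I would reduce to the previous proposition via cofibrant/fibrant replacements. By the factorisation axiom, choose an acyclic cofibration $j\colon A\to\widetilde A$ with $\widetilde A$ fibrant; since $A$ is cofibrant and $j$ is a cofibration, $\widetilde A$ lies in $\cM_{cf}$. Dually, choose an acyclic fibration $q\colon\widetilde B\to B$ with $\widetilde B$ cofibrant; since $B$ is fibrant and $q$ is a fibration, $\widetilde B$ also lies in $\cM_{cf}$. Given $f\colon A\to B$, lifting $f$ through $q$ using cofibrancy of $A$ and then extending along $j$ using fibrancy of $\widetilde B$ yields a morphism $\bar f\colon\widetilde A\to\widetilde B$ with $q\,\bar f\,j=f$; conversely, any $g\colon\widetilde A\to\widetilde B$ restricts to $q\,g\,j\colon A\to B$. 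Standard uniqueness-of-lift arguments show these two operations descend to inverse bijections between $\Hom_{\cM}(A,B)/\!\sim$ and $\Hom_{\cM}(\widetilde A,\widetilde B)/\!\sim$. Composing with the isomorphism $[\widetilde A,\widetilde B]\simeq[A,B]$ induced by the weak equivalences $j$ and $q$, and with the bijection $[\widetilde A,\widetilde B]\simeq\Hom_{\cM}(\widetilde A,\widetilde B)/\!\sim$ supplied by the previous proposition, identifies $\Phi$ as the desired bijection. The main obstacle is the bookkeeping required to show that these replacement and lifting operations do descend to well-defined inverse bijections on left-homotopy classes and that the resulting bijection matches $\Phi$; this is standard but technical and is carried out in \cite{hovey,hirschhorn-model}.
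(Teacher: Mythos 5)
The paper states this proposition without proof, recalling it as a standard fact from \cite{hovey} and \cite{hirschhorn-model}, so there is no in-paper argument to compare against. Your proposal is the correct standard textbook proof: well-definedness of the comparison map via $\gamma(i_1)=\gamma(i_2)=\gamma(p)^{-1}$, and bijectivity by passing to cofibrant-fibrant replacements $\widetilde A$ and $\widetilde B$ and invoking the preceding proposition on $\cM_{cf}$. The only minor imprecision is in the transitivity step: gluing the two homotopies along the shared copy of $A$ is handled by the universal property of the pushout (the two homotopies agree on that copy), and what actually requires work is showing that $A\amalg A\to A'\cup_A A''$ is a cofibration, which uses cofibrancy of $A$ rather than fibrancy of $B$; this does not affect the validity of the argument.
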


The next goal is to describe a type of ``colimit'' (resp.\ ``limit'') for homotopy categories.

\begin{dfn}
	Let $\cM$ be a model category and $\cD$ be a diagram. A \textit{homotopy colimit} (resp.\ \textit{homotopy limit}) of a functor $F\colon\cD\to\cM$ is the image of a cofibrant (resp.\ fibrant) replacement of $F$ under the colimit (resp.\ limit) functor $\Hom(\cD,\cM)\to\cM$. This is well-defined in $\Ho(\cM)$.
\end{dfn}

Note that the category $\Hom(\cD,\cM)$ can be given a model structure, called a Reedy model structure. See e.g.\ \cite{dugger} for the details. After studying cofibrant objects in $\Hom(\cD,\cM)$, we have the following propositions describing homotopy colimits in terms of usual colimits.

\begin{prp}\label{prp:model-hocolim-1}
	If $A,B,C$ are cofibrant objects in a model category $\cM$, then we have the weak equivalence
	\[\hocolim\left(\begin{tikzcd}
		A & & B\\
		& C\ar[lu,"f"]\ar[ru,"g"']
	\end{tikzcd}\right)
	\simeq
	\colim\left(\begin{tikzcd}
		A & & B\\
		& C\ar[lu,"f"]\ar[ru,"g"']
	\end{tikzcd}\right)\]
	in $\cM$, if $f$ is a cofibration.
\end{prp}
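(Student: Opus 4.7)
The plan is to represent the homotopy colimit by an explicit ordinary colimit and then compare it with the pushout we already have. Recall that the Reedy model structure on $\Hom(\cD, \cM)$ for the span category $\cD = (\bullet \leftarrow \bullet \to \bullet)$ makes a diagram $A \xleftarrow{f} C \xrightarrow{g} B$ cofibrant exactly when $C$ is cofibrant and both $f$ and $g$ are cofibrations, in which case the ordinary colimit functor already represents $\hocolim$. The hypotheses provide everything except the cofibrancy of $g$.

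First, I would apply the factorisation axiom to write $g = p \circ j$ with $j \colon C \hookrightarrow B'$ a cofibration and $p \colon B' \twoheadrightarrow B$ an acyclic fibration. The replacement span $A \xleftarrow{f} C \xrightarrow{j} B'$ is then Reedy cofibrant, with $B'$ automatically cofibrant since $C$ is, and the morphism of spans given by the identity on $A$ and $C$ and by $p$ on $B'$ is a levelwise weak equivalence. Consequently, the homotopy colimit of the original diagram is represented in $\cM$ by the ordinary pushout $A \amalg_C B'$.

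The remaining step is to compare $A \amalg_C B'$ with $A \amalg_C B$. The factorisation $g = p \circ j$ together with the pasting lemma for pushouts produces a canonical morphism $q \colon A \amalg_C B' \to A \amalg_C B$, and the task reduces to proving $q$ is a weak equivalence. This is the content of the classical gluing (or ``cube'') lemma: a levelwise weak equivalence between two spans, each having one leg a cofibration and all three vertices cofibrant, induces a weak equivalence on pushouts. I expect this gluing lemma to be the main obstacle, since its standard derivation proceeds by factoring $p$ through a mapping cylinder and appealing to Ken Brown's lemma for the resulting weak equivalences between cofibrant objects; however, the cofibrancy of $A$, $B$, $C$, and $B'$ makes each of these reductions clean, so no left properness hypothesis on $\cM$ is needed for the conclusion.
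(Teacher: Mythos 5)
Your proposal is correct. The paper does not actually prove Proposition \ref{prp:model-hocolim-1} --- it states it as a standard consequence of the Reedy model structure on $\Hom(\cD,\cM)$ (citing \cite{dugger}) --- and your argument is exactly the standard one filling that in: cofibrantly replace the non-cofibration leg via the factorisation axiom so that the resulting span is Reedy cofibrant (whence its ordinary pushout computes the homotopy colimit), then apply the cube/gluing lemma, which indeed requires only cofibrancy of the vertices and one cofibration leg in each span, with no left properness assumption.
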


\begin{prp}\label{prp:model-hocolim-2}
	If $A,B,C,C_1,C_2$ are cofibrant objects in a model category $\cM$, then we have the weak equivalence
	\[\hocolim\left(\begin{tikzcd}[column sep=0.3cm]
		A & & C & &B\\
		& C_1\ar[lu,"f"]\ar[ru,"h_1"'] & & C_2\ar[lu,"h_2"]\ar[ru,"g"']
	\end{tikzcd}\right)
	\simeq
	\colim\left(\begin{tikzcd}[column sep=0.3cm]
		A & & C & &B\\
		& C_1\ar[lu,"f"]\ar[ru,"h_1"'] & & C_2\ar[lu,"h_2"]\ar[ru,"g"']
	\end{tikzcd}\right)\]
	in $\cM$, if $C_1\amalg C_2\xrightarrow{h_1\amalg h_2}C$ is a cofibration.
\end{prp}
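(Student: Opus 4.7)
The plan is to compute the homotopy colimit of the $W$-shaped diagram directly via a Reedy cofibrant replacement and then match it with the ordinary colimit using the gluing lemma. The indexing category is a direct Reedy category with $C_1, C_2$ at the lower level and $A, B, C$ at the upper level; the latching maps at the upper objects are $f\colon C_1 \to A$, $g\colon C_2 \to B$, and $h_1 \amalg h_2 \colon C_1 \amalg C_2 \to C$. Only the last is assumed to be a cofibration, so I would factor $f$ and $g$ as cofibrations $\tilde f\colon C_1 \to \tilde A$ and $\tilde g\colon C_2 \to \tilde B$ followed by acyclic fibrations $\tilde A \to A$ and $\tilde B \to B$. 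The resulting diagram
\[\tilde A \xleftarrow{\tilde f} C_1 \xrightarrow{h_1} C \xleftarrow{h_2} C_2 \xrightarrow{\tilde g} \tilde B\]
is Reedy cofibrant and carries a levelwise weak equivalence to the original diagram, so the homotopy colimit of the $W$-diagram is represented by the ordinary colimit $(\tilde A \amalg \tilde B) \amalg_{C_1 \amalg C_2} C$ of this replacement.

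On the other hand, a direct comparison of universal properties shows that the ordinary colimit of the original $W$-diagram equals $(A \amalg B) \amalg_{C_1 \amalg C_2} C$. Both expressions are pushouts along the same cofibration $h_1 \amalg h_2$, and the induced map $\tilde A \amalg \tilde B \to A \amalg B$ is a weak equivalence between cofibrant objects (as the coproduct of weak equivalences between cofibrants). The gluing lemma --- the standard homotopy invariance of pushouts along a cofibration in a model category --- then yields a weak equivalence
\[(\tilde A \amalg \tilde B) \amalg_{C_1 \amalg C_2} C \xrightarrow{\sim} (A \amalg B) \amalg_{C_1 \amalg C_2} C,\]
which is precisely the desired identification of $\hocolim$ with $\colim$. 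Equivalently, one may apply Proposition \ref{prp:model-hocolim-1} to the $V$-shape $A \amalg B \xleftarrow{f \amalg g} C_1 \amalg C_2 \xrightarrow{h_1 \amalg h_2} C$ (which is a $V$-diagram of cofibrant objects with cofibration right leg) and then reduce the $W$-hocolim to the $V$-hocolim by the same Reedy-replacement argument.

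The main obstacle is the gluing lemma; once it is invoked the rest of the proof is routine bookkeeping with Reedy cofibrancy conditions and with universal properties of coproducts and pushouts. Since the gluing lemma is a standard consequence of the model category axioms, the overall proof amounts to a clean direct application of the Reedy framework to the $W$-shape.
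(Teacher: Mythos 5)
Your proof is correct and follows exactly the route the paper itself indicates: the paper states this proposition without proof, merely remarking that it follows from studying cofibrant objects in the Reedy model structure on $\Hom(\cD,\cM)$ (citing \cite{dugger}), and your argument --- identifying the latching maps of the $W$-shaped direct category, taking a Reedy cofibrant replacement by factoring $f$ and $g$, and then comparing the two pushouts along the cofibration $h_1\amalg h_2$ via the gluing lemma --- is precisely that analysis carried out in detail. No gaps; the only step you leave implicit (that a coproduct of weak equivalences between cofibrant objects is a weak equivalence) is itself a standard instance of the same cube lemma.
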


As a direct corollary of Proposition \ref{prp:model-hocolim-2}, we have the following proposition which will be our main tool when calculation homotopy colimits.

\begin{prp}\label{prp:hocolim-to-colim}
	If $A,B,C$ are cofibrant objects in a model category $\cM$, then we have the weak equivalence
	\[\hocolim\left(
	\begin{tikzcd}
		A & & B\\
		& C\ar[lu,"f"]\ar[ru,"g"']
	\end{tikzcd}
	\right)
	\simeq
	\colim\left(
	\begin{tikzcd}[column sep=0.3cm]
		A & & C' & & B\\
		& C\ar[lu,"f"]\ar[ru,"i_1"'] & & C\ar[lu,"i_2"]\ar[ru,"g"'] 
	\end{tikzcd}
	\right)
	\]
	in $\cM$, where $C'$ is a cylindrical object for $C$, and where $i_1\amalg i_2\colon C\amalg C\to C'$ is the cofibration for the cylinder object $C'$.
\end{prp}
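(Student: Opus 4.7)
The plan is to apply Proposition \ref{prp:model-hocolim-2} to the $5$-diagram appearing in the statement, taking $C_1 = C_2 = C$, letting the proposition's middle object play the role of the cylinder $C'$, and setting $h_1 = i_1$, $h_2 = i_2$ with $f, g$ the given maps. First I would verify the hypotheses: the objects $A, B, C$ are cofibrant by assumption, and $C'$ is cofibrant because $C \amalg C$ is a coproduct of cofibrant objects (hence cofibrant) and $C \amalg C \to C'$ is a cofibration by Definition \ref{dfn:cylinder}, so the composition $\emptyset \to C \amalg C \to C'$ is a cofibration. The same map $C \amalg C \to C'$ is precisely the middle-map cofibration hypothesis of Proposition \ref{prp:model-hocolim-2}. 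That proposition then identifies the colimit on the right-hand side of the statement with the homotopy colimit of the $5$-diagram $A \leftarrow C \xrightarrow{i_1} C' \xleftarrow{i_2} C \to B$.

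The second step is to identify this homotopy colimit with the homotopy colimit of the original $3$-diagram $A \leftarrow C \to B$. For this I would exploit the weak equivalence $p \colon C' \to C$ furnished by the cylinder datum to construct an objectwise weak equivalence of $I_5$-indexed diagrams
\[
(A \leftarrow C \xrightarrow{i_1} C' \xleftarrow{i_2} C \to B) \longrightarrow (A \leftarrow C \xrightarrow{\textup{id}} C \xleftarrow{\textup{id}} C \to B),
\]
which is the identity on the outer four slots and $p$ in the middle. Since $\hocolim$ is invariant under objectwise weak equivalences between sufficiently cofibrant diagrams, the two hocolims agree. The target here is the pullback of the $3$-diagram along the functor $\pi \colon I_5 \to I_3$ that collapses the three middle indices to the middle of $I_3$, and $\pi$ is homotopy cofinal (each slice category $x/\pi$ reduces to a zigzag poset whose nerve is a contractible tree), so the hocolim of the pullback agrees with the hocolim of the $3$-diagram.

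The main obstacle I anticipate is the formal bookkeeping around the homotopy invariance and cofinality steps in the Reedy model structure on diagram categories: one may need to pass to Reedy cofibrant replacements of the two $I_5$-diagrams in order to invoke invariance, and to verify cofinality at the level of hocolims rather than just nerves. Given the cofibrancy of $A, B, C$ together with the cofibration $C \amalg C \to C'$, both verifications are routine applications of the Reedy machinery, which is why the statement can be presented as a ``direct corollary'' of Proposition \ref{prp:model-hocolim-2}.
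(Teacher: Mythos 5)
Your proposal is correct and follows essentially the same route as the paper, which states this proposition as a direct corollary of Proposition \ref{prp:model-hocolim-2} without further argument. Your two steps --- applying Proposition \ref{prp:model-hocolim-2} to the five-object diagram (with the cylinder cofibration $i_1\amalg i_2$ as the required middle cofibration), and then identifying its homotopy colimit with that of the original span via the objectwise weak equivalence induced by $p\colon C'\to C$ together with homotopy cofinality of the collapse functor --- are exactly the details the paper leaves implicit.
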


\begin{exm}
	Consider the colimit
	\[\colim\left(\begin{tikzcd}
		D^2 & & \pt\\
		& S^1\ar[lu,"\parbox{1.7cm}{\scriptsize inclusion of the boundary}"]\ar[ru]
	\end{tikzcd}\right)
	\simeq S^2\]
	in the category of topological spaces $\cM=\Top$, which is obtained from the disjoint union of a disk $D^2$ and a point $\pt$ by identifying the images of the circle $S^1$, hence the colimit is a sphere $S^2$. If we consider this diagram in $\Ho(\Top)$ instead (for the model structure explained in Example \ref{exm:top-model}), the correct notion of colimit will be homotopy colimit. We will have the weak (homotopy) equivalence
	\[\hocolim\left(\begin{tikzcd}
		D^2 & & \pt\\
		& S^1\ar[lu]\ar[ru]
	\end{tikzcd}\right)
	\simeq 
	\colim\left(\begin{tikzcd}
		D^2 & & \pt\\
		& S^1\ar[lu]\ar[ru]
	\end{tikzcd}\right)
	\simeq S^2\]
	by Proposition \ref{prp:model-hocolim-1}, since the inclusion of the boundary is a cofibration, and since all the objects are CW complexes. We note that every CW complex is cofibrant in $\Top$.
	
	If we replace the disk with a point in the diagram, we should get the same homotopy colimit, because the disk and the point are weakly equivalent. However,
	\[\hocolim\left(\begin{tikzcd}
		\pt & & \pt\\
		& S^1\ar[lu]\ar[ru]
	\end{tikzcd}\right)
	\not\simeq 
	\colim\left(\begin{tikzcd}
		\pt & & \pt\\
		& S^1\ar[lu]\ar[ru]
	\end{tikzcd}\right)
	\simeq \pt,\]
	because the disk and the point are not isomorphic (i.e.\ not homeomorphic) in $\Top$. A way to deal with this issue is to use cylinder objects. In the model category of $\Top$, $C\times [1,2]$ is a cylinder object for $C$ for any object $C$. Then by Proposition \ref{prp:hocolim-to-colim}, we have the weak equivalence
	\[\hocolim\left(
	\begin{tikzcd}[column sep=0.5cm]
		\pt & & \pt\\
			& S^1\ar[lu]\ar[ru]
		\end{tikzcd}
		\right)
		\simeq
		\colim\left(
		\begin{tikzcd}[column sep=0.2cm]
			\pt & & S^1\times [1,2] & & \pt\\
			& S^1\ar[lu]\ar[ru,"i_1"'] & & S^1\ar[lu,"i_2"]\ar[ru] 
		\end{tikzcd}
		\right)
		\simeq S^2\]
		as expected, where $i_n$ is the inclusion of $S^1$ to $S^1\times\{n\}$.
\end{exm}

\subsection{The Category of DG Categories $\dgCat$}

For a survey of dg categories, see \cite{dgcat}, \cite{toen}. We will recall some basics about dg categories in this section. Recall that $k$ is a commutative ring.

\begin{dfn}\mbox{}
	\begin{enumerate}
		\item A \textit{differential graded (dg) category} is a category enriched over the symmetric monoidal category of complexes of $k$-modules. Explicitly, it is a category $\cC$ satisfying the followings;
		\begin{itemize}
			\item For any $A,B\in\cC$, the set of morphisms form $A$ to $B$
			\[\hom^*(A,B)=\bigoplus_{n\in\Z}\hom^n(A,B)\]
			forms a $\Z$-graded (co)chain complex of $k$-modules.
			\item There is a differential map 
			\[d\colon \hom^n(A,B)\to\hom^{n+1}(A,B),\]
			such that $d^2=0$.
			\item For any $A,B,C\in\cC$, the composition map
			\[\hom^*(B,C)\otimes\hom^*(A,B)\to\hom^*(A,C)\]
			is a graded $k$-linear map.
			\item The differential $d$ and the composition satisfy the \textit{(graded) Leibniz rule}
			\[d(gf)=(dg)f+(-1)^{|g|}g(df),\]
			for any homogeneous (i.e.\ of a particular degree) and composable morphisms $f$ and $g$, where $|g|$ denotes the degree of $g$ (i.e.\ $g\in\hom^{|g|}(B,C)$ for some $A,B\in\cC$).
			\item  Finally, for any object $C\in\cC$, the identity morphism $1_C$ is of degree zero, and $d(1_C)=0$.
		\end{itemize}
		\item A dg category with one object is called a \textit{dg algebra (dga)}.
	\end{enumerate}
\end{dfn}

\begin{rmk}
	We are working with $\Z$-graded morphism complexes, but the statements of this paper also hold for dg categories with $\Z/2$-graded morphism complexes by ignoring signs.
\end{rmk}

\begin{dfn}
	Let $\cC$ be a dg category. We write $\Hom^*(A,B)$ for the cohomology of the chain complex $\hom^*(A,B)$ for any $A,B\in\cC$. 
	\begin{enumerate}
		\item The \textit{graded homotopy category} $H^*\cC$ of $\cC$ is the graded $k$-linear category with the same objects as $\cC$ and with the morphism space $\Hom^*(A,B)$ for any $A,B\in\cC$.
		\item The \textit{homotopy category} $H^0\cC$ of $\cC$ is the $k$-linear category with the same objects as $\cC$ and with the morphism space $\Hom^0(A,B)$ for any $A,B\in\cC$.
	\end{enumerate} 
\end{dfn}

\begin{dfn}\mbox{}
	\begin{enumerate}
		\item A \textit{dg functor} $F\colon\cC\to\cD$ between the dg categories $\cC$ and $\cD$ is a functor such that the map
		\[F\colon\hom^*(A,B)\to\hom^*(F(A),F(B))\]
		is a chain map for any $A, B \in \cC$, i.e.\ it preserves the degree of morphisms, and
		\[dF=Fd .\]
		
		\item A dg functor between the dg algebras is called a \textit{dga morphism}.		

	\end{enumerate}
\end{dfn}
A dg functor $F\colon\cC\to\cD$ induces a graded $k$-linear functor $H^*F\colon H^*\cC\to H^*\cD$, and a $k$-linear functor $H^0F\colon H^0\cC\to H^0\cD$.

\begin{dfn}
	\textit{The category of (small) dg categories} $\dgCat$ is the category whose objects are (small) dg categories and whose morphisms are dg functors, with the obvious identity morphism and the composition.
\end{dfn}
The category $\dgCat$ has the initial object, the empty dg category, and the final object, the category with one object $C$ where $\hom^*(C,C)$ is the zero module.

The following are important classes of morphisms in $\dgCat$.

\begin{dfn}
	Let $F\colon\cC\to\cD$ be a dg functor. 
	\begin{enumerate}
		\item $F$ is called \textit{quasi-fully faithful}, if the induced chain map
		\[F\colon\hom^*(A,B)\to\hom^*(F(A),F(B))\]
		is a quasi-isomorphism of chain complexes for any $A,B\in\cC$.
		\item F is called \textit{quasi-essentially surjective}, if the induced functor
		\[H^0F\colon H^0\cC\to H^0\cD\]
		is essentially surjective.
		\item $F$ is called a \textit{quasi-equivalence} if it is quasi-fully faithful and quasi-essentially surjective.
	\end{enumerate}  
\end{dfn}

\begin{rmk}
	A quasi-equivalence does not need to be invertible in $\dgCat$. We will be mostly concerned with dg categories up to quasi-equivalence, hence we want to invert them. In other words, we want to localise $\dgCat$ at quasi-equivalences. That is the main reason that we study the Dwyer-Kan model structure on $\dgCat$ in the next chapter.
\end{rmk}

\subsection{Dwyer-Kan Model Structure on $\dgCat$}\label{sec:dwyer-kan}

The following model structure on $\dgCat$ is due to Tabuada.

\begin{prp}[\cite{tabuada-model}]
	The category of dg categories $\dgCat$ has a cofibrantly generated model structure $\dgqe$, called Dwyer-Kan model structure, whose weak equivalences are quasi-equivalences, and whose fibrations are given by the isofibrations which are surjective on morphism complexes. Every object of $\dgqe$ is fibrant within this model structure.
\end{prp}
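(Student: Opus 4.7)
The plan is to apply Kan's recognition theorem for cofibrantly generated model categories, following Tabuada. First I would verify that $\dgCat$ is complete and cocomplete: limits are computed on morphism complexes after taking limits on object sets, while colimits are obtained by the free construction of a dg category from an underlying ``pre-dg'' datum modulo relations (the same construction that, later in the paper, produces semifree extensions). Next I would exhibit candidate generating sets. For the generating cofibrations $I$, take the functor $\emptyset \to k$ from the empty dg category to the one-object dg category with endomorphism complex $k$ concentrated in degree $0$, together with, for each $n\in\Z$, the inclusion $\cS(n)\hookrightarrow\cD(n)$, where $\cS(n)$ is the dg category on two objects $0,1$ freely generated by a single closed morphism in $\hom^{n+1}(0,1)$, and $\cD(n)$ adjoins a morphism in $\hom^{n}(0,1)$ whose differential is that generator. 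For the generating acyclic cofibrations $J$ one takes, on one hand, the inclusions $\emptyset\to\cK(n)$ where $\cK(n)$ freely adjoins a generator in degree $n$ together with a nullhomotopy (so $\cK(n)$ is contractible), and on the other hand an inclusion $k\amalg k\to\cL$ that adjoins a closed degree-zero morphism between the two copies of $k$ whose class in $H^0$ is invertible up to coherent homotopy; the category $\cL$ is the ``walking quasi-isomorphism'' and is quasi-equivalent to $k$.

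The first verification is a direct unpacking of the lifting property against $I$: a dg functor $F\colon\cC\to\cD$ is $I$-injective exactly when it is surjective on objects and surjective on every morphism complex. One then checks that this class agrees with the intersection of the claimed fibrations with quasi-equivalences, which is the expected characterisation of acyclic fibrations. The second and main verification is that $F$ is $J$-injective precisely when $F$ is surjective on morphism complexes and an isofibration (lifting isomorphisms in $H^0\cD$), so that $J$-injectives coincide with the proposed fibrations, and simultaneously that every relative $J$-cell is a cofibration and a quasi-equivalence. Surjectivity is handled by the $\cK(n)$ generators, whereas the isofibration condition is handled by the $\cL$ generator. One must then use the two-out-of-three property together with a careful analysis of pushouts along $\emptyset\to\cK(n)$ and along $k\amalg k\to\cL$ to show that such pushouts preserve quasi-equivalences and that transfinite compositions of them remain quasi-equivalences.

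Once the pair $(I,J)$ is shown to permit the small object argument, which is routine since dg categories are presentable, Kan's recognition theorem delivers the model structure $\dgqe$ and its cofibrant generation. Fibrancy of every object is then immediate: the terminal dg category has a single object $*$ with $\hom^*(*,*)=0$, and the unique functor $\cC\to *$ is trivially surjective on morphism complexes and vacuously an isofibration, hence a fibration.

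The hard part is the construction and analysis of $\cL$, that is, a cofibrant resolution of the ``walking isomorphism'' in dg categories. One must adjoin not merely a closed degree-zero morphism and a would-be inverse, but also all the higher homotopies needed to witness invertibility in a coherent way, and then verify that pushouts along $k\amalg k\to\cL$ induce quasi-equivalences. This is precisely the place where a careful count of generating cells is needed, and it is the step that prevents one from taking a naively small $J$.
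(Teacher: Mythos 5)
The paper does not actually prove this proposition; it is quoted from \cite{tabuada-model}, and your outline follows the standard proof via Kan's recognition theorem, with the right generating cofibrations ($\emptyset\to k$ plus the sphere-to-disk inclusions on a single hom complex). However, both of your proposed generating acyclic cofibrations fail to be weak equivalences, and this breaks the recognition argument at its core: every map of $J$ must lie in $W\cap I\text{-cof}$, and pushouts and transfinite composites of your $J$ would not stay in $W$. Concretely, $\emptyset\to\cK(n)$ is not quasi-essentially surjective (the empty dg category has no objects while $\cK(n)$ has one), so the source must instead be the one-object category $k$, with $\cK(n)$ obtained by freely adjoining to it a contractible two-term complex of endomorphisms in degrees $n$ and $n-1$. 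Likewise $k\amalg k\to\cL$ is not quasi-fully faithful: $\hom_{k\amalg k}(1,2)=0$ while $\hom_{\cL}(1,2)$ is quasi-isomorphic to $k$, so this map is not a quasi-equivalence either. The correct generator is the inclusion $k\to\cK$ of a \emph{single} object into the walking homotopy equivalence (the category $\bar\cK_1$ of Lemma \ref{lem:semifree-trivial} in this paper); lifting against that one-object inclusion is exactly what encodes the isofibration condition, whereas lifting against $k\amalg k\to\cL$ demands lifting both endpoints of a homotopy equivalence, a different condition that does not characterize the stated fibrations.

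A second, smaller slip: $I$-injectivity is not ``surjective on objects and surjective on every morphism complex.'' Lifting against $\cS(n)\hookrightarrow\cD(n)$ forces the map on each hom complex to be a \emph{surjective quasi-isomorphism} (the usual characterization of acyclic fibrations of chain complexes by the sphere-disk inclusions), and it is this stronger condition that matches fibration $\cap$ weak equivalence. With these two corrections your argument reduces to Tabuada's, and the rest of your outline --- smallness, the analysis of pushouts along $J$, and the fibrancy of every object via the terminal dg category --- is sound.
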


\begin{rmk}
	If we work with $\Z/2$-graded version of $\dgCat$, i.e.\ if the dg categories are $\Z/2$-graded, then it has the same model structure as above, as shown in \cite{dyckerhoff-kapranov}.
\end{rmk}

\begin{rmk}
	There are more general ways of obtaining this model structure on $\dgCat$. It can be carried from the model structure of category of chain complexes. See \cite{hinich} for dg algebras (or more generally, dg operads), and see \cite{lurie-topos}, \cite{berger-moerdijk}, and \cite{muro} for Dwyer-Kan model structure on enriched categories.
\end{rmk}

\begin{rmk}
	When we say that two dg categories $\cC$ and $\cD$ are quasi-equivalent (or are the same up to quasi-equivalence), we will mean that they are weakly equivalent in $\dgqe$, which means they are isomorphic in $\Ho(\dgqe)$ (see Definition \ref{dfn:homotopy-category}). In particular, if $\cC$ and $\cD$ are quasi-equivalent, we may not have a quasi-equivalence from $\cC$ to $\cD$, or $\cD$ to $\cC$.
\end{rmk}

For our purposes, it is crucial to understand cofibrant objects and cofibrations in $\dgqe$. For that, we will first define semifree dg categories (see \cite{drinfeld}) and semifree extensions.

\begin{dfn}
	\mbox{}
	\begin{enumerate}
		\item By \textit{adding a set of objects $\{C_i\}$ disjointly to a dg category $\cC$} we get a new dg category $\cC_o$ such that
		\begin{align*}
			\Ob\,\cC_o&:=\Ob\,\cC\cup\{C_i\},\\
			\hom^*_{\cC_o}(A,B)&:=\begin{cases}
				\hom^*_{\cC}(A,B), & \text{if }A,B\in\cC,\\
				k\langle 1_{A}\rangle, & \text{if }A=B\in\{C_i\},\\
				0, & \text{if }A\neq B\text{ and either }A\text{ or }B\text{ is in }\{C_i\} .
			\end{cases}
		\end{align*}
		We will sometimes write $\cC\cup\{C_i\}$ for $\cC_o$.
		\item 	By \textit{adding a set of (homogeneous) morphisms $\{f_i\colon B_i\to A_i\}$ (with prespecified gradings and differentials) semifreely to a dg category $\cC$} we get a new category $\cC_m$ such that
		\begin{align*}
			\Ob\,\cC_m&:=\Ob\,\cC,\\
			\hom^*_{\cC_m}(A,B)&:=\bigoplus_{n\geq 0}\bigoplus_{i_1,\dots,i_n}\hom^*_{\cC}(A_{i_n},B)\otimes k\langle f_{i_n}\rangle\otimes\ldots\\
			&\hspace{3cm}\ldots\otimes k\langle f_{i_2}\rangle\otimes\hom^*_{\cC}(A_{i_1},B_{i_2})\otimes k\langle f_{i_1}\rangle\otimes\hom^*_{\cC}(A,B_{i_1}),
		\end{align*}
		i.e.\ the morphisms $\{f_i\}$ are added to $\cC$ freely in algebra level. Compositions are given by concatenations, and compositions of $f_i$ are free. Gradings are obviously determined. Since $df_i$ are given, the rest of the differentials are given by the Leibniz rule.
		
		We will sometimes write $\cC\cup\{f_i\}$ for $\cC_m$.
	\end{enumerate}
\end{dfn}

\begin{dfn}
	\mbox{}
	\begin{enumerate}
		\item A dg functor $F\colon\cC\to\cD$ is a \textit{semifree extension}, if $F$ is an inclusion (i.e.\ it is a faithful functor and injective on objects), and if there is a filtration (indexed by an ordinal) \[F(\cC)\subset\cD_0\subset\cD_1\subset\ldots\subset\cD\] of dg categories such that 
		\begin{itemize}
			\item $\cD_0$ is obtained by adding objects disjointly to $F(\cC)$,
			\item $\cD_{j+1}$ is obtained from $\cD_j$ by adding homogeneous morphisms $f_i$'s semifreely such that $df_i$ is a morphism in $\cD_j$,
			\item if $\lambda$ is a limit ordinal, then $\cD_{\lambda}$ is obtained from $\displaystyle\lim_{\substack{\longrightarrow \\ j<\lambda}}\cD_j$ by adding homogeneous morphisms $f_i$'s semifreely such that $df_i$ is a morphism in $\displaystyle\lim_{\substack{\longrightarrow \\ j<\lambda}}\cD_j$.
		\end{itemize}
		\item We also call $\cD$ a \textit{semifree extension} of $\cC$ by the relevant objects and morphisms.
	\end{enumerate}
\end{dfn}

\begin{dfn}\label{dfn:semifree-dg-category}
	\mbox{}
	\begin{enumerate}
		\item A dg category $\cD$ is called \textit{semifree dg category} if $\cD$ is a semifree extension of the empty category. Explicitly, $\cD$ has a filtration (indexed by an ordinal) \[\cD_{\obj}:=\cD_0\subset\cD_1\subset\cD_2\subset\ldots\subset\cD\] of dg categories such that 
		\begin{itemize}
			\item $\cD_{\obj}$ is obtained by adding objects disjointly to the empty category,
			\item $\cD_{j+1}$ is obtained from $\cD_j$ by adding homogeneous morphisms $f_i$'s semifreely such that $df_i$ is a morphism in $\cD_j$,
			\item if $\lambda$ is a limit ordinal, then $\cD_{\lambda}$ is obtained from $\displaystyle\lim_{\substack{\longrightarrow \\ j<\lambda}}\cD_j$ by adding homogeneous morphisms $f_i$'s semifreely such that $df_i$ is a morphism in $\displaystyle\lim_{\substack{\longrightarrow \\ j<\lambda}}\cD_j$.
		\end{itemize}
		\item We call this filtration a \textit{semifree filtration} for $\cD$.
		\item $f_i$'s are called the \textit{generating morphisms} of $\cD$.
	\end{enumerate}
\end{dfn}

\begin{rmk}
	The underlying algebraic structure which is obtained by forgetting differential and grading from a semifree dg category is free, but not all such dg categories are semifree. Also, a semifree extension of a semifree dg category is again a semifree dg category.
\end{rmk}

We have a simple description for pushouts in $\dgCat$ when one of the arrows is a semifree extension. For general colimits (coequalisers) in the category of enriched categories, see \cite{wolff}. Also, compare with \cite{bednarczyk} where an explicit description of the coequalisers in the category of categories is given.

\begin{prp}\label{prp:colimit}
	Let $\alpha\colon\cC\to\cA$ be the semifree extension of a dg category $\cC$ by a set of objects $R$ and a set of generating morphisms $S$. The cobase change of $\alpha$ along a dg functor $\beta\colon\cC\to\cB$, i.e.\ the colimit
	\[\colim\left(
	\begin{tikzcd}
		\cA & & \cB\\
		& \cC\ar[lu,"\alpha"]\ar[ru,"\beta"']
	\end{tikzcd}
	\right)\]
	in $\dgCat$, is the semifree extension of $\cB$ by $R$ and $S'$, where $S'$ is a set of morphisms defined as follows: Let $\cC'$ (resp.\ $\cB'$) be the semifree extension of $\cC$ (resp.\ $\cB$) by the objects $R$. Let $\beta'\colon\cC'\to\cB'$ be the induced functor. Then
	\[S':=\{f'\colon \beta'(A)\to\beta'(B)\vb f\colon A\to B\text{ in }S\}\]
	is a set of morphisms defined for $\cB'$. Extend $\beta'$ to $\cC'\cup S\to\cB\cup S'$ by $\beta'(f):=f'\in S'$ for each $f\in S$. Then we define the gradings and differentials by $|f'|:=|f|$ and $df':=\beta'(df)$.
\end{prp}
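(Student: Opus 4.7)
The plan is to show that the described category $\cP$ (the semifree extension of $\cB$ by $R$ and $S'$) satisfies the universal property of the pushout in $\dgCat$. First I would verify that $\cP$ is well-defined. Fix a semifree filtration $\cC \subset \cA_0 \subset \cA_1 \subset \ldots \subset \cA$ of $\cA$ over $\cC$, where $\cA_0$ is obtained by adjoining the objects $R$. Build $\cP$ in parallel by setting $\cP_0 := \cB \cup R$ and, at each successor stage, adjoining for each generator $f \in S$ introduced in $\cA_{j+1} \setminus \cA_j$ a new morphism $f'\colon \beta'(A) \to \beta'(B)$ with $|f'| := |f|$ and $df' := \beta'(df)$. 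Since $df \in \cA_j$, the element $\beta'(df)$ has already been defined inductively as a morphism in $\cP_j$, so $df'$ is unambiguous; limit ordinals take directed unions. This construction also yields a canonical dg functor $\alpha'\colon \cA \to \cP$ defined by $c \mapsto \beta(c)$ on $\cC$, $r \mapsto r$ on $R$, and $f \mapsto f'$ on $S$, commuting with the differential by construction. Together with the canonical inclusion $\beta''\colon \cB \hookrightarrow \cP$, one checks on objects and generators that $\alpha' \circ \alpha = \beta'' \circ \beta$.

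For the universal property, suppose we are given $F\colon \cA \to \cD$ and $G\colon \cB \to \cD$ with $F\alpha = G\beta$. I would define $H\colon \cP \to \cD$ by $H|_\cB := G$, $H(r) := F(r)$ for $r \in R$, and $H(f') := F(f)$ for $f \in S$. By the semifreeness of $\cP$ over $\cB$, these data uniquely extend to a graded $k$-linear functor preserving composition and identities. To show $H$ is a dg functor it suffices to verify $dH(f') = H(df')$ on each generator $f'$, the rest following from the Leibniz rule. Here $dH(f') = dF(f) = F(df)$ while $H(df') = H(\beta'(df))$, and transfinite induction on the filtration stage yields $H \circ \beta' = F$ on all of $\cA$: the base case on $\cC$ is $H\beta'|_\cC = G\beta = F\alpha = F|_\cC$, the base case on $R$ is $H(r) = F(r)$ by definition, and the inductive step is exactly $H(f') = F(f)$. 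The commutations $H\beta'' = G$ and $H\alpha' = F$ then hold on objects and generators, hence everywhere by $k$-linearity and preservation of composition.

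Uniqueness of $H$ is forced: any dg functor $\cP \to \cD$ making the pushout square commute must equal $G$ on $\cB$, send $r \in R$ to $F(r)$ (because $\alpha'(r) = r$), and send $f' \in S'$ to $F(f)$ (because $\alpha'(f) = f'$); semifreeness then propagates these assignments uniquely to all morphisms of $\cP$. The only real obstacle is the bookkeeping at limit ordinals in the transfinite filtration, but this mirrors the original construction of $\cA$ over $\cC$ and introduces no new difficulty; all verifications reduce to a routine induction against the semifree filtration, performed simultaneously on $\cA$ and on $\cP$.
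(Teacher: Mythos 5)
Your proposal is correct and follows essentially the same route as the paper: both verify the pushout universal property directly, with the induced functor defined by $G$ on $\cB$, $F$ on $R$, and $f'\mapsto F(f)$ on the new generators. The only difference is one of care rather than strategy: the paper checks $d$-compatibility under the simplifying assumption that $df'$ lands in $\cB$, whereas you run the transfinite induction along the semifree filtration to handle differentials involving previously adjoined generators, and you also spell out uniqueness, both of which the paper leaves implicit.
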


\begin{proof}
	By the assumption, $\cA=\cC\cup R\cup S$. For any dg category $\cD$ and dg functors $F$ and $G$ satisfying $F\circ\alpha=G\circ\beta$, consider the following diagram:
	\[\begin{tikzcd}
		& \cD\\
		& \cB\cup R\cup S'\ar[u,"u",dashed]\\
		\cC\cup R\cup S\ar[ru,"\beta'"']\ar[ruu,"F"] & &\cB\ar[lu,"\alpha'"]\ar[luu,"G"']\\
		& \cC\ar[lu,"\alpha"]\ar[ru,"\beta"']
	\end{tikzcd}\]
	Here, $\alpha'\colon\cB\to\cB\cup R\cup S'$ is the semifree extension. We want to show that there exists a dg functor $u$ making this diagram commute, which will prove that $\cB\cup R\cup S'$ is the colimit.
	
	Clearly, $\beta'\circ\alpha=\alpha'\circ\beta$. Define the functor $u\colon \cB\cup R\cup S'\to\cD$ such that $u=G$ on $\cB$, $u=F$ on $R$, and
	\[u(f'):=F(f)\]
	for $f'\in S'$, where $f\in S$ is the corresponding morphism. To see that $u$ is a dg functor, we need to show that it commutes with differential. Without loss of generality, assume $df'\in\cB$. Then
	\[u(df')=G(df')=G\circ\beta(df)=F\circ\alpha(df)=F(df)=d(F(f))=d(u(f')) .\]
	It is easy to check that the dg functor $u$ makes the above diagram commute, hence $\cB\cup R\cup S'$ is indeed the colimit.
\end{proof}

\begin{prp}\label{prp:cofibration}
	The cofibrations in $\dgqe$ are retracts (in the arrow category) of semifree extensions, and the cofibrant objects in $\dgqe$ are retracts of semifree dg categories.
\end{prp}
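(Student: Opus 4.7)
The plan is to invoke the standard characterization of cofibrations in a cofibrantly generated model category: by the small object argument, every cofibration in $\dgqe$ is a retract (in the arrow category) of a transfinite composition of cobase changes of coproducts of generating cofibrations. So the task reduces to showing that the class of semifree extensions is large enough to absorb all of these operations, once we have checked that each generating cofibration is itself a semifree extension.

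First, I would recall Tabuada's explicit list of generating cofibrations for $\dgqe$. These consist of (i) the inclusion of the empty dg category into the one-object dg category with endomorphism complex $k$ concentrated in degree $0$, which simply adds a single object; and (ii) for each $n\in\Z$, a dg functor which freely adjoins one generating morphism of degree $n$ to a small source dg category, with differential prescribed to land in the source. By direct inspection against Definition \ref{dfn:semifree-dg-category}, each of these is a semifree extension (by one object or one morphism, respectively). Coproducts of such generating cofibrations are also semifree extensions, since adding several objects or morphisms simultaneously is the same as adding them one at a time.

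Next, I would verify that semifree extensions are closed under two operations. For pushouts, Proposition \ref{prp:colimit} is exactly the statement that the cobase change of a semifree extension along any dg functor is again a semifree extension, with an explicit description of the new generating data. For transfinite compositions, the definition of a semifree filtration already allows arbitrary ordinals: given an ordinal-indexed tower of semifree extensions, one concatenates the individual semifree filtrations and uses the limit-ordinal clause of Definition \ref{dfn:semifree-dg-category} to handle limit stages. Combining these two closure properties with the observation about generating cofibrations, every relative $I$-cell complex in $\dgqe$ is a semifree extension. By the small object argument, every cofibration is then a retract in the arrow category of such a cell complex, hence of a semifree extension. This proves the first claim.

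For the second claim, an object $C\in\dgqe$ is cofibrant if and only if the unique functor $\emptyset\to C$ from the initial object (the empty dg category) is a cofibration. By the first part, this functor is a retract in the arrow category of some semifree extension $\emptyset\to\cD$. But a semifree extension of the empty dg category is, by definition, a semifree dg category, and a retract in the arrow category of an arrow out of $\emptyset$ is the same thing as a retract of its target in $\dgCat$. Therefore $C$ is a retract of the semifree dg category $\cD$, as required. The only mildly subtle step in the whole argument is the verification that transfinite compositions of semifree extensions remain semifree; this is where the flexibility built into the definition — allowing the semifree filtration to be indexed by an arbitrary ordinal rather than just $\omega$ — pays off, and it is essentially the reason the definition was phrased that way to begin with.
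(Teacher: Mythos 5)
Your proposal is correct and follows essentially the same route as the paper: use the cofibrantly generated structure to write any cofibration as a retract of a transfinite composition of cobase changes of (coproducts of) Tabuada's generating cofibrations, identify each such cobase change as a semifree extension via Proposition \ref{prp:colimit}, and observe that transfinite compositions of semifree extensions are semifree extensions. The paper states this more tersely, but the argument is the same; your extra remarks on transfinite compositions and on deducing the cofibrant-object statement are just explicit versions of steps the paper leaves implicit.
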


\begin{proof}
	Since $\dgqe$ is cofibrantly generated, any cofibration in $\dgCat$ is a retract of a transfinite composition of cobase changes of coproducts of generating cofibrations, where generating cofibrations for $\dgqe$ are described in \cite{tabuada-model}. It is easy to see that a cobase change of a generating cofibration is either a semifree extension by one object or by one morphism by Proposition \ref{prp:colimit}. Hence, transfinite compositions of those are exactly semifree extensions.
\end{proof}

\begin{rmk}
	Proposition \ref{prp:cofibration} proves a fact that a semifree extension is a cofibration, and a semifree dg category is a cofibrant object in $\dgCat$. This fact plays an important role in the current paper. 
\end{rmk}

\begin{rmk}
	For dg algebras, this statement also appears in \cite{hinich} and \cite{drinfeld}, where semifree dg algebras are called standard cofibrant dg algebras in the former.
\end{rmk}

\begin{rmk}
	Some authors call a dg category semifree, if its underlying algebra is free. This definition is not useful for our purposes, as they may not be a cofibrant object in $\dgqe$. The existence of a semifree filtration is crucial to be a cofibrant object.
\end{rmk}

Finally, we will recall the existence of internal Homs in $\Ho(\dgqe)$ and their description via $A_{\infty}$-functors. See \cite{seidel} for a review of $A_{\infty}$-categories.

\begin{prp}[\cite{toen-morita}]
	The homotopy category $\Ho(\dgqe)$ has a closed symmetric monoidal structure. In particular, $\Ho(\dgqe)$ has internal Homs. For any given dg categories $\cC$ and $\cD$, we will denote their internal Hom by $\RHom(\cC,\cD)\in\Ho(\dgqe)$.
\end{prp}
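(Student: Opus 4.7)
The plan is to exhibit a symmetric monoidal structure on $\dgCat$ at the point-set level and then derive it. Define the tensor product $\cC\otimes\cD$ to have object set $\Ob\,\cC\times\Ob\,\cD$ and morphism complexes
\[\hom^*_{\cC\otimes\cD}((C,D),(C',D')) := \hom^*_{\cC}(C,C')\otimes_k \hom^*_{\cD}(D,D'),\]
with composition following the Koszul sign rule, and with unit the dga $k$ (a single object whose endomorphism algebra is $k$). Associativity, symmetry, and the unit isomorphisms all hold strictly on the nose, so $(\dgCat,\otimes,k)$ is already symmetric monoidal before any localisation.

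To descend this to $\Ho(\dgqe)$, I would verify that $-\otimes-$ is a left Quillen bifunctor with respect to the Dwyer--Kan model structure, using the description of cofibrations from Proposition \ref{prp:cofibration} as retracts of semifree extensions. The pushout-product axiom is checked on Tabuada's generating cofibrations by direct inspection, reducing to the compatibility of semifree extension with tensor product. This yields a total left derived functor $-\otimes^{\L}-$ on $\Ho(\dgqe)$, computable by replacing either factor by a semifree (hence cofibrant) resolution. For the internal Hom, I would define $\RHom(\cC,\cD)$ as the dg category of right quasi-representable $\cC^{\op}\otimes\cD$-dg-bimodules; equivalently, one may take a dg nerve of the $A_{\infty}$-category of $A_{\infty}$-functors from $\cC$ to $\cD$. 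The required natural adjunction
\[[\cB\otimes^{\L}\cC,\cD]\simeq[\cB,\RHom(\cC,\cD)]\]
in $\Ho(\dgqe)$ is then established by sending a dg functor $F\colon\widetilde\cB\otimes\widetilde\cC\to\cD$ (for cofibrant replacements $\widetilde\cB,\widetilde\cC$) to the bimodule $(B,C)\mapsto\hom^*_{\cD}(-,F(B,C))$, and then invoking Proposition \ref{prp:model-hom} to identify morphism sets in $\Ho(\dgqe)$ with left-homotopy classes.

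The main obstacle is the derivation step: tensoring with a semifree dg category must preserve quasi-equivalences, which requires a filtration argument exploiting the $k$-freeness of the successive quotients in a semifree filtration so that quasi-isomorphisms of morphism complexes survive after tensoring. A secondary obstacle is confirming that the candidate $\RHom(\cC,\cD)$ really is the internal Hom rather than a weaker approximation, which reduces via a Yoneda-type argument to showing that right quasi-representable bimodules, up to quasi-isomorphism, are in natural bijection with left-homotopy classes of dg functors between cofibrant resolutions; here the explicit control over semifree objects is essential, since arbitrary dg functors from cofibrant domains correspond, via evaluation at generators, to compatible collections of chain maps indexing the bimodule structure.
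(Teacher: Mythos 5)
There is no proof of this statement in the paper: it is quoted verbatim from \cite{toen-morita}, so what you are really attempting is a reconstruction of To\"en's theorem. Your overall strategy (point-set tensor product, derive it, define $\RHom$ via right quasi-representable bimodules) is indeed To\"en's, but the pivotal middle step of your argument is false. The tensor product $-\otimes-$ is \emph{not} a left Quillen bifunctor for the Dwyer--Kan model structure, and the pushout-product axiom fails on Tabuada's generating cofibrations: the tensor product of two cofibrant dg categories need not be cofibrant. The standard counterexample is already visible with the paper's $\cK_2$ (two objects, one free closed degree-zero generating morphism $f$, which is semifree, hence cofibrant): in $\cK_2\otimes\cK_2$ the square built from $f\otimes 1$ and $1\otimes f$ commutes strictly, $(f\otimes 1)(1\otimes f)=(1\otimes f)(f\otimes 1)$, whereas a semifree model would only make it commute up to a degree $-1$ homotopy; so $\cK_2\otimes\cK_2$ is not a retract of a semifree dg category and is not cofibrant by Proposition \ref{prp:cofibration}. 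Consequently $\dgqe$ is not a symmetric monoidal model category, and you cannot obtain the closed structure on $\Ho(\dgqe)$ by invoking the general machinery of monoidal model categories as you propose.

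What survives is weaker and has to be argued by hand, which is exactly why To\"en's proof is nontrivial. The derived tensor product $\otimes^{\L}$ still exists on $\Ho(\dgqe)$ because tensoring with a cofibrant (or merely locally $k$-flat) dg category preserves quasi-equivalences --- this is the filtration argument you correctly identify as your ``main obstacle,'' and it is genuinely needed. But since $\otimes^{\L}$ is not part of a Quillen adjunction in two variables, the existence of a right adjoint $\RHom(\cC,-)$ does not follow formally; To\"en constructs $\RHom(\cC,\cD)$ explicitly as the dg category of right quasi-representable $\cC\otimes^{\L}\cD\op$-bimodules and proves the adjunction $[\cB\otimes^{\L}\cC,\cD]\simeq[\cB,\RHom(\cC,\cD)]$ directly, using his computation of mapping spaces in $\dgqe$ in terms of such bimodules rather than Proposition \ref{prp:model-hom} alone. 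Your closing paragraph gestures at the right objects, but the bridge from ``bimodules'' to ``morphisms in $\Ho(\dgqe)$'' is the actual content of the theorem and cannot be reduced to a left-homotopy count over semifree resolutions without reproving To\"en's mapping-space theorem.
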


\begin{prp}[\cite{faonte}]\label{prp:mod-ainf}
	Let $k$ be a field of characteristic zero. For any dg categories $\cC$ and $\cD$, the internal Hom $\RHom(\cC,\cD)$ is naturally isomorphic in $\Ho(\dgqe)$ to the dg category of $A_{\infty}$-functors, whose morphisms are $A_{\infty}$-natural transformations.
\end{prp}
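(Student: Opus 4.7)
The plan is to exhibit an explicit quasi-equivalence representing $\RHom(\cC,\cD)$ as the dg category of $A_\infty$-functors with $A_\infty$-natural transformations. Since the internal Hom in $\Ho(\dgqe)$ is computed by replacing the source with a cofibrant object and then taking dg functors, the first step is to produce a convenient cofibrant replacement $Q(\cC) \xrightarrow{\sim} \cC$. I would take $Q(\cC) = \Omega B \cC$, the cobar of the bar of $\cC$, which is a semifree dg category whose generating morphisms are bracketed sequences $[f_n | f_{n-1} | \cdots | f_1]$ of composable morphisms of $\cC$ and whose differential is dual to the bar differential on $B\cC$. By Proposition \ref{prp:cofibration}, $\Omega B \cC$ is cofibrant in $\dgqe$, and the bar-cobar counit provides a canonical quasi-equivalence $\Omega B \cC \xrightarrow{\sim} \cC$, so that $\RHom(\cC,\cD) \simeq \Fun_{dg}(\Omega B \cC, \cD)$ in $\Ho(\dgqe)$.

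Next I would identify dg functors out of $\Omega B \cC$ with $A_\infty$-functors. Because $\Omega B \cC$ is semifree on the generators above, a dg functor $F : \Omega B \cC \to \cD$ is uniquely determined by a choice of image for each generator, subject to the single condition that $F$ intertwines differentials. Unpacking this condition, the data of $F$ is precisely a family of multilinear maps $F_n$ of degree $1-n$ from $n$-fold tensors of $\hom^*_{\cC}$ to $\hom^*_{\cD}$ satisfying the standard $A_\infty$-functor relations. The same bookkeeping applied to the morphism complex between two such $F$ and $G$ identifies degree-$r$ dg natural transformations with families $(\eta_n)_{n\geq 0}$ of degree-$r$ multilinear maps, i.e.\ $A_\infty$-natural transformations, and the differential on the dg morphism complex matches the $A_\infty$-differential component by component. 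This should give a dg functor $\Phi : \Fun_{dg}(\Omega B \cC, \cD) \to \Fun_{A_\infty}(\cC, \cD)$ that is bijective on objects and an isomorphism on each morphism complex, hence in particular a quasi-equivalence.

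The main obstacle is the careful sign and combinatorial bookkeeping needed to verify that the composition of dg natural transformations of dg functors on the left matches the $A_\infty$-composition formula on the right; this is essentially the content of cobar-bar duality for small categories, which becomes cleanly verifiable in characteristic zero where symmetrization arguments and the coassociative coalgebra structure on $B\cC$ behave without obstruction. Once $\Phi$ is established as an isomorphism (or at worst a quasi-equivalence) of dg categories, combining with the identification $\RHom(\cC,\cD) \simeq \Fun_{dg}(\Omega B \cC, \cD)$ in $\Ho(\dgqe)$ completes the proof, and naturality in $\cC$ and $\cD$ follows from the naturality of the bar-cobar adjunction.
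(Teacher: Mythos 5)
The paper gives no proof of this proposition; it is imported from \cite{faonte}, so there is nothing internal to compare against and your proposal has to stand on its own. It does not: both of its main steps fail, and they fail precisely where the actual content of the theorem lies. The first step asserts that the internal Hom in $\Ho(\dgqe)$ is computed by cofibrantly replacing the source and forming the dg category of strict dg functors, so that $\RHom(\cC,\cD)\simeq\Fun_{\textup{dg}}(\Omega B\cC,\cD)$. This is false: $\dgqe$ is not a monoidal model category (a tensor product of cofibrant dg categories need not be cofibrant), so the internal Hom cannot be derived by source replacement, and this is exactly why Toën has to construct $\RHom$ through right quasi-representable bimodules. A concrete failure: for $\cC=\Delta^1_k$ (two objects, one free closed degree-zero arrow), which coincides with its own cobar--bar resolution, the strict functor category $\Fun_{\textup{dg}}(\Delta^1_k,\cD)$ demands squares in $\cD$ that commute on the nose, and its morphism complexes are not quasi-isomorphic to the correct ones, which must record a commuting homotopy as extra data.

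The second step claims that strict dg natural transformations between dg functors $F,G\colon\Omega B\cC\to\cD$ unpack into families $(\eta_n)_{n\geq 0}$ of higher components, i.e.\ into $A_\infty$-natural transformations, giving an isomorphism of hom complexes. This conflates two different complexes. A strict transformation is a single family $\{\eta_A\in\hom_{\cD}^r(FA,GA)\}_A$ \emph{subject to} the naturality equations attached to each generator $[f_n|\cdots|f_1]$; it is an end, a subcomplex of $\prod_A\hom(FA,GA)$. An $A_\infty$-natural transformation is a family of \emph{freely chosen} higher components whose relations appear only in the differential; it is the much larger Hochschild-type complex computing the derived end. Semifreeness of $\Omega B\cC$ lets you describe its dg functors by free data on generators, but a natural transformation is a limit rather than a colimit, so the same unpacking does not apply; your $\Phi$ is neither an isomorphism nor, in general, a quasi-isomorphism on hom complexes. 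The genuine theorem is that $\Fun_{A_\infty}(\cC,\cD)$, with its derived morphism complexes, satisfies the universal property $[\cB\otimes^{\L}\cC,\cD]\cong[\cB,\Fun_{A_\infty}(\cC,\cD)]$, and establishing that is the real work in \cite{faonte} (see also Canonaco--Ornaghi--Stellari); it is not a matter of sign bookkeeping.
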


\subsection{DG Localisation via Semifree Extension}

We will recall the definition of dg localisation, and express it as a semifree extension. This will be important when we define cylinder objects for semifree dg categories in the model categories of $\dgCat$.

\begin{dfn}
	Let $S$ be an arbitrary set of closed degree zero morphisms in a dg category $\cC$.
	A \textit{dg localisation} of $\cC$ at $S$ is the dg category $\cC[S^{-1}]$ with the dg functor $l\colon \cC\to \cC[S^{-1}]$ such that 
	\begin{itemize}
		\item for any dg category $\cD$, the induced morphism
		\[l^*\colon [\cC[S^{-1}],\cD]\to[\cC,\cD]\]
		in $\Ho(\dgqe)$ is injective, and
		\item the image of $l^*$ is a subset of morphisms $[\cC,\cD]$ consisting of all morphisms $F$ such that the induced functor $H^0F\colon H^0\cC\to H^0\cD$ sends each morphism in $S$ to an isomorphism in $H^0\cD$.
	\end{itemize} 
\end{dfn}

We will see that dg localisation exists, and it is unique up to quasi-equivalence.

\begin{dfn}
	We define $\cK_1$ to be the dg category with one object $C$ with $\hom^*(C,C)=k$, and $\cK_2$ to be the dg category with two objects $A$ and $B$ freely generated by a closed degree zero morphism $f\colon A\to B$.
\end{dfn}

\begin{prp}[\cite{toen-morita}]\label{prp:localisation}
	For any dg category $\cC$ and a subset $S$ of closed degree zero morphisms in $\cC$, the dg localisation $\cC[S^{-1}]$ exists and is unique up to quasi-equivalence, and it is given by
	\[\cC[S^{-1}]\simeq\hocolim\left(\begin{tikzcd}
		\coprod_{f\in S}\cK_1^f & & \cC\\
		& \coprod_{f\in S}\cK_2^f\ar[lu,"\alpha"]\ar[ru,"\beta"']
	\end{tikzcd}\right)\]
	in $\dgqe$,  where $\cK_1^f=\cK_1$ and $\cK_2^f =\cK_2$ for all $f\in S$, $f$ in $\cK_2^f$ is mapped to the identity morphism in $\cK_1^f$ under $\alpha$, and  to $f$ in $\cC$ under $\beta$.
\end{prp}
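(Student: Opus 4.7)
The strategy is to define $\cC[S^{-1}]$ as the stated homotopy colimit and verify the universal property of dg localisation directly; uniqueness then follows by the standard Yoneda-type argument for objects representing a functor on a homotopy category. Since $\coprod_{f\in S}\cK_1^f$ and $\coprod_{f\in S}\cK_2^f$ are semifree dg categories and $\alpha$ is a semifree extension, the homotopy colimit is well-defined and, by Theorem \ref{thm:hocolim-intro}, admits an explicit semifree presentation built from $\cC\amalg\coprod_f\cK_1^f$ by adjoining generators $t_{A_f}, t_{B_f}, t_f$ and then formally inverting the $t_{A_f}$ and $t_{B_f}$.

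To verify the universal property, let $\cD$ be any dg category. A morphism $\cC[S^{-1}]\to\cD$ in $\Ho(\dgqe)$ is given by a dg functor $F\colon\cC\to\cD$, a dg functor $G\colon\coprod_f\cK_1^f\to\cD$ (equivalently, a choice of object $G(C_f)\in\cD$ for each $f\in S$), and an identification in $\Ho(\dgqe)$ of the composites $F\circ\beta$ and $G\circ\alpha$ viewed as functors $\coprod_f\cK_2^f\to\cD$. Writing $f\colon A_f\to B_f$ for the generator of $\cK_2^f$, one has $F\circ\beta(f)=F(f)$ while $G\circ\alpha(f)=1_{G(C_f)}$, so such an identification amounts to a pair of closed degree zero morphisms $F(A_f)\to G(C_f)$ and $F(B_f)\to G(C_f)$, each an isomorphism in $H^0\cD$, which intertwine $F(f)$ with the identity up to homotopy.

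Such data exists precisely when $F(f)$ is an isomorphism in $H^0\cD$: in one direction, it writes $F(f)$ as a composite of isomorphisms in $H^0\cD$; in the other, assuming $F(f)$ is invertible in $H^0\cD$, one may take $G(C_f):=F(A_f)$ and use $F(f)$ itself to supply the required homotopy data. Hence morphisms $\cC[S^{-1}]\to\cD$ in $\Ho(\dgqe)$ correspond bijectively to dg functors $F\colon\cC\to\cD$ sending every element of $S$ to an isomorphism in $H^0\cD$, which is exactly the universal property of the dg localisation.

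The principal technical obstacle is making the phrase ``identification in $\Ho(\dgqe)$'' precise, since the mapping set $[\hocolim X,\cD]$ is only the homotopy pullback of the $[X,\cD]$ after one passes to a genuine enrichment (e.g.\ simplicial or quasi-categorical) of $\dgqe$. The cleanest workaround is to use the explicit semifree model supplied by Theorem \ref{thm:hocolim-intro} and construct the factorisation $\cC\to\cC[S^{-1}]\to\cD$ by hand: the generators $t_{A_f}, t_{B_f}$ together with their formal inverses provide the required isomorphisms in $H^0\cD$, while $t_f$, which lives in degree $-1$, encodes the homotopy witnessing the relation. Exhibiting such a factorisation and verifying its uniqueness up to natural quasi-isomorphism shows that the homotopy colimit represents the functor sending $\cD$ to the set of dg functors $\cC\to\cD$ inverting $S$, which completes the proof.
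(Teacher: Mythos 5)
The paper does not actually prove this statement: it is imported wholesale from \cite{toen-morita}, so there is no internal argument to compare yours against. Read on its own terms, your proposal has two genuine problems. The first is circularity within the logic of this paper: you invoke Theorem \ref{thm:hocolim-intro} to produce the semifree model of the homotopy colimit, but that theorem is proved via the cylinder object $\Cyl(\cC)$, whose definition (Definition \ref{dfn:cylinder-dg}) and identification as a semifree extension (Proposition \ref{prp:localisation-semifree}) themselves rest on Proposition \ref{prp:localisation}. Moreover the hypotheses of Theorem \ref{thm:hocolim} are not met here: the corner $\cC$ of the localisation diagram is an arbitrary dg category, not a semifree one, and your claim that $\alpha$ is a semifree extension is false --- $\alpha$ sends both objects of each $\cK_2^f$ to the single object of $\cK_1^f$, so it is not injective on objects. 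This is precisely why the paper first replaces $\cK_1$ by the quasi-equivalent semifree model $\bar \cK_1$ of Lemma \ref{lem:semifree-trivial}, after which the left leg becomes an honest cofibration and the homotopy colimit can be converted to a colimit.

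Second, and more fundamentally, the universal property is not actually verified. The content of the statement is that $l^*\colon[\cC[S^{-1}],\cD]\to[\cC,\cD]$ is injective with image exactly the $S$-inverting morphisms. Your argument for surjectivity onto that image is plausible, but injectivity is exactly the step you defer (``verifying its uniqueness up to natural quasi-isomorphism'') and never carry out. A homotopy pushout does not induce a pullback of hom-sets in the homotopy category; it only gives a homotopy pullback of mapping \emph{spaces}, and on $\pi_0$ this yields a Mayer--Vietoris-type exact sequence in which injectivity of $l^*$ is controlled by $\pi_1$ of the mapping spaces at the other corners. Toën's proof handles this by the key computation that $\textup{Map}(\cK_1,\cD)\to\textup{Map}(\cK_2,\cD)$, induced by $\alpha$, is a weak equivalence onto the union of components classifying morphisms invertible in $H^0\cD$; without this (or some equivalent control of the relevant mapping spaces) the bijection you assert does not follow from the data you exhibit. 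So the proposal has the right overall shape, but the step that makes the proposition true is missing.
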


The following fact can be found in \cite{drinfeld}.

\begin{lem}\label{lem:semifree-trivial}
	The dg category $\cK_1$ is quasi-equivalent to the semifree dg category $\bar \cK_1$ with two objects $A$ and $B$, and generating morphisms $f,f',\hat f,\check f,\bar f$
	\[\begin{tikzcd}
		A\ar[loop left,"\hat f"]\ar[r,"f"]\ar[r,"\bar f",bend left=60] & B\ar[l,"f'",bend left=30]\ar[loop right,"\check f"]
	\end{tikzcd}\]
	with the gradings
	\[|f|=|f'|=0,\qquad |\hat f|=|\check f|=-1,\qquad |\bar f|=-2,\]
	and with the differentials
	\[df=df'=0,\qquad d\hat f=1_A-f'f,\qquad d\check f=1_B-f f',\qquad d\bar f=f\hat f - \check f f .\]
\end{lem}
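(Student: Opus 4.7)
The plan is to construct an explicit dg functor $F\colon \bar\cK_1 \to \cK_1$ and prove it is a quasi-equivalence. I would define $F(A) = F(B) = C$, $F(f) = F(f') = 1_C$, and $F(\hat f) = F(\check f) = F(\bar f) = 0$, then extend to all of $\bar\cK_1$ using semifreeness. To check $F$ is a dg functor I only need compatibility with the differentials on generators: $F(d\hat f) = 1_C - 1_C = 0 = dF(\hat f)$, $F(d\check f) = 0 = dF(\check f)$, and $F(d\bar f) = F(f)F(\hat f) - F(\check f)F(f) = 0 = dF(\bar f)$. Quasi-essential surjectivity is immediate since $C = F(A)$.

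The content of the lemma is then that $F$ is quasi-fully-faithful, i.e.\ that the induced chain map
\[F_*\colon \hom^*_{\bar\cK_1}(X, Y) \longrightarrow \hom^*_{\cK_1}(C, C) = k\]
is a quasi-isomorphism for every $X, Y \in \{A, B\}$. Equivalently, each morphism complex of $\bar\cK_1$ should have cohomology isomorphic to $k$, concentrated in degree $0$, and represented by any ``shortest path'' from $X$ to $Y$ (namely $[1_A]$, $[f]$, $[f']$, or $[1_B]$).

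Because $\bar\cK_1$ is semifree, $\hom^*_{\bar\cK_1}(X, Y)$ has an explicit $k$-basis of reduced words in $f, f', \hat f, \check f, \bar f$ going from $X$ to $Y$, with differential dictated by the Leibniz rule together with $d\hat f = 1_A - f'f$, $d\check f = 1_B - ff'$, and $d\bar f = f\hat f - \check f f$. To show the complex has the right cohomology I would filter by the total number of occurrences of the higher generators $\hat f, \check f, \bar f$ in a word and study the resulting spectral sequence; alternatively, I would construct an explicit contracting homotopy on $\ker F_*$ that uses $\hat f$ and $\check f$ to cancel $f'f$ and $ff'$ subwords, with $\bar f$ handling the coherence obstruction that appears when these two cancellations interact on a word containing an $f$ in the middle.

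The main obstacle is precisely this cohomology computation, which is combinatorially heavy because the underlying graded algebra is free on five generators and words may be arbitrarily long. The role of $\bar f$ is crucial: the pair $(\hat f, \check f)$ by itself already makes $[f]$ an isomorphism in $H^0\bar\cK_1$, but without $\bar f$ the element $f\hat f - \check f f$ would persist as a nontrivial class in $H^{-1}\hom^*_{\bar\cK_1}(A, B)$ obstructing quasi-fully-faithfulness; adjoining $\bar f$ is exactly the minimal higher homotopy needed to kill this class and, in conjunction with freeness in algebra level, to eliminate all remaining higher cohomology. In this sense $\bar\cK_1$ is a small explicit semifree resolution of $\cK_1$ with just enough coherence for a degree-$0$ invertibility.
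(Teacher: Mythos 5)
Your reduction is sound: defining $F\colon\bar\cK_1\to\cK_1$ by collapsing both objects to $C$, sending $f,f'$ to $1_C$ and the higher generators to $0$, and checking $dF=Fd$ on generators is correct, and the lemma is indeed equivalent to the assertion that each of the four complexes $\hom^*_{\bar\cK_1}(X,Y)$ has cohomology $k$ concentrated in degree $0$. Note that the paper gives no proof of this lemma at all --- it cites \cite{drinfeld} --- and the cohomology computation you defer is exactly the content of that citation. So the gap is not in your framing but in the fact that the decisive step is only gestured at: neither of your two strategies is executed, and the first does not work as stated. If you filter $\hom^*_{\bar\cK_1}(X,Y)$ by the number of occurrences of $\hat f,\check f,\bar f$ in a word, the differential preserves the filtration but does not vanish on the associated graded: $d\hat f$ and $d\check f$ drop the count by one, whereas $d\bar f=f\hat f-\check f f$ preserves it, so the $E_0$-differential is the derivation sending $\bar f\mapsto f\hat f-\check f f$ and killing the other generators. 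Its cohomology still contains all words in $f,f',\hat f,\check f$, so the spectral sequence leaves you with essentially the original problem on the next page.

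The contracting-homotopy route is the right one, but writing the homotopy down \emph{is} the lemma. The naive candidate --- insert $\hat f$ to cancel a leading $f'f$, insert $\check f$ to cancel a trailing $ff'$ --- fails to satisfy $dh+hd=\mathrm{id}-\iota F_*$ on words where both cancellations interact, and $\bar f$ is there to repair precisely that failure; the nontrivial point, which you correctly identify but do not verify, is that no further obstruction appears afterwards (for a general quasi-isomorphism one expects an infinite tower of coherences, and the claim is that for a single free generator the tower stops at $\bar f$). Until you either write out $h$ with signs and check the homotopy identity on each of the four hom complexes, or replace it with a working algebraic argument (for instance the elementary-automorphism/destabilisation technique the paper uses in the proof of Theorem \ref{thm:cylinder}), the proof is incomplete at its only essential step.
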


Finally, we can express dg localisation as a semifree extension.

\begin{prp}\label{prp:localisation-semifree}
	Let $\cC$ be a semifree dg category. Then the dg localisation $\cC[S^{-1}]$ of $\cC$ can be expressed as the semifree extension of $\cC$ by the morphisms $f'_i,\hat f_i,\check f_i,\bar f_i$
	\[\begin{tikzcd}
		A_i\ar[loop left,"\hat f_i"]\ar[r,"f_i"]\ar[r,"\bar f_i",bend left=60] & B_i\ar[l,"f'_i",bend left=30]\ar[loop right,"\check f_i"]
	\end{tikzcd}\]
	for each $f_i\in\Hom^0(A_i,B_i)$ in $S$, with the gradings
	\[|f_i'|=0,\qquad |\hat f_i|=|\check f_i|=-1,\qquad |\bar f_i|=-2,\]
	and with the differentials
	\[df_i'=0,\qquad d\hat f_i=1_{A_i}-f_i'f_i,\qquad d\check f_i=1_{B_i}-f_if_i',\qquad d\bar f_i=f_i\hat f_i - \check f_if_i .\]
	In particular, the localisation functor $l\colon \cC\to \cC[S^{-1}]$ here is a semifree extension of $\cC$, hence a cofibration in $\dgqe$.
\end{prp}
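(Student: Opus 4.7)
The plan is to rewrite the homotopy colimit presentation of $\cC[S^{-1}]$ given by Proposition \ref{prp:localisation} as an ordinary pushout along a semifree extension, and then to read off the answer from Proposition \ref{prp:colimit}. The key inputs will be Proposition \ref{prp:localisation}, Lemma \ref{lem:semifree-trivial}, Proposition \ref{prp:model-hocolim-1}, Proposition \ref{prp:cofibration}, and Proposition \ref{prp:colimit}.

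First I would invoke Proposition \ref{prp:localisation} to express $\cC[S^{-1}]$ as the homotopy colimit of $\coprod_{f\in S}\cK_1^f \xleftarrow{\alpha}\coprod_{f\in S}\cK_2^f\xrightarrow{\beta}\cC$, where $\alpha$ sends the generator of $\cK_2^f$ to $1_C\in\cK_1^f$ and $\beta$ sends it to $f\in\cC$. Using Lemma \ref{lem:semifree-trivial}, I would then replace each $\cK_1^f$ with the quasi-equivalent semifree dg category $\bar\cK_1^f$, taking as left leg the natural inclusion $\iota\colon\cK_2\hookrightarrow\bar\cK_1$ sending the generator $f$ to the morphism $f$ of $\bar\cK_1$. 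The collapsing quasi-equivalence $\pi\colon\bar\cK_1\to\cK_1$ of Lemma \ref{lem:semifree-trivial} satisfies $\pi\circ\iota=\alpha|_{\cK_2\to\cK_1}$ on the nose, so the map of diagrams is honestly commutative in $\dgCat$; standard invariance of homotopy colimits under objectwise weak equivalences then guarantees that the two diagrams have quasi-equivalent homotopy colimits.

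Next I would check that $\iota$ is itself a semifree extension: one adds $f'$ (closed, degree zero), then $\hat f$ and $\check f$ (whose differentials lie in the subcategory generated by $f, f', 1$), and finally $\bar f$ (whose differential lies in the subcategory generated by $f, f', \hat f, \check f, 1$). Hence $\coprod\iota$ is a cofibration in $\dgqe$. Because $\cC$ is semifree by hypothesis and both $\coprod\cK_2^f$ and $\coprod\bar\cK_1^f$ are visibly semifree, Proposition \ref{prp:cofibration} gives that all three vertices are cofibrant, and Proposition \ref{prp:model-hocolim-1} lets me replace the homotopy colimit with the ordinary pushout along $\coprod\iota$. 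Applying Proposition \ref{prp:colimit} to this pushout yields the semifree extension of $\cC$ obtained by freely adjoining, for each $f_i\in S$, the morphisms $f_i', \hat f_i, \check f_i, \bar f_i$ with the prescribed gradings; the differentials are computed by pushing forward those of $\bar\cK_1$ under the extended functor $\beta'$, i.e.\ substituting $f\mapsto f_i$ and $1_A, 1_B\mapsto 1_{A_i}, 1_{B_i}$, which reproduces exactly the formulas in the statement. In particular, $l\colon\cC\to\cC[S^{-1}]$ is the canonical inclusion into this semifree extension, hence a cofibration in $\dgqe$ by Proposition \ref{prp:cofibration}.

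The one nontrivial point, which I expect to be the main obstacle, is the legitimacy of replacing $\cK_1^f$ by $\bar\cK_1^f$ inside the homotopy colimit: one needs not merely a vertex-wise weak equivalence in $\Ho(\dgqe)$ but an actual map of pushout diagrams in $\dgCat$. The identity $\pi\circ\iota=\alpha|_{\cK_2\to\cK_1}$ supplies this map strictly, and the resulting vertex-wise quasi-equivalence of diagrams induces a quasi-equivalence of homotopy colimits by the standard model-categorical argument, completing the reduction.
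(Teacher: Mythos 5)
Your proposal is correct and follows essentially the same route as the paper's own proof: invoke Proposition \ref{prp:localisation}, replace $\cK_1^f$ by $\bar\cK_1^f$ via Lemma \ref{lem:semifree-trivial} so that the left leg becomes a semifree extension (hence a cofibration), pass from homotopy colimit to colimit by Proposition \ref{prp:model-hocolim-1}, and read off the answer from Proposition \ref{prp:colimit}. Your extra care in justifying the diagram replacement (the strict identity $\pi\circ\iota=\alpha$ giving an objectwise weak equivalence of diagrams) is a point the paper leaves implicit.
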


\begin{proof}
	By Lemma \ref{lem:semifree-trivial}, we can replace $\cK_1$ with $\bar \cK_1$ in Proposition \ref{prp:localisation}, and we get
	\[\cC[S^{-1}]\simeq\hocolim\left(\begin{tikzcd}
		\coprod_{f\in S}\bar \cK_1^f & & \cC\\
		& \coprod_{f\in S}\cK_2^f\ar[lu,"\alpha"]\ar[ru,"\beta"']
	\end{tikzcd}\right),\]
	where $\alpha$ is now an inclusion which sends $f$ in $\cK_2^f$ to $f$ in $\bar \cK_1^f$. Hence, $\alpha$ is a cofibration. Also, all objects are cofibrant in the diagram above. Hence, by Proposition \ref{prp:model-hocolim-1}, the homotopy colimit becomes colimit. Then, Proposition \ref{prp:colimit} concludes the proof.
	
\end{proof}

\subsection{Quasi-equiconic and Morita Model Structure on $\dgCat$}

We studied the Dywer-Kan model structure on $\dgCat$ in Section \ref{sec:dwyer-kan}. There are two other model structures on $\dgCat$ we want to study in this paper, which are quasi-equiconic and Morita model structures. They are especially useful if one works with (idempotent-complete) pretriangulated dg categories.

\begin{dfn}
	We define $\Mod\, k$ to be the dg category of $\Z$-graded (co)chain complexes of $k$-modules, localised at quasi-isomorphisms. Equivalently, it is the dg category of cofibrant complexes of $k$-modules.
\end{dfn}

\begin{rmk}
	If $k$ is a field, every complex is cofibrant, hence no localisation is needed.
\end{rmk}

\begin{dfn}
	Let $\cC$ be a dg category. $\Mod\,\cC$ is defined as the dg category of \textit{$\cC$-modules}, which is given by the internal Hom $\RHom(\cC^{\op},\Mod\, k)$ in $\Ho(\dgqe)$.
\end{dfn}
 When $k$ is a field of characteristic zero, $\Mod\,\cC$ is equivalently characterised as the dg category of $A_{\infty}$-functors from $\cC^{\op}$ to $\Mod\,k$ by Proposition \ref{prp:mod-ainf}.
 	
\begin{dfn}
	\mbox{}
	\begin{enumerate}
		\item $\Tw\,\cC$ is defined as the dg category of \textit{twisted complexes} in $\cC$, or equivalently, the pretriangulated envelope of $\cC$.
		\item $\Perf\,\cC$ is the dg category of \textit{perfect $\cC$-modules}, which is the split-closure of $\Tw\,\cC$ (also called Karoubi envelope or idempotent completion of $\Tw\,\cC$). 
	\end{enumerate}
\end{dfn}
See \cite{seidel} for more explanation.
Note that $\Pi(\Tw\,\cC)$ is another notation for $\Perf\,\cC$.

\begin{prp}
	We have the quasi-fully faithful dg functor
	\begin{align*}
		\cY\colon \cC&\to \Mod\,\cC,\\
		C&\mapsto\hom^*(\uline,C),
	\end{align*}
	called \textit{dg Yoneda embedding}. Then, the followings hold; 
	\begin{itemize}
		\item $\Tw\,\cC\simeq\Tw(\cY(\cC))$ is the full dg subcategory of $\Mod\,\cC$ consisting of iterated mapping cones of the morphisms in $\cY(\cC)$.
		\item $\Perf\,\cC\simeq\Perf(\cY(\cC))$ is the full dg subcategory of $\Mod\,\cC$ obtained from $\Tw(\cY(\cC))$ by splitting direct summands.
		\item $\Mod\,\cC$ is the ind-completion of $\Perf(\cY(\cC))$, hence the (categorically) compact objects in $\Mod\,\cC$ are exactly the objects of $\Perf(\cY(\cC))$.
	\end{itemize} 
\end{prp}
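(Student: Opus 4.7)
The plan is to treat each claim separately, with the dg Yoneda lemma as the only non-formal input; the remaining assertions are then applications of universal properties.

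First, I would establish the quasi-fully faithfulness of $\cY$ via the dg Yoneda lemma: for any $\cF \in \Mod\,\cC$ and any $A \in \cC$, evaluation at $1_A$ induces a quasi-isomorphism $\hom^*_{\Mod\,\cC}(\cY(A), \cF) \xrightarrow{\sim} \cF(A)$. The proof is the chain-level analogue of the classical Yoneda computation: construct the evaluation map and its section $x \mapsto (g \mapsto g \cdot x)$ on the nose, then verify that one composition is the identity and the other is chain-homotopic to the identity. Specialising $\cF = \cY(B)$ then gives $\hom^*_{\cC}(A,B) \simeq \hom^*_{\Mod\,\cC}(\cY(A),\cY(B))$, which is precisely the map induced by $\cY$.

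For the first bullet, I would use that $\Mod\,\cC$ is pretriangulated (closed under shifts and mapping cones of closed degree zero morphisms), so by the universal property of $\Tw$ as the pretriangulated envelope, $\cY$ extends uniquely to a dg functor $\Tw\,\cC \to \Mod\,\cC$. Its essential image is by construction the full subcategory of iterated mapping cones on morphisms in $\cY(\cC)$, i.e.\ $\Tw(\cY(\cC))$. Quasi-fully faithfulness of this extension follows formally from that of $\cY$, since hom complexes in $\Tw$ are built as direct sums of shifts of hom complexes in $\cC$ with a differential that is functorial under $\cY$. The second bullet is the same argument one level up, using that $\Mod\,\cC$ is also split-closed and that $\Perf$ denotes the split-closure of $\Tw$.

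For the third bullet, I would first show each $\cY(A)$ is compact in $\Mod\,\cC$: by Yoneda, $\hom^*_{\Mod\,\cC}(\cY(A), \uline)$ is the evaluation functor at $A$, which commutes with filtered homotopy colimits. Compactness is preserved under shifts, mapping cones, and direct summands, so $\Perf(\cY(\cC))$ sits inside the compact objects. The hard part will be the converse: every compact $\cF \in \Mod\,\cC$ lies in $\Perf(\cY(\cC))$. The plan is to present $\cF$ as a filtered homotopy colimit of iterated cones on sums of representables (a free-resolution-type construction available because $\cY(\cC)$ generates $\Mod\,\cC$), and then use compactness of $\cF$ to factor the identity $\cF \to \cF$ through a finite stage of this colimit, exhibiting $\cF$ as a retract of an object in $\Tw(\cY(\cC))$; such a retract lies in $\Perf(\cY(\cC))$ by definition. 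The identification of $\Mod\,\cC$ with the ind-completion of $\Perf(\cY(\cC))$ then follows formally, since $\Mod\,\cC$ is cocomplete and compactly generated by $\cY(\cC)$.
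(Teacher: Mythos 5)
The paper states this proposition as recalled background (pointing the reader to the literature for $\Tw$, $\Perf$, and $\Mod$) and gives no proof of its own, so there is no argument in the text to compare yours against. Your sketch is the standard one and is essentially correct: the dg/$A_\infty$ Yoneda lemma via evaluation at $1_A$, the extension to $\Tw$ and $\Perf$ by the universal properties of pretriangulated envelope and split-closure together with a filtration argument on hom complexes of twisted complexes, and compact generation for the last bullet. The only place where your outline is thinner than the full argument is the converse inclusion of compact objects into $\Perf(\cY(\cC))$: a finite stage of the cell-attachment filtration of an arbitrary module is in general an iterated cone on possibly \emph{infinite} direct sums of representables, hence not yet an object of $\Tw(\cY(\cC))$. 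The standard remedy is to invoke compactness a second time to factor the relevant map through a finite sub-coproduct at each of the finitely many stages (the Neeman--Thomason argument), or equivalently to identify the compacts with the thick subcategory generated by the representables. This is a compression rather than an error, but if you write the proof out in full that step needs the extra care.
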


There are two other model structures on $\dgCat$ which focus on twisted complexes and perfect modules. In order to define them, we need the following notion of equivalences.

\begin{dfn}
	Let $F\colon \cC\to\cD$ be a dg functor. 
	\begin{enumerate}
		\item $F$ is called \textit{quasi-equiconic} or a \textit{pretriangulated equivalence}, if the induced functor
		\[\Tw\,F\colon \Tw\,\cC\to\Tw\,\cD\]
		is a quasi-equivalence.
		\item $F$ is called a \textit{Morita equivalence}, if the induced functor
		\[\Perf\,F\colon \Perf\,\cC\to\Perf\,\cD\]
		is a quasi-equivalence.
	\end{enumerate}
\end{dfn}

The following model structures are described in \cite{tabuada-morita}.
	
\begin{prp}
	The category of dg categories $\dgCat$ has two other model structures which are obtained by left Bousfield localisations of the Dwyer-Kan model structure $\dgqe$:
	\begin{itemize}
		\item Quasi-equiconic model structure $\dgeq$, where the weak equivalences are pretriangulated equivalences. Fibrant objects in $\dgeq$ are \textit{pretriangulated dg categories}, i.e.\ dg categories of the form $\Tw\,\cC$ for some dg category $\cC$.
		
		\item Morita model structure $\dgmo$, where the weak equivalences are Morita equivalences. Fibrant objects in $\dgmo$ are \textit{idempotent-complete pretriangulated dg categories}, i.e.\ dg categories of the form $\Perf\,\cC$ for some dg category $\cC$.
	\end{itemize}
	In particular, they have the same cofibrations with the Dwyer-Kan model structure $\dgqe$ (see Definition \ref{dfn:bousfield}), and their homotopy categories are the full subcategories of $\Ho(\dgqe)$ by the respective fibrant objects.
\end{prp}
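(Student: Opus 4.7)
The plan is to invoke the general existence theorem for left Bousfield localizations applied to the Dwyer--Kan model structure $\dgqe$. First, I would verify that $\dgqe$ satisfies the technical hypotheses (left proper and combinatorial) ensuring that left Bousfield localizations exist; this uses the cofibrant generation recalled in Section \ref{sec:dwyer-kan} together with a check that weak equivalences are stable under pushouts along cofibrations, which reduces to the analogous statement in complexes of $k$-modules.

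Next, I would choose the classes $S$ of morphisms to invert. For $\dgeq$, take $S_{\mathrm{tr}}$ to be a representative set of the canonical inclusions $\cC \hookrightarrow \Tw\,\cC$ as $\cC$ ranges over semifree dg categories of small cardinality; for $\dgmo$, take $S_{\mathrm{mo}}$ to be $S_{\mathrm{tr}}$ together with the inclusions $\cC \hookrightarrow \Perf\,\cC$. By Definition \ref{dfn:bousfield}, the Bousfield localization automatically retains the cofibrations of $\dgqe$, which gives the final claim that all three model structures share cofibrations (and hence share cofibrant objects, namely retracts of semifree dg categories, by Proposition \ref{prp:cofibration}).

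The central step is identifying fibrant objects and characterizing the new weak equivalences. An object $\cD$ is fibrant in the localization if and only if it is fibrant in $\dgqe$ (automatic) and $S$-local, meaning $\RHom(\cC',\cD) \to \RHom(\cC,\cD)$ is a quasi-equivalence for each $\cC \to \cC'$ in $S$. Unwinding this, $S_{\mathrm{tr}}$-locality of $\cD$ says that every dg functor $\cC \to \cD$ extends essentially uniquely through $\Tw\,\cC$, which forces $\cD \simeq \Tw\,\cD$; similarly $S_{\mathrm{mo}}$-locality forces $\cD \simeq \Perf\,\cD$. Conversely, an $S$-local equivalence is a map $F$ such that $\RHom(F,\cE)$ is a quasi-equivalence for every $S$-local $\cE$, which by testing against the Yoneda embedding into $\Mod\,\cD$ translates into $\Tw\,F$ (respectively $\Perf\,F$) being a quasi-equivalence.

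The main technical obstacle is the converse direction of this last identification: showing that $\Tw\,F$ being a quasi-equivalence genuinely implies $F$ is an $S_{\mathrm{tr}}$-local equivalence. This is handled by observing that $\Tw\,\cC$ itself arises as a transfinite colimit of cobase changes along maps in $S_{\mathrm{tr}}$, so any $S_{\mathrm{tr}}$-local $\cE$ cannot distinguish $\cC$ from $\Tw\,\cC$ via $\RHom(-,\cE)$; an analogous argument handles $\Perf$ and $S_{\mathrm{mo}}$. Finally, the identification of the homotopy categories with full subcategories of $\Ho(\dgqe)$ on the respective fibrant objects is a formal consequence of Proposition \ref{prp:model-hom} together with the standard fact that in any left Bousfield localization, weak equivalences between fibrant objects coincide with the original weak equivalences.
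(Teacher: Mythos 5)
The paper offers no argument for this proposition: it is imported wholesale from \cite{tabuada-morita}, so your attempt is necessarily an independent reconstruction. Your overall strategy --- realise $\dgeq$ and $\dgmo$ as left Bousfield localisations of $\dgqe$ at (a set of representatives of) the inclusions $\cC\hookrightarrow\Tw\,\cC$, resp.\ $\cC\hookrightarrow\Perf\,\cC$, then identify the local objects with (idempotent-complete) pretriangulated dg categories and the local equivalences with pretriangulated (resp.\ Morita) equivalences --- is the right one and is essentially how the cited source proceeds. The closing observations (cofibrations are preserved by definition of a left Bousfield localisation; the homotopy category of the localisation embeds as the full subcategory of $\Ho(\dgqe)$ on the local objects) are also correct.

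There is, however, one genuine gap, and it sits at the very first step. You justify the existence of the localisations by asserting that $\dgqe$ is left proper, and that this ``reduces to the analogous statement in complexes of $k$-modules.'' Neither claim holds up. A pushout of dg categories along a semifree extension builds the new hom-complexes as direct sums of iterated tensor products over $k$ of hom-complexes of the old categories (this is visible in Proposition \ref{prp:colimit}); since the paper works over an arbitrary commutative ring $k$, these tensor products do not preserve quasi-isomorphisms without flatness hypotheses, so cobase change along a cofibration does not preserve quasi-equivalences in general. Indeed $\dgqe$ is not left proper --- this is precisely why Propositions \ref{prp:model-hocolim-1} and \ref{prp:hocolim-to-colim} insist that \emph{all} objects in the diagram be cofibrant rather than only requiring one leg to be a cofibration, as left properness would allow. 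Consequently the standard existence theorem for left Bousfield localisations (left proper cellular, or left proper combinatorial) cannot be invoked off the shelf; one must either construct the quasi-equiconic and Morita model structures directly by exhibiting generating (acyclic) cofibrations and verifying the axioms, as is done in \cite{tabuada-morita}, or appeal to a localisation theorem that dispenses with left properness. The remainder of your outline (identification of the fibrant objects and of the $S$-local equivalences) is sound as a sketch once existence is secured.
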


\begin{rmk}
	In particular, any semifree extension (resp.\ semifree dg category) is a cofibration (resp.\ cofibrant object) in Dwyer-Kan, quasi-equiconic, and Morita model structures on $\dgCat$.
\end{rmk}

\begin{rmk}	
	When we say that two dg categories $\cC$ and $\cD$ are pretriangulated equivalent (resp.\ Morita equivalent), or are the same up to pretriangulated equivalence (resp.\ Morita equivalence), we will mean that they are weakly equivalent in $\dgeq$ (resp.\ $\dgmo$), which means they are isomorphic in $\Ho(\dgeq)$ (resp.\ $\Ho(\dgmo)$). See Definition \ref{dfn:homotopy-category}.
\end{rmk}

We have two notions for generation of dg categories.

\begin{dfn}
	Let $\cC$ be the full dg subcategory of a dg category $\cD$. We say the objects of $\cC$ \textit{generate} (resp.\ \textit{split-generate}) $\cD$, if $\cC$ and $\cD$ are pretriangulated equivalent (resp.\ Morita equivalent).
\end{dfn}

\begin{rmk}
	If $\cC$ and $\cD$ are pretriangulated equivalent, then they are also Morita equivalent. Hence, if the objects of $\cC$ generate $\cD$, then they also split-generate $\cD$.
\end{rmk}

Finally, we will state the following fact.

\begin{prp}[\cite{cohn}]
	Underlying $\infty$-category of the Morita model structure $\dgmo$ is equivalent to the $\infty$-category of small idempotent-complete $k$-linear stable $\infty$-categories.
\end{prp}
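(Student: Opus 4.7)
The plan is to follow Cohn's approach, which compares the two $\infty$-categories via the dg nerve construction and then establishes the equivalence by verifying essential surjectivity and full faithfulness.

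First I would set up the comparison functor. Any dg category $\cC$ has an associated $\infty$-category $N_{dg}(\cC)$ given by the dg nerve, which upgrades to a $k$-linear structure because each mapping complex is a $k$-module. When $\cC$ is pretriangulated, $N_{dg}(\cC)$ is a stable $\infty$-category, and when $\cC=\Perf\,\cE$ for some dg category $\cE$, the resulting stable $\infty$-category is idempotent-complete (splitting of idempotents in $\Perf\,\cE$ on the nose corresponds to splitting of idempotents in the homotopy category). Since the dg nerve sends quasi-equivalences of dg categories to equivalences of $\infty$-categories and, when applied after the $\Perf$-replacement, sends Morita equivalences to equivalences, it factors through the underlying $\infty$-category of $\dgmo$ to give a functor
\[
\Phi\colon N_{\infty}(\dgmo) \longrightarrow \Cat^{\Perf}_{k},
\]
where the target denotes small idempotent-complete $k$-linear stable $\infty$-categories.

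Next I would check that $\Phi$ is fully faithful on mapping spaces. For $\cC,\cD$ cofibrant-fibrant in $\dgmo$ (so idempotent-complete pretriangulated), the internal Hom $\RHom(\cC,\cD)$ of Toën computes the mapping space in $N_\infty(\dgmo)$. On the other side, the mapping space in $\Cat^{\Perf}_k$ between $N_{dg}(\cC)$ and $N_{dg}(\cD)$ is the space of exact $k$-linear functors preserving finite colimits. The identification of these two mapping spaces follows from the fact that dg functors between pretriangulated idempotent-complete dg categories correspond, up to quasi-isomorphism, to $A_\infty$-functors (Proposition \ref{prp:mod-ainf}), and these in turn model exact $k$-linear functors of stable $\infty$-categories once one identifies $\Mod\,\cC$ with the presentable $k$-linear stable $\infty$-category $\Ind(N_{dg}(\cC))$.

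Then I would establish essential surjectivity. Given a small idempotent-complete $k$-linear stable $\infty$-category $\cT$, one picks a set of generators, forms their endomorphism dga (or, more generally, the endomorphism dg category on a set of generators) $E$ using a strictification/rectification theorem for $A_\infty$-algebras over $k$, and then checks that $\Perf\,E \simeq \cT$ via the Yoneda-type embedding of $E$ into $\cT$ extended to $\Perf$. This step uses that $\Mod_{Hk}$ is, as an $E_\infty$-ring in spectra, equivalent to the monoidal $\infty$-category associated with $(\Mod\,k,\otimes_k)$, so $k$-linear structure in the spectral sense matches $k$-linear dg structure.

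The main obstacle will be the essential surjectivity step, specifically the rectification: turning an abstract $k$-linear stable $\infty$-category into a strict dg model. This requires either Lurie's general machinery that $\Mod_{Hk}$-linear stable $\infty$-categories are modelled by dg categories (via the equivalence between $E_1$-algebras in $\Mod_{Hk}$ and dg algebras over $k$, then bootstrapping to categories with many objects), or equivalently a direct argument using Dwyer--Kan simplicial localisation and the fact that the Morita model structure is the correct localisation of $\dgqe$ to force idempotent completeness. The other axioms, including that the constructed $\Phi$ commutes with the formation of $\Perf$, are formal given the explicit descriptions above.
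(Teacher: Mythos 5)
The paper does not prove this proposition at all: it is imported verbatim from \cite{cohn} as a black box, so there is no internal argument to compare yours against. Judged on its own terms, your outline does track the strategy of the cited reference (dg nerve as comparison functor, To\"en's internal Hom for mapping spaces, rectification for essential surjectivity), but as written it is a plan with the two hard steps deferred rather than a proof, and one of the deferrals introduces a genuine gap.

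Concretely: your full-faithfulness step appeals to Proposition \ref{prp:mod-ainf} to identify dg functors up to quasi-isomorphism with $A_\infty$-functors, but that proposition is stated in this paper only for $k$ a field of characteristic zero, whereas the proposition you are proving must hold over an arbitrary commutative ring $k$ (the whole of Section \ref{sec:cylinder-object and homotopy-colimit} works over a general $k$, and Part \ref{part dga computations} needs $k=\Z$). Cohn's actual argument deliberately avoids the $A_\infty$ model for exactly this reason, instead comparing $\Mod_{Hk}$-linear stable $\infty$-categories with dg categories via the symmetric monoidal identification of $Hk$-modules with chain complexes of $k$-modules and rectification of associative algebras therein. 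Your essential-surjectivity step likewise outsources the rectification to ``Lurie's general machinery'' without specifying which statement does the work of turning a small idempotent-complete $k$-linear stable $\infty$-category with many objects (not just a single endomorphism algebra) into a strict dg model, and without verifying that the resulting dg category is well defined up to Morita equivalence. Until those two points are pinned down over a general commutative ring, the argument is an accurate roadmap to the literature rather than a self-contained proof.
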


\subsection{Cylinder Object for Semifree DG Categories}

The existence of (``very good'') cylinder objects (see Definition \ref{dfn:cylinder}) in $\dgCat$ is guaranteed by model category axioms. Here, we will explicitly describe an easily computable cylinder object for any semifree dg category (for the Dwyer-Kan, quasi-equiconic, and Morita model structures on $\dgCat$). This description of cylinder object will be useful as it is not much more complicated than the dg category it is associated to.

This cylinder object will be our main tool and it has many applications such as determining the (left) homotopic dg functors and homotopy colimit calculations. Next chapter will focus on the latter application for the chains on the loop spaces of lens spaces.

Let $\cC$ be a dg category. Let $\cC_i$ be a copy of $\cC$ for $i=1,2$. We will write $A_i\in\cC_i$ for the corresponding object to $A\in\cC$. Similarly, we will write $f_i\in\hom^*(A_i,B_i)$ for the corresponding morphism to $f\in\hom^*(A,B)$.

\begin{dfn}\label{dfn:cylinder-pre-dg}
	Let $\cC$ be a semifree dg category, and let $\cC_1, \cC_2$ be two copies of $\cC$. We define $\Cyl_0(\cC)$ to be the semifree extension of $\cC_1\amalg\cC_2$ by the morphisms consisting of
	\begin{itemize}
		\item closed degree zero morphisms $t_A\colon A_1\to A_2$ for each $A\in\cC$,
		\item degree $|f|-1$ morphisms $t_f\colon A_1\to B_2$ for each generating morphism $f\in\hom^*_{\cC}(A,B)$.
	\end{itemize}
	The differential of the added morphisms $t_f$ are given by the following; 
	if 
	\begin{gather*}
		df = \left\{\begin{matrix}
			c 1_A + \sum_{i=1}^m c_i f^{i,n_i}\ldots f^{i,j}\ldots f^{i,1}, \text{  if  } f \in \hom^*_{\cC}(A,A),\\ 
			\sum_{i=1}^m c_i f^{i,n_i}\ldots f^{i,j}\ldots f^{i,1}, \text{  if  } f \in \hom^*_{\cC}(A,B) \text{  such that  } A \neq B,
		\end{matrix}\right.
	\end{gather*}
	where $f^{i,j}$ are generating morphisms of $\cC$, and where $c, c_i\in k$, then
	\[dt_f=(-1)^{|f|}(f_2 t_A-t_B f_1)+\sum_{i=1}^m c_i \sum_{j=1}^{n_i}(-1)^{|f^{i,j-1}|+\ldots+|f^{i,1}|} f^{i,n_i}_2 \ldots f^{i,j+1}_2 t_{f^{i,j}} f^{i,j-1}_1\ldots f^{i,1}_1.\]		
\end{dfn}

\begin{rmk}
	It is straightforward to check that $d\circ d=0$, hence $\Cyl_0(\cC)$ is well-defined. Alternatively, one can observe that the collection of $t$-morphisms, i.e.\, 
	\[\{t_A, t_f \vb A \in \cC, f \text{  is a generating morphism in  } \cC \}\] look like components of an $A_{\infty}$-natural transformation $T$ (see \cite{seidel} for the definition) between dg endofunctors on $\cC$, with the property $T^n=0$ for $n\geq 2$. Their comparison also confirms that $d\circ d=0$. This is not a coincidence: An $A_{\infty}$-natural transformation $T$ between dg functors from a semifree dg category can be assumed to satisfy $T^n=0$ for $n\geq 2$. This is a work in progress of the first author.
\end{rmk}
	
\begin{rmk}
	$\Cyl_0(\cC)$ is indeed a semifree extension of $\cC\amalg\cC$ via the semifree filtration
	\begin{multline*}
		\cC_1\amalg\cC_2\subset(\cC_1\amalg\cC_2)\cup\{t_A\vb A\in\cC\}\subset(\cC_1\amalg\cC_2)\cup\{t_A\}\cup\{t_f\}\subset\\
		(\cC_1\amalg\cC_2)\cup\{t_A\}\cup\{t_f,t_g\}\subset\ldots\subset\Cyl_0(\cC)
	\end{multline*}
	coming from the semifree filtration of $\cC$
	\[\emptyset\subset\cC_{\obj}\subset\cC_{\obj}\cup\{f\}\subset\cC_{\obj}\cup\{f,g\}\subset\ldots\subset\cC,\]
	where $\cC_{\obj}$ has the same objects as $\cC$, and where all the morphisms in $\cC_{\obj}$ are generated by the identity morphisms.
\end{rmk}

\begin{dfn}\label{dfn:cylinder-dg}
	Let $\cC$ be a semifree dg category. We define $\Cyl(\cC)$ as the localisation $\Cyl_0(\cC)[S^{-1}]$ where $S=\{t_C\vb C\in\cC\}$. Equivalently, by Proposition \ref{prp:localisation-semifree}, $\Cyl(\cC)$ is the semifree extension of $\Cyl_0(\cC)$ by the morphisms $t'_C,\hat t_C,\check t_C,\bar t_C$ for each $C\in\cC$, with the gradings
	\[|t'_C|=0,\qquad |\hat t_C|=|\check t_C|=-1,\qquad |\bar t_C|=-2,\]
	and with the differentials
	\[dt'_C=0,\qquad d\hat t_C=1-t'_C t_C,\qquad d\check t_i=1-t_C t'_C,\qquad d\bar t_C=t_C\hat t_C - \check t_C t_C .\]
\end{dfn}

\begin{rmk}
	$\cC\amalg\cC\to\Cyl(\cC)$ is a semifree extension since $\cC\amalg\cC\to\Cyl_0(\cC)$ and the localisation are both semifree extensions. Moreover, $\Cyl(\cC)$ is a semifree dg category since $\cC\amalg\cC$ is a semifree dg category.
\end{rmk}

The following is one of our main theorems.

\begin{thm}\label{thm:cylinder}
	If $\cC$ is a semifree dg category, then $\Cyl(\cC)$ is a cylinder object for $\cC$ in the Dwyer-Kan, quasi-equiconic, and Morita model structures on the category of dg categories $\dgCat$. Furthermore, this cylinder object is ``very good'' in the sense of Remark \ref{rmk:very-good-cyl}.
\end{thm}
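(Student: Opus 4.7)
My plan is to factor the codiagonal $\nabla\colon\cC\amalg\cC\to\cC$ as $\cC\amalg\cC\xrightarrow{i}\Cyl(\cC)\xrightarrow{p}\cC$ where $i$ is the canonical inclusion coming from the semifree construction and $p$ is the dg functor defined on generators by $A_1,A_2\mapsto A$, $f_1,f_2\mapsto f$, $t_A,t'_A\mapsto 1_A$, and $t_f,\hat t_A,\check t_A,\bar t_A\mapsto 0$. To see that $p$ extends consistently to a dg functor, I would check $p\circ d=d\circ p$ on each of the added generators. For $\hat t_A,\check t_A,\bar t_A$ this is a short computation using $p(t_A)=p(t'_A)=1_A$, and for $t_f$ the principal term $(-1)^{|f|}(f_2 t_A - t_B f_1)$ of $dt_f$ maps to $(-1)^{|f|}(f-f)=0$ while each correction term contains a factor $t_{f^{i,j}}$ annihilated by $p$. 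Clearly $p\circ i=\nabla$. For the cofibration axiom, $\Cyl(\cC)$ is by construction a semifree extension of $\cC\amalg\cC$ followed by a localization, and by Proposition \ref{prp:localisation-semifree} the localization is itself a semifree extension. Hence $i$ is a semifree extension and therefore a cofibration in $\dgqe$ by Proposition \ref{prp:cofibration}, and automatically a cofibration in $\dgeq$ and $\dgmo$ as these share cofibrations with $\dgqe$.

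Next, I would show $p$ is a quasi-equivalence, which immediately gives a weak equivalence in $\dgeq$ and $\dgmo$ as well. The inclusion $s\colon\cC\hookrightarrow\Cyl(\cC)$, $A\mapsto A_1$, $f\mapsto f_1$ is a strict section of $p$, so it suffices to prove $s$ is a quasi-equivalence. Quasi-essential-surjectivity amounts to $A_2\simeq A_1$ in $H^0\Cyl(\cC)$, which holds because the relations $d\hat t_A=1-t'_A t_A$ and $d\check t_A=1-t_A t'_A$ promote $[t_A]$ to an isomorphism with inverse $[t'_A]$. The real content is quasi-full-faithfulness, namely that
\[\hom^*_{\cC}(A,B)\longrightarrow\hom^*_{\Cyl(\cC)}(A_1,B_1)\]
is a quasi-isomorphism for all $A,B\in\cC$.

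The hardest step is this quasi-full-faithfulness, and my approach is a filtration argument. I first adjoin to $\cC_1$ all objects $A_2$ together with the localization data $\{t_A,t'_A,\hat t_A,\check t_A,\bar t_A\}$, which is a standard quasi-equivalence because each new object is strictly equivalent to an existing one. I then adjoin the morphisms $f_2$ and their companions $t_f$ inductively along the semifree filtration of $\cC$. At each step, the formula $dt_f=(-1)^{|f|}(f_2 t_A - t_B f_1)+\text{corrections in the previous stage}$, combined with $t_A t'_A\sim 1$ and $t'_A t_A\sim 1$, exhibits $f_2$ as cohomologous to $t_B f_1 t'_A$ modulo exact terms and morphisms already present at the previous stage. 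Thus $f_2$ contributes no new cohomology to $\hom^*_{\Cyl(\cC)}(A_1,B_1)$; more generally any composite path through $\cC_2$ reduces to one in $\cC_1$ because a round trip $\cC_1\to\cC_2\to\cC_1$ is contractible via the $\hat t,\check t$ homotopies and $f_2$ factors are traded for $f_1$ factors via the $t_f$ homotopies. Bookkeeping the correction terms carefully, especially when $df$ contains a scalar $c\cdot 1_A$, and verifying that an associated-graded spectral sequence on the filtration collapses to $\hom^*_{\cC}(A,B)$, will be the technical crux. A conceptual alternative, hinted at by the remark after Definition \ref{dfn:cylinder-pre-dg}, is to view $\{t_A,t_f\}$ as the components of an $A_\infty$-natural transformation between the two inclusions of $\cC$ into $\Cyl(\cC)$ which becomes a natural isomorphism upon inverting the $t_A$.

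Lastly, for the ``very good'' conclusion, I would verify $p$ is a fibration in $\dgqe$. Surjectivity on morphism complexes is immediate from the preimages $f\mapsto f_1$. For the isofibration condition, given $X\in\Cyl(\cC)$ and an isomorphism $[\phi]\colon p(X)\to A$ in $H^0\cC$, I would pick a cochain representative $\phi\in\hom^0_{\cC}(p(X),A)$ and lift to $\phi_i$ (matching the copy containing $X$) in $\Cyl(\cC)$; a cochain-level homotopy inverse of $\phi$ in $\cC$ copies directly to one of $\phi_i$ in $\Cyl(\cC)$, so $[\phi_i]$ is an isomorphism with $H^0p([\phi_i])=[\phi]$. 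The analogous fibration property in $\dgeq$ and $\dgmo$ follows from the same explicit lifts, since nothing in the argument demands pretriangulatedness or idempotent-completeness of the target.
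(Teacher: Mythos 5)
Your overall architecture matches the paper's: the same factorization $\cC\amalg\cC\xrightarrow{i}\Cyl(\cC)\xrightarrow{p}\cC$, the same definition of $p$ on generators, the observation that $i$ is a semifree extension (hence a cofibration shared by all three model structures via Proposition \ref{prp:cofibration}), and the check that $p$ is a fibration. However, the central claim --- that $p$ (equivalently your section $s$) is a quasi-equivalence --- is exactly the part you defer to ``careful bookkeeping'' and an unverified spectral sequence collapse, and as written it contains a logical leap. Showing that $f_2$ is cohomologous to $t_Bf_1t'_A$ does not by itself show that adjoining the pair $(f_2,t_f)$ creates no new cohomology in $\hom^*_{\Cyl(\cC)}(A_1,B_1)$: the morphism complexes of a semifree extension consist of arbitrary words in old and new generators, so you must control every word containing $f_2$ or $t_f$, not just the generators themselves. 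The associated-graded of the filtration you propose does not obviously have vanishing higher pages, and establishing that collapse is the entire content of the theorem, not a routine verification.

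The paper closes this gap with a concrete device you do not have: after using the invertible $t_C$ to identify $C_1$ with $C_2$ (reducing to the semifree extension $\cC'$ of $\cC_1$ by the pairs $f_2,t_f$), it performs an elementary automorphism of $\cC'$ in the sense of \cite[Section 2.6]{subcritical} --- a change of variables respecting the semifree filtration --- replacing $f_2$ by
\[(-1)^{|f|}f_2-(-1)^{|f|}f_1+\sum_{i,j}(\pm)\,f^{i,n_i}_2\cdots f^{i,j+1}_2\,t_{f^{i,j}}\,f^{i,j-1}_1\cdots f^{i,1}_1,\]
which is precisely $dt_f$. After this substitution each pair satisfies $df_2=0$ and $dt_f=f_2$, so the pairs are acyclic and can be removed by destabilisation, yielding $\cC'\simeq\cC_1$ without any spectral sequence. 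I recommend you either adopt this change-of-variables-plus-destabilisation argument, or, if you insist on the filtration route, actually construct the filtration, identify its associated graded, and prove the collapse --- including the case where $df$ contains a scalar multiple of the identity, which your sketch flags but does not handle.
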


\begin{proof}
	Let $i\colon \cC\amalg\cC \to \Cyl(\cC)$ be the semifree extension described above. 
	It is easy to define the dg functor $p : \Cyl(\cC) \to \cC$ so that 
	\begin{itemize}
		\item $p\circ i$ is the codiagonal map, and
		\item for any $C\in\cC$ and any generating morphism $f$ in $\cC$,
			\begin{gather*}
			p(t_C)=p(t_C')=1_C,\\
			p(\hat t_C)=p(\check t_C)=p(\bar t_C)=0,\\
			p(t_f)=0.
		\end{gather*}
	\end{itemize}
	
	The dg functor $i$ is cofibration in any mentioned model categories since it is a semifree extension. We only need to show that $p$ is a weak equivalence in any models of $\dgCat$. It is enough to show that $p$ is a quasi-equivalence since every quasi-equivalence is a weak-equivalence in any models. For that, we will first show that $\Cyl(\cC)$ is quasi-equivalent to $\cC$.
	
	First, we can use the invertible morphisms $t_C$ to identify $C_1$ and $C_2$ in $\Cyl(\cC)$ for each $C\in\cC$. This makes $C_1=C_2$, $t_C=t'_C=1$, and $\hat t_C=\check t_C=\bar t_C=0$ for each $C\in\cC$. Hence, $\Cyl(\cC)$ is quasi-equivalent to the semifree extension of $\cC_1=\cC$ by the morphisms $f_2$ and $t_f$ for each generating morphism $f$ in $\cC$, call it $\cC'$. Note that we now have 
	\[dt_f=(-1)^{|f|}(f_2 - f_1)+\sum_{i=1}^m c_i \sum_{j=1}^{n_i}(-1)^{|f^{i,j-1}|+\ldots+|f^{i,1}|} f^{i,n_i}_2 \ldots f^{i,j+1}_2 t_{f^{i,j}} f^{i,j-1}_1\ldots f^{i,1}_1,\]
	if
	\[df=\sum_{i=1}^m c_i f^{i,n_i}\ldots f^{i,j}\ldots f^{i,1} .\]
	There is a semifree filtration
	\[\cC_1\subset\cC_1\cup\{f_2\}\subset\cC_1\cup\{f_2,t_f\}\subset\cC_1\cup\{f_2,t_f,g_2\}\subset\cC_1\cup\{f_2,t_f,g_2,t_g\}\subset\ldots\subset\cC',\]
	coming from the semifree filtration of $\cC$
	\[\emptyset\subset\cC_{\obj}\subset\cC_{\obj}\cup\{f\}\subset\cC_{\obj}\cup\{f,g\}\subset\ldots\subset\cC .\]
	Using this filtration, we can give an elementary automorphism of $\cC'$. Recall from \cite[Section 2.6]{subcritical} that an elementary automorphism of a semifree dg category can be thought as a change of variables
	\[v\mapsto u v + w\]
	respecting a filtration for any generating morphism $v$, where $u$ is a unit in $k$, and $w$ is a morphism which lives in the lower part of the filtration. The elementary automorphism of $\cC'$ we want to consider is the identity on objects, and
	\begin{align*}
		f_1&\mapsto f_1,\\
		f_2&\mapsto (-1)^{|f|}f_2 - (-1)^{|f|} f_1+\sum_{i=1}^m c_i \sum_{j=1}^{n_i}(-1)^{|f^{i,j-1}|+\ldots+|f^{i,1}|} f^{i,n_i}_2 \ldots f^{i,j+1}_2 t_{f^{i,j}} f^{i,j-1}_1\ldots f^{i,1}_1,\\
		t_f&\mapsto t_f,
	\end{align*}
	for any generating morphism $f$ in $\cC$. Note that $(-1)^{|f|}$ is a unit in $k$, and note that the term after $(-1)^{|f|}f_2$ lives in the lower part of the semifree filtration of $\cC'$, hence this is indeed an elementary automorphism of $\cC'$. Redefine $f_1,f_2,t_f$ using this automorphism, then we have
	\[df_2=0,\quad\text{and}\quad dt_f=f_2,\]
	for any generating morphism $f$ in $\cC$. These are  the only differential relations in $\cC'$ containing $f_2$ or $t_f$, hence by destabilisation (see \cite[Section 2.6]{subcritical}), $f_2$ and $t_f$ cancel each other. In other words, $\cC'$ is quasi-equivalent to $\cC_1=\cC$. This shows that $\Cyl(\cC)$ is quasi-equivalent to $\cC$.
	
	Finally, $p\colon\Cyl(\cC)\to\cC$ is a quasi-equivalence, because $p$ sends $f_1$ in $\Cyl(\cC)$ to $f$ in $\cC$ for each generating morphism $f$ in $\cC$. Hence, $\Cyl(\cC)$ is a cylindrical object for $\cC$ in any mentioned model categories.
	
	Also, it is easy to see that $p$ is a fibration, hence $\Cyl(\cC)$ is ``very good'' in the sense of Definition \ref{dfn:cylinder}.
\end{proof}

There is a useful generalisation of Theorem \ref{thm:cylinder} when a dg category is expressed as a localisation.

\begin{thm}\label{thm:cylinder-loc}
	If $\cC$ is a semifree dg category, and if $S$ is a subset of closed degree zero morphisms in $\cC$, then $\Cyl(\cC)[\bar S^{-1}]$ is a ``very good'' cylindrical object for $\cC[S^{-1}]$ for the Dwyer-Kan, quasi-equiconic, and Morita model structures on the category of dg categories $\dgCat$, where
	\[\bar S=\{f_1,f_2\vb f\in S\},\]
	and where $f_1$ and $f_2$ are the copies of $f$ in $\cC_1\sqcup\cC_2\subset\Cyl(\cC)$ with $\cC_1=\cC_2=\cC$.
\end{thm}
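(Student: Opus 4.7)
The plan is to verify that $\Cyl(\cC)[\bar S^{-1}]$ satisfies the defining properties of a ``very good'' cylinder object for $\cC[S^{-1}]$: the canonical inclusion $\iota\colon \cC[S^{-1}]\amalg \cC[S^{-1}]\to \Cyl(\cC)[\bar S^{-1}]$ must be a cofibration, and the projection $\bar p\colon \Cyl(\cC)[\bar S^{-1}]\to \cC[S^{-1}]$ must be an acyclic fibration. The strategy is to reduce both properties to Theorem \ref{thm:cylinder} by exploiting the compatibility between the cylinder construction and the semifree description of dg localization provided by Proposition \ref{prp:localisation-semifree}.

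For the cofibration and fibration parts, I would observe that the semifree localization generators $f'_i, \hat f_i, \check f_i, \bar f_i$ attached to each $f_i\in\bar S$ live entirely within the copy $\cC_i$. Consequently $(\cC_1\amalg\cC_2)[\bar S^{-1}]$ is canonically identified with $\cC[S^{-1}]\amalg \cC[S^{-1}]$, and $\Cyl(\cC)[\bar S^{-1}]$ is obtained from it by further semifreely adjoining the cylinder morphisms $t_A, t_f$ together with the inversion data $t'_A,\hat t_A,\check t_A,\bar t_A$ of Definition \ref{dfn:cylinder-dg}. This exhibits $\iota$ as a semifree extension and hence a cofibration by Proposition \ref{prp:cofibration}. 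For the projection, since $p$ from Theorem \ref{thm:cylinder} satisfies $p(f_1)=p(f_2)=f$, we have $p(\bar S)=S$, so the universal property of dg localization produces a dg functor $\bar p$ that sends the inversion data for each $f_i\in\bar S$ to that of $f\in S$; the map $\bar p$ inherits surjectivity on morphism complexes and the isofibration property from $p$, and is therefore a fibration in all three model structures.

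The quasi-equivalence of $\bar p$ is the key point, and I would argue it in two steps. Since the two inclusions $\cC\to\Cyl(\cC)$ are left homotopic via the cylinder, the classes $[f_1]$ and $[f_2]$ agree in $H^0\Cyl(\cC)$ for every $f\in S$, so $f_2$ is a weak equivalence in $\Cyl(\cC)$ as soon as $f_1$ is. Using Lemma \ref{lem:semifree-trivial}, the inversion data for $f_2$ can then be built from that of $f_1$ and the cylinder homotopy by a quasi-trivial semifree extension, yielding a quasi-equivalence $\Cyl(\cC)[S_1^{-1}]\simeq \Cyl(\cC)[\bar S^{-1}]$, where $S_1=\{f_1\vb f\in S\}$. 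Presenting both $\cC[S^{-1}]$ and $\Cyl(\cC)[S_1^{-1}]$ as homotopy colimits via Proposition \ref{prp:localisation}, the bijection $S_1\xrightarrow{\sim} S$ induced by $p$ together with the quasi-equivalence $p\colon \Cyl(\cC)\to\cC$ yields a pointwise quasi-equivalence of diagrams of semifree (hence cofibrant) objects, and hence a quasi-equivalence on homotopy colimits. Composing these gives $\bar p$ as a quasi-equivalence.

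The main obstacle will be rigorously showing that enlarging $\Cyl(\cC)[S_1^{-1}]$ by further inverting the homotopy-equivalent morphisms $\{f_2\vb f\in S\}$ is a quasi-trivial operation. This requires producing explicit strict inverses and contracting homotopies for each $f_2$ from those of $f_1$ via the cylinder homotopy (or its iterated composition when $f\in S$ is not a generating morphism of $\cC$), and verifying that the additional localization generators assemble into an acyclic semifree extension. This is a destabilisation argument in the spirit of the proof of Theorem \ref{thm:cylinder}, complicated slightly by the need to interleave the cylinder cancellations with the preexisting localization data.
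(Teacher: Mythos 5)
Your proposal follows essentially the same route as the paper: factor the codiagonal as $\cC[S^{-1}]\amalg\cC[S^{-1}]\xrightarrow{i}\Cyl(\cC)[\bar S^{-1}]\xrightarrow{p}\cC[S^{-1}]$, observe that $i$ is a semifree extension (hence a cofibration), and deduce that $p$ is an acyclic fibration from the quasi-equivalence $\Cyl(\cC)\to\cC$ of Theorem \ref{thm:cylinder}. The paper's proof is a three-line version of yours; your two-step treatment of the quasi-equivalence of $p$ (first inverting $S_1=\{f_1\vb f\in S\}$, then noting each $f_2=t_Bf_1t_A^{-1}$ is already invertible in $H^0$) correctly handles the fact that $\bar S\to S$ is two-to-one, a point the paper leaves implicit, and the ``main obstacle'' you flag at the end is resolved more cheaply by the universal property of dg localisation than by explicit destabilisation.
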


\begin{proof}
	If we look at the decomposition of the codiagonal map
	\[\cC[S^{-1}]\amalg \cC[S^{-1}]\xrightarrow{i}\Cyl(\cC)[\bar S^{-1}]\xrightarrow{p}\cC[S^{-1}],\]
	it is easy to see that $i$ is a semifree extension, hence a cofibration. Moreover, $p$ is a quasi-equivalence since
	\[\Cyl(\cC)\to\cC\]
	is a quasi-equivalence. Also, $p$ is clearly a fibration.
\end{proof}

\subsection{Homotopy Colimit for Semifree DG Categories}

We will discuss homotopy pushouts in this section. Note that any colimit can be obtained from pushouts.

The following application of the cylinder object will be our main tool for homotopy colimit calculations.

\begin{thm}\label{thm:hocolim}
	Let $\cA, \cB, \cC$ be semifree dg categories, $\alpha\colon\cC\to\cA$ and $\beta\colon\cC\to\cB$ be dg functors. The homotopy colimit
	\[\hocolim\left(
	\begin{tikzcd}
		\cA & & \cB\\
		& \cC\ar[lu,"\alpha"]\ar[ru,"\beta"']
	\end{tikzcd}
	\right),\]
	for the Dwyer-Kan, quasi-equiconic, and Morita model structures on the category of dg categories $\dgCat$ can be expressed as the semifree dg category $\cD[\{t_C\vb C\in\cC\}^{-1}]$, where $\cD$ is a semifree extension of $\cA\amalg\cB$ by the morphisms consisting of
	\begin{itemize}
		\item closed degree zero morphisms $t_C\colon \alpha(C)\to \beta(C)$ for each $C\in\cC$,
		\item degree $|f|-1$ morphisms $t_f\colon \alpha(A)\to \beta(B)$ for each generating morphism $f\in\hom^*_{\cC}(A,B)$.
	\end{itemize}
	The differential of the added morphisms are given by the following; 
	if 
	\begin{gather*}
	df = \begin{cases}
		c 1_A + \sum_{i=1}^m c_i f^{i,n_i}\ldots f^{i,j}\ldots f^{i,1}, &\text{  if  } f \in \hom^*_{\cC}(A,A),\\ 
		\sum_{i=1}^m c_i f^{i,n_i}\ldots f^{i,j}\ldots f^{i,1}, &\text{  if  } f \in \hom^*_{\cC}(A,B) \text{  such that  } A \neq B,
		\end{cases}
	\end{gather*}
	where $f^{i,j}$ are generating morphisms of $\cC$, and where $c, c_i\in k$, then 
	\begin{multline*}
	dt_f=(-1)^{|f|}(\beta(f) t_{A}-t_{B} \alpha(f))+\\
	\sum_{i=1}^m c_i \sum_{j=1}^{n_i}(-1)^{|f^{i,j-1}|+\ldots+|f^{i,1}|} \beta(f^{i,n_i}) \ldots \beta(f^{i,j+1}) t_{f^{i,j}} \alpha(f^{i,j-1})\ldots \alpha(f^{i,1}).
	\end{multline*}
\end{thm}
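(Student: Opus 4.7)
The plan is to realise the homotopy colimit as an ordinary colimit using the ``very good'' cylinder object $\Cyl(\cC)$ from Theorem \ref{thm:cylinder}, and then compute that colimit explicitly via Proposition \ref{prp:colimit}. Since $\cA,\cB,\cC$ are semifree, they are cofibrant in all three model structures under consideration, so Proposition \ref{prp:hocolim-to-colim} applies uniformly and yields
\[\hocolim\left(
\begin{tikzcd}
\cA & & \cB\\
& \cC\ar[lu,"\alpha"]\ar[ru,"\beta"']
\end{tikzcd}
\right)
\simeq
\colim\left(
\begin{tikzcd}[column sep=0.3cm]
\cA & & \Cyl(\cC) & & \cB\\
& \cC\ar[lu,"\alpha"]\ar[ru,"i_1"'] & & \cC\ar[lu,"i_2"]\ar[ru,"\beta"']
\end{tikzcd}
\right).
\]
Equivalently, combining the two inclusions into the single semifree extension $i_1\amalg i_2\colon \cC\amalg\cC\to\Cyl(\cC)$, the right-hand side is just the pushout of $i_1\amalg i_2$ along $\alpha\amalg\beta\colon\cC\amalg\cC\to\cA\amalg\cB$, which will then be computed in one step.

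The next step is to apply Proposition \ref{prp:colimit} to this pushout. Recall from Definition \ref{dfn:cylinder-pre-dg} and Definition \ref{dfn:cylinder-dg} that $\Cyl(\cC)$ is obtained from $\cC_1\amalg\cC_2$ by first semifreely adjoining the morphisms $t_A$ and $t_f$ with the explicit differentials displayed in Definition \ref{dfn:cylinder-pre-dg}, and then adjoining the localisation data $t_C',\hat t_C,\check t_C,\bar t_C$ that inverts each $t_C$ (Proposition \ref{prp:localisation-semifree}). Proposition \ref{prp:colimit} then says the pushout along $\alpha\amalg\beta$ is obtained from $\cA\amalg\cB$ by freely adjoining the ``same'' morphisms, where an added morphism $t_A\colon A_1\to A_2$ in $\Cyl(\cC)$ now becomes a morphism $\alpha(A)\to\beta(A)$, and similarly $t_f$ becomes a morphism $\alpha(A)\to\beta(B)$. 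According to Proposition \ref{prp:colimit}, the differentials are transported by the pushforward rule $df':=(\alpha\amalg\beta)'(df)$; applying this to the formula for $dt_f$ in Definition \ref{dfn:cylinder-pre-dg} and substituting $f_1\mapsto\alpha(f)$, $f_2\mapsto\beta(f)$ gives precisely the differential for $t_f$ stated in the theorem.

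The localisation layer of $\Cyl(\cC)$ is itself a semifree extension by $t_C',\hat t_C,\check t_C,\bar t_C$, so it is pushed forward by the same mechanism, and by Proposition \ref{prp:localisation-semifree} this amounts to inverting the morphisms $\{t_C\mid C\in\cC\}$ in the resulting semifree dg category $\cD$. This produces the semifree dg category $\cD[\{t_C\mid C\in\cC\}^{-1}]$ exactly as in the statement. The main point requiring care is that $d^2 t_f=0$ in the output: but this identity already holds in $\Cyl(\cC)$ by construction, and because dg functors commute with differentials it is automatically preserved by the pushforward through $\alpha\amalg\beta$, so no separate sign check is needed. Finally, since the three model structures $\dgqe$, $\dgeq$, $\dgmo$ share the same cofibrations and since semifree dg categories are cofibrant in all of them, Proposition \ref{prp:hocolim-to-colim} and hence the entire argument applies uniformly, giving the desired description of the homotopy colimit in each of the three model structures simultaneously.
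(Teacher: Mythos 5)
Your proposal is correct and follows exactly the paper's route: the paper's proof is the one-line citation of Proposition \ref{prp:hocolim-to-colim}, Theorem \ref{thm:cylinder}, and Proposition \ref{prp:colimit}, and your argument simply spells out those same three steps (replacing the homotopy colimit by the colimit over the cylinder diagram, rewriting that colimit as the pushout of the semifree extension $i_1\amalg i_2$ along $\alpha\amalg\beta$, and transporting the generators and differentials via Proposition \ref{prp:colimit}). The additional details you supply, including the treatment of the localisation layer and the uniformity across the three model structures, are accurate.
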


\begin{proof}
	It directly follows from Proposition \ref{prp:hocolim-to-colim}, Theorem \ref{thm:cylinder}, and Proposition \ref{prp:colimit}.
\end{proof}

\begin{rmk}
	Theorem \ref{thm:hocolim} describes homotopy colimit of semifree dg categories as a semifree dg category since $\cD$ is a semifree dg category and a localisation of a semifree dg category can be expressed as a semifree dg category by Proposition \ref{prp:localisation-semifree}. This is useful in many ways. To name one, it allows using the homotopy colimit formula successively to compute homotopy colimit of larger diagrams.
\end{rmk}

There is an easier homotopy colimit formula when the dg categories are given by localisations. It is very useful, for example, when calculating wrapped Fukaya category of Weinstein manifolds via gluing partial wrapped Fukaya categories of Weinstein sectors (see \cite{gps2}). See also Chapter \ref{sec:wrap-lens} for an application.

\begin{thm}\label{thm:hocolim-loc}
	Let $\cA, \cB, \cC$ be semifree dg categories, $\alpha\colon\cC\to\cA$ and $\beta\colon\cC\to\cB$ be dg functors. If $S_{\cA},S_{\cB},S_{\cC}$ are subsets of closed degree zero morphisms in $\cA$, $\cB$, $\cC$, respectively, such that the morphisms in $\alpha(S_{\cC})$ (resp.\ $\beta(S_{\cC})$) are invertible in $H^0(\cA[S^{-1}_{\cA}])$ (resp.\ $H^0(\cB[S_{\cB}^{-1}])$), then we have
	\[\hocolim\left(
	\begin{tikzcd}[column sep=0.1cm]
		\cA[S_{\cA}^{-1}] & & \cB[S_{\cB}^{-1}]\\
		& \cC[S_{\cC}^{-1}]\ar[lu,"\alpha"]\ar[ru,"\beta"']
	\end{tikzcd}
	\right)
	\simeq
	\hocolim\left(
	\begin{tikzcd}
		\cA & & \cB\\
		& \cC\ar[lu,"\alpha"]\ar[ru,"\beta"']
	\end{tikzcd}
	\right)[S^{-1}],
	\]
	up to weak equivalence in the Dwyer-Kan, quasi-equiconic, and Morita model structures on the category of dg categories $\dgCat$, where
	\[S=S_{\cA}\sqcup S_{\cB},\]
	and where the formula for the latter homotopy colimit is given in Theorem \ref{thm:hocolim}.
\end{thm}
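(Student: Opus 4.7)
The plan is to apply Theorem \ref{thm:hocolim} to both sides of the claimed weak equivalence and then compare the two resulting semifree dg categories via destabilisation. By Proposition \ref{prp:localisation-semifree}, the localisations $\cA[S_\cA^{-1}]$, $\cB[S_\cB^{-1}]$, $\cC[S_\cC^{-1}]$ are themselves semifree dg categories, obtained from $\cA$, $\cB$, $\cC$ by freely adjoining the localisation gadgets $f'_i,\hat{f}_i,\check{f}_i,\bar{f}_i$ for each $f_i$ in the respective subset. Hence Theorem \ref{thm:hocolim} is directly applicable on both sides.

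Unfolding the RHS gives a semifree extension of $\cA\amalg\cB$ by the morphisms $t_C$ (for $C\in\cC$) and $t_f$ (for each generating morphism $f$ of $\cC$), followed by inverting $\{t_C\vb C\in\cC\}\cup S_\cA\cup S_\cB$. Unfolding the LHS gives a semifree extension of $\cA[S_\cA^{-1}]\amalg\cB[S_\cB^{-1}]$ by $t_C$ (for $C\in\cC$) and $t_g$ (for each generating morphism $g$ of $\cC[S_\cC^{-1}]$), followed by inverting only $\{t_C\}$. Since the generators of $\cC[S_\cC^{-1}]$ consist of the generators of $\cC$ together with the new gadgets $f'_i,\hat f_i,\check f_i,\bar f_i$ for $f_i\in S_\cC$, the LHS presentation differs from the RHS presentation only in carrying these additional gadgets inside $\cC[S_\cC^{-1}]$ together with their associated extra $t$-morphisms $t_{f'_i}, t_{\hat f_i}, t_{\check f_i}, t_{\bar f_i}$.

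The key step will be to destabilise these extra generators in pairs, in the same spirit as in the proof of Theorem \ref{thm:cylinder}. The invertibility hypothesis guarantees that $\alpha(f_i)$ has an inverse in $H^0(\cA[S_\cA^{-1}])$, represented inside $\cA[S_\cA^{-1}]$ by an explicit cycle built from the localisation gadgets for $S_\cA$, and similarly for $\beta(f_i)$. Using these quasi-inverses, one performs an elementary change of variables on the LHS so that the differential of each extra $t$-morphism becomes linear in a single corresponding new generator of $\cC[S_\cC^{-1}]$; the pair then destabilises. By performing these cancellations sequentially along the induced semifree filtration, one recovers the RHS presentation.

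The main obstacle will be the bookkeeping for these destabilisations: the differentials $d\hat f_i=1-f'_i f_i$, $d\check f_i=1-f_i f'_i$, and $d\bar f_i=f_i\hat f_i-\check f_i f_i$ produce cross-terms under the $t$-operation of Theorem \ref{thm:hocolim}, and one has to verify that the chosen elementary automorphism makes every differential pair correctly. An alternative and cleaner route is via universal properties: in $\Ho(\dgqe)$, $\Ho(\dgeq)$, and $\Ho(\dgmo)$, both sides co-represent the same functor sending a fibrant dg category $\cN$ to pairs of dg functors $F_\cA\colon\cA[S_\cA^{-1}]\to\cN$ and $F_\cB\colon\cB[S_\cB^{-1}]\to\cN$ equipped with a natural equivalence $F_\cA\circ\alpha\simeq F_\cB\circ\beta$. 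The invertibility hypothesis is exactly what ensures that any such natural equivalence between functors out of $\cC$ automatically extends to $\cC[S_\cC^{-1}]$, making the two universal properties agree.
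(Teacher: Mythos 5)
Your proposal takes a genuinely different route from the paper, and the route as described has a gap at its central step. The paper never forms the cylinder $\Cyl(\cC[S_{\cC}^{-1}])$: it invokes Theorem \ref{thm:cylinder-loc}, which exhibits $\Cyl(\cC)[\bar S_{\cC}^{-1}]$ as a cylinder object for $\cC[S_{\cC}^{-1}]$, and then computes the colimit of $\cA[S_{\cA}^{-1}]\leftarrow\cC[S_{\cC}^{-1}]\to\Cyl(\cC)[\bar S_{\cC}^{-1}]\leftarrow\cC[S_{\cC}^{-1}]\to\cB[S_{\cB}^{-1}]$ by Proposition \ref{prp:colimit}. Since the localisation gadgets for $f_1$ and $f_2$ ($f\in S_{\cC}$) already lie in the images of $i_1$ and $i_2$, the only generators added to $\cA[S_{\cA}^{-1}]\amalg\cB[S_{\cB}^{-1}]$ are the $t_C$, the $t_f$ for generating morphisms $f$ of $\cC$, and the gadgets inverting the $t_C$; the pushout is therefore \emph{identically} the right-hand side and no cancellation is needed. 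Your route instead applies Theorem \ref{thm:hocolim} to the localised diagram, i.e.\ uses $\Cyl(\cC[S_{\cC}^{-1}])$, and so must afterwards remove the four extra generators $t_{f'_i},t_{\hat f_i},t_{\check f_i},t_{\bar f_i}$ per $f_i\in S_{\cC}$.

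The removal step is where the argument fails as stated. Destabilisation, as used in the proof of Theorem \ref{thm:cylinder}, cancels a pair $u,v$ with $dv=u$, and the elementary automorphisms available to reach that normal form are changes of variables $v\mapsto uv+w$ with $u$ a unit \emph{in the ground ring} $k$. But the differential formula of Theorem \ref{thm:hocolim} gives, for instance, $dt_{\hat f_i}=-\beta(\hat f_i)t_{A_i}+t_{A_i}\alpha(\hat f_i)-\beta(f'_i)t_{f_i}-t_{f'_i}\alpha(f_i)$: the generator $t_{f'_i}$ you wish to cancel occurs only multiplied by $\alpha(f_i)$, which by hypothesis is invertible merely in $H^0(\cA[S_{\cA}^{-1}])$, not strictly (the gadgets give homotopy inverses, $d\hat g=1-g'g$, never strict ones). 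The same obstruction appears for every candidate pairing of the four extra generators. Hence no elementary automorphism brings these differentials to the form $dv=u+(\text{lower order})$, and the cited destabilisation machinery does not apply; one would need a genuinely homotopical cancellation (e.g.\ homological perturbation), which is precisely the work Theorem \ref{thm:cylinder-loc} is designed to avoid. Your fallback via corepresented functors is morally correct but also incomplete: the paper's universal property of dg localisation controls only $\pi_0$ of mapping spaces ($[\cC[S^{-1}],\cD]\hookrightarrow[\cC,\cD]$), whereas identifying two homotopy pushouts by corepresentability requires the universal property at the level of full mapping spaces. The efficient repair is to replace your first step by Theorem \ref{thm:cylinder-loc} combined with Propositions \ref{prp:hocolim-to-colim} and \ref{prp:colimit}, after which the extra generators never arise.
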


\begin{proof}
	The former homotopy colimit is quasi-equivalent to
	\[\colim\left(
	\begin{tikzcd}[column sep=0.05cm]
		\cA[S_{\cA}^{-1}] & & \Cyl(\cC)[\bar S_{\cC}^{-1}] & & \cB[S_{\cB}^{-1}]\\
		& \cC[S_{\cC}^{-1}]\ar[lu,"\alpha"]\ar[ru,"i_1"'] & & \cC[S_{\cC}^{-1}]\ar[lu,"i_2"]\ar[ru,"\beta"'] 
	\end{tikzcd}
	\right),
	\]
	by Proposition \ref{prp:hocolim-to-colim} and Theorem \ref{thm:cylinder-loc}. Then, Proposition \ref{prp:colimit} and Theorem \ref{thm:hocolim} conclude the proof.
\end{proof}

\subsection{Homotopy Limit for Representations of Semifree DG Categories}

Lastly, we will state a theorem, which is useful, for example, when we glue microlocal sheaf categories, see \cite{pinwheel}. Microlocal sheaf categories are naturally presented as $\Mod\,\cC$ where $\cC$ is a semifree dg category.

\begin{thm}\label{thm:holim}
	Let $\cA, \cB, \cC$ be semifree dg categories, $\alpha\colon\cC\to\cA$ and $\beta\colon\cC\to\cB$ be dg functors. Then we have
	\[\holim\left(
	\begin{tikzcd}[column sep=0.1cm]
		\Mod\,\cA\ar[rd,"\Mod\,\alpha"'] & & \Mod\,\cB\ar[ld,"\Mod\,\beta"]\\
		& \Mod\,\cC
	\end{tikzcd}
	\right)
	\simeq
	\Mod\left(\hocolim\left(
	\begin{tikzcd}
		\cA & & \cB\\
		& \cC\ar[lu,"\alpha"]\ar[ru,"\beta"']
	\end{tikzcd}
	\right)\right),\]
	up to quasi-equivalence, where $\Mod\,\alpha$ and $\Mod\,\beta$ are induced functors, and the formula for the latter homotopy colimit is given in Theorem \ref{thm:hocolim}.
\end{thm}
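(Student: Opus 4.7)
The plan is to prove this abstractly by showing that the contravariant functor $\RHom(-, \Mod\,k)$ converts homotopy colimits of dg categories into homotopy limits; the semifreeness hypothesis on $\cA,\cB,\cC$ is then only needed to make the right-hand side concrete via Theorem \ref{thm:hocolim}, not for the theorem itself.

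First I would use that $(-)^{\op}$ is an autoequivalence of $\dgqe$ and hence commutes with $\hocolim$, so
\[\hocolim(\cA \leftarrow \cC \to \cB)^{\op} \simeq \hocolim(\cA^{\op} \leftarrow \cC^{\op} \to \cB^{\op}).\]
The key step is then to show that for any fixed target $\cE$ (eventually taken to be $\Mod\,k$), $\RHom(-, \cE)$ sends a homotopy pushout of dg categories to a homotopy pullback. I would establish this as a special case of a general Yoneda argument in $\Ho(\dgqe)$, using the closed symmetric monoidal structure of \cite{toen-morita}: for any dg category $\cF$ and any small diagram $\{\cC_i\}$,
\begin{align*}
	[\cF, \RHom(\hocolim_i \cC_i, \cE)] &\simeq [\cF \otimes^{\L} \hocolim_i \cC_i, \cE] \\
	&\simeq [\hocolim_i(\cF \otimes^{\L} \cC_i), \cE] \\
	&\simeq \holim_i [\cF \otimes^{\L} \cC_i, \cE] \\
	&\simeq \holim_i [\cF, \RHom(\cC_i, \cE)] \\
	&\simeq [\cF, \holim_i \RHom(\cC_i, \cE)].
\end{align*}
The first and fourth lines are the tensor-hom adjunction; the second line uses that $\cF \otimes^{\L} (-)$ preserves homotopy colimits as a left adjoint; the third and fifth lines are the standard characterisations of homotopy (co)limits via mapping spaces. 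By the dg Yoneda lemma, this chain of equivalences forces $\RHom(\hocolim_i \cC_i, \cE) \simeq \holim_i \RHom(\cC_i, \cE)$. Combining this with the first step and the definition $\Mod\,\cC = \RHom(\cC^{\op}, \Mod\,k)$ yields the theorem.

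The main obstacle I anticipate is making the third and fifth lines above rigorous: at the level of $\Ho(\dgqe)$ the bracket $[-,-]$ only records connected components of mapping spaces, so the interchange of $[-,-]$ with $\holim$ is not automatic in the homotopy category (there is only a Milnor-type exact sequence in general). I would avoid this technicality by passing to the underlying $\infty$-category of $\dgqe$, or equivalently by working with the enriched mapping complexes of the Dwyer-Kan model structure, where the enriched tensor-hom adjunction and enriched Yoneda lemma directly upgrade the identifications to the level of mapping spaces and the argument becomes clean.
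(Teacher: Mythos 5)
Your proposal is correct and takes essentially the same route as the paper: the paper's own proof is just the one-line observation that $\Mod(\cC)=\RHom(\cC^{\op},\Mod\,k)$ is an internal Hom in $\Ho(\dgqe)$ and therefore converts homotopy colimits into homotopy limits, with Theorem \ref{thm:hocolim} supplying the concrete description. Your Yoneda/tensor--hom expansion, and your remark that the interchange must really be carried out at the level of mapping spaces (or the underlying $\infty$-category) rather than just $[-,-]$, is a legitimate and more careful filling-in of the details the paper leaves implicit.
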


\begin{proof}
	Since $\RHom(\uline,\uline)$ is internal Hom in $\Ho(\dgqe)$, the functor
	\begin{align*}
		\Mod\colon \Ho(\dgqe)^{\op}&\longrightarrow\Ho(\dgqe)\\
		\cC &\longmapsto\Mod(\cC):=\RHom(\cC^{\op},\Mod\,k)
	\end{align*}
	preserves limits. The rest is Theorem \ref{thm:hocolim}.
\end{proof}

\begin{rmk}
	Compare Theorem \ref{thm:holim} with the homotopy limit formula given in \cite{canonaco}. The former gives an easier way to compute homotopy limits when it applies.
\end{rmk}

\section{Wrapped Fukaya Category of the Cotangent Bundles of Lens Spaces}\label{sec:wrap-lens}

In this chapter, we will describe the wrapped Fukaya category of the cotangent bundles of the lens spaces (Theorem \ref{thm:wrap-lens}), and also the chains on the based loop space of lens spaces (Theorem \ref{thm:loop-lens}). For that, we will use the Heegaard diagram for lens spaces. This will induce homotopy colimit diagrams for the mentioned invariants for lens spaces by Theorem \ref{thm:wrapped-hocolim} and Theorem \ref{thm:loop-hocolim}, which will allow us to apply our homotopy colimit formulas (Theorem \ref{thm:hocolim} and Theorem \ref{thm:hocolim-loc}) to calculate them.

\subsection{Wrapped Fukaya Categories and Chains on the Based Loops Spaces}

We will recall two classes of symplectic manifolds: Liouville and Weinstein manifolds. We will also recall a powerful invariant associated to them: Wrapped Fukaya category. Our main references are \cite{weinstein}, \cite{gps1}, and \cite{gps2}. Let $k$ be a commutative ring.

\begin{dfn}
	A \textit{Liouville manifold} $(W,\theta,Z)$ is an (even-dimensional) smooth manifold  $W$, with a 1-form $\theta$ on $W$, called a \textit{Liouville form}, and a complete vector field $Z$ on $W$, called a \textit{Liouville vector field}, such that
	\begin{itemize}
		\item $d\theta$ is a symplectic form on $W$,		
		\item the equation $d\theta(Z,\uline)=\theta$ holds,
		\item there is an exhaustion $W_1\subset W_2\subset\ldots\subset W$ such that each $W_i$ is a compact domain with a smooth boundary,
		\item $Z$ is outwardly transverse to $\partial W_i$ for each $i$.
	\end{itemize}
\end{dfn}

\begin{dfn}
	The \textit{skeleton} of a Liouville manifold $(W,\theta,Z)$ is given by
	\[\bigcup_{i=1}^\infty\bigcap_{t> 0} Z^{-t}(W_i),\]
	where
	\[Z^{-t}\colon W\to W\]
	is the negative flow of $Z$, and $W_1\subset W_2\subset\ldots\subset W$ is a compact exhaustion of $W$.
\end{dfn}

\begin{dfn}
	A Liouville manifold $(W,\theta,Z)$ is \textit{finite-type}, if its skeleton is compact. In this case, we can write
	\[W = W_c \cup_{\partial W_c} (\partial W_c\times [0,\infty)),\]
	where $W_c$ is a compact domain with a smooth boundary containing the skeleton, and $Z$ is outwardly transverse to $\partial W_c$. $\partial W_c\times [0,\infty)$ is called the \textit{cylindrical end} of $W$. The Liouville form $\theta$ becomes
	\[e^r\theta|_{\partial W_C}\]
	on the cylindrical end, where $r$ is the radial coordinate on $[0,\infty)$.
\end{dfn}

From now on, we will assume that all Liouville manifolds are finite-type.

\begin{dfn}
	An \textit{exact Lagrangian} $L$ in a Liouville manifold $(W,\theta,Z)$ is a half-dimensional submanifold of $W$ such that
	\[\theta|_L=df\]
	for some smooth function $f\colon L\to \R$. $L$ is \textit{with cylindrical end}, if $\theta|_L=0$ outside a compact domain, or equivalently, it is of the form
	\[L=L_c\cup_{\partial L_c}(\partial L_c\times [0,\infty)),\]
	where $\partial L_c\times [0,\infty)$ lives inside the cylindrical end of $W$ nicely.
\end{dfn}

\begin{dfn}
	A \textit{Weinstein manifold} $(W,\theta,Z)$ is a Liouville manifold $(W,\theta,Z)$ such that $Z$ is gradient-like (see \cite{weinstein}) with respect to a proper Morse function $\phi\colon W\to \R_{\geq 0}$.
\end{dfn}

\begin{exm}
	The cotangent bundle $T^*M$ of a closed smooth manifold $M$ is a Weinstein manifold with the skeleton $M$.
\end{exm}

\begin{prp}
	For a $2n$-dimensional Weinstein manifold $W$, the Morse function $\phi$ gives a handle decompositon for $W$ with Weinstein handles whose cores are isotropic submanifolds of $W$. In particular, the indices of the handles are at most $n$.
\end{prp}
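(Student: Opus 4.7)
The plan is to combine standard Morse-theoretic arguments with the expansion property of the Liouville flow. Since $\phi \colon W \to \R_{\geq 0}$ is proper and Morse, and $Z$ is gradient-like for $\phi$, classical Morse theory produces a handle decomposition of $W$ whose handles are in bijection with the critical points of $\phi$. The core of the handle attached at a critical point $p$ will be taken to be the descending manifold
\[
W^s(p) := \{\, q \in W \mid Z^{-t}(q) \to p \text{ as } t \to \infty \,\},
\]
where $Z^{-t}$ denotes the time-$t$ flow of $-Z$; its dimension equals the Morse index of $p$, which in turn equals the index of the attached handle.

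The key step is to show each $W^s(p)$ is isotropic for the symplectic form $d\theta$. Because $\mathcal{L}_Z(d\theta) = d\theta$, the flow satisfies $(Z^{-t})^*(d\theta) = e^{-t}\, d\theta$, so for any $q \in W^s(p)$ and any $u, v \in T_q W^s(p)$ one obtains
\[
d\theta|_q(u, v) \;=\; e^{t}\, d\theta|_{Z^{-t}(q)}\bigl((Z^{-t})_* u,\, (Z^{-t})_* v\bigr).
\]
As $t \to \infty$, $Z^{-t}(q) \to p$ while the pushed-forward tangent vectors lie in the stable subspace of the linearization $DZ_p$ and contract exponentially; once the contraction rate dominates the $e^t$ factor, the right-hand side tends to zero, forcing $d\theta|_q(u, v) = 0$. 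Therefore each $W^s(p)$ is isotropic, the handle at $p$ is a Weinstein handle in the usual sense, and because an isotropic submanifold of a $2n$-dimensional symplectic manifold has dimension at most $n$, every handle has index at most $n$.

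The principal obstacle is the spectral condition on $DZ_p$ needed in the limit above. Either one imposes from the outset (as part of the definition of a Weinstein structure) that $Z$ contracts along $W^s(p)$ at a rate greater than $1/2$ and expands transversally at the same rate, or one verifies that any Liouville/Morse pair can be homotoped through Weinstein structures to one with this property. The cleanest route is to pass to a local Darboux-type normal form near each critical point, in which $Z$ becomes an explicit contracting-expanding sum aligned with a standard symplectic form, and read off the isotropy of $W^s(p)$ directly from the model.
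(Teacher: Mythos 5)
First, note that the paper does not prove this proposition at all: it is stated as recalled background from the cited references (essentially Cieliebak--Eliashberg), so there is no in-paper argument to compare yours against. Judged on its own terms, your proposal has the right mechanism ($\mathcal{L}_Z(d\theta)=d\theta$, flow into the critical point, conformal rescaling of the symplectic form) but a sign error in the direction of the flow that both breaks the argument and manufactures the ``principal obstacle'' you then struggle with. Since $Z$ is gradient-like for $\phi$ and points outward (it is outwardly transverse to the exhaustion), the core of the handle at $p$ is the \emph{descending} disk, which is the stable manifold of the \emph{forward} Liouville flow, $\{q : Z^{t}(q)\to p \text{ as } t\to+\infty\}$; its dimension is the Morse index of $p$. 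The set you wrote down, $\{q : Z^{-t}(q)\to p \text{ as } t\to\infty\}$, is the ascending (unstable) manifold, which has dimension $2n-k$ and is in general coisotropic rather than isotropic --- at a local minimum of $\phi$ it is an open set. For that manifold your limiting argument cannot be repaired: linearising $\mathcal{L}_Zd\theta=d\theta$ at $p$ gives $d\theta(Au,v)+d\theta(u,Av)=d\theta(u,v)$ for $A=DZ_p$, so eigenvalues pair up as $(\lambda,1-\lambda)$ under $d\theta$-duality; on the unstable side both members of such a pair can occur, the total contraction of $(Z^{-t})_*$ on a dual pair is exactly $e^{-t}$, and it cancels your prefactor $e^{t}$ on the nose. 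No homotopy of the Weinstein structure can make the contraction rate exceed $1/2$ in both directions of a pair summing to $1$.

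Once the direction is corrected, the difficulty you flag evaporates and no spectral condition or normal form is needed for the statement as given. For $q$ in the stable manifold and $u,v$ tangent to it, $(Z^{t})^*d\theta=e^{t}d\theta$ gives
\[
d\theta_q(u,v)\;=\;e^{-t}\,d\theta_{Z^{t}(q)}\bigl((Z^{t})_*u,\,(Z^{t})_*v\bigr),
\]
and as $t\to+\infty$ the point $Z^{t}(q)$ converges to $p$ while $(Z^{t})_*u$, $(Z^{t})_*v$ stay bounded (indeed contract), so the right-hand side is $e^{-t}\cdot O(1)\to 0$: the conformal factor now cooperates with the contraction instead of fighting it. Hence the cores are isotropic and, being isotropic submanifolds of a $2n$-dimensional symplectic manifold, have dimension at most $n$; equivalently, the eigenvalue pairing $(\lambda,1-\lambda)$ shows directly that the stable subspace $E^s$ of $DZ_p$ is isotropic of dimension at most $n$, since two eigenvalues with negative real part cannot sum to $1$. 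The Darboux-type normal form you invoke at the end is only needed if one wants the handles to be literally standard Weinstein handles, not for the isotropy of the cores or the index bound.
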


\begin{dfn}
	A Weinstein $n$-handle (resp.\ ``$<n$''-handle) of a $2n$-dimensional Weinstein manifold $W$ is called a \textit{critical handle} (resp \textit{subcritical handle}) of $W$.
\end{dfn}

\begin{prp}
	The skeleton of a Weinstein manifold $W$ is the union of cores of the Weinstein handles. In particular, the skeleton is isotropic (possibly singular) in $W$.
\end{prp}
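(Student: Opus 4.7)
The strategy is to rewrite the skeleton of $W$ in dynamical terms and then use the gradient-like property of $Z$ to match the resulting set with the union of handle cores. By definition, a point $p\in W$ lies in the skeleton if and only if there exists a compact $W_i$ in the exhaustion such that $Z^{-t}(p)\in W_i$ for all $t\geq 0$; equivalently, the entire backward Liouville orbit of $p$ is relatively compact in $W$. This dynamical reformulation is the starting point.

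Next I would invoke the Weinstein condition. Since $Z$ is gradient-like for the proper Morse function $\phi\colon W\to\R_{\geq 0}$, the function $\phi$ strictly decreases along the $(-Z)$-flow away from $\textup{Crit}(\phi)$. Properness of $\phi$ together with relative compactness of the backward orbit of $p$ forces $\phi(Z^{-t}(p))$ to converge as $t\to\infty$, and a standard Lyapunov / $\omega$-limit argument for gradient-like flows shows that the orbit itself limits to a single critical point $q$. Conversely, any point whose backward orbit approaches a critical point is backward-bounded. Hence
\[\textup{skeleton}(W)=\bigcup_{q\in\textup{Crit}(\phi)} W^{s}(q),\]
where $W^{s}(q)$ is the stable manifold of $q$ under the flow of $-Z$.

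To finish, I would appeal to the standard local model of a Weinstein handle at a critical point $q$ of index $\ell\leq n$: the attached Weinstein $\ell$-handle has as its core precisely the stable manifold of $q$, an $\ell$-dimensional isotropic disc in $(W,d\theta)$. Globalising over all critical points then rewrites the displayed union as the union of Weinstein handle cores, proving the first assertion. The ``isotropic (possibly singular)'' statement follows immediately: each core is isotropic in $(W,d\theta)$, so the union is isotropic at its smooth points, with singularities arising only along the attaching loci where stable manifolds of different indices meet. The one nontrivial analytic step — and the place I would spend most care — is the convergence of backward-bounded gradient-like orbits to a single critical point; this is classical for Morse Lyapunov systems, but it is the only ingredient that is not pure bookkeeping about handle decompositions.
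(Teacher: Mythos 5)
The paper states this proposition as a standard fact from the Weinstein-manifold literature and gives no proof, so there is nothing to compare line by line; your overall strategy --- reformulate the skeleton dynamically, use the gradient-like condition to show bounded orbits converge to critical points, and match the resulting invariant manifolds with handle cores via the local model --- is indeed the standard argument. However, as written your proof runs the Liouville flow in the wrong direction, and this breaks both key identifications. Unwinding the definition, $p\in\bigcap_{t>0}Z^{-t}(W_i)$ means $Z^{t}(p)\in W_i$ for all $t>0$: it is the \emph{forward} $Z$-orbit of $p$ that must stay in the compact set, not the backward one. Since $Z$ is outwardly transverse to $\partial W_i$ and the $W_i$ exhaust $W$, the backward orbit of \emph{every} point is relatively compact, so your stated criterion would make the skeleton all of $W$.

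The error propagates. The set you arrive at, $\bigcup_{q} W^{s}(q)$ with $W^{s}(q)$ the stable manifold for the flow of $-Z$, equals all of $W$ (every backward orbit converges to some critical point, since $\phi$ decreases along it and is bounded below), and in the local model at an index-$\ell$ critical point this stable manifold is the $(2n-\ell)$-dimensional ascending manifold --- the co-core direction --- not the $\ell$-dimensional core. The correct chain is: the skeleton is the set of points whose forward $Z$-orbit stays bounded, hence converges to a critical point (here your Lyapunov argument applies verbatim, using that $\phi$ \emph{increases} along $Z$ because $d\phi(Z)>0$); this set is the union of the stable manifolds for the flow of $+Z$, i.e.\ the descending manifolds of $\phi$, which in the Weinstein handle model are exactly the $\ell$-dimensional cores; and each is isotropic because $(Z^{t})^{*}d\theta=e^{t}\,d\theta$ while the forward flow converges to $q$ with bounded derivative along the stable manifold, forcing $d\theta$ to vanish there. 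With ``forward'' and ``backward'' (and correspondingly ``stable for $-Z$'' and ``stable for $Z$'') interchanged throughout, your argument becomes the standard correct proof.
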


\begin{dfn}
	A \textit{Liouville sector} (resp.\ \textit{Weinstein sector}) is a Liouville (resp.\ Weinstein) manifold with boundary satisfying the conditions given in \cite{gps1} near its boundary.
\end{dfn}

Studying Lagrangians in Liouville sectors gives a powerful invariant, presented as an $A_{\infty}$-category . Note that any $A_{\infty}$-category is equivalent to a dg category. See \cite{seidel} for a review of $A_{\infty}$-categories.

\begin{dfn}\label{dfn:wrapped-fukaya}
	The \textit{wrapped Fukaya category} $\cW(W)$ of a Liouville sector $W$ is an $A_{\infty}$-category  whose objects are exact immersed Lagrangians with cylindrical end, and morphisms are given by the direct limit
	\[\hom^*(L_1,L_2):=\lim_{\substack{\longrightarrow \\ L_1\to L_1^+}}k\langle L_1^+\cap L_2\rangle,\]
	over positive isotopies $L_1\to L_1^+$, see \cite{gps2} for the details. $A_{\infty}$-relations are coming from counting pseudo-holomorphic polygons in $W$ bounded by Lagrangians.
	
	Note that $\cW(W)$ can be always made $\Z/2$-graded, however, it can be made $\Z$-graded only when $2c_1(W)=0$ in $H^2(W;\Z)$. The definition of $\Z$-graded $\cW(W)$ depends on the choice of a quadratic complex volume form on $TW$ as explained in \cite{seidel}. This gives effectively $H^1(W;\Z)$ many choices for $\cW(W)$. For cotangent bundles, there is a canonical choice.
\end{dfn}

There is a generation result for wrapped Fukaya categories by \cite{cdrgg} and \cite{gps2}. For cotangent bundles, this is originally due to \cite{abouzaid-wrapped-generation}.

\begin{thm}\label{thm:wrapped-generation}
	The wrapped Fukaya category $\cW(W)$ of a Weinstein manifold $W$ is generated by the Lagrangian cocores of the critical handles of $W$. In particular, $\cW(T^*M)$ is generated by a cotangent fibre of $T^*M$ when $M$ is a connected closed smooth manifold.
\end{thm}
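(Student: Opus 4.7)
My plan is to reduce the first statement to a sectorial Mayer--Vietoris computation combined with a vanishing result for subcritical Weinstein manifolds, and then deduce the cotangent bundle statement by a suitable choice of Morse function on $M$.

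First, I would fix a Weinstein handle decomposition of $W$ and let $W_{\textup{sub}}\subset W$ be the sub-Weinstein submanifold obtained by removing open neighborhoods of the Lagrangian cores of the critical handles. The aim is to show that $\cW(W_{\textup{sub}})\simeq 0$. Subcritical Weinstein manifolds admit sectorial coverings whose pieces are modeled on $\C^{n-k}\times T^*D^k$ with $k<n$; applying the sectorial gluing theorem (Theorem \ref{thm:gps-intro}) reduces the vanishing to the case of these local models, which one verifies directly by observing that their skeleta are isotropic of positive codimension and the cofinal wrapping procedure of Definition \ref{dfn:wrapped-fukaya} produces no nontrivial intersections in the limit.

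Next, I would invoke the critical handle attachment formula of Ganatra--Pardon--Shende, which identifies $\cW(W)$ with the category built from $\cW(W_{\textup{sub}})$ by adjoining the cocores $L_i$ of the critical handles as new objects together with the wrapped morphisms coming from their attaching spheres. Since $\cW(W_{\textup{sub}})\simeq 0$, the resulting category is pretriangulated-generated by the $L_i$, which is exactly the first assertion. I expect this handle-attachment formula to be the main obstacle: its proof in \cite{gps2} requires delicate control over holomorphic disks near the belt spheres of critical handles and a compatible choice of cofinal wrappings, and essentially all of the symplectic technology is packaged into it.

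Finally, for $T^*M$ with $M$ a connected closed manifold of dimension $n$, I would choose a Morse function $\phi\colon M\to\R$ with a unique maximum $p$. This lifts to a Weinstein handle decomposition of $T^*M$ with a unique critical ($n$-)handle, whose cocore is the cotangent fibre $T^*_pM$. The first assertion then gives that $T^*_pM$ generates $\cW(T^*M)$; since $M$ is connected, any other cotangent fibre is Hamiltonian isotopic to $T^*_pM$ in $T^*M$ and hence represents the same object in $\cW(T^*M)$, completing the argument.
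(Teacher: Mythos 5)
The paper does not prove this statement: it is quoted as a known result, attributed to \cite{cdrgg} and \cite{gps2} in general and to \cite{abouzaid-wrapped-generation} for cotangent bundles, so there is no in-paper argument to compare yours against. Judged on its own terms, your sketch is an accurate outline of the Ganatra--Pardon--Shende route: the two genuine inputs are (a) vanishing of the wrapped Fukaya category of the subcritical part and (b) the critical-handle-attachment (stop-removal) formula, and you are right that essentially all of the symplectic content is packaged into (b), which you cannot reprove here. The cotangent-bundle specialization is also correct: a Morse function on a closed connected $M$ with a unique local maximum $p$ induces a Weinstein handle decomposition of $T^*M$ with a single critical handle whose cocore is $T^*_pM$, and any two cotangent fibres are isomorphic objects since they are Hamiltonian isotopic through cylindrical exact Lagrangians.

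One step of your sketch is too quick to stand as written: for the local models $\C^{n-k}\times T^*D^k$ with $k<n$, the assertion that ``the cofinal wrapping procedure produces no nontrivial intersections in the limit'' because the skeleton is isotropic of positive codimension is not by itself an argument --- positive-codimension isotropic skeleta do not formally force $\HW$ to vanish. The standard way to close this gap is to use the $\C$ factor: $\cW(\C)\simeq 0$ (every cylindrical exact Lagrangian in $\C$ is displaced from itself under wrapping, or equivalently the unit $1\in\HW^*(L,L)$ vanishes), and then the K\"unneth theorem for wrapped Fukaya categories of products gives $\cW(\C^{n-k}\times T^*D^k)\simeq 0$. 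With that substitution your outline matches the argument in the cited sources. Since the theorem is an external input for this paper, the honest conclusion is that your proposal is a correct roadmap through the literature rather than a self-contained proof, which is all the paper itself relies on.
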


The following cosheaf property will be our main tool when calculating wrapped Fukaya categories.

\begin{thm}[\cite{gps2}]\label{thm:wrapped-hocolim}
	Let $W=W_1\cup W_2$ be a Liouville manifold such that $W_1$ and $W_2$ are Weinstein sectors meeting along a hypersurface in $W$. If the neighborhood of the hypersurface is $F\times T^*[0,1]$ where $F$ is a Weinstein sector up to a deformation, then we have
	\[\cW(W)\simeq\hocolim\left(\begin{tikzcd}[column sep=0.1cm]
		\cW(W_1) & & \cW(W_2)\\
		& \cW(F)\ar[lu]\ar[ru]
	\end{tikzcd}\right),\]
	up to pretriangulated equivalence.
\end{thm}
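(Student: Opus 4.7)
The plan is to reduce this to the general Weinstein sectorial descent theorem of Ganatra--Pardon--Shende \cite{gps2}, of which the displayed pushout is just the two-piece case. I would proceed in three steps.

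First I would check that the collection $\{W_1, W_2\}$ genuinely defines a Weinstein sectorial cover of $W$ in the precise sense of \cite{gps2}. The hypothesis that $W_1$ and $W_2$ are Weinstein sectors meeting along a hypersurface with neighborhood of the form $F\times T^*[0,1]$ is exactly the local model one wants: the splitting hypersurface and its collar form the overlap region, and the transverse Liouville structure there is by assumption the standard $T^*[0,1]$. Some local Weinstein homotopy may be needed so that the sector boundary/stop conditions of \cite{gps2} are met on the nose, but this does not change any of the wrapped Fukaya categories up to pretriangulated equivalence.

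Second I would apply the main descent theorem of \cite{gps2}: for any Weinstein sectorial cover $\{W_\alpha\}$ of $W$ there is a pretriangulated equivalence
\[\cW(W)\;\simeq\; \hocolim_{\emptyset\neq I}\;\cW\!\left(\bigcap_{\alpha\in I} W_\alpha\right),\]
indexed by the \v{C}ech nerve of the cover. For a two-element cover, the nerve collapses to the span with vertices $\cW(W_1),\cW(W_2)$ and apex $\cW(W_1\cap W_2)$; all higher intersections are empty so contribute nothing. The arrows in the span are the Viterbo/inclusion functors induced by $W_1\cap W_2\hookrightarrow W_i$.

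Third I would identify $\cW(W_1\cap W_2) \simeq \cW(F)$. Since the overlap is modeled on $F\times T^*[0,1]$, and $T^*[0,1]$ is a Weinstein sector whose wrapped Fukaya category is quasi-equivalent to $k$ (generated by a single cotangent fiber with trivial endomorphism algebra, via Theorem \ref{thm:wrapped-generation}), the K\"unneth-type statement for wrapped Fukaya categories proved in \cite{gps2} gives $\cW(F\times T^*[0,1])\simeq \cW(F)\otimes k \simeq \cW(F)$, as required.

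The genuine technical obstacle lives inside \cite{gps2} itself: controlling pseudo-holomorphic curves that cross the splitting hypersurface, so that wrapping in $W$ can be faithfully reconstructed from wrapping data in the two sectors $W_1$ and $W_2$. This neck-stretching/gluing analysis is the heart of the GPS proof. Taking their descent theorem as a black box, the remaining work on our side is a bookkeeping verification that the two-element \v{C}ech nerve reduces to the advertised span and that the natural functors $\cW(F)\to \cW(W_i)$ appearing in the homotopy colimit agree with the inclusion functors coming from $F\times T^*[0,1]\hookrightarrow W_i$.
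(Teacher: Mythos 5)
Your proposal is correct and matches the paper's treatment: the paper states this result as a direct citation of \cite{gps2} without giving its own proof, and your reduction --- invoking the GPS sectorial descent theorem, collapsing the two-element \v{C}ech nerve to the displayed span, and identifying $\cW(W_1\cap W_2)\simeq\cW(F\times T^*[0,1])\simeq\cW(F)$ via stabilization/K\"unneth --- is exactly the intended derivation. The only substantive content beyond quoting \cite{gps2} is the bookkeeping you describe, so there is nothing to add.
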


\begin{rmk}
	There is a procedure called ``arborealisation'' that transforms a Weinstein manifold into one which is a union of Weinstein sectors whose skeleta are ``arboreal singularities'' (see \cite{arboreal} for the definition, and see \cite{arboreal-starkston} and \cite{arboreal-alvarez} for the cases where this procedure applies). The wrapped Fukaya category of such Weinstein sectors is combinatorially described in \cite{arboreal} as a semifree dg category. Hence, the difficulty of the calculation of wrapped Fukaya categories reduces to the problem of taking homotopy colimit of semifree dg categories via Theorem \ref{thm:wrapped-hocolim}.
	
	This is exactly the case our Theorem \ref{thm:hocolim} deals with, if the arrows between semifree dg categories in the diagram of Theorem \ref{thm:wrapped-hocolim} can be described. This can be done relatively easily at least for cotangent bundles and plumbings with clean intersections (see \cite{abouzaid-plumbing}). Next sections will focus on the calculation of the wrapped Fukaya category of the cotangent bundle of lens spaces via this approach. The study of the wrapped Fukaya category of the plumbings with clean intersections is work in progress of the authors of this paper.
\end{rmk}

\begin{rmk}
	One can also consider the dg category of (unbounded) microlocal sheaves $\mSh(W)$ on the skeleton of a Weinstein manifold $W$ (see \cite{wrapped} and \cite{gps3} for the definition) which is quasi-equivalent to $\Mod(\cW(W))$. The full dg subcategory of $\Mod(\cW(W))$ consisting of (categorically) compact objects is Morita equivalent to $\cW(W)$. $\mSh(W)$ has a sheaf property (in contrast to Theorem \ref{thm:wrapped-hocolim} where we have a cosheaf property), hence we have
	\[\mSh(W)\simeq\holim\left(\begin{tikzcd}[column sep=0.1cm]
		\mSh(W_1)\ar[rd] & & \mSh(W_2)\ar[ld]\\
		& \mSh(F)
	\end{tikzcd}\right),\]
	up to quasi-equivalence. See \cite{pinwheel} for an application of this approach to calculation of wrapped Fukaya category of some rational homology balls. Our Theorem \ref{thm:holim} proposes a simpler calculation of this type of homotopy limits when it applies.
\end{rmk}

Let us focus on the Weinstein manifolds that are cotangent bundles from now on. There is a relation between the wrapped Fukaya category and the loop space of the base manifold in this case.

\begin{thm}[\cite{abouzaid-loops}]\label{thm:wrapped-loops}
	Let $M$ be a connected closed smooth manifold, and let $x\in M$ be a point. The endomorphism algebra of the cotangent fibre $T_x^*M$ is the dg algebra which is the singular chain complex of the based (Moore) loop space $\Omega_x M$ of $M$, denoted by $C_{-*}(\Omega_xM)$, where the product is Pontryagin product with the concatenations of loops. In particular, we have
	\[\cW(W)\simeq C_{-*}(\Omega_xM),\]
	up to pretriangulated equivalence by Theorem \ref{thm:wrapped-generation}.
\end{thm}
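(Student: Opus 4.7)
The theorem identifies the self-Hom $A_\infty$-algebra of a cotangent fibre in $\cW(T^*M)$ with the dga $C_{-*}(\Omega_x M)$ of chains on the based Moore loop space. The plan is to construct an explicit quasi-isomorphism of $A_\infty$-algebras, using moduli spaces of pseudo-holomorphic disks whose boundary traces out loops on the zero section.

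First, I would fix an almost complex structure on $T^*M$ compatible with the standard symplectic form (for instance the one induced by a Riemannian metric on $M$) and a Hamiltonian $H \colon T^*M \to \R$ which is quadratic in the fibre coordinate outside a compact set. Time-$1$ chords of $H$ with both endpoints on $T_x^*M$ then correspond bijectively to paths $\gamma \colon [0,1] \to M$ with $\gamma(0) = \gamma(1) = x$, i.e., elements of $\Omega_x M$. Rescaling $H$ by $w \to \infty$ recovers the wrapped complex $CW^*(T_x^*M, T_x^*M)$ as the colimit over continuation maps, and each generator carries a tautological class in $C_{-*}(\Omega_x M)$ given by the chord viewed as a loop. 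The $\Z$-grading on the Floer side matches the Morse-theoretic index on the loop space (with a sign flip making it homological), giving the $-*$ in the statement.

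Next, I would define the comparison $A_\infty$-functor $\Phi \colon CW^*(T_x^*M, T_x^*M) \to C_{-*}(\Omega_x M)$ following the half-plane/pearl construction: on the $d$-th component, take the moduli space of holomorphic disks with $d+1$ boundary punctures mapping to $T_x^*M$, one output marked with a fixed asymptotic and the others with chord asymptotics from a Hamiltonian perturbation, and associate to such a configuration the singular chain swept out by composing its boundary with the projection $\pi \colon T^*M \to M$ and reading off a Moore loop at $x$. The zeroth-order piece $\Phi^1$ counts strips and sends a chord to the loop it parametrizes; a dimension count shows $\Phi^1$ is a chain map. Compatibility of $\Phi$ with the Floer $A_\infty$-operations on the source and the Pontryagin product on the target follows from analyzing codimension-one boundaries of the disk moduli spaces, where strip-breaking on the Floer side corresponds exactly to concatenation of Moore loops on the loop-space side.

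The main obstacle is showing that $\Phi$ is a quasi-isomorphism. The cleanest route is a filtration argument: the Hamiltonian action filters $CW^*$, and on the loop space side the length functional (for a bump-off of $H$ coming from a Riemannian metric) filters $C_{-*}(\Omega_x M)$ by a Morse-theoretic model due to Milnor, namely chains on finite-dimensional approximations by piecewise geodesics. One then checks $\Phi$ is a filtered map and computes that the induced map on associated graded pieces counts rigid gradient flowlines on both sides, hence is an isomorphism; a standard spectral sequence argument then promotes this to a quasi-isomorphism of the full complexes. The delicate analytic points are compactness and transversality for the half-strip moduli spaces with evaluation constraints on a non-compact Lagrangian, and the identification of Floer trajectories at low action with Morse trajectories on the (piecewise-geodesic approximations to the) loop space; these are precisely the steps carried out in Abouzaid's original argument.
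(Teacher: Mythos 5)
The paper offers no proof of this statement: it is imported verbatim from \cite{abouzaid-loops} (Abouzaid's theorem identifying wrapped Floer cochains of a cotangent fibre with chains on the based Moore loop space), so there is no in-paper argument to compare yours against. What you have written is a reasonable high-level outline of Abouzaid's own strategy --- chords as tautological loops, an $A_\infty$-comparison map defined by evaluation on moduli of punctured half-discs, compatibility of strip-breaking with Pontryagin concatenation, and a quasi-isomorphism established by filtering by action versus length and comparing with Milnor's finite-dimensional piecewise-geodesic model --- and you correctly flag that the compactness/transversality and low-energy identification are where the real work sits.

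Three points deserve correction or caution. First, time-$1$ chords of a fibrewise-quadratic $H$ with endpoints on $T_x^*M$ do \emph{not} biject with all of $\Omega_x M$; they form a discrete set of geodesic loops (critical points of the energy functional), each of which \emph{determines} an element of $\Omega_x M$. The generators map injectively into the loop space; the statement as you phrased it is false. Second, in Abouzaid's construction the loop is read off from the boundary arc of the disc lying on the zero section (the fibre and the zero section meet at $x$, so that arc is automatically a based loop), not by projecting the entire boundary via $\pi\colon T^*M\to M$; projecting the fibre-boundary components would not produce Moore loops with well-controlled lengths and would complicate the filtration argument. Third, the theorem as used in this paper is for the wrapped Fukaya category in the Ganatra--Pardon--Shende localisation model, while Abouzaid's original proof is in the quadratic-Hamiltonian model and your sketch uses the linear-Hamiltonian colimit model; invoking the result here tacitly requires the comparison of these definitions, which is itself a nontrivial theorem and should be cited rather than elided.
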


\begin{rmk}
	One can also consider the based (not Moore!) loop space $\Omega_xM$ as an $A_{\infty}$-space where $A_{\infty}$-operations are coming from the concatenation of loops and (higher) homotopies between them. The resulting $A_{\infty}$-algebra $C_{-*}(\Omega_xM)$ is quasi-equivalent to the dg algebra $C_{-*}(\Omega_xM)$ in Theorem \ref{thm:wrapped-loops}.
\end{rmk}

Theorem \ref{thm:wrapped-hocolim} also induces a gluing property for the chains on the based loop spaces.

\begin{thm}\label{thm:loop-hocolim}
	Let $M=M_1\cup M_2$ be a connected smooth manifold such that $M_1$, $M_2$, and $M_1\cap M_2$ are connected smooth manifolds and open in $M$. Let $x\in M_1\cap M_2$ be a point. Then we have
	\[C_{-*}(\Omega_x M)\simeq\hocolim\left(\begin{tikzcd}[column sep=0.1cm]
		C_{-*}(\Omega_x M_1) & & C_{-*}(\Omega_x M_2)\\
		& C_{-*}(\Omega_x (M_1\cap M_2))\ar[lu]\ar[ru]
	\end{tikzcd}\right),\]
	up to quasi-equivalence.
\end{thm}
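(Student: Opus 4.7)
The plan is to apply the sectorial gluing formula (Theorem \ref{thm:wrapped-hocolim}) to $T^*M$ with the covering $T^*M_1\cup T^*M_2$, then translate through Abouzaid's theorem (Theorem \ref{thm:wrapped-loops}) which identifies the endomorphism algebra of the cotangent fibre $T_x^*N$ in $\cW(T^*N)$ with $C_{-*}(\Omega_xN)$.

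First I would arrange the geometric setup: choose a bicollared smooth hypersurface $\Sigma\subset M_1\cap M_2$ separating $M$ into closed submanifolds-with-boundary $M_1', M_2'$ that are deformation retracts of $M_1, M_2$ respectively, with $M_1'\cap M_2'=\Sigma$ a deformation retract of $M_1\cap M_2$. After a collar thickening, $\{T^*M_1', T^*M_2'\}$ is a Weinstein sectorial covering of $T^*M$ whose overlap has the form $T^*\Sigma\times T^*(-1,1)$, so Theorem \ref{thm:wrapped-hocolim} gives
\[\cW(T^*M)\simeq\hocolim\left(\begin{tikzcd}[column sep=0.1cm] \cW(T^*M_1') & & \cW(T^*M_2') \\ & \cW(T^*\Sigma)\ar[lu]\ar[ru] \end{tikzcd}\right)\]
up to pretriangulated equivalence.

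For each $N\in\{M,M_1',M_2',\Sigma\}$, Theorem \ref{thm:wrapped-generation} shows the cotangent fibre $T_x^*N$ at the common basepoint $x$ generates $\cW(T^*N)$, while Theorem \ref{thm:wrapped-loops} identifies its endomorphism dg algebra with $C_{-*}(\Omega_xN)$. Since $T_x^*$ is literally the same Lagrangian in all four cotangent bundles, the sectorial inclusion functors in the hocolim diagram preserve the chosen generator, and so restrict on endomorphism complexes to the expected dg maps $C_{-*}(\Omega_x\Sigma)\to C_{-*}(\Omega_xM_i')$ induced by inclusions of based loop spaces. The deformation-retract choices above moreover yield quasi-isomorphisms $C_{-*}(\Omega_xN')\simeq C_{-*}(\Omega_xN)$, identifying this loop-space diagram with the one in the statement.

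Finally, the quasi-fully faithful inclusions $C_{-*}(\Omega_xN')\hookrightarrow\cW(T^*N')$ of the one-object subcategories on $T_x^*$ are pretriangulated equivalences compatible with all diagram maps, so passing to hocolim in $\dgeq$ identifies $\hocolim C_{-*}(\Omega_xN')$ with $\cW(T^*M)\simeq C_{-*}(\Omega_xM)$ up to pretriangulated equivalence; since $\dgeq$ is a left Bousfield localisation of $\dgqe$ the cofibrant replacements and hence the hocolim as a dg category agree with what Theorem \ref{thm:hocolim} computes. Because both sides are one-object dg algebras and the identification respects the distinguished generator $T_x^*$, it restricts on endomorphism complexes to a quasi-isomorphism, upgrading the pretriangulated equivalence to the claimed quasi-equivalence of dg algebras. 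The main obstacle I expect is the first step — realising the open cover $\{M_1,M_2\}$ as an honest Weinstein sectorial covering of $T^*M$ satisfying the $F\times T^*[0,1]$ hypothesis when the $M_i$ are possibly non-compact and $T^*M_i$ not finite-type Weinstein — which forces the deformation-retract detour through $M_i'$ and $\Sigma$ above and requires care that each replacement is invisible at the level of based loop spaces.
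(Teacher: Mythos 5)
Your proposal is correct and follows essentially the same route as the paper: apply the sectorial descent theorem (Theorem \ref{thm:wrapped-hocolim}) to a sectorial covering of $T^*M$ modelled on the open cover $\{M_1,M_2\}$, and then use Abouzaid's identification (Theorem \ref{thm:wrapped-loops}) of the endomorphism algebra of the cotangent fibre with $C_{-*}(\Omega_x N)$ to read off the statement at the level of based loop space chains. The paper's own proof is a two-sentence sketch of exactly this argument; your version simply spells out the geometric replacements and the passage from pretriangulated equivalence to quasi-equivalence of the one-object endomorphism algebras in more detail.
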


\begin{proof}
	Geometrically, Theorem \ref{thm:wrapped-hocolim} identifies $T_x^*M_1$ and $T_x^*M_2$ along the inclusion of $T_x^*(M_1\cap M_2)$ into both, to get $T_x^*M$. This induces a gluing (homotopy colimit) of endomorphism ($A_{\infty}$-)algebras of the cotangent fibres, which are given by the chains on the based loop spaces of the bases by Theorem \ref{thm:wrapped-loops}.
\end{proof}

\begin{rmk}
	Note that
	\[H_0(\Omega_x M)=\pi_1(M)\]
	by definition, hence the above theorem can be regarded as an extension of the Seifert-Van Kampen theorem.
\end{rmk}

\begin{rmk}
	Both $C_{-*}(\Omega_x M)$ and $\cW(T^*M)$ are invariants for the smooth manifold $M$, however, the former is a priori a stronger invariant since we consider it up to quasi-equivalence, whereas the latter is considered up to pretriangulated equivalence. 
\end{rmk}

\begin{rmk}
	If $M$ is simply connected, $C_{-*}(\Omega_x M)$ is determined by the singular cochain complex $C^*(M)$ of $M$ with the cup product (see \cite{ekholm-lekili} for a more general Koszul duality statement). Hence, this invariant is only interesting when $M$ is not simply connected. We will see that when $M$ is a lens space, $C_{-*}(\Omega_x M)$ is a strictly stronger invariant than $C^*(M)$ with cup product and $\pi_1(M)$ since it distinguishes the homotopy type of lens spaces where the others cannot.
\end{rmk}

The homology of $C_{-*}(\Omega_x M)$ also gives an invariant, although it is determined by the fundamental group and the universal cover of $M$.

\begin{prp}\label{prp:homology-universal}
	Let $\widetilde M$ is the universal cover of a connected topological space $M$. Then we have the homotopy equivalence
	\[\Omega_x M\simeq \bigsqcup_{\gamma\in\pi_1(M)}\Omega_x \widetilde M .\]
	In particular, we have the isomorphism
	\[H_{-*}(\Omega_x M)\simeq \bigoplus_{\gamma\in\pi_1(M)}H_{-*}(\Omega_x \widetilde M)\]
	of graded $k$-modules.
\end{prp}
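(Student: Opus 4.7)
The plan is to decompose $\Omega_x M$ into its path components and identify each of them with $\Omega_{\tilde x}\widetilde M$, for a fixed lift $\tilde x \in p^{-1}(x)$, where $p\colon\widetilde M \to M$ denotes the universal covering map. First, I would invoke the standard identification $\pi_0(\Omega_x M) \cong \pi_1(M,x)$, which gives the decomposition into path components
\[
\Omega_x M = \bigsqcup_{\gamma \in \pi_1(M)} \Omega_x^\gamma M ,
\]
where $\Omega_x^\gamma M$ consists of the loops at $x$ based-homotopic to a fixed representative of $\gamma$.

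Next, I would show that the components of $\Omega_x M$ are pairwise homotopy equivalent, so that the whole disjoint union is homotopy equivalent to $\pi_1(M)$ copies of the identity component $\Omega_x^e M$. For each $\gamma \in \pi_1(M)$ choose a loop $\ell_\gamma$ representing $\gamma$; then right-concatenation with $\ell_\gamma$ defines a continuous map $\Omega_x^e M \to \Omega_x^\gamma M$, whose homotopy inverse is right-concatenation with $\ell_\gamma^{-1}$, using the standard null-homotopy contracting $\ell_\gamma\ell_\gamma^{-1}$ to the constant loop. So the problem reduces to identifying $\Omega_x^e M$ with $\Omega_{\tilde x}\widetilde M$. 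For this, I would use that $p$ is a Serre fibration with discrete fiber. The unique path-lifting property defines a continuous map $\Omega_x^e M \to \Omega_{\tilde x}\widetilde M$ sending a null-homotopic loop $\ell$ at $x$ to its unique lift $\tilde\ell$ starting at $\tilde x$; a null-homotopy of $\ell$ lifts to a homotopy whose other end is a lift of the constant loop starting at $\tilde x$, hence the constant loop at $\tilde x$, so $\tilde\ell$ is itself a loop. This is a two-sided inverse to the map $p_*\colon \Omega_{\tilde x}\widetilde M \to \Omega_x^e M$. Combining the two steps yields the required homotopy equivalence.

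The main subtlety is making sure that the lifting map is continuous (which follows from the local product structure of the covering) and that the reduction to the identity component is independent of the choice of representatives $\ell_\gamma$ up to homotopy; both points are routine. The homology statement then follows at once, since singular homology is a homotopy invariant and converts disjoint unions into direct sums, so $H_{-*}(\Omega_x M) \cong \bigoplus_{\gamma\in\pi_1(M)} H_{-*}(\Omega_{\tilde x}\widetilde M)$ as graded $k$-modules.
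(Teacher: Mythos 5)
Your argument is correct and follows essentially the same route as the paper's proof: decompose $\Omega_x M$ into path components indexed by $\pi_1(M)$, show the components are pairwise homotopy equivalent via concatenation with fixed representative loops, and identify the identity component with $\Omega_{\tilde x}\widetilde M$ by path lifting. Your version simply spells out more of the routine details (the homotopy inverse and the continuity of the lifting map) that the paper leaves implicit.
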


\begin{proof}
	The based loop space $\Omega_xM$ has $H_0(\Omega_xM)=\pi_1(M)$-many connected components. Label the connected component corresponding $\gamma\in\pi_1(M)$ by $(\Omega_xM)_{\gamma}$. Connected components are homotopy equivalent via
	\begin{align*}
		(\Omega_xM)_{\gamma}&\longrightarrow(\Omega_xM)_{\gamma'},\\
		\eta&\longmapsto \gamma'\circ\eta\circ\gamma^{-1},		
	\end{align*}
	where the operation is the concatenation of loops. In particular, when $\gamma=1$ is the identity, we have the homotopy equivalence
	\[(\Omega_xM)_1\simeq \Omega_x\widetilde M\]
	since any element (loop) in $(\Omega_xM)_1$ becomes an element (loop) in $\Omega_x\widetilde M$ after lifting. This concludes the proof.
\end{proof}

\begin{rmk}
	Since $\widetilde M$ is simply connected, $C_{-*}(\Omega_x \widetilde M)$ is determined by the cohomology of $\widetilde M$ with the cup product. Hence, $H_{-*}(\Omega_x M)$ is not a strong invariant for $M$. Therefore, the dg structure of $C_{-*}(\Omega_x M)$ (which comes from $A_{\infty}$-structure of $\Omega_x M$, not just from its homotopy type) is important to get a strong invariant for $M$ as we will see in the next sections for the lens spaces.
\end{rmk}

\subsection{Heegaard Diagram for 3-Manifolds and Lens Spaces}

Before studying the lens spaces, we will recall Heegaard diagrams for 3-manifolds. A reference for this section is \cite{heegaard}.

Any oriented closed 3-manifold $M$ admits a Heegaard diagram $(\Sigma_g,\alpha_1,\ldots,\alpha_g,\beta_1,\ldots,\beta_g)$ where
\begin{itemize}
	\item $\Sigma_g$ is the genus $g$ closed topological surface,
	
	\item $\alpha_i$ and $\beta_i$ are closed embedded curves in $\Sigma_g$,
	
	\item $\alpha$-curves are disjoint, and $\beta$-curves are disjoint,
	
	\item $[\alpha_i]$ are linearly independent in $H_1(\Sigma_g;\Z)$, and $[\beta_i]$ are linearly independent in $H_1(\Sigma_g;\Z)$.
\end{itemize}
A Heegaard diagram determines a closed 3-manifold by gluing two genus $g$ handlebodies $U_1$ and $U_2$ along their common boundary $\Sigma_g$, where $U_1$ is obtained from $\Sigma_g$ by attaching $g$ many $D^2\times D^1$ along their attaching circles $\partial D^2\simeq \alpha_i$ and then attaching $D^3$ along its boundary $S^2$. $U_2$ is obtained similarly from $\beta$-curves.

In this section, we will focus on the case where 3-manifolds are given by a Heegaard diagram with $g=1$. If $M$ is given by the Heegaard diagram $(\Sigma_1,\alpha,\beta)$ for some $\alpha$ and $\beta$, then
\begin{equation}\label{eq:colim-3-mfd}
	M\simeq \colim\left(
	\begin{tikzcd}
		U_1 & & U_2\\
		& \Sigma_1\ar[lu,"i_1"]\ar[ru,"i_2"']
	\end{tikzcd}
	\right)
\end{equation}
as a topological space, where $\Sigma_1$ is the torus, $U_1$ and $U_2$ are solid tori. The maps $i_j$ are determined by the induced map on the first homology when $g=1$. So, the only data is the kernel of the maps
\[(i_j)_*\colon H_1(\Sigma_1;\Z)=\Z\oplus\Z\to\Z=H_1(U_j;\Z)\]
for $j=1,2$. Note that $[\alpha]=0$ in $H_1(U_1;\Z)$, and $[\beta]=0$ in $H_1(U_2;\Z)$. We can always choose a basis for $m,n\in H_1(\Sigma_1;\Z)=\Z\oplus\Z$ such that $[\alpha]=n$ in $H_1(\Sigma_1;\Z)$.

\begin{prp}\label{prp:heegaard-genus-1}
	A Heegaard diagram $(\Sigma_1,\alpha,\beta)$ with $[\alpha]=n$ determines one of the following:
	\begin{itemize}
		\item $S^3$, if $[\beta]=m$,
		
		\item $S^1\times S^2$, if $[\beta]=n$,
		
		\item lens spaces $L(p,q)$ for $p>q\geq 1$ and $(p,q)=1$, if $[\beta]=pm+qn$.
	\end{itemize}
\end{prp}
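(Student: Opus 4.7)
The plan is to treat each of the three cases separately, using standard facts about genus-one Heegaard splittings to identify the 3-manifold produced by the colimit (\ref{eq:colim-3-mfd}).

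First, I would note that each $i_j\colon \Sigma_1 \to U_j$ is a boundary inclusion of the torus into a solid torus, and the kernel of $(i_j)_*$ on $H_1$ is generated by the unique primitive class represented by a meridian of $U_j$ (the curve that bounds a disk in $U_j$). The meridian data in turn determines $i_j$ up to isotopy, and hence determines the homeomorphism type of $M = U_1 \cup_{\Sigma_1} U_2$. So the 3-manifold $M$ depends only on the pair of primitive classes $([\alpha], [\beta])$ in $H_1(\Sigma_1; \Z) = \Z m \oplus \Z n$, and we already have $[\alpha] = n$ by our choice of basis.

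For $[\beta] = m$, the two solid tori are glued with their meridian and longitude swapped; this is precisely the genus-one Heegaard splitting of $S^3$ exhibited by $S^3 = \{|z_1| \le |z_2|\} \cup \{|z_1| \ge |z_2|\}$ inside $\C^2$. For $[\beta] = n = [\alpha]$, both meridians coincide, and I would write $U_1 \cup_{\Sigma_1} U_2 \simeq (D^2 \cup_{\partial D^2} D^2) \times S^1 \simeq S^2 \times S^1$ by collapsing the common meridional disks. For $[\beta] = pm + qn$ with $(p,q) = 1$, I would either take this as the definition of the lens space $L(p,q)$ via its genus-one Heegaard splitting, or verify it by starting from the quotient model $L(p,q) = S^3 / \Z_p$ with the standard $\Z_p$-action $(z_1, z_2) \mapsto (e^{2\pi i/p} z_1, e^{2\pi i q/p} z_2)$, restricting to the $\Z_p$-invariant Heegaard splitting of $S^3$ above, and computing the induced meridian of the quotient in the basis $(m, n)$.

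The main (minor) obstacle is bookkeeping with conventions: both the sign and basis choices for $(m, n)$ on $\Sigma_1$, and the well-known ambiguity in Heegaard presentations of $L(p, q)$ (the lens space depends on $q$ only up to $\pm q^{\pm 1} \pmod{p}$). Fixing the basis so that $[\alpha] = n$ and so that the case $[\beta] = m$ yields $S^3$ with its standard orientation pins down the third case unambiguously, completing the classification.
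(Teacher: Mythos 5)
The paper does not actually prove this proposition: it is stated as a recalled standard fact in the background section on Heegaard diagrams, with the classification deferred to the cited reference. Your sketch is a correct outline of the standard argument — the manifold obtained from a genus-one splitting depends only on the primitive class of the meridian $[\beta]$ in $H_1(\Sigma_1;\Z)$ (since any self-homeomorphism of the boundary torus preserving the meridian class extends over the solid torus), and the three cases $[\beta]=m$, $[\beta]=n$, $[\beta]=pm+qn$ then yield $S^3$, $S^2\times S^1$, and $L(p,q)$ exactly as you describe, with the $\pm q^{\pm1}\pmod p$ ambiguity handled by the normalization $p>q\geq 1$ and a fixed orientation convention. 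So your proposal supplies a correct justification for a statement the paper leaves to the literature; there is nothing to reconcile against an in-paper proof.
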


\begin{rmk}
	The lens space $L(2,1)$ is homeomorphic to $\RP^3$.
\end{rmk}

\begin{prp}
	Every topological 3-manifold has a unique smooth structure. Moreover, the smooth structure of topological 3-manifolds are uniquely determined by their simple homotopy type.
\end{prp}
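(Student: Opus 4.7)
The plan is to split the proposition into its two assertions and to invoke classical results from low-dimensional topology for each.

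For the first assertion, existence and uniqueness of a smooth structure on any topological $3$-manifold $M$, I would appeal to Moise's triangulation theorem, which produces a PL structure on $M$ that is unique up to PL-homeomorphism, together with the classical smoothing theory of Munkres (equivalently Whitehead in this dimension), which asserts that every PL manifold of dimension at most three admits a compatible smoothing that is unique up to diffeomorphism. Composing these two statements yields the claim.

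For the second assertion, by the first part it is enough to show that the homeomorphism type of a closed $3$-manifold is determined by its simple homotopy type. In the closed orientable case this is Turaev's theorem, proved through Reidemeister torsion invariants and a surgery-style classification extending earlier work of Reidemeister. For the intended application to lens spaces the statement follows from the classical result of J.~H.~C.~Whitehead: two lens spaces $L(p,q_1)$ and $L(p,q_2)$ are simple-homotopy equivalent if and only if $q_1\equiv\pm q_2^{\pm 1}\pmod{p}$, which in turn is the Reidemeister classification of lens spaces up to homeomorphism.

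The main obstacle is the second assertion in its full generality: simple homotopy type determining homeomorphism type for closed $3$-manifolds relies on non-trivial Whitehead torsion considerations, and in the non-orientable or bounded setting additional care is required. Since the proposition is applied in this paper only to lens spaces, one can safely restrict to that case, where everything reduces to Reidemeister--Whitehead theory and becomes purely combinatorial.
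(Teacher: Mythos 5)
Your proposal is correct and matches what the paper intends: the paper states this proposition without proof as standard background, and the references you supply (Moise's triangulation theorem together with Munkres--Whitehead smoothing for the first assertion, and the Reidemeister--Whitehead classification of lens spaces, restated in the paper as Theorem \ref{thm:lens-classification}, for the second) are exactly the classical results being implicitly invoked. Your caveat about the second assertion is well placed --- in full generality it rests on Turaev-type torsion arguments and geometrization --- but the paper only ever applies it to closed orientable manifolds arising from genus-one Heegaard diagrams, i.e.\ to lens spaces, $S^3$, and $S^1\times S^2$, where the Reidemeister--Whitehead theory you cite suffices.
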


Finally, we will recall some facts regarding lens spaces.

\begin{prp}\label{prp:lens-homology}
	For a lens space $L(p,q)$, we have the homology groups
	\[H_n(L(p,q);\Z)=\begin{cases}
		\Z, & \text{if }n=0,\\
		\Z_p, & \text{if }n=1,\\
		0, & \text{if }n=2,\\
		\Z, & \text{if }n=3,
	\end{cases}\]
	the cohomology groups
	\[H^n(L(p,q);\Z)=\begin{cases}
		\Z, & \text{if }n=0,\\
		0, & \text{if }n=1,\\
		\Z_p, & \text{if }n=2,\\
		\Z, & \text{if }n=3,
	\end{cases}\]
	and the homotopy groups
	\[\pi_n(L(p,q))=\begin{cases}
		\Z_p, & \text{if }n=1,\\
		\pi_n(S^3), & \text{if }n\geq 2.
	\end{cases}\]
	Moreover, the cup product on $H^n(L(p,q);\Z)$ does not depend on $q$.
\end{prp}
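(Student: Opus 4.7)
The plan is to compute each piece directly from a standard cell structure on $L(p,q)$ and then verify the cup-product statement by a degree count. Recall $L(p,q) = S^3/\Z_p$, where $S^3 \subset \C^2$ and $\Z_p$ acts freely by $(z_1,z_2)\mapsto(\zeta z_1,\zeta^q z_2)$ with $\zeta = e^{2\pi i/p}$. Equivariantly lifting the standard CW decomposition of $S^3$ with one $k$-cell in each dimension $0,1,2,3$ (and $p$-fold rotational orbits), one obtains a CW structure on $L(p,q)$ with exactly one cell in each dimension $0,1,2,3$. This is the same CW structure that arises from the Heegaard decomposition in Proposition~\ref{prp:heegaard-genus-1}: the $0$- and $1$-cells live on the core circle of $U_1$, and the $2$- and $3$-cells come from the $2$-handle and $3$-handle of $U_2$ glued along the Heegaard torus via the class $pm+qn$.

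Next, I would compute the cellular chain complex. The attaching map of the $2$-cell wraps $p$ times around the $1$-cell (this is exactly the $p$ in $[\beta]=pm+qn$), and all other attaching maps are null-homologous, giving
\begin{equation*}
0 \longrightarrow \Z \xrightarrow{\ 0\ } \Z \xrightarrow{\ p\ } \Z \xrightarrow{\ 0\ } \Z \longrightarrow 0.
\end{equation*}
Reading off the (co)homology is immediate and yields the stated groups; alternatively, one can rerun the argument as a Mayer--Vietoris computation on the decomposition $L(p,q)=U_1\cup_{\Sigma_1} U_2$ using $H_*(U_j)=H_*(S^1)$ and $H_*(\Sigma_1)=H_*(T^2)$, with connecting maps read off from the matrix $\begin{pmatrix}0 & 1 \\ p & q\end{pmatrix}$ giving the gluing, which produces the same answer.

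For the homotopy groups, the free action of $\Z_p$ gives a regular covering $S^3\to L(p,q)$, so $\pi_1(L(p,q))=\Z_p$ and, by the long exact sequence of this covering (or the fact that $S^3$ is the universal cover), $\pi_n(L(p,q))\cong\pi_n(S^3)$ for all $n\geq 2$.

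Finally, for the cup product on $H^*(L(p,q);\Z)$: since $H^1(L(p,q);\Z)=0$ and $H^n(L(p,q);\Z)=0$ for $n\geq 4$, the only cup products to consider are those involving $H^0$, the product $H^2\otimes H^2\to H^4=0$, and products involving $H^1$, which all vanish. Thus the ring structure is determined by the additive structure and the identity element, so it automatically does not depend on $q$. This is the only subtle point, but it reduces to the trivial observation that there is simply no room for a nontrivial product over $\Z$; the sensitivity to $q$ only emerges with $\Z_p$ coefficients, which is not what is asserted here.
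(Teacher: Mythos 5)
Your proof is correct. The paper itself offers no proof of this proposition --- it is stated as recalled background in the subsection on Heegaard diagrams --- so there is nothing to compare against; your cellular chain complex $0 \to \Z \xrightarrow{0} \Z \xrightarrow{p} \Z \xrightarrow{0} \Z$ is the standard argument, the covering-space argument for the homotopy groups is the standard one, and you correctly identify that the cup-product claim over $\Z$ is forced for degree reasons (since $H^1 = 0$ and $H^{\geq 4} = 0$, every product of positive-degree classes lands in a zero group), with the genuine $q$-dependence only appearing in finer invariants such as the linking form or $\Z_p$-coefficient products, exactly as the paper's surrounding remarks indicate.
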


\begin{thm}[\cite{reidemeister-lens}, \cite{brody-lens}]\label{thm:lens-classification}
	The lens spaces $L(p,q)$ and $L(p',q')$ are homotopy equivalent if and only if $p'=p$ and
	\[q'\equiv \pm\alpha^2 q \quad \textup{(mod $p$)}\]
	for some $\alpha\in\Z$. $L(p,q)$ and $L(p',q')$ are simple homotopy equivalent if and only if $p'=p$ and
	\[q'\equiv \pm q^{\pm 1}\quad \textup{(mod $p$)} .\]
\end{thm}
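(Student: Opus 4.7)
The plan is to prove both directions using two classical algebraic invariants sensitive to the pair $(p,q)$: the \emph{linking form} on $H_1$ detects the homotopy type, while the \emph{Reidemeister torsion} detects the simple homotopy type. Both invariants can be computed directly from a CW model of $L(p,q)$ such as the one coming from the genus-one Heegaard decomposition of Proposition \ref{prp:heegaard-genus-1}.

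\textbf{Homotopy equivalence part.} First I would compute the linking form $\lambda_{p,q}\colon H_1(L(p,q);\Z)\otimes H_1(L(p,q);\Z)\to \Q/\Z$. Writing $H_1\cong \Z_p$ with generator $m$, one finds $\lambda_{p,q}(m,m)\equiv q/p \pmod{\Z}$, either from intersection counts in the Heegaard picture or from the standard definition via the Bockstein $\beta\colon H^1(L;\Q/\Z)\to H^2(L;\Z)$ paired with the cup product on $H^*(L;\Z)$. Since $\lambda_{p,q}$ is built from cup products and Bocksteins, it is a homotopy invariant up to the orientation sign. A homotopy equivalence $L(p,q)\to L(p',q')$ forces $p'=p$ and induces multiplication by a unit $\alpha\in\Z_p^{\times}$ on $H_1$; preservation of $\lambda$ (up to orientation) is then exactly $q'\equiv\pm\alpha^2 q\pmod p$. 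For the converse I would construct the equivalence by cellular induction on the standard CW model of $L(p,q)$, using that the only non-trivial Postnikov $k$-invariant through dimension three lies in $H^4(K(\Z_p,1);\pi_3(S^3))$ and is, up to the same $\pm\alpha^2$ ambiguity, pinned down by $q\bmod p$.

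\textbf{Simple homotopy equivalence part.} Next I would compute the Reidemeister torsion $\tau(L(p,q))$ in the appropriate quotient of the units of the field of fractions of $\Z[\Z_p]$ modulo $\pm \Z[\Z_p]^{\times}$. Using the minimal handle decomposition with one cell in each dimension $0,1,2,3$, the cellular chain complex of the universal cover $S^3$ has the form
\[\Z[\Z_p]\xrightarrow{1-t^q}\Z[\Z_p]\xrightarrow{\Sigma}\Z[\Z_p]\xrightarrow{1-t}\Z[\Z_p],\]
where $t$ generates $\Z_p$ and $\Sigma=1+t+\cdots+t^{p-1}$. Trivialising the middle $\Sigma$-differential over the field of fractions yields $\tau(L(p,q))=(t-1)(t^q-1)$ modulo the stated ambiguity. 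Since $\tau$ is a simple homotopy invariant and orientation reversal acts by $t\mapsto t^{-1}$, the equality $(t-1)(t^q-1)=\pm t^k(t-1)(t^{q'}-1)$, together with its $t\mapsto t^{-1}$ variant, forces $q'\equiv\pm q^{\pm 1}\pmod p$ by comparing irreducible factorisations in $\Z[\Z_p]$ after extending scalars to cyclotomic fields. The reverse implication is realised geometrically by explicit self-homeomorphisms of $S^3$ descending to the lens spaces in each of these four cases.

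\textbf{Main obstacle.} The hardest step is the ``only if'' direction for homotopy equivalence: showing that the linking form is a \emph{complete} invariant, i.e.\ that an abstract isomorphism of $H_1$ intertwining $\lambda$ actually lifts to a homotopy equivalence. This requires either a careful Postnikov-theoretic argument identifying the linking form with the first non-trivial $k$-invariant, or the classical geometric construction that modifies a degree-one map $L(p,q)\to L(p,q')$ to kill obstructions in the relevant cohomology. Once this identification is in place, the remaining steps reduce to elementary number-theoretic manipulations of units in $\Z_p$ and in $\Z[\Z_p]$ modulo trivial units $\pm t^k$.
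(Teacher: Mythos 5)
The paper does not prove Theorem \ref{thm:lens-classification}: it is quoted from \cite{reidemeister-lens} and \cite{brody-lens}, and the remark immediately following it names exactly the two invariants you use --- the torsion linking form for the homotopy classification and the Reidemeister torsion for the simple homotopy classification --- so your outline is the standard classical argument matching the cited sources. The only step worth flagging is that your ``comparing irreducible factorisations after extending scalars to cyclotomic fields'' is standing in for Franz's independence lemma on cyclotomic units, which is the genuinely nontrivial input in the torsion half (and, as you correctly note, the completeness of the linking form is the corresponding nontrivial input in the homotopy half).
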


\begin{rmk}
	The homology/cohomology groups (with cup product) and the homotopy groups do not distinguish two non-homotopic lens spaces $L(p,q)$ and $L(p,q')$. 
	
	The classification up to homotopy is given by the torsion linking form
	\[L\colon \text{Tor}(H_1(L(p,q);\Z))\otimes \text{Tor}(H_1(L(p,q);\Z))\to\Q/\Z,\]
	where $\text{Tor}(H_1(L(p,q);\Z))$ is the torsion part of the group $H_1(L(p,q);\Z)$.
	
	The classification up to simple homotopy is given by the Reidemeister torsion.
	
	For $s\in L(p,q)$, we will show that the chains $C_{-*}(\Omega_sL(p,q))$ on the based loop space of $L(p,q)$ also classifies lens spaces up to homotopy equivalence, and $C_{-*}(\Omega_sL(p,q))$ does not detect the simple homotopy type of lens spaces in Part \ref{part dga computations}.
\end{rmk}

\subsection{Wrapped Fukaya Category of the Cotangent Bundle of Torus}

To understand the wrapped Fukaya category of the cotangent bundles of 3-manifolds coming from genus 1 Heegaard diagrams, we need first to figure out the wrapped Fukaya category of the cotangent bundle of torus, $\cW(T^*\Sigma_1)$.

There are multiple ways to describe $\cW(T^*\Sigma_1)$. It is pretriangulated equivalent to
\[\cW(T^*S^1)\otimes\cW(T^*S^1)\]
by \cite{gps2}. It can be also calculated using Circle Lemma in \cite{pinwheel}. Here, we will calculate it using Theorem \ref{thm:hocolim-loc} we proved, which will naturally describe it as a localisation of a semifree dg category.

We will write
\[k\langle x_1,x_2,\ldots,x_n\rangle\]
for the semifree dg algebra with the single object $L$ and the generating morphisms
\[x_1,x_2,\ldots,x_n\in\hom^*(L,L) .\]
Also, recall that we write
\[ \cC[\{y_1,y_2,\ldots,y_m\}^{-1}]\]
for the dg localisation of a dg category $\cC$ at a set of closed degree zero morphisms $\{y_1,y_2,\ldots,y_m\}$.

First note that for $p\in S^1$, we have
\[C_{-*}(\Omega_pS^1)\simeq k\langle x\rangle [\{x\}^{-1}],\]
up to quasi-equivalence, and
\[\cW(T^*S^1)\simeq k\langle x\rangle [\{x\}^{-1}],\]
up to pretriangulated equivalence, where $dx=0$ and $|x|=0$. In particular, we have \[\Hom^*(L_x,L_x)\simeq k[x,x^{-1}] .\]
for a cotangent fibre $L_x$ of $T^*S^1$. The computation can be found e.g. in \cite{pinwheel}.

Note also that
\[\cW(T^*(M\times I))\simeq \cW(T^*M),\]
up to pretriangulated equivalence, for any smooth manifold $M$ and interval $I$ (see e.g. \cite{gps2}). Then we have the following result:

\begin{prp}\label{prp:torus}
	For the torus $\Sigma_1$, we have
	\[\cW(T^*\Sigma_1)\simeq k\langle m,n,h\rangle[\{m,n\}^{-1}]\]
	up to pretriangulated equivalence, where $m,n$ are closed degree zero morphisms, and $h$ is a degree $-1$ morphism with the differential
	\[dh=mn-nm .\]
\end{prp}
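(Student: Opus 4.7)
The plan is to apply Theorem \ref{thm:wrapped-hocolim} to a two-chart cover of $\Sigma_1$ and then compute the resulting homotopy colimit via Theorems \ref{thm:hocolim-loc} and \ref{thm:hocolim}. Write $\Sigma_1 = S^1 \times S^1$ and cover the second circle by two open arcs whose intersection is a disjoint union of two smaller arcs. This gives $\Sigma_1 = U_1 \cup U_2$ with each $U_i$ an open annulus (so homotopy equivalent to $S^1$), $U_1 \cap U_2 \simeq S^1 \sqcup S^1$, and with the interface modeled on $(S^1 \sqcup S^1) \times T^*[0,1]$. Using $\cW(T^*S^1) \simeq k\langle x\rangle[x^{-1}]$ (with $|x|=0$, $dx = 0$), $\cW(T^*(M \times I)) \simeq \cW(T^*M)$, and the fact that the wrapped Fukaya category of a disjoint symplectic union is the coproduct of dg categories, Theorem \ref{thm:wrapped-hocolim} yields
\[
\cW(T^*\Sigma_1) \simeq \hocolim\left(
\begin{tikzcd}[column sep=0.2cm]
k\langle m\rangle[m^{-1}] & & k\langle n\rangle[n^{-1}]\\
& k\langle x_1, x_2\rangle[\{x_1, x_2\}^{-1}] \ar[lu,"\alpha"]\ar[ru,"\beta"']
\end{tikzcd}
\right)
\]
where the middle term has two objects $C_1, C_2$ with only self-morphisms $x_i$, and $\alpha$ (resp.\ $\beta$) sends both $C_i$ to the unique object of its target and both $x_i$ to $m$ (resp.\ $n$).

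Next, I would apply Theorem \ref{thm:hocolim-loc} (whose hypothesis holds because $\alpha(x_i) = m$ and $\beta(x_i) = n$ are already invertible in the respective $H^0$) to pull the localizations at $m$ and $n$ out of the homotopy colimit, then apply Theorem \ref{thm:hocolim} to compute the unlocalized hocolim. The result is $\cD[\{t_{C_1}, t_{C_2}, m, n\}^{-1}]$, where $\cD$ is the semifree extension of $k\langle m\rangle \sqcup k\langle n\rangle$ (on objects $A, B$) by closed degree zero morphisms $t_{C_i} \colon A \to B$ and degree $-1$ morphisms $t_{x_i} \colon A \to B$ satisfying
\[
dt_{x_i} = n\, t_{C_i} - t_{C_i}\, m, \qquad i = 1, 2,
\]
since $dx_i = 0$ kills the sum term in the differential formula of Theorem \ref{thm:hocolim}.

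Finally, I would simplify this presentation following the blueprint of the proof of Theorem \ref{thm:cylinder}. Because $t_{C_1}$ is invertible, one can identify $A$ with $B$ (replacing $t_{C_1}$ by $1_L$) to obtain a one-object dg category with closed degree zero morphisms $m, n, t_{C_2}$ (all invertible) and degree $-1$ morphisms $t_{x_i}$ obeying $dt_{x_1} = n - m$ and $dt_{x_2} = n t_{C_2} - t_{C_2} m$. The elementary change of variables $t_{x_2}' := t_{x_2} - t_{x_1} t_{C_2}$ is a quasi-equivalence and, using $dt_{C_2}=0$, produces $dt_{x_2}' = m t_{C_2} - t_{C_2} m$; in particular $n$ now appears only in $dt_{x_1} = n - m$. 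Passing to the new generator $n' := n - m$ (an elementary change of variables that preserves invertibility since $n = n' + m$ with $m$ invertible) gives $dt_{x_1} = n'$, and destabilization in the sense of \cite[Section 2.6]{subcritical} cancels the pair $(t_{x_1}, n')$. Relabeling $t_{C_2} \to n$ and $t_{x_2}' \to h$ produces $k\langle m, n, h\rangle[\{m, n\}^{-1}]$ with $|m| = |n| = 0$, $|h| = -1$, and $dh = mn - nm$, proving the proposition.

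The main obstacle is the algebraic simplification: verifying that the change of variables yields the commutator differential (which requires a careful sign computation) and that the invertibility of $n$ is correctly transferred to the invertibility of $m$ through destabilization. Both points are routine once one adapts the filtration arguments in the proof of Theorem \ref{thm:cylinder}, but the bookkeeping must be done carefully.
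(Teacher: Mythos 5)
Your proposal is correct and follows essentially the same route as the paper: the same decomposition of the torus into two cylinders meeting along $S^1\sqcup S^1$, the same application of Theorem \ref{thm:hocolim-loc} and Theorem \ref{thm:hocolim}, and the same final simplification (the paper simply substitutes $y=x$ via the destabilization before reading off $dt_u = xt_{L_u}-t_{L_u}x$, whereas you perform the change of variables $t_{x_2}'=t_{x_2}-t_{x_1}t_{C_2}$ first; the two orders are equivalent).
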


\begin{proof}
	We can get a torus via gluing two cylinders along their boundaries, hence we have
	\[\cW(T^*\Sigma_1) \simeq \hocolim\left(
		\begin{tikzcd}[column sep=0.01cm]
			\cW(T^*(S^1\times I)) & & \cW(T^*(S^1\times I))\\
			& \cW(T^*(S^1\amalg S^1))\ar[lu]\ar[ru]
		\end{tikzcd}
	\right),\]
	up to pretriangulated equivalence by Theorem \ref{thm:wrapped-hocolim}. Then we have
	\[\cW(T^*\Sigma_1) \simeq \hocolim\left(
		\begin{tikzcd}
			k\langle x\rangle[\{x\}^{-1}] & & k\langle y\rangle[\{y\}^{-1}]\\
			& k\langle u\rangle[\{u\}^{-1}]\amalg k\langle v\rangle[\{v\}^{-1}]\ar[lu,"\alpha"]\ar[ru,"\beta"']
		\end{tikzcd}\right),\
	\]
	where
	\begin{align*}
		x&\in\hom^*(L_x,L_x),\\
		y&\in\hom^*(L_y,L_y),\\
		u&\in\hom^*(L_u,L_u),\\
		v&\in\hom^*(L_v,L_v)
	\end{align*}
	are closed degree zero morphisms, and
	\begin{align*}
		\alpha(u)&=\alpha(v)=x,\\
		\beta(u)&=\beta(v)=y .
	\end{align*}
	Then, Theorem \ref{thm:hocolim-loc} gives
	\[\cW(T^*\Sigma_1)\simeq \cD[\{x,y,t_{L_u},t_{L_v}\}^{-1}],\]
	up to pretriangulated equivalence, where $\cD$ is the semifree dg category with two objects $L_x$ and $L_y$ and with the generating morphisms
	\begin{align*}
		x &\in\hom^*(L_x,L_x),\\
		y &\in\hom^*(L_y,L_y),\\
		t_{L_u},t_{L_v},t_u,t_v &\in\hom^*(L_x,L_y),
	\end{align*}
	such that $|t_{L_u}|=|t_{L_v}|=|t_u|+1=|t_v|+1=0$, $dt_{L_u}=dt_{L_v}=0$, and
	\begin{align*}
		dt_u&=\beta(u) t_{L_u}-t_{L_u} \alpha(u)=y t_{L_u}-t_{L_u}x,\\
		dt_v&=\beta(v) t_{L_v}-t_{L_v} \alpha(v)= y t_{L_v} - t_{L_v}x .
	\end{align*}
	Use invertible $t_{L_v}$ to identify $L_x$ and $L_y$, and set $t_{L_v}=1$. Then we get
	\begin{align*}
		dt_u&=yt_{L_u}-t_{L_u}x,\\
		dt_v&=y-x .
	\end{align*}
	Using $t_v$, we can identify $y=x$ and set $t_v=0$. By the relabeling
	\[m:=x,\qquad n:=t_{L_u},\qquad h:=t_u,\]
	we get the result.
\end{proof}

\begin{rmk}
	As stated in Definition \ref{dfn:wrapped-fukaya}, there are $H^1(\Sigma_1;\Z)=\Z\oplus\Z$ many choices for $\cW(T^*\Sigma_1)$, and the one we described corresponds to the canonical choice. The other choices can be obtained by letting $m$ and $n$ to have an arbitrary degree.
\end{rmk}

Similarly, we can apply Theorem \ref{thm:loop-hocolim} and Theorem \ref{thm:hocolim-loc} to get the following proposition.

\begin{prp}
	For the torus $\Sigma_1$ and $p\in\Sigma_1$, we have
	\[C_{-*}(\Omega_p\Sigma_1)\simeq k\langle m,n,h\rangle[\{m,n\}^{-1}],\]
	up to quasi-equivalence, where $m,n$ are closed degree zero morphisms, and $h$ is a degree $-1$ morphism with the differential
	\[dh=mn-nm .\]
\end{prp}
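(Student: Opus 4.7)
The plan is to mirror the proof of Proposition \ref{prp:torus} almost verbatim, substituting Theorem \ref{thm:loop-hocolim} (the gluing theorem for chains on based loop spaces) for Theorem \ref{thm:wrapped-hocolim}. First I would present the torus as a union $\Sigma_1 = (S^1 \times I_1) \cup (S^1 \times I_2)$ of two open cylinders glued along their intersection $S^1 \sqcup S^1$, choosing the basepoint $p$ to lie in the intersection. All three pieces are connected and open in $\Sigma_1$, so Theorem \ref{thm:loop-hocolim} applies and gives
\[
C_{-*}(\Omega_p\Sigma_1)\simeq\hocolim\left(\begin{tikzcd}[column sep=0.1cm]
C_{-*}(\Omega_p(S^1\times I_1)) & & C_{-*}(\Omega_p(S^1\times I_2))\\
& C_{-*}(\Omega_p(S^1\sqcup S^1))\ar[lu]\ar[ru]
\end{tikzcd}\right)
\]
up to quasi-equivalence.

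Next I would identify each term using the fact, recalled just before Proposition \ref{prp:torus}, that $C_{-*}(\Omega_q S^1)\simeq k\langle x\rangle[\{x\}^{-1}]$ with $x$ closed of degree zero. Since $S^1\times I$ is homotopy equivalent to $S^1$ and $\Omega_p$ only depends on the homotopy type of the pointed space, this gives the outer terms. For the intersection $S^1\sqcup S^1$, based loops at $p$ in one component decouple from the other, yielding the coproduct $k\langle u\rangle[\{u\}^{-1}]\amalg k\langle v\rangle[\{v\}^{-1}]$. The two gluing maps $\alpha,\beta$ send both $u$ and $v$ to the generator of the respective cylinder's loop algebra, exactly as in the proof of Proposition \ref{prp:torus}.

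Then I would apply Theorem \ref{thm:hocolim-loc} to convert this to a homotopy colimit of the underlying semifree dg categories followed by inverting the appropriate closed degree zero morphisms. By Theorem \ref{thm:hocolim}, the result is a semifree dg category built from two objects $L_x,L_y$, generating morphisms $x,y$ (degree zero, closed), $t_{L_u},t_{L_v}$ (degree zero, closed), and $t_u,t_v$ (degree $-1$) with
\[
dt_u = yt_{L_u}-t_{L_u}x,\qquad dt_v = yt_{L_v}-t_{L_v}x,
\]
localised at $\{x,y,t_{L_u},t_{L_v}\}$. Using invertibility of $t_{L_v}$ to identify $L_x=L_y$ (setting $t_{L_v}=1$), then using $t_v$ to destabilise and identify $y=x$, and finally relabeling $m:=x$, $n:=t_{L_u}$, $h:=t_u$, one recovers $k\langle m,n,h\rangle[\{m,n\}^{-1}]$ with $dh=mn-nm$.

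The argument is essentially formal given the machinery already established; there is no genuine obstacle beyond carefully checking that Theorem \ref{thm:loop-hocolim} applies to this open cover (which it does since a small open thickening of the cylinders makes all three pieces open, connected, and containing $p$). The computation is literally identical to that of Proposition \ref{prp:torus} at the algebraic level, which is unsurprising since both wrapped Fukaya category and chains on the based loop space have the same cosheaf-type gluing property and the same input for $S^1$.
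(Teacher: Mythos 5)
Your proposal matches the paper's own treatment: the paper gives no separate proof of this proposition, stating only that one applies Theorem \ref{thm:loop-hocolim} and Theorem \ref{thm:hocolim-loc} ``similarly'' to the computation of Proposition \ref{prp:torus}, which is exactly the argument you spell out. The one point worth flagging---present equally in the paper's implicit argument---is that Theorem \ref{thm:loop-hocolim} is stated for connected $M_1\cap M_2$ while here the intersection is $S^1\sqcup S^1$, so one must tacitly use the multi-object (coproduct) version of the loop-space dga on the intersection, with a basepoint in each component, as you do.
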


\subsection{Wrapped Fukaya Category for Genus 1 Heegaard Diagrams}

The gluing diagram (\ref{eq:colim-3-mfd}) for a 3-manifold $M$ given by genus 1 Heegaard diagram induces a gluing diagram for the wrapped Fukaya category of $T^*M$ by Theorem \ref{thm:wrapped-hocolim}
\[
\cW(T^*M)\simeq \hocolim\left(
\begin{tikzcd}
	\cW(T^*U_1) & & \cW(T^*U_2)\\
	& \cW(T^*\Sigma_1)\ar[lu,"i_1"]\ar[ru,"i_2"']
\end{tikzcd}
\right),
\]
up to pretriangulated equivalence, where $U_i$ is a solid tori and $\Sigma_1$ is a torus. Here,
\[\cW(T^*U_i)\simeq\cW(T^*S^1)\simeq k\langle x\rangle[\{x\}^{-1}]\]
up to pretriangulated equivalence, where $dx=0$ and $|x|=0$. From Proposition \ref{prp:torus}, we also have 
\[\cW(T^*\Sigma_1)\simeq k\langle m,n,h\rangle[\{m,n\}^{-1}]\]
up to pretriangulated equivalence, where $m,n\in\hom^*(L,L)$ are closed degree zero morphisms, and $h\in\hom^*(L,L)$ is a degree $-1$ morphism with the differential
\[dh=mn-nm .\]
Then we get
\begin{equation}\label{eq:hocolim-wrapped-3-mfd}
	\cW(T^*M)\simeq \hocolim\left(
	\begin{tikzcd}[column sep=0.1cm]
		k\langle x_1\rangle[\{x_1\}^{-1}] & & k\langle x_2\rangle[\{x_2\}^{-1}]\\
		& k\langle m,n,h\rangle[\{m.n\}^{-1}]\ar[lu,"i_1"]\ar[ru,"i_2"'] .
	\end{tikzcd}
	\right),
\end{equation}
up to pretriangulated equivalence. We can assume that
\[i_1(m)=x_1,\qquad i_1(n)=1,\]
as in the classification given by Proposition \ref{prp:heegaard-genus-1}, and $i_2(m)$ and $i_2(n)$ are in the polynomial ring $k[x,x^{-1}]$ where we denote $x:=x_2$. Then we have the following proposition.

\begin{prp}\label{prp:wrapped-heegaard}
	If $M$ is a 3-manifold given by a genus 1 Heegaard diagram, and if we assume $i_2(m)$ and $i_2(n)$ in the diagram (\ref{eq:hocolim-wrapped-3-mfd}) are in the polynomial ring $k[x]$, then
	\[\cW(T^*M)\simeq k\langle x,y,z\rangle[\{x\}^{-1}],\]
	up to pretriangulated equivalence, where $|x|=0, |y|=-1$, $|z|=-2$, and
	\begin{align*}
		dx&=0,\\
		dy&=i_2(n)-1,\\
		dz&=i_2(m)y-y\, i_2(m) .
	\end{align*}
	The dg functor $i_2$ is determined by the Heegaard diagram of $M$.
\end{prp}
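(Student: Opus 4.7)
The plan is to compute the homotopy colimit~\eqref{eq:hocolim-wrapped-3-mfd} by invoking Theorems~\ref{thm:hocolim-loc} and~\ref{thm:hocolim} in sequence, and then to simplify the resulting semifree dg algebra via the elementary changes of variables and destabilisation technique used in the proof of Theorem~\ref{thm:cylinder}.

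First, I observe that $i_1(h)$ and $i_2(h)$ both vanish: each must be closed of degree $-1$, but the target $k\langle x_j\rangle$ (with $|x_j|=0$) has no nonzero morphisms of negative degree. The hypotheses $i_1(m)=x_1$, $i_1(n)=1$, and $i_2(m), i_2(n)\in k[x_2]$ supply the invertibility required by Theorem~\ref{thm:hocolim-loc}, so the localised hocolim equals the non-localised hocolim of $k\langle x_1\rangle$, $k\langle x_2\rangle$, and $k\langle m,n,h\rangle$ (with $dh = mn - nm$), subsequently localised at $\{x_1, x_2\}$. Applying Theorem~\ref{thm:hocolim} to the non-localised diagram adds a closed degree zero $t_L\colon L_1 \to L_2$, degree $-1$ morphisms $t_m, t_n$, and a degree $-2$ morphism $t_h$ to the coproduct $k\langle x_1\rangle \amalg k\langle x_2\rangle$. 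Direct application of the formula gives
\begin{align*}
dt_m &= i_2(m)\,t_L - t_L\,x_1, \\
dt_n &= i_2(n)\,t_L - t_L, \\
dt_h &= i_2(m)\,t_n + t_m - i_2(n)\,t_m - t_n\,x_1,
\end{align*}
and the full answer is this semifree dg algebra localised at $\{t_L, x_1, x_2\}$.

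The simplification proceeds in three elementary moves. Invertibility of $t_L$ identifies $L_1$ with $L_2$, and we set $t_L = 1$, simplifying each differential. Next, the substitution $t_h \mapsto t_h + t_n t_m$ is an elementary automorphism (respecting the semifree filtration in which $t_h$ is added last). The Leibniz computation
\[
d(t_n t_m) = (i_2(n) - 1)\,t_m - t_n\,(i_2(m) - x_1)
\]
exactly cancels the $t_m$ and $x_1$ summands in $dt_h$, leaving the cleaner relation $dt_h = i_2(m)\,t_n - t_n\,i_2(m)$. Finally, the substitution $x_1 \mapsto x_1 - i_2(m)$ (elementary with respect to a filtration placing $x_2$ strictly below $x_1$) turns $dt_m$ into $-x_1$; since neither the new $x_1$ nor $t_m$ now appears in any other differential relation, the pair cancels by destabilisation. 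The inversion of $x_1$ is absorbed into the cohomological identification $[x_1]=[i_2(m)]$, which is invertible in $k[x_2, x_2^{-1}]$ by hypothesis. Relabelling $x := x_2$, $y := t_n$, $z := t_h$ yields the claimed presentation.

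The main technical obstacle is the sign bookkeeping in the computation of $dt_h$: the expression $dh = mn - nm$ generates four terms via the formula of Theorem~\ref{thm:hocolim} whose signs depend on the degrees of the preceding factors, and one must verify that the substitution $t_h \mapsto t_h + t_n t_m$ exactly cancels the $t_m$ and $x_1$ contributions. Once the differentials are in this reduced form, destabilisation and relabelling are mechanical.
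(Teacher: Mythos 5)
Your proposal is correct and follows essentially the same route as the paper's proof: the same vanishing of $i_1(h),i_2(h)$, the same application of Theorem~\ref{thm:hocolim-loc}, the identical differentials for $t_m,t_n,t_h$, the same substitution $t_h\mapsto t_h+t_nt_m$, and the same elimination of the pair $(x_1,t_m)$ with the observation that inverting $x_1$ is redundant because $i_2(m)$ is already invertible in $k[x_2,x_2^{-1}]$. The only cosmetic difference is that you phrase the last cancellation as an explicit change of variables followed by destabilisation, where the paper directly sets $x_1=i_2(m)$ and $t_m=0$.
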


\begin{proof}
	We can assume that
	\[i_1(h)=i_2(h)=0\]
	as $\Hom^{-1}(L_j,L_j)\simeq 0$ in $k\langle x_j\rangle[\{x_j\}^{-1}]$ for $j=1,2$. Then, by the assumption, the dg functor
	\[i_2\colon k\langle m,n,h\rangle[\{m,n\}^{-1}]\to k\langle x_2\rangle[\{x_2\}^{-1}]\]
	descends to the dg functor
	\[i_2\colon k\langle m,n,h\rangle \to k\langle x_2\rangle .\]
	After recalling
	\[i_1(m)=x_1,\qquad i_1(n)=1,\]
	we can apply Theorem \ref{thm:hocolim-loc} and get
	\[\cW(T^*M)\simeq\cD[\{x_1,x_2,t_L\}^{-1}],\]
	up to pretriangulated equivalence, where $\cD$ is semifree dg category with two objects $L_1$ and $L_2$, and with the generating morphisms
	\begin{align*}
		x_1&\in\hom^*(L_1,L_1),\\
		x_2&\in\hom^*(L_2,L_2),\\
		t_L, t_m, t_n, t_h&\in\hom^*(L_1,L_2),
	\end{align*}
	such that $|t_L|=|t_m|+1=|t_n|+1=|t_h|+2=0$, and
	\begin{align*}
		dt_L&=0,\\
		dt_m&=i_2(m)t_L-t_L i_1(m)=i_2(m)t_L-t_L x_1,\\
		dt_n&=i_2(n)t_L-t_L i_1(n)=i_2(n) t_L-t_L,\\
		dt_h&=-(i_2(h)t_L-t_L i_1(h))+ i_2(m) t_n + t_m i_1(n) - i_2(n) t_m - t_n i_1(m),\\
		&=i_2(m) t_n + t_m - i_2(n) t_m -t_n x_1 .
	\end{align*}
	Use invertible $t_L$ to identify $L_1$ and $L_2$, and set $t_L=1$. Then
	\begin{align*}
		dt_m&=i_2(m)- x_1,\\
		dt_n&=i_2(n) -1,\\
		dt_h&=i_2(m) t_n-t_n x_1 + (1 - i_2(n)) t_m .
	\end{align*}
	Define $z:=t_h+t_nt_m$ to replace $t_h$, and we get 
	\begin{align*}
		dz&=i_2(m) t_n-t_n x_1 + (1 - i_2(n)) t_m + (i_2(n) -1)t_m-t_n(i_2(m)-x_1)\\
		&=i_2(m)t_n -t_n i_2(m) .
	\end{align*}
	We can use $dt_m=i_2(m)-x_1$ to set $x_1=i_2(m)$ and $t_m=0$. Then we get
	\[\cW(T^*M)\simeq k\langle x_2,t_n,z\rangle[\{i_2(m),x_2\}^{-1}],\]
	up to pretriangulated equivalence. However, $m$ is invertible, hence $i_2(m)$ is already invertible in $k\langle x_2\rangle[\{x_2\}^{-1}]$. This means that we don't need to invert $i_2(m)$. Then, we get the proposition by setting $x:=x_2$ and $y:=t_n$.
\end{proof}

Next, we will give a quick calculation for $\cW(T^*S^3)$ and $\cW(T^*(S^2\times S^1))$ using Proposition \ref{prp:wrapped-heegaard}, and leave lens spaces to the next section.

\begin{prp}\label{prp:wrap-s3}
	We have
	\[\cW(T^*S^3)\simeq k\langle z\rangle,\]
	up to pretriangulated equivalence, where $|z|=-2$ and $dz=0$.
\end{prp}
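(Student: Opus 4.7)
The plan is to specialise Proposition \ref{prp:wrapped-heegaard} to the genus 1 Heegaard diagram of $S^3$. By Proposition \ref{prp:heegaard-genus-1}, this diagram is characterised by $[\alpha] = n$ and $[\beta] = m$ in $H_1(\Sigma_1; \Z) = \Z \oplus \Z$. The convention $i_1(m) = x_1$, $i_1(n) = 1$ in Proposition \ref{prp:wrapped-heegaard} encodes the fact that the $\alpha$-curve bounds a disc in the solid torus $U_1$. The corresponding symmetric condition for $S^3$, namely that $\beta$ bounds a disc in $U_2$, forces the class of $m$ to be trivial in $H_1(U_2; \Z)$ while $n$ generates it. Lifting to the wrapped level, this means we may take
\[i_2(m) = 1, \qquad i_2(n) = x,\]
where $x := x_2$ is the generator of $\cW(T^*U_2) \simeq k\langle x_2\rangle[\{x_2\}^{-1}]$.

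Plugging these values into Proposition \ref{prp:wrapped-heegaard}, the differentials become
\begin{align*}
dx &= 0, \\
dy &= i_2(n) - 1 = x - 1, \\
dz &= i_2(m)\,y - y\,i_2(m) = y - y = 0.
\end{align*}
Hence $\cW(T^*S^3) \simeq k\langle x, y, z\rangle[\{x\}^{-1}]$ with $|x| = 0$, $|y| = -1$, $|z| = -2$, up to pretriangulated equivalence.

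Finally, I would destabilise the pair $(x, y)$. The relation $dy = x - 1$ is the only differential containing $y$, so by the standard destabilisation argument for semifree dg categories (see \cite[Section 2.6]{subcritical}), we may cancel $x$ and $y$ simultaneously by setting $x = 1$ and $y = 0$. Once $x = 1$, the localisation $[\{x\}^{-1}]$ becomes redundant. What remains is the semifree dg algebra $k\langle z\rangle$ with $|z| = -2$ and $dz = 0$, which gives the claim.

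The main obstacle (and it is a mild one for this proposition) is the identification of the dg functor $i_2$ from the topology of the Heegaard diagram of $S^3$; once that is correctly read off, the remainder of the argument is mechanical cancellation and fits squarely in the framework of the previous proposition. As a sanity check, the result $\cW(T^*S^3) \simeq k\langle z\rangle$ with $|z|=-2$ and $dz=0$ is consistent with Theorem \ref{thm:wrapped-loops}, since $\Omega S^3$ has homology $H_{-*}(\Omega S^3) \cong k[z]$ concentrated in even non-positive degrees.
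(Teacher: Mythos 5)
Your proposal is correct and follows the same route as the paper's proof: read off $i_2(m)=1$, $i_2(n)=x$ from the genus~1 Heegaard diagram of $S^3$, apply Proposition \ref{prp:wrapped-heegaard} to get $dy=x-1$ and $dz=0$, and cancel the pair $(x,y)$ by destabilisation, after which the localisation at $x$ is vacuous. The extra topological justification for $i_2$ and the sanity check against $H_{-*}(\Omega S^3)$ are fine but the argument is the same.
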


\begin{proof}
	For $S^3$, Proposition \ref{prp:heegaard-genus-1} suggests that $i_2(m)=1$, $i_2(n)=x$. Hence, by Proposition \ref{prp:wrapped-heegaard}, we get
	\[\cW(T^*S^3)\simeq k\langle x,y,z\rangle[\{x\}^{-1}]\]
	where $|x|=0, |y|=-1$, $|z|=-2$, and
	\[dx=0,\quad dy=x-1,\quad dz=y-y=0 .\]
	Using $dy=x-1$, we can set $x=1$ and $y=0$, which proves the proposition.
\end{proof}

Similarly, we can apply Theorem \ref{thm:loop-hocolim} and Theorem \ref{thm:hocolim-loc} to get the following proposition.

\begin{prp}\label{prp:s3-dga}
	If $p\in S^3$, we have
	\[C_{-*}(\Omega_p S^3)\simeq k\langle z\rangle,\]
	up to quasi-equivalence, where $|z|=-2$ and $dz=0$.
\end{prp}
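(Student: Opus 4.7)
The plan is to mirror the proof of Proposition \ref{prp:wrap-s3} with chains on based loop spaces in place of wrapped Fukaya categories, using Theorem \ref{thm:loop-hocolim} in place of Theorem \ref{thm:wrapped-hocolim}. The sphere $S^3$ admits the standard genus one Heegaard decomposition $S^3=U_1\cup U_2$ as a union of two solid tori meeting along a torus $\Sigma_1$; after slightly thickening each handlebody we may assume $U_1$, $U_2$, and $U_1\cap U_2$ are connected open submanifolds of $S^3$ with common basepoint $p\in\Sigma_1\subset U_1\cap U_2$, so that Theorem \ref{thm:loop-hocolim} applies.

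Next I would identify each piece. Since each solid torus deformation retracts onto a circle, $C_{-*}(\Omega_p U_i)\simeq C_{-*}(\Omega_p S^1)\simeq k\langle x_i\rangle[\{x_i\}^{-1}]$ with $|x_i|=0$ and $dx_i=0$, and the torus piece $C_{-*}(\Omega_p\Sigma_1)\simeq k\langle m,n,h\rangle[\{m,n\}^{-1}]$ was computed in the previous subsection. The inclusion maps are determined by the Heegaard data: following Proposition \ref{prp:heegaard-genus-1} for $S^3$, one may take $i_1(m)=x_1$, $i_1(n)=1$, $i_2(m)=1$, $i_2(n)=x_2$, and $i_j(h)=0$ (the latter because the degree $-1$ cohomology of $k\langle x_j\rangle[\{x_j\}^{-1}]$ vanishes).

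With those ingredients in hand, the algebraic computation then runs identically to the one in Propositions \ref{prp:wrapped-heegaard} and \ref{prp:wrap-s3}: applying Theorem \ref{thm:hocolim-loc} produces $k\langle x,y,z\rangle[\{x\}^{-1}]$ with $|x|=0$, $|y|=-1$, $|z|=-2$, and
\begin{align*}
dx&=0, & dy&=x-1, & dz&=y-y=0,
\end{align*}
after the same change of variables (in particular $z:=t_h+t_nt_m$) used in the wrapped Fukaya case. The pair $(x,y)$ can then be destabilised via $dy=x-1$ to set $x=1$ and $y=0$, leaving $k\langle z\rangle$ with $|z|=-2$ and $dz=0$.

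The only point that requires any care is the topological setup: Theorem \ref{thm:loop-hocolim} demands open connected subsets, whereas the Heegaard decomposition as stated uses closed handlebodies with shared boundary. This is handled by the standard thickening trick mentioned above, after which the cover $\{U_1,U_2\}$ meets the hypotheses. Beyond that, I do not expect any genuine obstacle, since the algebraic reduction is a line-for-line translation of the proof of Proposition \ref{prp:wrap-s3} via the parallel between Theorems \ref{thm:wrapped-hocolim} and \ref{thm:loop-hocolim}.
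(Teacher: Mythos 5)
Your proposal is correct and follows exactly the route the paper intends: the paper gives no separate argument for this proposition beyond the remark ``Similarly, we can apply Theorem \ref{thm:loop-hocolim} and Theorem \ref{thm:hocolim-loc},'' i.e.\ rerun the genus-one Heegaard computation of Propositions \ref{prp:wrapped-heegaard} and \ref{prp:wrap-s3} with $C_{-*}(\Omega_p(-))$ in place of $\cW(T^*(-))$. Your additional care about thickening the handlebodies to meet the openness hypothesis of Theorem \ref{thm:loop-hocolim} is a reasonable detail the paper leaves implicit.
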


\begin{prp}\label{prp:wrap-s1xs2}
	We have
	\[\cW(T^*(S^1\times S^2))\simeq k\langle x,y,z\rangle[\{x\}^{-1}],\]
	where $|x|=0, |y|=-1$, $|z|=-2$, and
	\[dx=0,\quad dy=0,\quad dz=xy-yx .\]
\end{prp}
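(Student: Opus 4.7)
The plan is to invoke Proposition \ref{prp:wrapped-heegaard} with the gluing data coming from the genus-1 Heegaard decomposition of $S^1 \times S^2$ as described by Proposition \ref{prp:heegaard-genus-1}. For $S^1 \times S^2$ the relevant class is $[\beta] = n$, meaning the $\beta$-curve of the Heegaard diagram bounds a disk in the solid torus $U_2$ along the $n$-direction. Under the induced dg functor $i_2 \colon \cW(T^*\Sigma_1) \to \cW(T^*U_2) \simeq k\langle x\rangle[\{x\}^{-1}]$, this forces $n$ to become trivial, while $m$ is sent to a generator of $H_1(U_2) = \Z$. Thus, up to a choice of normalization, $i_2(n) = 1$ and $i_2(m) = x$, both of which lie in $k[x]$, so the polynomial hypothesis of Proposition \ref{prp:wrapped-heegaard} is satisfied.

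Substituting these choices into the differential formulas of Proposition \ref{prp:wrapped-heegaard} yields
\begin{align*}
dx &= 0,\\
dy &= i_2(n) - 1 = 0,\\
dz &= i_2(m)\, y - y\, i_2(m) = xy - yx,
\end{align*}
which is exactly the semifree dg algebra in the statement. Hence $\cW(T^*(S^1 \times S^2)) \simeq k\langle x,y,z\rangle[\{x\}^{-1}]$ up to pretriangulated equivalence, with $|x|=0$, $|y|=-1$, $|z|=-2$.

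The only point worth emphasising is that, in contrast to the $S^3$ case of Proposition \ref{prp:wrap-s3} where the relation $dy = x - 1$ permits a destabilization that cancels the pair $(x,y)$, here $dy = 0$ and the expression $dz = xy - yx$ contains no summand isolating a single generator. Consequently no further simplification is available, and all three generators $x, y, z$ must be retained. There is no genuine obstacle in this argument; once Proposition \ref{prp:wrapped-heegaard} is in hand, the computation for $S^1 \times S^2$ reduces to reading off the Heegaard data and substituting.
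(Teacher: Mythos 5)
Your proposal is correct and follows exactly the paper's argument: read off $i_2(m)=x$, $i_2(n)=1$ from the genus-1 Heegaard data of $S^1\times S^2$ via Proposition \ref{prp:heegaard-genus-1}, and substitute into Proposition \ref{prp:wrapped-heegaard}. The extra remarks about why no destabilisation is available are accurate but not needed.
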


\begin{proof}
	For $S^1\times S^2$, Proposition \ref{prp:heegaard-genus-1} suggests that $i_2(m)=x$, $i_2(n)=1$. Hence, Proposition \ref{prp:wrapped-heegaard} gives the result.
\end{proof}

\begin{rmk}
	Note that
	\[\cW(T^*(S^1\times S^2))\simeq\cW(T^*S^1)\otimes\cW(T^*S^2),\]
	up to pretriangulated equivalence. Then, one can interpret Proposition \ref{prp:wrap-s1xs2} such that $x$ is coming from $\cW(T^*S^1)\simeq k\langle x\rangle$, $y$ is coming from $\cW(T^*S^2)\simeq k\langle y\rangle$ (see \cite{pinwheel}), and $z$ commutes $x$ and $y$ because of the tensor product.
\end{rmk}

\subsection{Wrapped Fukaya Category of the Cotangent Bundles of Lens Spaces}

In this section, we will describe the wrapped Fukaya category of the cotangent bundle of a lens space $L(p,q)$ using Proposition \ref{prp:wrapped-heegaard}. To do that, consider the diagram (\ref{eq:hocolim-wrapped-3-mfd}) for the lens space $L(p,q)$
\begin{equation*}
	\cW(T^*L(p,q))\simeq \hocolim\left(
	\begin{tikzcd}[column sep=0.1cm]
		k\langle x_1\rangle[\{x_1\}^{-1}] & & k\langle x_2\rangle[\{x_2\}^{-1}]\\
		& k\langle m,n,h\rangle[\{m,n\}^{-1}]\ar[lu,"i_1"]\ar[ru,"i_2"'] .
	\end{tikzcd}
	\right),
\end{equation*}
where
\[i_1(m)=x_1,\qquad i_1(n)=1.\]
Let $x:=x_2$. Here, $i_2(m)$ and $i_2(n)$ are in the polynomial ring $k[x,x^{-1}]$. To apply Proposition \ref{prp:wrapped-heegaard}, we need to have
\[i_2(m), i_2(n)\in k[x] .\]
For this to be possible, we first need to change the Heegaard diagram for $L_{p,q}$ given in Proposition \ref{prp:heegaard-genus-1} in such a way that $[\beta]=pm-qn$. This can be done by replacing $n$ with $-n$. This imposes the requirement
\[i_2(m^p n^{-q})=1 .\]
Also, geometrically, the monodromy $x$ of the solid torus should be in the image of $i_2$. From these requirements, we need to deduce $i_2(m)$ and $i_2(n)$.

Since $p$ and $q$ are relatively prime, there exists $r,s\in\Z$ such that $pr+qs=1$. Consider the change of basis for the torus
\begin{align*}
	f\colon k\langle m,n,h\rangle[\{m,n\}^{-1}] &\rightarrow k\langle u,v,w\rangle[\{u,v\}^{-1}],\\
	m &\mapsto u^r v^q,\\
	n &\mapsto u^{-s} v^p,
\end{align*}
with the inverse
\begin{align*}
	f'\colon k\langle u,v,w\rangle[\{u,v\}^{-1}] &\rightarrow k\langle m,n,h\rangle[\{m,n\}^{-1}],\\
	u &\mapsto m^p n^{-q},\\
	v &\mapsto m^s n^r .
\end{align*}
Note that the image of $h$ and $w$ can be given but is not relevant here. We can write $i_2$ as
\[i_2\colon k\langle m,n,h\rangle[\{m,n\}^{-1}] \xrightarrow{f} k\langle u,v,w\rangle[\{u,v\}^{-1}]\xrightarrow{g}k\langle x\rangle[\{x\}^{-1}],\]
where $g$ is just the trivial inclusion of the torus to the solid torus as its boundary, with
\[g(u)=1,\qquad g(v)=x,\qquad g(w)=0 .\]
This satisfy the requirements above, i.e.\
\[i_2(m^p n^{-q})=1,\]
and $x$ is in the image of $i_2$. This shows that
\[i_2(m)=x^q,\quad i_2(n)=x^p .\]
Then we finally get one of our main theorems.

\begin{thm}\label{thm:wrap-lens}
	If $L(p,q)$ is a lens space with $p>q\geq 1$ and $(p,q)=1$, then
	\[\cW(T^*L(p,q))\simeq k\langle x,y,z\rangle,\]
	up to pretriangulated equivalence, where $|x|=0, |y|=-1$, $|z|=-2$, and
	
	\begin{align*}
		dx&=0,\\
		dy&=1-x^p,\\
		dz&=x^q y-yx^q .
	\end{align*}
	We will denote this differential graded algebra (dga) $k\langle x,y,z\rangle$ by $\cC_{p,q}$.
\end{thm}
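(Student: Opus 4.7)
The plan is to apply Proposition \ref{prp:wrapped-heegaard} directly to the Heegaard decomposition of $L(p,q)$. Since that decomposition fits the framework of diagram (\ref{eq:hocolim-wrapped-3-mfd}) with $i_1(m)=x_1$ and $i_1(n)=1$, the task reduces to (a) pinning down the images of $m$ and $n$ under $i_2$, (b) checking that these lie in the polynomial subalgebra $k[x]\subset k[x,x^{-1}]$ so that Proposition \ref{prp:wrapped-heegaard} applies, and (c) removing the formal localization at $x$ in the resulting presentation.

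For step (a), I would use Proposition \ref{prp:heegaard-genus-1} together with the flip $n\mapsto -n$ to present $L(p,q)$ by a Heegaard diagram with $[\beta]=pm-qn$, so that $i_2$ must send $m^p n^{-q}$ to $1$ while still hitting the monodromy generator $x$ of $\cW(T^*U_2)\simeq k\langle x\rangle[\{x\}^{-1}]$. Using $(p,q)=1$, choose $r,s\in\Z$ with $pr+qs=1$ and perform the change of basis on the torus given by $m\mapsto u^r v^q$, $n\mapsto u^{-s}v^p$. In the $u,v$ basis the inclusion of the boundary torus into the solid torus $U_2$ is the standard one, sending $u\mapsto 1$ and $v\mapsto x$; transporting back via the change of basis yields $i_2(m)=x^q$ and $i_2(n)=x^p$. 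Both lie in $k[x]$, so step (b) is automatic.

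Plugging these formulas into Proposition \ref{prp:wrapped-heegaard} produces the semifree dga $k\langle x,y,z\rangle[\{x\}^{-1}]$ with $|x|=0$, $|y|=-1$, $|z|=-2$ and differentials $dx=0$, $dy=x^p-1$, $dz=x^q y-yx^q$; replacing $y$ by $-y$ flips the sign of $dy$ to the asserted $1-x^p$. The final move is to observe that the localization at $x$ is redundant: the relation $dy=1-x^p$ shows that $x^p$ is cohomologous to $1$, so $x^{p-1}$ is a two-sided homotopy inverse to $x$ and already represents $x^{-1}$ in $H^0$. Hence the localization functor $k\langle x,y,z\rangle\to k\langle x,y,z\rangle[\{x\}^{-1}]$ is a quasi-equivalence, and the identification $\cW(T^*L(p,q))\simeq\cC_{p,q}$ follows. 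The only real obstacle, as in all such calculations, is getting the change-of-basis formula for $i_2$ correct; everything else is bookkeeping built on Theorem \ref{thm:hocolim-loc} and Proposition \ref{prp:wrapped-heegaard}.
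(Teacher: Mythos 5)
Your proposal follows the paper's own proof essentially verbatim: the same flip of the Heegaard diagram to $[\beta]=pm-qn$, the same Bezout change of basis $m\mapsto u^rv^q$, $n\mapsto u^{-s}v^p$ giving $i_2(m)=x^q$ and $i_2(n)=x^p$, the same application of Proposition \ref{prp:wrapped-heegaard}, and the same observation that the localization at $x$ is redundant because $dy$ already makes $x$ invertible in $H^0$. The only (trivial) omission is that the substitution $y\mapsto -y$ must be accompanied by $z\mapsto -z$ to preserve $dz=x^qy-yx^q$, exactly as the paper notes at the end of its proof.
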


\begin{proof}
	Since $i_2(m)=x^q$ and $i_2(n)=x^p$ for $L(p,q)$, Proposition \ref{prp:wrapped-heegaard} gives
	\[\cW(T^*L(p,q))\simeq k\langle x,y,z\rangle[\{x\}^{-1}],\]
	up to pretriangulated equivalence, with the given degrees and differentials above. Note that we do not need to invert $x$ because $dy=x^p-1$ already implies that $x$ is invertible.

	After replacing $y$ with $-y$ and $z$ with $-z$, we get the result.
\end{proof}

\begin{rmk}
	There is a unique choice for $\cW(T^*L(p,q))$ since $H^1(L(p,q);\Z)=0$, as stated in Definition \ref{dfn:wrapped-fukaya}.
\end{rmk}

Similarly, we can apply Theorem \ref{thm:loop-hocolim} and Theorem \ref{thm:hocolim-loc} to get the following theorem.

\begin{thm}\label{thm:loop-lens}
	If $s\in L(p,q)$, we have
	\[C_{-*}(\Omega_s L(p,q))\simeq k\langle x,y,z\rangle,\]
	up to quasi-equivalence, where $|x|=0, |y|=-1$, $|z|=-2$, and
		
	\begin{align*}
		dx&=0,\\
		dy&=1-x^p,\\
		dz&=x^q y-yx^q .
	\end{align*}
\end{thm}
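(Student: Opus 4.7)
The plan is to mimic the proof of Theorem \ref{thm:wrap-lens} line by line, substituting the loop space gluing formula (Theorem \ref{thm:loop-hocolim}) for the wrapped Fukaya gluing formula (Theorem \ref{thm:wrapped-hocolim}) and working up to quasi-equivalence instead of pretriangulated equivalence. Starting from a Heegaard decomposition $L(p,q) = U_1 \cup U_2$ whose overlap is an open collar of the Heegaard torus, Theorem \ref{thm:loop-hocolim} gives
\[
C_{-*}(\Omega_s L(p,q)) \simeq \hocolim\left(\begin{tikzcd}[column sep=0.1cm]
C_{-*}(\Omega_s U_1) & & C_{-*}(\Omega_s U_2)\\
& C_{-*}(\Omega_s \Sigma_1)\ar[lu,"i_1"]\ar[ru,"i_2"']
\end{tikzcd}\right),
\]
and it remains to identify the vertices and the arrows, then simplify.

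First I would identify each vertex. Since each $U_i$ deformation retracts onto a circle, the computation $C_{-*}(\Omega_p S^1) \simeq k\langle x\rangle[\{x\}^{-1}]$ recorded earlier gives $C_{-*}(\Omega_s U_i) \simeq k\langle x_i\rangle[\{x_i\}^{-1}]$ with $|x_i|=0$. The torus vertex is the based loop computation stated in the excerpt: $C_{-*}(\Omega_s \Sigma_1) \simeq k\langle m,n,h\rangle[\{m,n\}^{-1}]$ with $|m|=|n|=0$, $|h|=-1$, and $dh = mn - nm$. The crucial input is that the maps $i_1, i_2$ agree, on these semifree models, with the symplectic ones used in the derivation of Theorem \ref{thm:wrap-lens}: they are determined at the level of homotopy classes of the attaching curves, so the same Heegaard data and the same change of basis (the map $f$ arising from $pr + qs = 1$) force $i_1(m) = x_1$, $i_1(n) = 1$ and $i_2(m) = x^q$, $i_2(n) = x^p$, with $i_1(h) = i_2(h) = 0$ because $\Hom^{-1}$ vanishes in each target.

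From here the calculation is identical to the proof of Theorem \ref{thm:wrap-lens}. Apply Theorem \ref{thm:hocolim-loc} to move the localisations outside the homotopy colimit, then apply Theorem \ref{thm:hocolim} to the resulting diagram of semifree dg algebras. The same sequence of simplifications used there applies verbatim: identify the two objects using the invertible morphism $t_L$ and set $t_L = 1$; use $dt_m = i_2(m) - x_1$ to cancel $x_1$ against $t_m$; replace $t_h$ by $z := t_h + t_n t_m$ to obtain $dz = x^q y - y x^q$ after writing $x := x_2$ and $y := t_n$; and observe that the relation $dy = 1 - x^p$ already forces $x$ to be invertible, so no further localisation is needed. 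After replacing $y, z$ by their negatives this produces the asserted presentation. The only genuinely new point compared with Theorem \ref{thm:wrap-lens} is the identification of the gluing functors on the loop space side, and this is essentially forced by the geometry of the Heegaard decomposition, so no substantial new obstacle arises.
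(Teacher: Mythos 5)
Your proposal is correct and is exactly the argument the paper intends: the paper proves this theorem by the single remark that one applies Theorem \ref{thm:loop-hocolim} and Theorem \ref{thm:hocolim-loc} in place of Theorem \ref{thm:wrapped-hocolim} and then repeats the computation of Theorem \ref{thm:wrap-lens} verbatim. Your attention to taking open thickenings so that the hypotheses of Theorem \ref{thm:loop-hocolim} hold, and to the fact that the gluing maps are determined by the same Heegaard data, matches the paper's (implicit) treatment.
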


One can easily see that
\[H^0(\cC_{p,q})= k[\Z_p]\]
from the description of $\cC_{p,q}=C_{-*}(\Omega_s L(p,q))$. We can confirm this also by
\[H^0(\cC_{p,q})=H_0(\Omega_s L(p,q))= k[\pi_1(L(p,q))]\]
and $\pi_1(L(p,q))=\Z_p$ from Proposition \ref{prp:lens-homology}. Moreover, we have the following.

\begin{prp}\label{prp:cpq-cohomology}
	If $s\in L(p,q)$, we have
	\[H^n(\cC_{p,q})=\begin{cases}
		k[\Z_p], & \text{if $n\leq 0$ and $n$ is even,}\\
		0, & \text{otherwise.}
	\end{cases}\]
\end{prp}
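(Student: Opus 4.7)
The plan is to deduce this from the preceding topological identification rather than attempting a direct computation on the free model. By Theorem~\ref{thm:loop-lens}, $\cC_{p,q}$ is quasi-equivalent to $C_{-*}(\Omega_s L(p,q))$, so the problem reduces to computing the graded $k$-module $H_{-*}(\Omega_s L(p,q))$. The key geometric input is that the universal cover of $L(p,q)$ is $S^3$ and the deck group is $\pi_1(L(p,q))=\Z_p$, which is exactly the setup of Proposition~\ref{prp:homology-universal}.

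Applying Proposition~\ref{prp:homology-universal} with $\widetilde{L(p,q)} = S^3$ produces an isomorphism
\[ H_{-*}(\Omega_s L(p,q)) \;\simeq\; \bigoplus_{\gamma \in \Z_p} H_{-*}(\Omega_s S^3) \]
of graded $k$-modules. The remaining piece is the computation of $H_{-*}(\Omega_s S^3)$, which is already handled in Proposition~\ref{prp:s3-dga}: it identifies $C_{-*}(\Omega_s S^3)$ with the semifree dga $k\langle z\rangle$ of zero differential, $|z|=-2$. Its cohomology is the polynomial ring $k[z]$, which is free of rank one in each non-positive even degree and zero elsewhere. Summing $p$ copies gives $H^n(\cC_{p,q}) = k^{\oplus p} = k[\Z_p]$ as $k$-modules when $n \leq 0$ is even, and $0$ otherwise, as claimed.

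In this route there is no real obstacle — once Theorem~\ref{thm:loop-lens} has been established, the proposition is a direct assembly of earlier results. A self-contained algebraic alternative would be to write down the dga morphism $\cC_{p,q} \to \mathcal{E}$ sending $x\mapsto\alpha$, $y\mapsto 0$, $z\mapsto\gamma$ (well-defined since $\alpha^p=1$ and $\alpha\gamma=\gamma\alpha$ in $\mathcal{E}$, cf.\ the map $\pi$ from Section~\ref{sec:notation-convention}) and prove that it is a quasi-isomorphism. The hard part of that alternative would be controlling the cocycles on the $\cC_{p,q}$ side, which have arbitrarily complicated words in $x,y,z$; one would likely filter by the total number of $y$'s and $z$'s and run a spectral sequence whose $E_1$-page encodes the interaction $dz = x^q y - y x^q$ killing the surplus degree $-1$ cocycles left over by $dy=1-x^p$. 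This is strictly more laborious than the topological proof, so I would present the latter and mention the algebraic verification only as a sanity check.
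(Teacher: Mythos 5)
Your proof is correct and follows essentially the same route as the paper: the paper likewise invokes the identification $H^*(\cC_{p,q})=H_{-*}(\Omega_s L(p,q))$, applies Proposition~\ref{prp:homology-universal} with universal cover $S^3$, and concludes via Proposition~\ref{prp:s3-dga}. The additional remarks about a purely algebraic verification via the map $\pi$ are not needed and do not appear in the paper's argument.
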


\begin{proof}
	Note that $S^3$ is the universal cover of the lens space $L(p,q)$. Since we have $\pi_1(L(p,q))=\Z_p$, by Proposition \ref{prp:homology-universal} we have the isomorphism
	\[H^*(\cC_{p,q})=H_{-*}(\Omega_s L(p,q))\simeq \bigoplus_{\gamma\in\Z_p}H_{-*}(\Omega_s S^3)\]
	of graded $k$-modules. By Proposition \ref{prp:s3-dga}, we have
	\[C_{-*}(\Omega_s S^3)\simeq k\langle z\rangle,\]
	up to quasi-equivalence, where $|z|=-2$ and $dz=0$. Then
	\[H_{-n}(\Omega_s S^3)=\begin{cases}
		k, & \text{if $n\leq 0$ and $n$ is even,}\\
		0, & \text{otherwise.}
	\end{cases}\]
	This concludes the proof.
\end{proof}

\begin{rmk}\label{rmk:cohomology-lens}
	Note that the cohomology $H^*(\cC_{p,q})$ distinguishes the lens spaces $L(p,q)$ and $L(p',q')$ when $p\neq p'$. However, when $p=p'$, $H^*(\cC_{p,q})$ is the same for both lens spaces (even with the product structure, as we will see in Part \ref{part dga computations}). Moreover, for degree reasons, (higher) Massey products $\mu^n$ with $n\geq 3$ on $H^*(\cC_{p,q})$ are zero when $n$ is odd. In Part \ref{part dga computations}, we will study the differential graded structure of $\cC_{p,q}$ to show that $\cC_{p,q}$ in fact detects the homotopy type of the lens spaces when one considers more than the cohomology of $\cC_{p,q}$.
\end{rmk}

\clearpage

\part{Analysis on DGA Invariants of Lens Spaces}
\label{part dga computations}

\section{Setting}
\label{section setting}

\subsection{Results in Part \ref{part dga computations}}
\label{subsection results in part 2}
In the previous part, we constructed a differential graded algebra (dga) $\mathcal{C}_{p,q}$ over a coefficient ring $k$, from a lens space $L(p,q)$, so that $\mathcal{C}_{p,q}$ is pretriangulated equivalent to the wrapped Fukaya category of the cotangent bundle of the lens space $L(p,q)$ (see Theorem \ref{thm:wrap-lens}).
In Part \ref{part dga computations}, we will perform some computations on $\mathcal{C}_{p,q}$ for the case of $k = \mathbb{Z}$. 
By doing this, we prove the following Theorems \ref{thm homotopy type to quasi} and \ref{thm quasi to homotopy type}.

\begin{rmk}
	We note that in the rest of this paper, we fix the coefficient ring $k$ as $\mathbb{Z}$ if we do not specify it. 
	The reason why we do not choose a field coefficient will be explained in Section \ref{subsect properties of chi}, in Remark \ref{rmk:lens-field}.
\end{rmk}

\begin{thm}
	\label{thm homotopy type to quasi}
	If $L(p_1,q_1)$ and $L(p_1,q_2)$ are of the same homotopy type, then $\mathcal{C}_{p_1,q_1}$ and $\mathcal{C}_{p_2,q_2}$ are quasi-equivalent.
\end{thm}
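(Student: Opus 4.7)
The plan is as follows. By Theorem \ref{thm:lens-classification}, the hypothesis that $L(p, q_1)$ and $L(p, q_2)$ are homotopy equivalent amounts to the existence of an integer $n$ (necessarily coprime to $p$, since $q_1, q_2$ are) and a sign $\epsilon \in \{+1, -1\}$ with $q_2 = \epsilon n^2 q_1 + pm$ for some $m \in \Z$. I focus on the case $\epsilon = +1$; the case $\epsilon = -1$ reduces to it by composing with the dga involution of $\cC_{p,q}$ induced by $x \mapsto x^{p-1}$, which reflects the orientation-reversing homotopy equivalence $L(p,q) \simeq L(p, p-q)$.

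To build $F : \cC_{p,q_1} \to \cC_{p,q_2}$ I exploit the semifreeness of $\cC_{p,q_1}$: it suffices to prescribe $F(x_1), F(y_1), F(z_1)$ compatibly with $d$. Set $F(x_1) := x_2^n$ and $F(y_1) := \sum_{j=0}^{n-1} x_2^{jp}\, y_2$; a telescoping sum gives $dF(y_1) = 1 - x_2^{np} = F(dy_1)$. The required image $F(dz_1) = [x_2^{n q_1}, F(y_1)]$ is automatically a $(-1)$-cycle, because $dF(y_1)$ is a polynomial in $x_2$ and so commutes with $x_2^{n q_1}$; by Proposition \ref{prp:cpq-cohomology} we have $H^{-1}(\cC_{p,q_2}) = 0$, so this cycle bounds and one takes $F(z_1)$ to be any primitive. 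An explicit primitive may be assembled from two building blocks in $\cC_{p,q_2}$: the element $Z = \sum_{k=0}^{\beta - 1} x_2^{k q_2} z_2 x_2^{(\beta - 1 - k) q_2}$ satisfying $dZ = [x_2^{\beta q_2}, y_2]$, and $V_{m'} = -\sum_{k=0}^{m'-1} x_2^{kp} y_2^2 x_2^{(m'-1-k)p}$ satisfying $dV_{m'} = [x_2^{m'p}, y_2]$, where one decomposes $n q_1 = \beta q_2 + m' p$ with $\beta \equiv n^{-1} \pmod p$, then applies the Leibniz identity $[ab, y_2] = a[b, y_2] + [a, y_2]\, b$.

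To show the resulting $F$ is a quasi-isomorphism, I would compose with the quasi-faithful dga morphism $\pi_i : \cC_{p, q_i} \to \cE$ (which is in fact a quasi-iso by Proposition \ref{prp:cpq-cohomology}) and check that $\pi_2 \circ F = \phi \circ \pi_1$ for an automorphism $\phi$ of $\cE$ sending $\alpha \mapsto \alpha^n$ and $\gamma \mapsto u\,\gamma$ with $u \in \Z[\Z_p]^{\times}$. On $x_1, y_1$ this is immediate: $\pi_2(F(x_1)) = \alpha^n$ (a generator of $H^0$ since $\gcd(n,p) = 1$) and $\pi_2(F(y_1)) = 0$. For $z_1$, since $\pi_2$ kills $y_2$, only the $z_2$-terms of $F(z_1)$ survive under $\pi_2$, summing to $u\,\gamma \in \cE$ for some $u$. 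The freedom in the choice of primitive $F(z_1)$ (different primitives differ by a degree $-2$ cycle, and under $\pi_2$ these realise arbitrary classes in $H^{-2}(\cE) = \Z[\Z_p] \cdot \gamma$) lets one adjust $u$ to any element of $\Z[\Z_p]$, in particular a unit. With $\phi$ thereby an automorphism and $\pi_1, \pi_2$ quasi-isos, $F$ too is a quasi-iso.

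The main obstacles are the combinatorial complexity of $F(z_1)$—tracking signs and exponents while iterating the Leibniz rule, and handling the case where the naive decomposition of $n q_1$ would require negative exponents (resolved by shifting $\beta$ modulo $p$ or by using $x_2^{p-1}$ as a chain-level homotopy inverse of $x_2$)—and, more essentially, confirming that the cohomological multiplier $u$ can be arranged to lie in $\Z[\Z_p]^{\times}$. This last point cannot be read off from the cohomology groups alone and is precisely what the auxiliary quasi-faithful morphism $\pi$ is designed to control; it is also where working over $\Z$ rather than a field makes the bookkeeping delicate.
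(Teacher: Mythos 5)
Your construction of $F$ is essentially the paper's: the same $F(x_1)=x_2^a$, the same $F(y_1)$ up to a left/right convention, and a primitive of the cycle $[x_2^{aq_1},F(y_1)]$ for $F(z_1)$ (the paper assembles it from the explicit elements $\Lambda_2$ and $\chi_2$, much like your $Z$ and $V_{m'}$). The gap is in the verification that $F$ is a quasi-equivalence, and it is not a bookkeeping issue. Over $\Z$ the morphism $\pi\colon\cC_{p,q}\to\cE$ is \emph{not} a quasi-isomorphism: by Proposition \ref{prop divided by p^n} the image of $H^{-2n}\pi$ is exactly $p^n\cE^{-2n}$, so $\pi$ is only quasi-faithful (Corollary \ref{cor 1}). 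Indeed, if $\pi$ were a quasi-isomorphism, every $\cC_{p,q}$ would be quasi-equivalent to $\cE$ independently of $q$ and Theorem \ref{thm quasi to homotopy type} would be false; this is precisely the field-coefficient situation dismissed in Remark \ref{rmk:lens-field}. So you cannot conclude ``$\phi$ an automorphism and $\pi_1,\pi_2$ quasi-isos, hence $F$ a quasi-iso.''

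The same divisibility fact breaks your mechanism for arranging $u\in\Z[\Z_p]^\times$: two primitives of $[x_2^{aq_1},F(y_1)]$ differ by a degree $-2$ cycle, and $\pi_2$ sends every such cycle into $p\,\Z[\Z_p]\gamma$, not onto all of $\Z[\Z_p]\gamma$. Hence the class of $u$ modulo $p$ is \emph{forced} by the construction, and showing it is a unit requires the explicit computation using $bq_2=a^2q_1+cp$ and $q_2\bar q_2=r_2p+1$ (the paper's Equation \eqref{eqn final}, which yields $\pi_2(F(\chi_1))=pb\,\alpha^{-q_2}\gamma$ with $b=\pm1$, i.e.\ $[F(\chi_1)]=b[\chi_2]$). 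Finally, even granting $\pi_2\circ F=\phi\circ\pi_1$ with $\phi$ an automorphism, deducing that $H^*F$ is an isomorphism requires knowing that $\{[x^i\chi^n]\}_{0\le i\le p-1}$ is a $\Z$-basis of $H^{-2n}_{p,q}$ and that $H^{-2n}\pi$ has image exactly $p^n\cE^{-2n}$; this is the content of Lemma \ref{lem commuting} and Propositions \ref{prop divided by p^n} and \ref{prop generators} (the $\Psi_m,\Phi_m$ argument), for which your proposal offers no substitute. You correctly identify the unit $u$ as the crux, but the route you propose to it does not close.
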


As a corollary of Theorem \ref{thm homotopy type to quasi}, we obtain Corollary \ref{cor Fukaya category}.

\begin{cor}
	\label{cor Fukaya category}
	The wrapped Fukaya category of $T^*L(p,q)$ is an invariant of the homotopy type of $L(p,q)$.
\end{cor}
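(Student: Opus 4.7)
The plan is to obtain Corollary \ref{cor Fukaya category} as an essentially formal consequence of Theorem \ref{thm homotopy type to quasi} combined with Theorem \ref{thm:wrap-lens}. The substantive content is packaged entirely in Theorem \ref{thm homotopy type to quasi}, so the corollary itself only requires chaining together the appropriate equivalences in the correct localisation of $\dgCat$.

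Concretely, suppose $L(p_1,q_1)$ and $L(p_2,q_2)$ are homotopy equivalent. By Theorem \ref{thm homotopy type to quasi}, the dg algebras $\cC_{p_1,q_1}$ and $\cC_{p_2,q_2}$ are quasi-equivalent, i.e.\ isomorphic in $\Ho(\dgqe)$. Since the quasi-equiconic model structure $\dgeq$ is a left Bousfield localisation of $\dgqe$, every quasi-equivalence is a pretriangulated equivalence, so $\cC_{p_1,q_1} \simeq \cC_{p_2,q_2}$ in $\Ho(\dgeq)$ as well. Finally, by Theorem \ref{thm:wrap-lens} we have the pretriangulated equivalences
\[
\cW(T^*L(p_1,q_1)) \simeq \cC_{p_1,q_1} \simeq \cC_{p_2,q_2} \simeq \cW(T^*L(p_2,q_2)),
\]
and composing these in $\Ho(\dgeq)$ yields the desired pretriangulated equivalence $\cW(T^*L(p_1,q_1)) \simeq \cW(T^*L(p_2,q_2))$. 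Note that one must (and can) work in $\Ho(\dgeq)$ throughout, since Theorem \ref{thm:wrap-lens} only identifies $\cW(T^*L(p,q))$ with $\cC_{p,q}$ up to pretriangulated equivalence; a stronger quasi-equivalence statement would not follow from this argument.

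There is no real obstacle here, as all the work sits in Theorem \ref{thm homotopy type to quasi}; the only thing to verify is the implication \emph{quasi-equivalence} $\Rightarrow$ \emph{pretriangulated equivalence}, which is immediate from the Bousfield localisation set-up recalled in Section \ref{sec:dwyer-kan}. If one wishes to emphasise naturality (so that the assignment $L(p,q) \mapsto \cW(T^*L(p,q))$ is functorial on the homotopy category of lens spaces), one could instead invoke the explicit dg functor constructed in the proof of Theorem \ref{thm homotopy type to quasi} and observe that, after passing to $\Ho(\dgeq)$, it is compatible with the identifications in Theorem \ref{thm:wrap-lens}; but for the statement of the corollary as written, the above three-line chain of equivalences suffices.
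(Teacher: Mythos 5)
Your proposal is correct and matches the paper's own proof, which likewise cites Theorem \ref{thm:wrap-lens} for the pretriangulated equivalence $\cW(T^*L(p,q))\simeq\cC_{p,q}$ and then invokes Theorem \ref{thm homotopy type to quasi} together with the fact that quasi-equivalence implies pretriangulated equivalence. Your extra remarks on the Bousfield localisation and on naturality are harmless elaborations of the same argument.
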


\begin{rmk}\label{rmk:wrapped-lens}
	Corollary \ref{cor Fukaya category} gives an example of quasi-equivalence between (pretriangulated closures of) wrapped Fukaya categories of two Weinstein manifolds, which are not induced from a symplectomorphisms between them. 
	More precisely, by \cite{abouzaid-kragh}, it is known that $T^*L(p_1,q_1)$ and $T^*L(p_2,q_2)$ are symplectomorphic if and only if $L(p_1,q_1)$ and $L(p_2,q_2)$ are diffeomorphic. 
	Since there is a pair of lens spaces $\big(L(p_1,q_1), L(p_2,q_2)\big)$ such that not diffeomorphic but homotopic, Corollary \ref{cor Fukaya category} provides an example of a quasi-equivalence between wrapped Fukaya categories, which is not induced from a symplectomorphism. 
	
	From the view point of homological mirror symmetry, it seems that the above argument implies a restriction on the mirror side of a lens space $L(p_i,q_i)$ in the above pair. 
	This is because, by \cite[Thoerem 2.5]{bondal-orlov}, if $X$ is a smooth irreducible projective variety with ample canonical or anticanonical sheaf, and if $D^b_{coh}(X)$ is equivalent to $D^b_{coh}(X')$ for some other smooth algebraic variety $X'$, then $X$ and $X'$ are isomorphic to each other. 
	Roughly, every equivalence between $D^b_{coh}(X)$ and $D^b_{coh}(X')$ must be induced from a geometric equivalence. 
	Thus, we expect that the mirror of $T^*L(p_i,q_i)$ where $L(p_i,q_i)$ is in the above pair cannot be a smooth irreducible projective variety with ample canonical or anticanonical sheaf.
\end{rmk}

Theorem \ref{thm quasi to homotopy type} is the inverse directional statement of Theorem \ref{thm homotopy type to quasi}.

\begin{thm} 
	\label{thm quasi to homotopy type}
	If $\mathcal{C}_{p_1,q_1}$ and $\mathcal{C}_{p_2,q_2}$ are quasi-equivalent, then $L(p_1,q_1)$ and $L(p_2,q_2)$ are of the same homotopy type. 
\end{thm}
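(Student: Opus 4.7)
The strategy is to recover the Reidemeister-type classification data of $L(p,q)$ (the value of $p$, together with $q$ modulo the action of $\pm\alpha^2$) directly from the quasi-equivalence class of the dga $\cC_{p,q}$, by probing with dga morphisms into the simple auxiliary dga $\cD$ from Section \ref{sec:notation-convention}. First, a quasi-equivalence induces a ring isomorphism on $H^0$; by Proposition \ref{prp:cpq-cohomology} we have $H^0(\cC_{p,q})\cong\Z[\Z_p]$, and since the integral group ring of a finite cyclic group determines its order (e.g.\ via $\Z$-rank, or via the number of group-like units), this immediately gives $p_1=p_2=:p$.

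Next, I would reduce to a strict dg functor. Each $\cC_{p,q}$ is semifree, hence cofibrant in $\dgqe$, and every dg category is fibrant in $\dgqe$; therefore a quasi-equivalence $\cC_{p,q_1}\simeq\cC_{p,q_2}$ is represented by a strict dg functor $F\colon\cC_{p,q_1}\to\cC_{p,q_2}$, and by semifreeness $F$ is determined by its values on the three generators $x,y,z$. The plan is then to analyze $F$ via post-composition with the family $\mu_{a,b,c}\colon\cC_{p,q_2}\to\cD$: since $\cD$ has zero differential and integer degree-zero part, any dga morphism into $\cD$ is constrained to send $x$ to an integer with $p$-th power $1$ (hence $\pm 1$) and is classified by a small integer tuple $(a,b,c)$. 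Tracking how this classification changes under $F$ will pin down $F(x), F(y), F(z)$ up to homotopy.

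The congruence of Theorem \ref{thm:lens-classification} should emerge as follows. Since $F$ is a ring isomorphism on $H^0=\Z[\Z_p]$, and the only automorphisms of $\Z[\Z_p]$ carrying the group-like generator to a group-like generator are $x\mapsto\pm x^\alpha$ with $\alpha\in(\Z/p)^*$, $F(x)$ is cohomologous to $\pm x^\alpha$. The chain-level equation
\[dF(z)=F(dz)=F(x)^{q_1}F(y)-F(y)F(x)^{q_1}\]
then exhibits $x^{\alpha q_1}F(y)-F(y)x^{\alpha q_1}$ (up to boundaries and the sign from $F(x)$) as a boundary in $\cC_{p,q_2}$; matching this against the defining relation $dz=x^{q_2}y-yx^{q_2}$ and tracking the image of $[F(z)]\in H^{-2}\cong\Z[\Z_p]$ under the induced ring automorphism forces $\alpha^{2}q_1\equiv\pm q_2\pmod{p}$. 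By Theorem \ref{thm:lens-classification} this is precisely the condition for $L(p,q_1)\simeq L(p,q_2)$.

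The main obstacle is the final exponent-matching step: one must rigorously verify that, after accounting for all homotopy corrections to $F(x)$ and $F(y)$, the only way for $F(x)^{q_1}F(y)-F(y)F(x)^{q_1}$ to be a boundary in $\cC_{p,q_2}$ is through a relation reducing, under the substitution $x\mapsto\pm x^\alpha$, to the defining relation $dz=x^{q_2}y-yx^{q_2}$. This requires an explicit basis-level analysis of $\cC_{p,q_2}$ in low negative degrees, and is where the technical apparatus announced later in Part \ref{part dga computations} (the explicit bases, the conditions $\mathfrak{A}$--$\mathfrak{D}$, and the comparison maps $\Psi,\Phi$ listed in Section \ref{sec:notation-convention}) is expected to enter.
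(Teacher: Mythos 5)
Your setup agrees with the paper's: you reduce to a strict dga morphism $F\colon\cC_{p,q_1}\to\cC_{p,q_2}$ via cofibrancy, you get $p_1=p_2$ from the rank of the cohomology, and you identify the correct probe, namely the family of dga morphisms $\mu_{a,b,c}\colon\cC_{p,q}\to\cD$ classified by degree reasons. But the step where the congruence is actually supposed to appear does not work as you describe it, and this is the heart of the proof. First, $[F(z)]\in H^{-2}$ is not defined: $z$ is not closed ($dz=x^qy-yx^q\neq 0$), so there is no class to track under an induced map on cohomology. Second, the condition you propose to exploit --- that $F(x)^{q_1}F(y)-F(y)F(x)^{q_1}$ be a boundary in $\cC_{p,q_2}$ --- is vacuous: this element \emph{is} $dF(z)$ by the definition of a dg functor, so it is a boundary for every choice of $F(x),F(y),F(z)$, and no basis-level analysis of that identity can single out the exponents with $\alpha^2q_1\equiv\pm q_2$. (Your detour through torsion units of $\Z[\Z_p]$ to write $[F(x)]=\pm x^\alpha$ is also unnecessary; the paper only ever needs $\mu(x)\in\Z$ with $\mu(x)^p=1$.)

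The constraint the paper actually uses comes from degree $-2$ cohomology, not from the relation $dz=x^qy-yx^q$. Since $H^{-2}F$ is surjective, there is a \emph{closed} $A\in\cC^{-2}_{p,q_1}$ with $[F(A)]=[\chi_2]$, where $\chi_2$ is the explicit cycle with $\mu_2(\chi_2)=q_2\beta^2+p\gamma$. The key rigidity statement (Lemma \ref{lemma 5}, proved via the commuting square $g\circ\mu=f\circ d$ of Lemma \ref{lemma 3}) is that $\mu_{1,1,0}$ sends \emph{every} closed element of $\cC^{-2}_{p,q_1}$ to an integer multiple of $q_1\beta^2+p\gamma$. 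Writing $\mu_2\circ F=\tilde\mu_{a,b,c}\circ\mu_{1,1,0}$ and comparing
\[q_2\beta^2+p\gamma=\mu_2(F(A))=\tilde\mu_{a,b,c}\bigl(k(q_1\beta^2+p\gamma)\bigr)=(ka^2q_1+kpc)\beta^2+kpb\gamma\]
forces $kb=1$, hence $k=\pm1$ and $\pm q_2\equiv a^2q_1\pmod p$. This is the missing idea in your sketch; note also that the apparatus you defer to ($\Psi$, $\Phi$, and the conditions $\mathfrak{A}$--$\mathfrak{D}$, feeding into Proposition \ref{prop divided by p^n} and Proposition \ref{prop generators}) is used for the converse direction, Theorem \ref{thm homotopy type to quasi}, not for this theorem.
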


We note that by Theorem \ref{thm:lens-classification}, $L(p_1,q_1)$ and $L(p_2,q_2)$ are homotopic to each other if and only if 
\[p_1 = p_2, \text{  and  } bq_2 = a^2 q_1 + c p_1,\]
where $a, c \in \mathbb{Z}$, and where $b$ is either $1$ or $-1$.
Moreover, one can easily prove that if $\mathcal{C}_{p_1, q_1}$ and $\mathcal{C}_{p_2,q_2}$ are quasi-equivalent, then $p_1 = p_2$ by taking the cohomology, see Remark \ref{rmk:cohomology-lens}. 

From this, in the rest of the current paper, we simply use $p$ instead of $p_1$ or $p_2$ in Theorems \ref{thm homotopy type to quasi} and \ref{thm quasi to homotopy type}. 

\subsection{Notation}
\label{subsect notation}
We review some notions from the previous part, and partially set notation for Part \ref{part dga computations} in Section \ref{subsect notation}.

The differential graded algebra $\mathcal{C}^*_{p,q}$ is the semifree dga given in Theorem \ref{thm:wrap-lens}, i.e., 
\begin{gather*}
	\cC_{p,q} \simeq k\langle x,y,z\rangle\, \text{  such that,} \\
	|x|=0, |y|=-1, |z|=-2,\\
	dx =0 , dy = 1-x^p, z = x^q y - y x^q,
\end{gather*}
and let $H^*_{p,q}$ denote the cohomology of $\mathcal{C}_{p,q}$.
We note that $\mathcal{C}_{p,q}$ is determined from the lens space $L(p,q)$, thus, $p$ and $q$ are relatively primes.
It induces that there are $\bar{q}, r \in \mathbb{Z}$ such that 
\begin{gather}
	\label{eqn p,q}
	\bar{q} q = rp +1.
\end{gather}
We note that there are infinitely many $\bar{q}$ and $r$. 
Let $\bar{q}$ and $r$ be the smallest positive numbers satisfying Equation \eqref{eqn p,q}.

We fix two degree $-2$ elements $\chi, \Lambda \in \mathcal{C}^{-2}_{p,q}$ with the numbers $\bar{q}, r$ as follows:
\begin{gather}
	\label{eqn chi}
	\chi := y\big(\sum_{i=1}^qx^{p(i-1)}\big)y + \sum_{i=1}^p x^{q(p-i)}zx^{q(i-1)},\\
	\label{eqn Lambda}
	\Lambda := yx\big(\sum_{i=1}^{r} x^{p(i-1)}\big)y + \sum_{i=1}^{\bar{q}}x^{q(\bar{q}-i)}zx^{q(i-1)}.
\end{gather}
For simplicity, we set 
\begin{gather}
	\label{eqn f_n(x)}
	f_n(x):= 1+ x + x^2 + \cdots + x^{n-1}.
\end{gather}
Then, one obtains 
\begin{gather}
	\label{eqn def chi Lambda}
	\chi = yf_q(x^p)y + \sum_{i=1}^p x^{q(p-i)}zx^{q(i-1)}, \hspace{1em} \Lambda = yxf_{r}(x^p)y + \sum_{i=1}^{\bar{q}}x^{q(\bar{q}-i)}zx^{q(i-1)}.
\end{gather}

The followings are results of easy computations.
\begin{align}
	\label{eqn d chi}
	d \chi &= (1- x^p) f_q(x^p) y  - y f_q(x^p)(1-x^p) + \sum_{i=1}^p x^{q(p-i)} (x^q y - y x^q) x^{q(i-1)} \\ 
	\notag &= (1- x^{pq}) y - y (1- x^{pq}) + \sum_{i=1}^p (x^{q(p-i+1)} y x^{q(i-1)} - x^{q(p-i)} y x^{qi} ) \\
	\notag &= 0, \\
	\label{eqn d Lambda}
	d \Lambda &= (1-x^p) x f_r(x^p) y - yx f_r(x^p) (1-x^p) + \sum_{i=1}^{\bar{q}}x^{q(\bar{q}-i)}(x^qy-yx^q)x^{q(i-1)} \\
	\notag &= (1 - x^{pr}) xy - yx (1-x^{pr}) + \sum_{i=1}^{\bar{q}}(x^{q(\bar{q}-i+1)}yx^{q(i-1)} - x^{q(\bar{q}-i)}yx^{qi}) \\
	\notag &= xy - yx - x^{pr+1}y + yx^{pr+1} + x^{q\bar{q}}y - yx^{q\bar{q}} \\
	\notag &= xy-yx.
\end{align}

In Part \ref{part dga computations}, we will compare $\mathcal{C}_{p,q_1}$ and $\mathcal{C}_{p,q_2}$. 
For convenience, we use subscripts, for example, the generators of $\mathcal{C}_{p,q_i}$ are $x_i, y_i, z_i$. 

In order to compare $\mathcal{C}_{p,q_1}$ and $\mathcal{C}_{p,q_2}$, we will define various differential graded algebras, $\mathbb{Z}$ modules, and maps on them. 
We will give definitions of them when we use them, but we would like to introduce a general criterion for them. 
The criterion is that if the generators of the new dgas are naturally related to the generators $x,y,z$ of $\mathcal{C}_{p,q}$, then let $\alpha, \beta, \gamma$ denote the generators corresponding to $x,y,z$ respectively.

\section{Proof of Theorem \ref{thm homotopy type to quasi}}
\label{section theorem 1}
We prove that if $L(p,q_1)$ and $L(p,q_2)$ are of the same homotopy type, then $\mathcal{C}_{p,q_1}$ and $\mathcal{C}_{p,q_2}$ are quasi-equivalent. 

\subsection{Properties of $\chi$}
\label{subsect properties of chi}
By Proposition \ref{prp:cpq-cohomology}, we have that
\begin{gather}
	\label{eqn H^K_p,q}
	H^k_{p,q} = \begin{cases}
		\mathbb{Z}[\mathbb{Z}_p],& \text{  if  } k = -2n \text{  for some  } n \in \Z_{\geq 0},\\ 
		0, &\text{  otherwise}.	
	\end{cases}
\end{gather}
In Sections \ref{subsect properties of chi}--\ref{subsect generators}, we prove that 
\[\{[\chi^n], [x \chi^n], \cdots, [x^{p-1}\chi^n]\}\]
generates $H^{-2n}_{p,q}$.

First, we construct a differential graded algebra $\mathcal{E}$ generated by two elements $\alpha, \gamma$ satisfying 
\begin{itemize}
	\item $|\alpha| = 0, |\gamma| = -2$,
	\item the differential $\partial$ is the zero map, 
	\item $\alpha \gamma = \gamma \alpha$, and
	\item $\alpha^p = 1$. 
\end{itemize}
Moreover, we set a map $\pi : \mathcal{C}_{p,q} \to \mathcal{E}$ as follows.
\begin{gather}
	\label{eqn pi}
	\pi(x) = \alpha, \hspace{1em} \pi(y) = 0, \hspace{1em} \pi(z) = \gamma.
\end{gather}
We note that $\mathcal{C}_{p,q}$ is generated by $x, y, z$ as an algebra without any relations, thus, Equation \eqref{eqn pi} is enough to define a strictly unital algebra map $\pi$. 

\begin{lem}
	\label{lem pi is a dga map}
	The above $\pi$ is a dga map, i.e., $\pi \circ d = \partial \circ \pi$. 
\end{lem}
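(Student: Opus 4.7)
The plan is to reduce the statement $\pi\circ d = \partial\circ\pi$ to a check on the generators $x,y,z$ of $\cC_{p,q}$, then extend multiplicatively. This is clean because two structural features of the setup collapse most of the work: first, $\cC_{p,q}$ is semifree (free as a $k$-algebra on $x,y,z$), so the assignment $x\mapsto\alpha$, $y\mapsto 0$, $z\mapsto\gamma$ automatically determines a well-defined unital algebra map into $\mathcal{E}$, with no algebraic relations to verify; second, the differential $\partial$ on $\mathcal{E}$ is identically zero, so the identity $\pi\circ d = \partial\circ\pi$ simplifies to the single condition $\pi(df)=0$ for all $f\in\cC_{p,q}$.

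Next I would observe that the vanishing condition $\pi(d(\,\cdot\,))=0$ is closed under the algebra operations of $\cC_{p,q}$. Indeed, if $\pi(df)=0$ and $\pi(dg)=0$, then by the graded Leibniz rule
\[
\pi(d(fg)) \;=\; \pi((df)g) + (-1)^{|f|}\pi(f\,(dg)) \;=\; \pi(df)\,\pi(g) + (-1)^{|f|}\pi(f)\,\pi(dg) \;=\; 0,
\]
using that $\pi$ is a (graded) algebra map. Since every element of $\cC_{p,q}$ is built from $x,y,z$ by $k$-linear combinations and products, it therefore suffices to verify the identity on these three generators.

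The generator check is then a direct computation, using exactly the three defining features of $\mathcal{E}$ (namely $\alpha^p=1$, $\alpha\gamma=\gamma\alpha$, and $\partial\equiv 0$) together with the recipe $\pi(y)=0$:
\begin{align*}
\pi(dx) &= \pi(0) = 0 = \partial(\alpha) = \partial\pi(x),\\
\pi(dy) &= \pi(1-x^p) = 1-\alpha^p = 0 = \partial\pi(y),\\
\pi(dz) &= \pi(x^q y - y x^q) = \alpha^q\cdot 0 - 0\cdot\alpha^q = 0 = \partial\pi(z).
\end{align*}
The only nontrivial inputs are the relation $\alpha^p=1$, which absorbs $\pi(dy)$, and the equation $\pi(y)=0$, which kills $\pi(dz)$ before the commutativity relation $\alpha\gamma=\gamma\alpha$ even needs to be invoked.

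There is really no serious obstacle here; the mild point to be careful about is that the Leibniz induction above must be carried out in the graded sense, so that signs from degrees of $x,y,z$ match on both sides, and that one invokes semifreeness of $\cC_{p,q}$ to justify that $\pi$ extends from generators to a well-defined algebra map in the first place. Together these give $\pi\circ d = \partial\circ\pi$ on all of $\cC_{p,q}$, proving the lemma.
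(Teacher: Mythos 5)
Your proof is correct and follows essentially the same route as the paper's: reduce $\pi\circ d=\partial\circ\pi$ to the generators (the paper leaves the Leibniz-rule closure argument implicit, you spell it out) and then verify $\pi(dx)=\pi(dy)=\pi(dz)=0$ using $\alpha^p=1$ and $\pi(y)=0$. No issues.
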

\begin{proof}
	Since $\partial = 0$, it is enough to prove that $\pi \circ d = 0$. 
	Thus, the following computations complete the proof. 
	\begin{gather*}
		\pi(dx) = \pi(0) = 0, \\
		\pi(dy) = \pi(1-x^p) = 1 - \alpha^p = 0, \\
		\pi(dz) = \pi(x^q y - y x^q) = \alpha^q \cdot 0 - 0 \cdot \alpha^q = 0. 
	\end{gather*}
\end{proof}

The dga map $\pi$ induces a map $H^*\pi$ from $H^*_{p,q}$ to $H^*\mathcal{E} = \mathcal{E}^*$. 
The last equality comes from the fact that $\partial =0$.
With the induced map $H^*\pi$, one can prove Lemma \ref{lem chi is not zero}.

\begin{lem}
	\label{lem chi is not zero}
	In $H^{-2n}_{p,q}$, $[x^k\chi^n]$ is not zero for all $n, k \in \Z_{\geq 0}$.
\end{lem}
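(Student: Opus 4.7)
The plan is to push $x^k \chi^n$ through the dga map $\pi \colon \mathcal{C}_{p,q} \to \mathcal{E}$ of Equation (\ref{eqn pi}) and show that its image is already nonzero in $\mathcal{E}^*$. Since $\pi$ descends to $H^*\pi\colon H^*_{p,q} \to H^*\mathcal{E} = \mathcal{E}^*$ by Lemma \ref{lem pi is a dga map} (using that $\partial = 0$), this will force $[x^k\chi^n]$ to be nonzero in $H^{-2n}_{p,q}$.

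First I would compute $\pi(\chi)$ directly from the definition in Equation (\ref{eqn def chi Lambda}). Since $\pi(y) = 0$, the $y f_q(x^p) y$ term vanishes, so
\[\pi(\chi) = \sum_{i=1}^{p} \alpha^{q(p-i)} \gamma \alpha^{q(i-1)}.\]
Using $\alpha \gamma = \gamma \alpha$, every summand collapses to $\alpha^{q(p-1)} \gamma$, giving $\pi(\chi) = p\, \alpha^{q(p-1)} \gamma$. Applying $\pi$ to $x^k \chi^n$ and using commutativity of $\alpha$ and $\gamma$ again, one gets
\[\pi(x^k \chi^n) = p^n\, \alpha^{k + nq(p-1)} \gamma^n.\]

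Next I would argue that this element is nonzero in $\mathcal{E}$. The algebra $\mathcal{E}$ is isomorphic to $\mathbb{Z}[\mathbb{Z}_p][\gamma]$ (a polynomial algebra in the single degree $-2$ variable $\gamma$ over the group ring of $\mathbb{Z}_p$), so the monomials $\alpha^j \gamma^n$ with $0 \le j < p$ and $n \ge 0$ form a $\mathbb{Z}$-basis. Hence $p^n \alpha^{k+nq(p-1)}\gamma^n$ (after reducing the exponent of $\alpha$ modulo $p$) is a nonzero integer multiple of a basis element, and so nonzero in $\mathcal{E}^*$. This completes the proof since $H^*\pi([x^k\chi^n]) = \pi(x^k\chi^n) \neq 0$ forces $[x^k\chi^n] \neq 0$.

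The only potential obstacle is making sure $p^n \neq 0$ in the coefficient ring; this is exactly where working with $k = \mathbb{Z}$ (rather than a field of characteristic $p$) is essential, and matches the explicit warning that the result fails for mod-$p$ coefficients. Everything else is a direct calculation using the defining relations $\alpha^p = 1$ and $\alpha\gamma = \gamma\alpha$, together with the vanishing of the $y$-heavy term of $\chi$ under $\pi$.
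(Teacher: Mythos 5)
Your proposal is correct and follows essentially the same route as the paper: both apply the dga map $\pi$ (which kills the $y$-term of $\chi$ and sends the $z$-term to $p\,\alpha^{-q}\gamma$ using $\alpha\gamma=\gamma\alpha$ and $\alpha^p=1$), obtain $\pi(x^k\chi^n)=p^n\alpha^{k-nq}\gamma^n\neq 0$ in $\mathcal{E}$, and conclude nonvanishing of the class via $H^*\pi$. Your explicit remark that the monomials $\alpha^j\gamma^n$ form a $\Z$-basis of $\mathcal{E}$ is a welcome small addition the paper leaves implicit.
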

\begin{proof}
	We note that from Equation \eqref{eqn d chi}, $[\chi] \in H^{-2}_{p,q}$. 
	Thus, $[\chi^n] \in H^{-2n}_{p,q}$. 
	Then, one can apply $H^*\pi$ for $[\chi^n]$, and one obtains
	\begin{align*}
		H^*\pi([\chi^n]) = [\pi(\chi^n)] = \pi(\chi^n) = \big(\sum_{i=1}^p \alpha^{q(p-1)} \gamma\big)^n = p^n \alpha^{-nq} \gamma^n \neq 0.
	\end{align*}
	We note that $\alpha^p =1$, thus, the negative power of $\alpha$ makes sense. 
	Thus, $[\chi^n] \neq 0$. 
	Similarly, $[x^k \chi^n] \neq 0$ in $H^{-2n}_{p,q}$. 
\end{proof}

Since $[x^k \chi^n] \neq 0$, the statement of Lemma \ref{lem linearly independent} makes sense.
\begin{lem}
	\label{lem linearly independent}
	In $H^{-2n}_{p,q}$, 
	\[\{[\chi^n], [x \chi^n], \cdots, [x^{p-1}\chi^n]\}\]
	is a linearly independent set. 
\end{lem}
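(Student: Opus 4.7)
The plan is to prove linear independence by pushing a hypothetical dependence forward to $\mathcal{E}$ via the dga map $\pi$, where the relations are so simple that independence becomes visible.

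First I would record the structure of $\mathcal{E}$ in each degree. Since $\mathcal{E} = \mathbb{Z}\langle \alpha,\gamma\rangle/(\alpha^p-1,\alpha\gamma-\gamma\alpha)$ with $|\alpha|=0$, $|\gamma|=-2$ and $\partial=0$, the degree $-2n$ component is the free $\mathbb{Z}$-module
\[\mathcal{E}^{-2n} \;=\; \mathbb{Z}[\mathbb{Z}_p]\cdot\gamma^n \;=\; \bigoplus_{j=0}^{p-1} \mathbb{Z}\cdot\alpha^{j}\gamma^n,\]
and since the differential is trivial, $H^{-2n}\mathcal{E}=\mathcal{E}^{-2n}$.

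Next I would compute the images under $H^*\pi$. The formula $\pi(\chi) = \sum_{i=1}^{p}\alpha^{q(p-i)}\gamma = p\,\alpha^{-q}\gamma$ from the proof of Lemma \ref{lem chi is not zero}, together with $\alpha\gamma=\gamma\alpha$ and $\pi$ being a dga map, gives
\[H^*\pi\bigl([x^{k}\chi^{n}]\bigr) \;=\; p^{n}\,\alpha^{k-nq}\gamma^{n}\]
for every $k=0,1,\ldots,p-1$ and $n\geq 0$. As $k$ ranges over $\{0,\ldots,p-1\}$, the exponents $k-nq$ taken modulo $p$ simply cyclically permute $\{0,\ldots,p-1\}$, so the set $\{\alpha^{k-nq}\gamma^{n}\}_{k=0}^{p-1}$ is precisely the $\mathbb{Z}$-basis of $\mathcal{E}^{-2n}$ described above, up to reindexing.

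Now suppose $\sum_{k=0}^{p-1} a_k [x^{k}\chi^{n}] = 0$ in $H^{-2n}_{p,q}$ with $a_k\in\mathbb{Z}$. Applying $H^*\pi$ yields $\sum_{k=0}^{p-1} a_k\, p^{n}\,\alpha^{k-nq}\gamma^{n} = 0$ in $\mathcal{E}^{-2n}$, and reading off coefficients with respect to the basis above gives $p^{n} a_k = 0$ for every $k$. Since we are working over $\mathbb{Z}$ and $p\geq 1$, this forces $a_k=0$, completing the argument. The only point that requires any care is the elementary $\mathbb{Z}$-module identification of $\mathcal{E}^{-2n}$; working over $\mathbb{Z}$ (rather than a field of characteristic dividing $p$) is precisely what makes the factor $p^{n}$ harmless, consistent with the choice of coefficient ring emphasised in the paper.
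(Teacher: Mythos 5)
Your proof is correct and follows essentially the same route as the paper: apply $H^*\pi$, observe that the images $p^{n}\alpha^{k-nq}\gamma^{n}$ are $p^{n}$ times distinct elements of the $\mathbb{Z}$-basis of the free module $\mathcal{E}^{-2n}$, and conclude that the coefficients vanish over $\mathbb{Z}$. The paper's argument is just a terser version of yours; your explicit identification of $\mathcal{E}^{-2n}$ and the remark that torsion-freeness over $\mathbb{Z}$ is what makes the factor $p^{n}$ harmless are both accurate.
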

\begin{proof}
	We apply $H^*\pi$ for the set, then the result, after reordering, is 
	\[\{p^n\gamma^n, p^n \alpha \gamma^n, \cdots, p^n \alpha^{p-1}\gamma^n\}.\]
	Since the above set is a linearly independent set in $\mathcal{E}$, Lemma \ref{lem linearly independent} is true.
\end{proof}

\begin{rmk}
	\label{rmk base ring}
	Let $\tilde{C}_{p,q}$ (resp.\ $\tilde{\mathcal{E}}$) be the differential graded algebra such that 
	\begin{itemize}
		\item the generators, relations, and the differential map are the same as $\mathcal{C}_{p,q}$ (resp.\ $\mathcal{E}$), but
		\item the base ring is a field whose characteristic is not equal to $p$. 
	\end{itemize}
	On $\tilde{\mathcal{C}}_{p,q}$, one could prove the modified version of Lemma \ref{lem linearly independent} by the same way.
	Then, this is enough to prove that
	\[\{[\chi^n], [x \chi^n], \cdots, [x^{p-1}\chi^n]\}\]
	is a basis of $H^{-2n}_{p,q}$.
\end{rmk}

\subsection{Lemma \ref{lem commuting}}
\label{subsect commuting lemma}
We would like to prove that 
\[\{[\chi^n], [x \chi^n], \cdots, [x^{p-1}\chi^n]\}\]
is a basis for $H^{-2n}_{p,q}$. 
This fact will play a key role in proving Theorem \ref{thm homotopy type to quasi}.
In order to prove that, we apply $\pi$ for the set. 
Then, from Section \ref{subsect properties of chi}, we observe that after applying $\pi$, every member of the set is a multiple of $p^n$. 
Thus, the following proposition is needed.

\begin{prp}
	\label{prop divided by p^n}
	Let $u \in \mathcal{C}^{-2n}_{p,q}$ such that $d u =0$.
	Then, $\pi(u)$ is divisible by $p^n$. 
\end{prp}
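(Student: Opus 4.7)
The plan is to exploit the grading of $\mathcal{C}_{p,q}$ by the number of $z$'s. Since $\pi$ kills every monomial containing $y$, one has $\pi(u) = \pi(u_n)$ where $u_n$ is the summand of $u$ with exactly $n$ copies of $z$ and no $y$'s. Viewing $u_n$ as an element of the commutative polynomial ring $\Z[x_0,\ldots,x_n]$ (where $x_i$ records the block of $x$'s between the $i$-th and $(i{+}1)$-st $z$), I get $\pi(u) = u_n(\alpha,\ldots,\alpha)\,\gamma^n$. So the task reduces to showing that $u_n(\alpha,\ldots,\alpha)$ lies in $p^n\mathbb{Z}[\mathbb{Z}_p]$.

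First I would decompose $u = u_n + u_{n-1} + \cdots + u_0$ by $z$-count (so $u_k$ has $z$-count $k$ and $y$-count $2(n-k)$), and split the differential as $d = d_1 + d_2$, where $d_1$ comes from $dy$ (preserving $z$-count and lowering $y$-count by one) and $d_2$ comes from $dz$ (lowering $z$-count by one, raising $y$-count by one). Closedness then decouples into the cascade $d_1 u_k = -d_2 u_{k+1}$ for $0 \le k < n$, with $d_1 u_n = 0$ automatic. The core of the argument is to show, by induction on $n$, that solvability of this cascade forces $u_n(\alpha,\ldots,\alpha)$ to be divisible by $p^n$.

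The base case $n=1$ is illuminating: writing $u_1 = \sum_i P_i\, z\, Q_i$ and $u_0 = \sum_j A_j\, y\, B_j\, y\, C_j$, the equation $d_1 u_0 + d_2 u_1 = 0$ becomes the polynomial identity $\phi(x_0,x_1)(x_0^q - x_1^q) + (1-x_0^p)\Psi_1(x_0,x_1) - (1-x_1^p)\Psi_2(x_0,x_1) = 0$ in $\Z[x_0,x_1]$, where $\phi(x_0,x_1) := \sum_i P_i(x_0)Q_i(x_1)$. Reducing modulo $(1-x_0^p,1-x_1^p)$ yields $\phi(\alpha_0,\alpha_1)(\alpha_0^q - \alpha_1^q) = 0$ in $\Z[\Z_p\times\Z_p]$, which forces the coefficients of $\phi$ to be invariant under the shift $(a,b)\mapsto (a+q,b-q)$. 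Since $\gcd(p,q)=1$, the shift subgroup $\{(kq,-kq)\}$ has order $p$, every orbit has size $p$, and each orbit is sent under the diagonal $\alpha_0=\alpha_1=\alpha$ to a single element of $\Z_p$ counted $p$ times. This exhibits $\phi(\alpha,\alpha) = \pi(u)/\gamma$ as an element of $p\,\Z[\Z_p]$.

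For the inductive step, I would iterate this orbit-averaging principle along the cascade. The top $u_n \in \Z[x_0,\ldots,x_n]$ inherits analogous constraints coming from all solvability conditions below it, and each stage introduces (after reduction modulo the ideals $(1-x_i^p)$ coupled with the $(x_i^q - x_{i+1}^q)$ obstructions from $d_2$) a new factor of $p$. Conceptually, this can be organized via the spectral sequence of the $z$-count filtration, whose $E_1$-page is $\Z[\Z_p]^{\otimes(k+1)}$ concentrated in degree $-2k$, and where the higher differentials correspond to the cascade and each force an additional $p$-multiple on any class that survives to $E_\infty$. The main obstacle will be the bookkeeping at the inductive step: the constraint on $u_{k+1}$ coming from solvability of $d_1 u_k = -d_2 u_{k+1}$ is cohomological (only the class of $u_k$ modulo $d_1$-coboundaries matters), so one must verify carefully that the $p$-divisibility accumulates multiplicatively rather than only giving a single factor of $p$.
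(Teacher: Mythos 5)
Your reduction to the top $z$-layer, the splitting $d=d_1+d_2$, the cascade $d_1u_k=-d_2u_{k+1}$, and the $n=1$ computation are all correct. But as written the proposal does not prove the statement for $n\geq 2$: the inductive step is only a sketch ("I would iterate\ldots"), and you yourself flag the central difficulty --- whether the divisibility accumulates multiplicatively rather than contributing a single factor of $p$ --- without resolving it. That unresolved step is a genuine gap. It is, however, closable, and in fact more easily than you fear: no induction, no spectral sequence, and no lower cascade equations are needed. Write $u_n=\sum c_{i_0,\dots,i_n}\,x^{i_0}zx^{i_1}z\cdots zx^{i_n}$ and identify it with $\phi_n\in\mathbb{Z}[x_0,\dots,x_n]$, so that $\pi(u)=\phi_n(\alpha,\dots,\alpha)\gamma^n$. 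The single equation $d_1u_{n-1}+d_2u_n=0$ in $\mathcal{C}^{-2n+1}_{p,q}$ decomposes according to the position of the unique $y$ among the $n$ letters $Y_1,\dots,Y_n$ of a basis monomial. For the type in which $y$ is the $j$-th letter, the contribution of $d_2u_n$ (replace the $j$-th $z$ by $x^qy-yx^q$) is exactly $\phi_n(w_0,\dots,w_n)\,(w_{j-1}^q-w_j^q)$ in the block coordinates $w_0,\dots,w_n$ of that type, while every term of $d_1u_{n-1}$ lies in the ideal $(1-w_0^p,\dots,1-w_n^p)$, since replacing a $y$ by $1-x^p$ merges two adjacent blocks and produces a factor $1-w_m^p$. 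Reducing modulo that ideal yields $\phi_n(\alpha_0,\dots,\alpha_n)(\alpha_{j-1}^q-\alpha_j^q)=0$ in $\mathbb{Z}[\mathbb{Z}_p^{n+1}]$ for every $j=1,\dots,n$. Hence the coefficient function of $\phi_n$ on $\mathbb{Z}_p^{n+1}$ is invariant under translation by each $(0,\dots,0,q,-q,0,\dots,0)$; since $\gcd(p,q)=1$ these generate the full sum-zero subgroup, of order $p^n$. Every translation orbit has size $p^n$ and is constant under $(a_0,\dots,a_n)\mapsto a_0+\cdots+a_n$, so $\phi_n(\alpha,\dots,\alpha)\in p^n\mathbb{Z}[\mathbb{Z}_p]$. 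All $n$ factors of $p$ arrive simultaneously from the $n$ independent shift-invariances; the cohomological bookkeeping you were worried about never enters.

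For comparison, the paper argues quite differently: it defines $\mathbb{Z}$-linear functionals $\Psi_m,\Phi_m$ that count coefficients of basis monomials in which all $y$'s precede all $z$'s, filtered by the residue of the total $x$-exponent modulo $p$, proves the commutation $\rho\circ\Psi_m=\Phi_m\circ d$, and deduces from $du=0$ a ladder $q\,b_{i-1}=p\,b_i$ running through all $z$-counts, whence $q^nb_0=p^nb_n$ and $p^n\mid b_0=a_m$. The paper's ladder genuinely uses the whole cascade (all the components $u_k$), whereas your argument, once completed as above, uses only its top rung. Both ultimately rest on $\gcd(p,q)=1$, but the mechanisms are different, and your route has the advantage of isolating exactly where each factor of $p$ comes from.
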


Proposition \ref{prop divided by p^n} will be proven in Section \ref{subsect proposition}.
In the current subsection, we prepare the proof of Proposition \ref{prop divided by p^n}.

In order to prove Proposition \ref{prop divided by p^n}, we construct $\mathbb{Z}$-module maps defined on $\mathcal{C}^{-2n}_{p,q}$ and $\mathcal{C}^{-2n+1}_{p,q}$. 
It would be easy to define the maps if we fix $\mathbb{Z}$-module bases for $\mathcal{C}^{-2n}_{p,q}$ and $\mathcal{C}^{-2n+1}_{p,q}$. 
Thus, we fix the following basis $B_N \subset \mathcal{C}^{-N}_{p,q}$, 
\begin{gather}
	\label{eqn basis}
	B_N := \{x^{i_0}Y_1x^{i_1}Y_2x^{i_2}\cdots Y_kx^{i_k} \hspace{0.2em} | \hspace{0.2em} \text{  for all  } j, i_j \geq 0, Y_j = y \text{  or  } z,\\
	\notag \text{  degree of each element of  } B_N \text{  is  } N\}.
\end{gather}

From here to the end of Section \ref{subsect commuting lemma}, we fix a positive integer $n$. 
\begin{dfn}
	\label{dfn conditions}
	\mbox{}
	\begin{enumerate}
		\item Let $b \in B_{2n}$, and let $i$ be an integer such that $0 \leq i \leq n$. 
		Then, $b$ {\em satisfies the condition $\mathfrak{A}_i$} if
		\begin{gather*}
			b = x^{i_0}Y_1x^{i_1}Y_2x^{i_2}\cdots Y_{n+i}x^{i_{n+i}}, \text{  so that  } \\
			Y_1 = \cdots = Y_{2i} = y, \hspace{1em} Y_{2i+1} = \cdots = Y_{n+i} = z.
		\end{gather*}
		\item Let $b = x^{i_0}Y_1x^{i_1}Y_2x^{i_2}\cdots Y_kx^{i_k} \in B_{2n}$, and let $m, i$ be integers such that $0 \leq m \leq p-1$ and $0 \leq i \leq n$. 
		Then, $b$ {\em satisfies the condition $\mathfrak{B}_{m,i}$} if
		\[\sum_{j=0}^k i_j \equiv m+iq \text{  (mod  } p).\]  
		\item Let $b \in B_{2n-1}$, and let $i$ be an integer such that $1 \leq i \leq n$.
		Then, $b$ {\em satisfies the condition $\mathfrak{C}_i$} if 
		\begin{gather*}
			b = x^{i_0}Y_1x^{i_1}Y_2x^{i_2}\cdots Y_{n+i}x^{i_{n+i-1}}, \text{  so that  } \\
			Y_1 = \cdots = Y_{2i-1} = y, \hspace{1em} Y_{2i} = \cdots = Y_{n+i-1} = z.
		\end{gather*}
		\item Let $b = x^{i_0}Y_1x^{i_1}Y_2x^{i_2}\cdots Y_kx^{i_k} \in B_{2n-1}$, and let $m, i$ be integers such that $0 \leq m \leq p-1$ and $1 \leq i \leq n$. 
		Then, $b$ {\em satisfies the condition $\mathfrak{D}_{m,i}$} if 
		\[\sum_{j=0}^k i_j \equiv m+iq \text{  (mod  } p).\]
	\end{enumerate}
\end{dfn}
We note that conditions $\mathfrak{A}_i, \mathfrak{B}_{m,i}, \mathfrak{C}_i, \mathfrak{D}_{m,i}$ depend on the choice of an integer $n$.
Thus, it would be correct to put $n$ in the notation, but we omit $n$ for the simplicity.

For each integer $m$ such that $0 \leq m \leq p-1$, Definition \ref{def Psi and Phi} defines $\mathbb{Z}$-module maps defined on $\mathcal{C}^{2n}_{p,q}$ and $\mathcal{C}^{2n-1}_{p,q}$. 
\begin{dfn}
	\label{def Psi and Phi}
	\mbox{}
	\begin{enumerate}
		\item {\em A $\mathbb{Z}$-module morphism} $\Psi_m : \mathcal{C}^{-2n}_{p,q} \to \mathbb{Z}^{n \times 2}$ is defined as follows:
		let $\psi^{i,j}_m$ be the $(i,j)$ component of $\Psi_m$, and let $b \in B_{2n}$, then 
		\begin{gather*}
			\psi^{i,1}_m(b) := \begin{cases}
				1,&	\text{  if  } b \text{  satisfies conditions  } \mathfrak{A}_{i-1} \text{  and  } \mathfrak{B}_{m, i-1},\\ 
				0,&	\text{  otherwise,} 
			\end{cases}\\
			\psi^{i,2}_m(b) := \begin{cases}
				1,&	\text{  if  } b \text{  satisfies conditions  } \mathfrak{A}_{i} \text{  and  } \mathfrak{B}_{m, i},\\ 
				0,&	\text{  otherwise.}
			\end{cases}
		\end{gather*}
		\item {\em A $\mathbb{Z}$-module morphism} $\Phi_m : \mathcal{C}^{-2n+1}_{p,q} \to \mathbb{Z}^{n}$ is defined as follows:
		let $\phi^i_m$ be the $i^{\text{th}}$ component of $\Phi_m$, and let $b \in B_{2n-1}$, then
		\begin{gather*}
			\phi^i_m(b) =\begin{cases}
				i_{2i-1},&	\text{  if  } b \text{  satisfies conditions  } \mathfrak{C}_{i} \text{  and  } \mathfrak{D}_{m, i},\\ 
				0,&	\text{  otherwise.}
			\end{cases}
		\end{gather*}
	\end{enumerate}
\end{dfn}
\begin{rmk}
	\label{rmk Phi}
	In Definition \ref{def Psi and Phi}.\ 2, the number $i_{2n-1}$ is well-defined since $b \in B_{2n-1}$ where $B_{2n-1}$ is defined in Equation \eqref{eqn basis}.
	We would like to point out that $i_{2n-1}$ is the power of $x$ which appears in the right of the last $y$ and in the left of the first $z$ in $b$ satisfying $\mathfrak{C}_{i}$ and $\mathfrak{D}_{m,i}$.  
\end{rmk}

The $\mathbb{Z}$-module maps $\Psi_m$ and $\Phi_m$ satisfy Lemma \ref{lem commuting}.

\begin{lem}
	\label{lem commuting}
	For any integer $0 \leq m \leq p-1$, $\rho \circ \Psi_m = \Phi_m \circ d$, where 
	\[\rho : \mathbb{Z}^{n \times 2} \to \mathbb{Z}^n, \hspace{1em} A \mapsto A \bigl(\begin{smallmatrix}
		-q	\\ 
		p	
	\end{smallmatrix}\bigr).\]
\end{lem}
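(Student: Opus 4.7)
Since $\rho$, $\Psi_m$, $\Phi_m$, and $d$ are all $\mathbb{Z}$-linear, it suffices to verify the identity on each basis element $b = x^{i_0}Y_1x^{i_1}\cdots Y_k x^{i_k} \in B_{2n}$. The plan is to compute the $i$-th component of both $\rho(\Psi_m(b))$ and $\Phi_m(db)$ directly and match them term by term.

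The left-hand side $i$-th component is, by definition, $-q\cdot\psi^{i,1}_m(b) + p\cdot\psi^{i,2}_m(b)$, which equals $p$ exactly when $b$ satisfies $\mathfrak{A}_i$ and $\mathfrak{B}_{m,i}$, equals $-q$ exactly when $b$ satisfies $\mathfrak{A}_{i-1}$ and $\mathfrak{B}_{m,i-1}$, and is zero otherwise (the two conditions cannot hold simultaneously because they fix different shapes of $b$). For the right-hand side, I will expand $db$ via the graded Leibniz rule, writing $db = \sum_j (-1)^{\sigma_{j-1}}\,x^{i_0}Y_1\cdots x^{i_{j-1}}(dY_j)x^{i_j}\cdots Y_k x^{i_k}$ with $\sigma_{j-1} = \sum_{l<j}|Y_l|$, and analyse which of the resulting monomials can satisfy $\mathfrak{C}_i$ and contribute nontrivially to $\phi^i_m$.

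The combinatorial core is the following observation. Replacing $Y_j = y$ by $dy = 1 - x^p$ decreases the $y$-count by one and preserves positions of all other $Y$'s, while replacing $Y_j = z$ by $dz = x^q y - y x^q$ keeps the $Y$-length and turns the $j$-th slot into a $y$. For the output to have the ordered shape $y^{2i-1}z^{n-i}$ of $\mathfrak{C}_i$, a simple bookkeeping with positions forces: (a) if we are removing a $y$, then all $2i$ of the $y$'s in $b$ must already sit at positions $1,\ldots,2i$ followed by $z$'s, i.e., $b$ satisfies $\mathfrak{A}_i$; (b) if we are promoting a $z$ to a $y$, then the $z$ being replaced must sit at one of positions $1,\ldots,2i-1$ with the remaining $y$'s filling the other such positions. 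The $\mathfrak{B}/\mathfrak{D}$ compatibility is automatic because $dy$ changes the total power by $0$ or $p$, while $dz$ changes it by $q$, matching the $\mathfrak{B}_{m,i}\!\to\!\mathfrak{D}_{m,i}$ and $\mathfrak{B}_{m,i-1}\!\to\!\mathfrak{D}_{m,i}$ shifts.

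The remaining task, which is where most of the care is needed, is to check that within each case the signs and the extracted power $i_{2i-1}'$ cancel correctly except in one distinguished position. For $b$ satisfying $\mathfrak{A}_i$, replacing any $y$ at position $j<2i$ contributes $(-1)^{j-1}[i_{2i} - i_{2i}] = 0$, because the $1$-term and the $-x^p$-term leave the boundary power $i'_{2i-1} = i_{2i}$ unchanged; only $j=2i$ survives, where the $x^p$-insertion shifts the boundary power and yields $(-1)^{2i-1}\bigl[(i_{2i-1}+i_{2i}) - (i_{2i-1}+p+i_{2i})\bigr] = p$. Symmetrically, for $b$ satisfying the shape in case (b) at some $j \in \{1,\ldots,2i-1\}$, the two pieces of $dz$ give $i'_{2i-1} = i_{2i-1}$ and $i'_{2i-1} = i_{2i-1}+q$ only when $j = 2i-1$ (forcing $b \in \mathfrak{A}_{i-1}$), contributing $(-1)^{2i-2}[i_{2i-1} - (i_{2i-1}+q)] = -q$; all other positions produce identical boundary powers in the two pieces of $dz$ and cancel. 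Summing, $\phi^i_m(db) = p\cdot[\mathfrak{A}_i\wedge\mathfrak{B}_{m,i}](b) - q\cdot[\mathfrak{A}_{i-1}\wedge\mathfrak{B}_{m,i-1}](b)$, which is precisely the $i$-th component of $\rho(\Psi_m(b))$. The main obstacle is just verifying these sign-and-position cancellations cleanly; once that bookkeeping is written out, the lemma follows immediately.
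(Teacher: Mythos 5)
Your overall strategy is the same as the paper's: reduce to basis elements, read off the $i$-th component of $\rho\circ\Psi_m$ from the definitions, expand $db$ by the Leibniz rule, and classify which resulting monomials can satisfy $\mathfrak{C}_i$. Your computations of the two surviving contributions ($p$ from differentiating the last $y$ of a word satisfying $\mathfrak{A}_i$, and $-q$ from differentiating the first $z$ of a word satisfying $\mathfrak{A}_{i-1}$) agree with the paper, as does your observation that the $\mathfrak{B}/\mathfrak{D}$ congruences track correctly because $dy$ shifts the total $x$-power by $0$ or $p$ while $dz$ shifts it by $q$.

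However, your case analysis is incomplete, and the claim you use to close it is false. In case (a) you assert that if differentiating a $y$ produces a monomial whose letters are ordered as $y^{2i-1}z^{n-i}$, then $b$ must already satisfy $\mathfrak{A}_i$. This is not forced: a word whose $Y$-sequence is $y^{2i-1}z^{k}yz^{n-i-k}$ with $k\geq 1$, i.e.\ with a single $y$ displaced to the right of some $z$'s, also becomes sorted when that displaced $y$ is removed, and such a $b$ satisfies no $\mathfrak{A}_j$ at all. This is precisely case (iv) of the paper's proof, which you omit. For such $b$ the two pieces $1$ and $-x^p$ of $dy$ are inserted to the right of position $2i-1$ and therefore leave the extracted power $i_{2i-1}$ unchanged, so the contribution to $\phi^i_m(db)$ is $(-1)^{2i-1}(i_{2i-1}-i_{2i-1})=0$; the lemma is unaffected, but this verification is genuinely needed and is missing from your argument. (Your enumeration in case (b) of the $dz$ contributions is complete and matches the paper's cases (ii) and (iii).) With this extra case added and checked, your proof coincides with the paper's.
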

\begin{proof}
	We prove this by computing $(\rho \circ \Psi_m)(b)$ and $(\Phi_m \circ d)(b)$ for all $b \in B_{2n}$. 
	\vskip0.2in
	
	\noindent{\em The left hand side, $\rho \circ \Psi_m$.}
	From Definition \ref{def Psi and Phi}, one could observe that there is no $i$ such that $b \in B_{2n}$ satisfies $\mathfrak{A}_i$ and $\mathfrak{B}_{m,i}$, $\Psi_m(b) = 0$. 
	
	Let assume that $b$ satisfies $\mathfrak{A}_i$ and $\mathfrak{B}_{m,i}$ for some $i$ such that $0 \leq i \leq n$.
	Then, $\Psi_m(b) = (\psi_m^{j,k})_{1 \leq j \leq n}^{1 \leq k \leq 2}$ is a matrix such that 
	\begin{itemize}
		\item if $i = 0$, 
		\[\psi_m^{j,k}(b) = \begin{cases}	
			1,& \text{  if  } (j,k) = (1,1),\\ 
			0,& \text{  otherwise,}	
		
		\end{cases}\]
		\item if $1 \leq i < n$, 
		\[\psi_m^{j,k}(b) = \begin{cases}
			1,& \text{  if  } (j,k) = (i,2) \text{  or  } (i+1,1),\\ 
			0,& \text{  otherwise,}	
		\end{cases}\]
		\item if $i = n$, 
		\[\psi_m^{j,k}(b) = \begin{cases}
			1,& \text{  if  } (j,k) = (n,2),\\ 
			0,& \text{  otherwise.}	
		\end{cases}\] 
	\end{itemize}
	
	The above arguments conclude that  
	\begin{align*}
		(\rho \circ \Psi_m) (b) = \begin{cases}
		(-q, 0, \cdots, 0)^T,& \text{  if  $b$ satisfies } \mathfrak{A}_0 \text{  and  } \mathfrak{B}_{m,0},\\ 
			(0, \cdots, 0, p, -q, 0, \cdots, 0)^T,& \text{  if $b$ satisfies  } \mathfrak{A}_i \text{  and  } \mathfrak{B}_{m,i} \text{  for  } 1 \leq i <n, \\
			(0, \cdots, 0, p)^T,& \text{  if  $b$ satisfies } \mathfrak{A}_n \text{  and  } \mathfrak{B}_{m,n},\\
			(0, \cdots, 0)^T,& \text{  otherwise,}
		\end{cases}
	\end{align*} 
	where $T$ stands for transposing a row matrix to a column matrix.
	\vskip0.2in
	
	\noindent{\em The right hand side, $\Phi \circ d$.}
	We note that $d (b)$ could be written as a linear combination of $B_{2n-1}$ uniquely.
	If $\phi_m^i\big(d (b)\big)$ is not zero for some $i$, then it means that the linear combination for $d (b)$ contains $b' \in B_{2n-1}$ such that $b'$ satisfies the conditions $\mathfrak{C}_i$ and $\mathfrak{D}_{m,i}$.	
	
	If $d (b)$ of $b \in B_{2n}$ contains $b' \in B_{2n-1}$ satisfying $\mathfrak{C}_i$, then one of the followings holds:
	\begin{enumerate}
		\item[(i)] $b$ satisfies $\mathfrak{A}_i$, and $b'$ is obtained by taking the derivative of one of $y$, 
		\item[(ii)] $b$ satisfies $\mathfrak{A}_{i-1}$, and $b'$ is obtained by taking the derivative of the first $z$,
		\item[(iii)] $b = x^{i_0}Y_1x^{i_1}\cdots x^{i_{n+i-2}}Y_{n+i-1} x^{i_{n+i-1}}$ such that 
		\[Y_1 = \cdots =Y_{k-1} = y = Y_{k+1} = \cdots = Y_{2i-1}, Y_k = z = Y_{2i} = \cdots = Y_{n+i-1},\]
		for some $k$ such that $1 \leq k < 2i-1$, and $b'$ is obtained by taking the derivative of $Y_k = z$, or
		\item[(iv)] $b = x^{i_0}Y_1x^{i_1}\cdots x^{i_{n+i-1}}Y_{n+i} x^{i_{n+i}}$ such that 
		\[Y_1 = \cdots = Y_{2i-1} = y = Y_{2i+k}, Y_{2i} = \cdots = Y_{2i+k-1} = z = Y_{2i+k+1} = \cdots = Y_{n+i}, \]
		for some $k$ such that $1 \leq k \leq n-i$, and $b'$ is obtained by taking the derivative of $Y_{2i+k} = y$.
	\end{enumerate}
	\vskip0.2in
	
	{\em If (i) holds}, i.e., 
	\[b = x^{i_0}yx^{i_1}\cdots y x^{i_{2i}} z \cdots z x^{i_{n+i}},\]
	then,
	\begin{align*}
		\phi_m^i\big(d (b)\big) &= \phi_m^i\big( \sum_{k=1}^{2i} (-1)^{k-1} 
		x^{i_0}yx^{i_1}\cdots x^{i_{k-1}}(1-x^p)x^{i_{k}} \cdots y x^{i_{2i}} z \cdots z x^{i_{n+i}}\big) \\
		&= \sum_{k=1}^{2i} (-1)^{k-1}\big( \phi_m^i( x^{i_0}yx^{i_1}\cdots x^{i_{k-1} + i_{k}} \cdots y x^{i_{2i}} z \cdots z x^{i_{n+i}}) \\ & \hspace{7em} - \phi_m^i(x^{i_0}yx^{i_1}\cdots x^{i_{k-1} + i_{k} + p} \cdots y x^{i_{2i}} z \cdots z x^{i_{n+i}}) \big).
	\end{align*}
	If the above one is not zero, then the terms in $\phi^i_m$ should satisfy $\mathfrak{D}_{m,i}$. 
	In other words, 
	\[\sum_{k=0}^{n+i} i_k \equiv m+iq \text{  (mod  } p),\]
	or equivalently, $b$ satisfies $\mathfrak{B}_{m,i}$. 
	If $b$ satisfies $\mathfrak{B}_{m,i}$, then 
	\begin{align*}
		\phi_m^i\big(d (b)\big) = \sum_{k=1}^{2i-1} (-1)^k(i_{2i} - i_{2i}) + (-1)^{2i-1}(i_{2i-1} + i_{2i} - i_{2i-1} - i_{2i} -p) = p.
	\end{align*}
	Shortly, if $b$ satisfies $\mathfrak{A}_i$,
	\[\phi_m^i\big(d (b)\big) = \begin{cases}
		p,& \text{  if  $b$ satisfies } \mathfrak{B}_{m,i},\\ 
		0,& \text{  otherwise.}	
	\end{cases}\]
	\vskip0.2in
	
	{\em If (ii) holds}, i.e.,  
	\[b = x^{i_0}yx^{i_1}\cdots y x^{i_{2i-2}} z x^{i_{2i-1}}\cdots z x^{i_{n+i-1}},\] 
	then, similarly to the case of (i), one obtains
	\begin{align*}
		\phi_m^i\big(d (b)\big) &= \phi_m^i\big((-1)^{2i-2} x^{i_0}yx^{i_1}\cdots y x^{i_{2i-2}} (x^qy - yx^q) x^{i_{2i-1}} z\cdots z x^{i_{n+i-1}}\big) \\
		&= \begin{cases}
			-q,& \text{  if  $b$ satisfies } \mathfrak{B}_{m,i-1},\\ 
			0,& \text{  otherwise.}	
		\end{cases}
	\end{align*}
	\vskip0.2in
	
	{\em If (iii) holds}, then 
	\[b= x^{i_0}yx^{i_1}\cdots y x^{i_{k-1}} z x^{i_k} y \cdots y x^{i_{2i-1}} z x^{i_{2i}}\cdots z x^{i_{n+i}}, \text{  for some  } k \text{  such that  } 1 \leq k <2i-1. \]
	Similar to the above computations, 
	\begin{align*}
		\phi_m^i\big(d (b)\big) &= \phi_m^i((-1)^{k-1} x^{i_0}yx^{i_1}\cdots y x^{i_{k-1}} (x^qy-yx^q) x^{i_k} y \cdots y x^{i_{2i-1}} z x^{2i}\cdots z x^{i_{n+i-1}} ) \\
		&= \begin{cases}
			(-1)^{k-1} (i_{2i-1} - i_{2i-1}) = 0,& \text{  if  $b$ satisfies }  \mathfrak{B}_{m,i-1},\\ 
			0,& \text{  otherwise.}
		\end{cases}
	\end{align*}
	\vskip0.2in
	
	{\em If (iv) holds}, then
	\[b = x^{i_0}yx^{i_1}\cdots y x^{i_{2i-1}} z x^{i_{2i}}\cdots x^{i_{2i+k-1}} y x^{i_{2i+k}} z \cdots z x^{i_{n+i}}, \text{  for some  } k \text{  such that  } 1 \leq k \leq n-i.\]
	Similar to the cases of (i)--(iii), one obtains
	\begin{align*}
		\phi_m^i(d b) & = \phi_m^i\big((-1)^{2i-1}x^{i_0}yx^{i_1}\cdots y x^{i_{2i-1}} z x^{2i}\cdots x^{2i+k-1} (1- x^p) x^{2i+k} z \cdots z x^{i_{n+i}}\big) \\
		&=\begin{cases}
			(-1)^{2i-1} (i_{2i-1} - i_{2i-1}) = 0,& \text{  if  $b$ satisfies }  \mathfrak{B}_{m,i},\\ 
			0,& \text{  otherwise.}
		\end{cases}
	\end{align*}
	
	Simply, one concludes that 
	\begin{align*}
		(\Phi_m \circ d) (b) =\begin{cases}	(-q, 0, \cdots, 0)^T,& \text{  if  $b$ satisfies } \mathfrak{A}_0 \text{  and  } \mathfrak{B}_{m,0},\\ 
			(0, \cdots, 0, p, -q, 0, \cdots, 0)^T,& \text{  if $b$ satisfies  } \mathfrak{A}_i \text{  and  } \mathfrak{B}_{m,i} \text{  for some  } i \text{  such that  } 1 \leq i <n, \\
			(0, \cdots, 0, p)^T,& \text{  if  $b$ satisfies } \mathfrak{A}_n \text{  and  } \mathfrak{B}_{m,n},\\
			(0, \cdots, 0)^T,& \text{  otherwise.}
		\end{cases}
	\end{align*} 
	This completes the proof.
\end{proof}

\subsection{Proposition \ref{prop divided by p^n}}
\label{subsect proposition}
We prove Proposition \ref{prop divided by p^n} in the current subsection.

\begin{proof}[Proof of Proposition \ref{prop divided by p^n}.]
	We recall that 
	\[\pi(x) = \alpha, \pi(y) = 0, \pi(z)= \gamma.\]
	Thus, for any $b \in B_{2n}$ defined in Equation \eqref{eqn basis}, $\pi(b) = 0$ if $b$ contains at least one $y$. 
	In other words, $\pi(b) \neq 0$ if and only if $b$ satisfies $\mathfrak{A}_0$.
	Moreover, if $b \in B_{2n}$ satisfies $\mathfrak{A}_0$ and $\mathfrak{B}_{m,0}$, then, $\pi(b) = \alpha^m \gamma^n$.
	
	We also note that $\pi(u) \in \mathcal{E}^{-2n}$, and that $\mathcal{E}^{-2n}$ is generated by 
	\[\{\gamma, \alpha \gamma, \cdots, \alpha^{p-1} \gamma\},\]
	as a $\mathbb{Z}$ module.
	Thus, there are $a_i \in \mathbb{Z}$ for $i = 0, \cdots, p-1$ such that 
	\[\pi(u) = \sum_{i=0}^{p-1}a_i \alpha^i \gamma^n.\]
	Under this, $p^n$ divides $\pi(u)$ if and only if $p^n$ divides $a_i$ for all $i$. 
	We will show that $p^n$ divides $a_m$ by using $\Psi_m$ and $\Phi_m$ 
	
	Since $B_{2n}$ is a generating set of $\mathcal{C}_{p,q}^{-2n}$, $u$ can be uniquely written as 
	\[u = \sum_{b \in B_{2n}} c_b b,\]
	for some $c_b\in\Z$. From the above arguments, $a_m$ is the sum of $c_b$ such that $b$ satisfies $\mathfrak{A}_0$ and $\mathfrak{B}_{m,0}$.  
	
	Let 
	\[\Psi_m(u) = \begin{pmatrix}
		b_0	& b_1 \\ 
		b_1	& b_2 \\ 
		\vdots	& \vdots \\ 
		b_{n-1}	& b_n
	\end{pmatrix},\]
	for some $b_i\in\Z$. We note that $\Psi_m(u)$ is in the above form by definition of $\Psi_m$.
	Then, it is easy to observe that $a_m = b_0$. 
	
	Since $d u = 0$, 
	\[(\rho \circ \Psi_m)(u) = (\Phi_m \circ d)(u) = (0, \cdots, 0)^T.\]
	Thus, 
	\[q b_{i-1} = p b_i, \text{  for all  } 1 \leq i \leq n.\]
	It concludes that $q^n b_0 = p^n b_n$. 
	Since $p$ and $q$ are relatively prime to each other, $p^n$ divides $b_0 = a_m$. 
\end{proof}

\subsection{Generators}
\label{subsect generators}
We prove Proposition \ref{prop generators} in the current subsection.

\begin{prp}
	\label{prop generators}
	The set 
	\[I := \{[\chi^n], [x \chi^n], \cdots, [x^{p-1}\chi^n]\}\]
	generates $H^{-2n}_{p,q}$. 
\end{prp}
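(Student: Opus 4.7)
The plan is to exploit the dga morphism $\pi\colon \mathcal{C}_{p,q}\to\mathcal{E}$ together with the divisibility constraint furnished by Proposition \ref{prop divided by p^n}, and to combine this with a rank count on finitely generated free abelian groups. From Proposition \ref{prp:cpq-cohomology}, $H^{-2n}_{p,q}\cong k[\mathbb{Z}_p]$ is a free $\mathbb{Z}$-module of rank $p$, and the abelian group $\mathcal{E}^{-2n}$ is likewise free of rank $p$ with $\mathbb{Z}$-basis $\{\alpha^i\gamma^n : 0\leq i\leq p-1\}$; in particular $p^n\mathcal{E}^{-2n}$ is also free of rank $p$.

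Next I would directly compute the induced map $H^*\pi$ on the candidate generators. Using $\pi(y)=0$, only the $z$-terms of $\chi$ survive, so
\[
\pi(\chi)=\sum_{i=1}^{p}\alpha^{q(p-i)}\gamma\,\alpha^{q(i-1)}=p\,\alpha^{-q}\gamma,
\]
and hence $H^*\pi([x^k\chi^n])=p^n\alpha^{k-nq}\gamma^n$ for each $0\leq k\leq p-1$. Because $\alpha$ has order $p$, the residues $k-nq\pmod p$ run through every class exactly once as $k$ varies, so the images of the $p$ elements of $I$ are precisely the $p$ basis vectors of the free $\mathbb{Z}$-module $p^n\mathcal{E}^{-2n}$. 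In particular, the restriction $H^*\pi|_{\langle I\rangle}\colon\langle I\rangle\to p^n\mathcal{E}^{-2n}$ is already surjective.

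Finally, by Proposition \ref{prop divided by p^n} the image of the full map $H^*\pi$ lies inside $p^n\mathcal{E}^{-2n}$, so together with the previous step we see that $H^*\pi\colon H^{-2n}_{p,q}\twoheadrightarrow p^n\mathcal{E}^{-2n}$ is a surjection between two free abelian groups of rank $p$. Any such surjection splits, so its kernel is a free abelian group of rank $p-p=0$, i.e.\ trivial; thus $H^*\pi$ is an isomorphism onto $p^n\mathcal{E}^{-2n}$. Composing the inclusion $\langle I\rangle\hookrightarrow H^{-2n}_{p,q}$ with this isomorphism recovers the surjection from $\langle I\rangle$ onto $p^n\mathcal{E}^{-2n}$, which forces $\langle I\rangle=H^{-2n}_{p,q}$, completing the proof.

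There is no real obstacle in this argument once Proposition \ref{prop divided by p^n} is in hand; the only subtlety is that the rank-count step genuinely uses that $H^{-2n}_{p,q}$ is \emph{free} of rank exactly $p$, and not just some abelian group of the same cardinality pattern. This freeness is supplied by Proposition \ref{prp:homology-universal} and the rank-one computation of $H_{-*}(\Omega_sS^3)$, both established earlier in the paper.
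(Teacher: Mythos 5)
Your argument is correct, and it takes a genuinely different route from the paper's. The paper passes to rational coefficients: it uses that $I$ is already known to span $H^{-2n}(\tilde{\mathcal{C}}_{p,q})$ over $\mathbb{Q}$ (Remark \ref{rmk base ring}), picks the smallest positive integer $c_0$ such that $c_0[u]$ is an integral combination of $I$, and then uses the $p^n$-divisibility of Proposition \ref{prop divided by p^n} to force $c_0=1$. You instead stay entirely over $\mathbb{Z}$: you observe that the images $H^*\pi([x^k\chi^n])=p^n\alpha^{k-nq}\gamma^n$ already exhaust a basis of $p^n\mathcal{E}^{-2n}$, that Proposition \ref{prop divided by p^n} confines the image of all of $H^{-2n}_{p,q}$ to $p^n\mathcal{E}^{-2n}$, and then run a rank count on the surjection $H^{-2n}_{p,q}\twoheadrightarrow p^n\mathcal{E}^{-2n}$ between free abelian groups of rank $p$ to conclude it is an isomorphism, whence $\langle I\rangle=H^{-2n}_{p,q}$. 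Both arguments rest on the same two inputs --- the divisibility proposition and the computation $H^{-2n}_{p,q}\cong\mathbb{Z}[\mathbb{Z}_p]$ from Proposition \ref{prp:cpq-cohomology} (which the paper also needs, both for the rational statement and, implicitly, for torsion-freeness when dividing $c_0[u]$ by $c_0$) --- but your version avoids the detour through $\mathbb{Q}$ and delivers as a byproduct the injectivity of $H^*\pi$, i.e.\ the quasi-faithfulness of $\pi$ that the paper records separately in Corollary \ref{cor 1}, as well as the fact that $I$ is a basis rather than merely a generating set.
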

\begin{proof}
	Let $\tilde{C}_{p,q}$ (resp.\ $\tilde{\mathcal{E}}$) be the differential graded algebra such that 
	\begin{itemize}
		\item the generators, relations, and the differential map are the same as $\mathcal{C}_{p,q}$ (resp.\ $\mathcal{E}$), but
		\item the base ring is the field of rational numbers $\mathbb{Q}$. 
	\end{itemize}
	Then, there is a natural embedding of $\mathcal{C}_{p,q}$ (resp.\ $\mathcal{E}$) to $\tilde{\mathcal{C}}_{p,q}$ (resp.\ $\tilde{\mathcal{E}}$).
	Thus, $I$ is a subset of $\tilde{\mathcal{C}}_{p,q}$
	Moreover, as mentioned in Remark \ref{rmk base ring}, the set $I$ generates $H^{-2n}(\tilde{\mathcal{C}}_{p,q})$. 
	
	This implies that, if $[u] \in H^{-2n}_{p,q}$, then, $u$ can be written as a linear combination of $I$ with rational coefficients since $[u] \in H^{-2n}(\tilde{\mathcal{C}}_{p,q})$. 
	It means that there is $c \in \mathbb{Z}$ such that $c [u]$ can be written as a linear combination of $I$ with integer coefficients. 
	Let $c_0$ be the smallest positive integer among such $c$. 
	Then, in order to prove Proposition \ref{prop generators}, it is enough to show that $c_0 =1$.
	
	Let $c_0 [u] = \sum_{i=0}^{p-1} a_i [x^i \chi^n]$ with $a_i \in \mathbb{Z}$. 
	Then, 
	\begin{align*}
		c_0 \pi(u) &= \pi(c_0u) \\
		&= H^*\pi(c_0 [u]) \\
		&= H^*\pi( \sum_{i=0}^{p-1} a_i [x^i \chi^n]) \\
		&= \sum_{i=0}^{p-1} \big(a_i p^n \alpha^{i-nq} \gamma^n\big). 
	\end{align*}	
	We note that $\alpha^p=1$ in $\mathcal{E}$, thus, the negative power of $\alpha$ makes sense. 
	
	On the other hand, let 
	\[\pi(u) = \sum_{i=0}^{p-1} b_i \alpha^i \gamma^n,\]
	for some $b_i \in \mathbb{Z}$. 
	This is possible because $\pi(u) \in \mathcal{E}^{-2n}$ which is generated by 
	\[\{\gamma^n, \alpha \gamma^n, \cdots, \alpha^{p-1} \gamma^n\}.\]
	Then, after relabeling $a_i$, one obtains 
	\[\sum_{i=0}^{p-1} c_0 b_i \alpha^i \gamma^n = \sum_{i=0}^{p-1} a_i p^n \alpha^i \gamma^n. \]
	Thus, $c_0 b_i = p^n a_i$ for all $i$. 
	
	By Proposition \ref{prop divided by p^n}, $\displaystyle \frac{b_i}{p^n}$ is an integer.
	Thus, from $\displaystyle c_0 \frac{b_i}{p^n} = a_i$, one concludes that $c_0$ divides $a_i$ for all $i$. 
	
	Since we choose the smallest positive $c_0$, $c_0$ cannot divide $a_i$ for all $i$ unless $c_0 =1$. 
	Thus, $c_0 =1$, and it completes the proof.   
\end{proof}

\begin{cor}
	\label{cor 1}
	If the base ring is a field whose characteristic is not equal to $p$ (resp.\ $\mathbb{Z}$), then, $\pi$ is quasi-equivalence (resp.\ quasi-faithful).
\end{cor}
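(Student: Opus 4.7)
The plan is to deduce both parts of the corollary directly from the cohomological picture assembled in Propositions \ref{prp:cpq-cohomology}, \ref{prop divided by p^n}, \ref{prop generators} and Lemma \ref{lem linearly independent}. Because $\mathcal{C}_{p,q}$ and $\mathcal{E}$ each have a single object, ``quasi-equivalence'' reduces to the statement that the induced map $H^*\pi \colon H^*_{p,q} \to \mathcal{E}^*$ is an isomorphism, and ``quasi-faithful'' reduces to injectivity of $H^*\pi$. Moreover, by Proposition \ref{prp:cpq-cohomology} together with the fact that $\mathcal{E}$ is concentrated in even non-positive degrees, both $H^*_{p,q}$ and $\mathcal{E}^*$ vanish outside even non-positive degrees, so the entire verification takes place in degree $-2n$ for each $n \geq 0$.

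In degree $-2n$, I would first pin down a basis on each side. Over $\mathbb{Z}$, the set $\{[x^i\chi^n] : 0\le i\le p-1\}$ is linearly independent by Lemma \ref{lem linearly independent} and spans $H^{-2n}_{p,q}$ by Proposition \ref{prop generators}, hence is a $\mathbb{Z}$-basis. Over a field $k$ of characteristic not equal to $p$, the same set remains linearly independent by the argument indicated in Remark \ref{rmk base ring}, and since $\dim_k H^{-2n}_{p,q} = p$ by Proposition \ref{prp:cpq-cohomology}, it is again a $k$-basis. On the target side, $\{\alpha^j\gamma^n : 0\le j\le p-1\}$ is a basis of $\mathcal{E}^{-2n}$ in either setting.

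The key computation is the one already recorded in the proof of Lemma \ref{lem chi is not zero}:
\[
H^*\pi\bigl([x^i\chi^n]\bigr) = p^n\,\alpha^{i-nq}\gamma^n.
\]
Thus $H^*\pi$ sends the chosen basis of $H^{-2n}_{p,q}$ to $p^n$ times a reordering of the chosen basis of $\mathcal{E}^{-2n}$. Over $\mathbb{Z}$, multiplication by $p^n$ is injective, which yields injectivity of $H^*\pi$ in every degree and hence quasi-faithfulness of $\pi$. Over a field $k$ with $\mathrm{char}\,k \neq p$, multiplication by $p^n$ is invertible, so $H^*\pi$ carries a basis to a basis in every degree and is an isomorphism, which upgrades $\pi$ to a quasi-equivalence. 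Since the heavy lifting (particularly Proposition \ref{prop divided by p^n} and the resulting Proposition \ref{prop generators}) has already been done, there is no serious obstacle in the corollary itself; the only subtlety is the field-coefficient case, where one has to note that a linearly independent set of the correct size is automatically a basis, for which the dimension count provided by Proposition \ref{prp:cpq-cohomology} is exactly what is needed.
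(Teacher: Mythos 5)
Your proof is correct and follows essentially the same route as the paper: both compute that $H^*\pi$ sends the basis $\{[x^i\chi^n]\}$ of $H^{-2n}_{p,q}$ to $p^n$ times (a reordering of) the basis $\{\alpha^j\gamma^n\}$ of $\mathcal{E}^{-2n}$, and then read off injectivity over $\mathbb{Z}$ and bijectivity over a field of characteristic different from $p$. Your version just spells out a few routine details (the dimension count over a field, the reduction to even non-positive degrees) that the paper leaves implicit.
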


\begin{proof}
	Under $H^{-2n}\pi\colon H_{p,q}^{-2n}\to\cE^{-2n}$, the basis
	\[\{[\chi^n], [x \chi^n], \cdots, [x^{p-1}\chi^n]\}\]
	of $H_{p,q}$ is mapped to the set
	\[\{p^n\gamma^n, p^n \alpha \gamma^n, \cdots, p^n \alpha^{p-1}\gamma^n\}\]
	in $\mathcal{E}$. Since this set is linearly independent in $\cE$ for the base ring $\Z$, $\pi$ is quasi-faithful. If the base ring is field of characteristic not equal to $p$, then the latter set is a basis for $\cE^{-2n}$ since $p^n$ is a unit, hence $\pi$ is a quasi-equivalence.
\end{proof}

\begin{rmk}\label{rmk:lens-field}
	Thus, when the base ring is a field whose characteristic is not $p$, $\cC_{p,q_1}$ and $\cC_{p,q_2}$ are quasi-equivalent for any $q_1,q_2$. Hence Theorem \ref{thm homotopy type to quasi} is trivial and Theorem \ref{thm quasi to homotopy type} is not true. 
	This is the reason why we do not choose a field coefficient. When the base ring is $\Z$, we will use quasi-faithfulness of $\pi$ to investigate our dga $\cC_{p,q}$.
\end{rmk}

\begin{rmk}\label{rmk:product-lens}
	One can also easily observe that, since
	\[\{[\chi^n], [x \chi^n], \cdots, [x^{p-1}\chi^n]\}\]
	is a basis for $H^{-2n}_{p,q}$, the product structure on the cohomology $H_{p,q}$ of $\cC_{p,q}$ does not depend on $q$, hence it does not distinguish non-homotopic lens spaces.
\end{rmk}

\subsection{Theorem \ref{thm homotopy type to quasi}}
\label{subsect Theorem 1}
We prove Theorem \ref{thm homotopy type to quasi} in Section \ref{subsect Theorem 1}.

\begin{proof}[Proof of Theorem \ref{thm homotopy type to quasi}.]
	
	Before starting the proof, we review notation from Section \ref{section setting} and preliminaries from lower dimensional topology. 
	\begin{itemize}
		\item First, in the current subsection, we compare $\mathcal{C}_{p,q_1}$ and $\mathcal{C}_{p,q_2}$. 
		In order to distinguish those two differential graded algebras and their elements, maps, etc, we use subscripts, for example, we use $x_i, y_i, z_i, d_i, \pi_i$ for $i=1,2$. 
		\item Since $p, q_i$ are relatively primes, there are $\bar{q}_i, r_i$ such that $q_i \bar{q}_i = r_i p +1$. See Equation \eqref{eqn p,q}. 
		\item Since $L(p,q_1)$ and $L(p,q_2)$ are of the same homotopy type, by Theorem \ref{thm:lens-classification}, there are integers $a, b, c$ such that $bq_2 = a^2 q_1 + cp$, and such that $b$ is either $1$ or $-1$. 
	\end{itemize}
	
	We prove Theorem \ref{thm homotopy type to quasi} by constructing a specific quasi-equivalence 
	\[F : \mathcal{C}_{p,q_1} \to \mathcal{C}_{p,q_2}.\] 
	
	Since $x_1, y_1, z_1$ generate $\mathcal{C}_{p,q_1}$ as an algebra, an algebra morphism $F$ is defined by setting $F(x_1), F(y_1)$, and $F(z_1)$. 
	Let 
	\begin{align*}
		F(x_1) &:= x_2^a,\\
		F(y_1) &:= y_2 (\sum_{i=1}^{a} x_2^{p(i-1)}), \\
		F(z_1) &:= (\sum_{i=1}^{a q_1} x_2^{aq_1-i} \Lambda_2 x_2^{i-1}) (\sum_{i=1}^{a} x_2^{p(i-1)}) +(c \bar{q}_2 -b r_2)x_2^{a q_1} \chi_2.
	\end{align*}
	We note that with Equation \eqref{eqn f_n(x)}, one can replace $\sum_{i=1}^{a} x_2^{p(i-1)}$ with $f_a(x_2^p)$.
	We also note that $\chi_i$ and $\Lambda_i$ are defined in Equations \eqref{eqn chi} and \eqref{eqn Lambda}, see also Equation \eqref{eqn def chi Lambda}.
	
	In order to prove that $F$ is a dga morphism, it is necessarily to check that $F \circ d_1 = d_2 \circ F$. 
	Thus, we compute the followings: 
	\begin{align*}
		(F \circ d_1) (x_1) &= F( 0) = 0, \\
		(d_2 \circ F)(x_1) &= d_2 (x_2^a) = 0, \\
		(F \circ d_1) (y_1) &= F(1 - x_1^p) = 1 - x_2^{ap}, \\
		(d_2 \circ F)(y_1) &= d_2( y_2 f_a(x_2^p)) = (1-x_2^p) f_a(x_2^p) = 1- x_2^{ap}, \\
		(F \circ d_1) (z_1) &= F( x_1^{q_1} y_1 - y_1 x_1^{q_1}) = x_2^{a q_1}y_2 f_a(x_2^p) - y_2 f_a(x_2^p) x_2^{a q_1}  = (x_2^{a q_1}y_2 - y_2 x_2^{a q_1})f_a(x_2^p), \\
		(d_2\circ F)(z_1) &= d_2\big((\sum_{i=1}^{a q_1} x_2^{aq_1-i} \Lambda_2 x_2^{i-1}) f_a(x_2^p) +(c \bar{q}_2 -b r_2)x_2^{a q_1} \chi_2\big) \\
		&= (\sum_{i=1}^{a q_1} x_2^{aq_1-i} (x_2y_2 - y_2 x_2) x_2^{i-1}) f_a(x_2^p) \\
		&= (x_2^{a q_1} y_2 - y_2 x_2^{a q_1}) f_a(x_2^p).
	\end{align*}
	See Equations \eqref{eqn d chi} and \eqref{eqn d Lambda} for computing $(d_2 \circ F)(z_1)$. 
	It concludes that $F$ is a dga morphism. 
	
	In order to prove that $F$ is a quasi-equivalent map, it is enough to show that for all $n \in \Z_{\geq 0}$, 
	\[H^{-2n}F : H^{-2n}_{p,q_1} \to H^{-2n}_{p,q_2}\]
	is an isomorphism, where $H^*F$ is the induced morphism. 
	In other words, it is enough to concentrate on the even degrees.
	This is because of Equation \eqref{eqn H^K_p,q}. 
	
	Since $b$ is either $1$ or $-1$, if 
	\begin{gather}
		\label{eqn goal}
		H^*F(\{[\chi_1^n], [x_1 \chi_1^n], \cdots, [x_1^{p-1} \chi_1^n]\}) =	\{b^n[\chi_2^n], b^n[x_2 \chi_2^n], \cdots, b^n[x_2^{p-1} \chi_2^n]\},
	\end{gather}
	then, it completes the proof by Proposition \ref{prop generators}. 
	Thus, we will show that Equation \eqref{eqn goal} holds.
	
	First, we compute the following: 
	\begin{align}
		\label{eqn Hpi_2}
		H^*\pi_2 ([F(\chi_1)]) &= \pi_2\big(F(\chi_1)\big) \\
		\notag	&= \pi_2\big(F(y_1 f_{q_1}(x_1^p)y_1 + \sum_{i=1}^p x_1^{q_1(p-i)} z_1 x_1^{q_1(i-1)})\big) \\
		\notag	&= \pi_2\big(y_2 f_a(x_2^p) f_{q_1}(x_2^{ap})y_2 f_a(x_2^p)\big) + \sum_{i=1}^p \pi_2 \big(F(x_1^{q_1(p-i)} z_1 x_1^{q_1(i-1)} )\big) \\
		\notag	&= \sum_{i=1}^p \alpha^{aq_1(p-1)} \pi_2\big(F(z_1)\big) \\
		\notag	&= p \alpha^{aq_1(p-1)} \big[\pi_2\big((\sum_{i=1}^{a q_1} x_2^{aq_1-i} \Lambda_2 x_2^{i-1}) f_a(x_2^p) +(c \bar{q}_2 -b r_2)x_2^{a q_1} \chi_2\big)\big] \\
		\notag	&=p\alpha^{aq_1(p-1)} \big[\big(\sum_{i=1}^{aq_1}\alpha^{aq_1-1} \pi_2(\Lambda_2)\big) f_a(\alpha^p) + (c \bar{q}_2 - b r_2) \alpha^{aq_1} \pi_2(\chi_2)\big] \\
		\notag	&= p\alpha^{aq_1(p-1)} \big[a q_1 \alpha^{aq_1-1} (\sum_{i=1}^{\bar{q}_2} \alpha^{q_2(\bar{q}_2-1)}\gamma) f_a(\alpha^p) + (c\bar{q}_2-br_2) \alpha^{aq_1} (\sum_{i=1}^p \alpha^{q_2(p-1)}\gamma) \big] \\
		\notag	&= pa q_1 \bar{q}_2 \alpha^{q_2 \bar{q}_2 - q_2 -1} f_a(\alpha^p) \gamma + p^2(c \bar{q}_2 - b r_2) \alpha^{-q_2} \gamma.
	\end{align}
	We note that the negative power of $\alpha$ makes sense since $\alpha^p=1$. 
	
	One can easily check the followings facts: 
	\begin{itemize}
		\item since $\alpha^p =1$, $f_a(\alpha^p) =f_a(1) =a$,
		\item since $b q_2 = a^2 q_1 + cp$, and since $q_2 \bar{q}_2 = r_2 p +1$,  
		\[a^2 q_1 \bar{q_2} = b q_2 \bar{q}_2 - cp \bar{q}_2 = b (r_2 p +1) -cp \bar{q}_2,\] 
		\item since $q_2 \bar{q}_2 = r_2 p +1$, $\alpha^{q_2 \bar{q}_2 - q_2 -1} = \alpha^{r_2 p +1 -q_2 -1} = \alpha^{-q_2}$. 
	\end{itemize}
	
	The given facts induce that Equation \eqref{eqn Hpi_2} becomes
	\begin{align}
		\label{eqn final} 
		H^*\pi_2([F(\chi_1)]) &= pa q_1 \bar{q}_2 \alpha^{q_2 \bar{q}_2 - q_2 -1} f_a(\alpha^p) \gamma + p^2(c \bar{q}_2 - b r_2) \alpha^{-q_2} \gamma \\
		\notag &= p a^2 q_1 \bar{q}_2 \alpha^{-q_2}\gamma + p^2(c \bar{q}_2 - b r_2)\alpha^{-q_2} \gamma \\
		\notag &= \big(pb(r_2p +1) - p^2 c \bar{q}_2\big)\alpha^{- q_2} \gamma + p^2(c \bar{q}_2-br_2)\alpha^{-q_2}\gamma \\
		\notag &= pb \alpha^{-q_2} \gamma. 
	\end{align}
	
	By Proposition \ref{prop generators}, there are $a_0, \cdots, a_{p-1} \in \mathbb{Z}$ such that 
	\begin{align}
		\label{eqn the other hand}
		[F(\chi_1)] = \sum_{i=0}^{p-1} a_i [x_2^i \chi_2]. 
	\end{align}
	
	From Equations \eqref{eqn final} and \eqref{eqn the other hand}, one concludes that 
	\begin{align}
		\label{eqn injectivity}
		p b\alpha^{-q_2} \gamma = H^* \pi_2([F(\chi_1)]) = \pi_2\big(F(\chi_1)\big) = \sum_{i=0}^{p-1} p a_i \alpha^{i-q_2} \gamma. 
	\end{align}
	A more conceptual way of concluding this is the fact that $\pi$ is quasi-faithful.
	
	Equation \eqref{eqn injectivity} implies that $a_0 =b$, and $a_i =0 $ for all $i \neq 0$.
	In other words, $[F(\chi_1)] = b[\chi_2]$. 
	Thus, 
	\begin{align}
		\label{eqn result}
		H^*F(\{[\chi_1], [x_1 \chi_1], \cdots, [x_1^{p-1} \chi_1]\}) = \{b[\chi_2], b[x_2^a \chi_2], \cdots, b[x_2^{a(p-1)} \chi_2]\}.
	\end{align}
	
	Now, it is enough to show that $a$ and $p$ are relative primes from Equation \eqref{eqn result}. 
	This is because 
	\begin{itemize}
		\item $b q_2 \equiv a^2 q_1 \text{  (mod  } p)$ with $b = \pm 1$,
		\item $q_1, q_2$ are relative primes to $p$.
	\end{itemize} 
	Thus, Equation \eqref{eqn result} induces Equation \eqref{eqn goal} for $n=1$.
	
	For a general $n$, Equation \eqref{eqn goal} holds since 
	\[F([x_1^i \chi_1^n]) = [x_2^{ai} b^n \chi_2^n].\] 
\end{proof}

Theorem \ref{thm homotopy type to quasi} says that the topology of $L(p,q)$, more specifically the homotopy type of $L(p,q)$, determines the wrapped Fukaya category of $T^*L(p,q)$. 
In other words, we obtain Corollary \ref{cor Fukaya category}. 

\begin{proof}[Proof of Corollary \ref{cor Fukaya category}]
	Theorem \ref{thm:wrap-lens} proves that the wrapped Fukaya category of $T^*L(p,q_i)$ is pretriangulated equivalent to $\mathcal{C}_{p,q_i}$. Since quasi-equivalence implies pretriangulated equivalence, Theorem \ref{thm homotopy type to quasi} implies Corollary \ref{cor Fukaya category}. 
\end{proof}

\section{Proof of Theorem \ref{thm quasi to homotopy type}}
\label{section dga}
In Section \ref{section dga}, we prove Theorem \ref{thm quasi to homotopy type}.

\subsection{Setting} 
\label{subsect setting}
We follow the convention and use the same notation, which are given in Section \ref{section setting}. 
We also need another differential graded algebra in Section \ref{section dga}, which is defined in Definition \ref{def D}.

\begin{dfn}
	\label{def D}
	\mbox{}
	\begin{enumerate}
		\item Let {\em $\mathcal{D}$} be the strictly unital graded algebra generated freely by two generators $\beta$ and $\gamma$ such that 
		\[|\beta|=-1, |\gamma|= -2.\]
		\item Let $\partial : \mathcal{D}\to \mathcal{D}$ is given by 
		\begin{gather*}
			\partial(\beta) = \partial(\gamma) = 0.
		\end{gather*}
		The pair $(\mathcal{D}, \partial)$ is a differential graded algebra. 
	\end{enumerate}
\end{dfn}
For convenience, we simply write $\mathcal{D}$ for the differential graded algebra, instead of the pair $(\mathcal{D},\partial)$.

\subsection{DGA Morphisms}
\label{subsect dga morphisms}
Before starting Section \ref{subsect dga morphisms}, we remark that $p$ and $q$ are relatively prime. 
Also, we note that for convenience, we will assume that $p$ is an odd number in Sections \ref{subsect dga morphisms} -- \ref{subsect comparison}.
In Section \ref{subsect even p}, we will discuss the case of even $p$. 

Let $\mu$ be a strictly unital dga morphism from $\mathcal{C}_{p,q}$ to $\mathcal{D}$. 
Then, because of the degree reason, 
\begin{gather}
	\label{eqn dga morphism}
	\mu(x) = a_0, \mu(y) = a \beta, \mu(z) = b \gamma + c \beta^2,
\end{gather}
for some $a_0, a, b, c \in \mathbb{Z}$. 

Since $\mu$ is a dga morphism, $\mu \circ d = \partial \circ \mu$.
Moreover, $\partial =0$, thus
\[(\mu\circ d)(y) = \mu(1-x^p) = 1 - a_0^p = 0.\]
Then, since $p$ is an odd number, $a_0$ should be $1$. 

It is easy to check that if $a_0 =1$, then for any $a, b, c \in \mathbb{Z}$, Equation \eqref{eqn dga morphism} defines a dga map. 
Let $\mu_{a,b,c}$ denote the dga map defined by Equation \eqref{eqn dga morphism} for $a, b, c \in \mathbb{Z}$. 

\begin{lem}
	\label{lemma 1}
	For any $a, b, c \in \mathbb{Z}$, there is a dga morphism $\tilde{\mu}_{a,b,c} : \mathcal{D}\to \mathcal{D}$ such that $\tilde{\mu}_{a,b,c} \circ \mu_{1,1,0} = \mu_{a,b,c}$. 
\end{lem}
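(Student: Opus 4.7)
The plan is straightforward since $\mathcal{D}$ is freely generated by $\beta$ and $\gamma$ with zero differential, so specifying a strictly unital dga endomorphism of $\mathcal{D}$ amounts to choosing degree-preserving images of $\beta$ and $\gamma$ (compatibility with $\partial=0$ is automatic).

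First I would define $\tilde{\mu}_{a,b,c}\colon \mathcal{D}\to \mathcal{D}$ on generators by
\[\tilde{\mu}_{a,b,c}(\beta) := a\beta, \qquad \tilde{\mu}_{a,b,c}(\gamma) := b\gamma + c\beta^2,\]
and extend by strict unitality and multiplicativity. The images have the correct degrees ($|a\beta|=-1$, $|b\gamma+c\beta^2|=-2$), and since $\partial\equiv 0$ on $\mathcal{D}$, the identity $\partial\circ\tilde{\mu}_{a,b,c} = \tilde{\mu}_{a,b,c}\circ\partial$ holds trivially on the generators, hence everywhere by the Leibniz rule. Thus $\tilde{\mu}_{a,b,c}$ is a well-defined dga morphism.

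Next I would verify $\tilde{\mu}_{a,b,c}\circ \mu_{1,1,0} = \mu_{a,b,c}$ by checking agreement on the algebra generators $x,y,z$ of $\mathcal{C}_{p,q}$:
\begin{align*}
(\tilde{\mu}_{a,b,c}\circ \mu_{1,1,0})(x) &= \tilde{\mu}_{a,b,c}(1) = 1 = \mu_{a,b,c}(x),\\
(\tilde{\mu}_{a,b,c}\circ \mu_{1,1,0})(y) &= \tilde{\mu}_{a,b,c}(\beta) = a\beta = \mu_{a,b,c}(y),\\
(\tilde{\mu}_{a,b,c}\circ \mu_{1,1,0})(z) &= \tilde{\mu}_{a,b,c}(\gamma) = b\gamma + c\beta^2 = \mu_{a,b,c}(z).
\end{align*}
Since both sides are strictly unital algebra morphisms out of $\mathcal{C}_{p,q}$ and they agree on a generating set, they are equal.

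There is essentially no obstacle here: the whole content of the lemma is that $\mathcal{D}$ is a free graded algebra with trivial differential, so every degree-compatible assignment on $\beta,\gamma$ extends to a dga endomorphism. The lemma will be useful in the next subsection precisely because it lets us reduce the study of arbitrary $\mu_{a,b,c}\colon \mathcal{C}_{p,q}\to \mathcal{D}$ to the single canonical map $\mu = \mu_{1,1,0}$ composed with an endomorphism of $\mathcal{D}$.
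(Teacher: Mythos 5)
Your proof is correct and follows essentially the same route as the paper: define $\tilde{\mu}_{a,b,c}$ on the free generators $\beta,\gamma$ by $\beta\mapsto a\beta$, $\gamma\mapsto b\gamma+c\beta^2$, note that commuting with $\partial=0$ is automatic, and check the composition on $x,y,z$. The only difference is that you carry out the final generator-by-generator verification explicitly where the paper leaves it as ``easy to check.''
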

\begin{proof}
	Let $\tilde{\mu}$ be a strictly unital graded algebra morphism defined as follows:
	\begin{gather*}
		\tilde{\mu}_{a,b,c}(\beta) = a \beta, \hspace{1em} \tilde{\mu}_{a,b,c}(\gamma) = b \gamma +c \beta^2.
	\end{gather*}
	Since $\mathcal{D}$ is generated by $\beta$ and $\gamma$ freely, the above equations are enough to define a graded algebra morphism $\tilde{\mu}_{a,b,c}$.
	Moreover, $\tilde{\mu}_{a,b,c}$ commutes with the differential map $\partial$ since $\partial$ is the zero map. 
	Thus, $\tilde{\mu}_{a,b,c}$ is a dga morphism.
	Then, it is easy to check that $\tilde{\mu}_{a,b,c} \circ \mu_{1,1,0} = \mu_{a,b,c}$.   
\end{proof}
By Lemma \ref{lemma 1}, we focus on $\mu_{1,1,0}$.
For convenience, let $\mu$ denote $\mu_{1,1,0}$. 

\subsection{$\mathbb{Z}$-module Morphisms}
\label{subsect Z module morphisms}
Let $\mathcal{C}^k_{p,q}, H^k_{p,q}$ and $\mathcal{D}^k$ be the degree $k$ part of $\mathcal{C}_{p,q}$, the cohomology of $\mathcal{C}_{p,q}$, and $\mathcal{D}$, respectively.  
In Section \ref{subsect Z module morphisms}, we concentrate on $\mathcal{C}^{-i}_{p,q}, H^{-i}_{p,q}$ and $\mathcal{D}^{-i}$ for $i =1, 2$.

As a $\mathbb{Z}$-module, $\mathcal{D}^{-2}$ is freely generated by $\beta^2$ and $\gamma$, i.e., 
\[\mathcal{D}^{-2} = \mathbb{Z}\langle \beta^2 \rangle \oplus \mathbb{Z} \langle \gamma \rangle.\]
Thus, it is easy to define a $\mathbb{Z}$-module morphism $g : \mathcal{D}^{-2} \to \mathbb{Z}\oplus \mathbb{Z}$ such that 
\begin{gather}
	\label{eqn g}
	g(\beta^2) = (-p,p), g(\gamma) = (q,-q).
\end{gather} 

Similarly, as a $\mathbb{Z}$-module, $\mathcal{C}^{-1}_{p,q}$ is freely generated by $x^{i_1} y x^{i_2}$ for all $i_1, i_2 \in \Z_{\geq 0}$.
Thus, there is a $\mathbb{Z}$-module morphism $f : \mathcal{C}^{-1}_{p,q} \to \mathbb{Z}\oplus \mathbb{Z}$ such that
\[f(x^{i_1} y x^{i_2}) = (i_1, i_2). \] 
Then, Lemma \ref{lemma 3} holds. 

\begin{lem}
	\label{lemma 3}
	On $\mathcal{C}^{-2}_{p,q}$, $g \circ \mu = f \circ d$. 
\end{lem}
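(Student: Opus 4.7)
The plan is to verify the identity on a $\mathbb{Z}$-module basis of $\mathcal{C}^{-2}_{p,q}$, since both sides of the claimed equality $g\circ\mu = f\circ d$ are $\mathbb{Z}$-linear maps from $\mathcal{C}^{-2}_{p,q}$ to $\mathbb{Z}\oplus\mathbb{Z}$. Using the basis $B_2$ from Equation (\ref{eqn basis}), the degree $-2$ elements come in exactly two types: the monomials $x^{i_0}yx^{i_1}yx^{i_2}$ containing two $y$'s, and the monomials $x^{i_0}zx^{i_1}$ containing one $z$ (with $i_j\geq 0$). The identity is checked separately on each type.

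For a basis element of the first type, I compute $\mu(x^{i_0}yx^{i_1}yx^{i_2})=\beta^2$ directly from $\mu(x)=1$, $\mu(y)=\beta$, so $g(\mu(u))=(-p,p)$ by definition. Applying Leibniz to $d$ and using $dx=0$, $dy=1-x^p$, I obtain
\[d(x^{i_0}yx^{i_1}yx^{i_2}) = x^{i_0+i_1}yx^{i_2} - x^{i_0+i_1+p}yx^{i_2} - x^{i_0}yx^{i_1+i_2} + x^{i_0}yx^{i_1+i_2+p},\]
where the signs are determined by $|y|=-1$. Then applying $f$ to each summand and summing yields $(-p,0)+(0,p)=(-p,p)$, matching $g(\mu(u))$.

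For a basis element of the second type, $\mu(x^{i_0}zx^{i_1})=\gamma$ since $\mu(z)=\gamma$ and $\mu(x)=1$, so $g(\mu(u))=(q,-q)$. Using $dz=x^qy-yx^q$ and $dx=0$, Leibniz gives
\[d(x^{i_0}zx^{i_1}) = x^{i_0+q}yx^{i_1} - x^{i_0}yx^{i_1+q},\]
and applying $f$ yields $(i_0+q,i_1)-(i_0,i_1+q)=(q,-q)$.

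The computation is completely routine once the basis is fixed; the only things to be slightly careful about are the signs in the Leibniz rule (coming from $|x^{i_0}|=0$, $|y|=-1$, $|z|=-2$) and the bookkeeping of which copy of $x^p$ cancels which. The values $g(\beta^2)=(-p,p)$ and $g(\gamma)=(q,-q)$ in Equation (\ref{eqn g}) are clearly \emph{reverse-engineered} to make precisely this identity hold, so no real obstacle is anticipated. The lemma will be used in the next subsection, presumably to extract obstructions to $\mu$ being a dga morphism with prescribed values of $(a,b,c)$, by combining the constraint $g\circ\mu = f\circ d$ with the structure of $\mathcal{C}^{-1}_{p,q}$.
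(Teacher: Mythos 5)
Your proof is correct and follows essentially the same route as the paper: both verify the $\mathbb{Z}$-linear identity on the two types of basis monomials of $\mathcal{C}^{-2}_{p,q}$ (the paper phrases this by expanding a general element as a linear combination, but the computation is identical, including the sign from $|y|=-1$ in the Leibniz rule). Nothing further is needed.
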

\begin{proof}
	As a $\mathbb{Z}$ module, $\mathcal{C}^{-2}_{p,q}$ is freely generated by $x^{i_1} y x^{i_2} y x^{i_3}$ and $x^{j_1} z x^{j_2}$ for non-negative integers $i_1, i_2, i_3, j_1,j_2$.
	Thus, an arbitrary element $A \in \mathcal{C}^{-2}_{p,q}$ can be  written as a linear combination uniquely, as follow:
	\[A = \sum_{i_1, i_2, i_3} A_y(i_1, i_2, i_3) x^{i_1} y x^{i_2} y x^{i_3} + \sum_{j_1,j_2} A_z(j_1,j_2)x^{j_1} z x^{j_2},\]
	where $A_y(i_1,i_2,i_3)$ and $A_z(j_1,j_2)$ are integers. 
	
	By direct computations, 
	\begin{align*}
		(g \circ \mu)(A) &= g(\sum_{i_1, i_2, i_3} A_y(i_1, i_2, i_3) \beta^2  + \sum_{j_1,j_2} A_z(j_1,j_2)\gamma), \\
		&= \sum_{i_1, i_2, i_3} A_y(i_1, i_2, i_3) (-p,p)  + \sum_{j_1,j_2} A_z(j_1,j_2)(q,-q), \\
		dA &= \sum_{i_1, i_2, i_3} A_y(i_1, i_2, i_3) x^{i_1} (1-x^p) x^{i_2} y x^{i_3} \\
		& \hspace{1em} - \sum_{i_1, i_2, i_3} A_y(i_1, i_2, i_3) x^{i_1} y x^{i_2} (1-x^{p}) x^{i_3} \\
		& \hspace{2em} + \sum_{j_1,j_2} A_z(j_1,j_2)x^{j_1} (x^qy-yx^q) x^{j_2}, \\
		f(dA) &= \sum_{i_1, i_2, i_3} A_y(i_1, i_2, i_3) \big((i_1 + i_2, i_3) - (i_1 + i_2 +p, i_3)\big)\\
		& \hspace{1em}- \sum_{i_1, i_2, i_3} A_y(i_1, i_2, i_3) \big((i_1, i_2 + i_3) - (i_1, i_2 + i_3 +p)\big) \\
		& \hspace{2em} + \sum_{j_1,j_2} A_z(j_1,j_2) \big((j_1 +q, j_2) - (j_1, j_2+q)\big) \\
		&= \sum_{i_1, i_2, i_3} A_y(i_1, i_2, i_3) (-p,p)  + \sum_{j_1,j_2} A_z(j_1,j_2)(q,-q).
	\end{align*}
\end{proof}

Lemma \ref{lemma 5} follows.
\begin{lem}
	\label{lemma 5}
	Let $A \in \mathcal{C}^{-2}_{p,q}$ such that $dA =0$, then $\mu(A) = kq \beta^2 + kp \gamma$ for some $k \in \mathbb{Z}$.  
\end{lem}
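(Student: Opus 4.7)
The plan is to exploit Lemma \ref{lemma 3} directly. Since $dA = 0$ we have $f(dA) = 0$, and Lemma \ref{lemma 3} then gives $g(\mu(A)) = 0$ in $\mathbb{Z} \oplus \mathbb{Z}$.

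Next, I would use that $\mathcal{D}^{-2}$ is the free $\mathbb{Z}$-module on the basis $\{\beta^2, \gamma\}$, so we may write
\[
\mu(A) = m\beta^2 + n\gamma
\]
for unique integers $m, n$. Applying $g$ and using the defining formulas \eqref{eqn g} yields
\[
g(\mu(A)) = m(-p, p) + n(q, -q) = (-mp + nq,\, mp - nq) = (0,0),
\]
so the single relation $mp = nq$ must hold.

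Finally, since $p$ and $q$ are coprime, $mp = nq$ forces $q \mid m$ and $p \mid n$. Writing $m = kq$ for some $k \in \mathbb{Z}$ and substituting back gives $n = kp$, hence $\mu(A) = kq\beta^2 + kp\gamma$ as claimed. There is no genuine obstacle here; the entire content of the lemma is packaged into Lemma \ref{lemma 3}, and the remaining work is only the elementary number-theoretic step using $\gcd(p,q) = 1$.
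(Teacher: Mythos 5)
Your proposal is correct and is essentially identical to the paper's own proof: both apply Lemma \ref{lemma 3} to get $g(\mu(A))=(0,0)$, expand $\mu(A)$ in the basis $\{\beta^2,\gamma\}$, and use $\gcd(p,q)=1$ to solve $mp=nq$. No differences worth noting.
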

\begin{proof}
	Lemma \ref{lemma 3} gives 
	\[(0,0) = f(dA) = g(\mu(A)).\]
	If $\mu(A) = A_\beta \beta^2 + A_\gamma \gamma$, then 
	\[g(\mu(A)) = A_{\beta} (-p,p) + A_{\gamma} (q,-q) = (0,0).\]
	Since $p$ and $q$ are relatively prime, $A_\beta = kq, A_{\gamma}=kp$ for some $k \in \mathbb{Z}$. 	
\end{proof}

\subsection{Comparison of $\mathcal{C}_{p,q_1}$ and $\mathcal{C}_{p,q_2}$}
\label{subsect comparison}
In Section \ref{subsect comparison}, we prove Lemma \ref{prop odd} which is Theorem \ref{thm quasi to homotopy type} for the case of odd $p$.
We note that for $\mathcal{C}_{p,q_i}$, we use the notation from the previous sections together with a subscript except $\mu_{a,b,c}$.
For example, we use $x_i, y_i, z_i$ instead of $x, y, z$, and for $\mu$, we use $\mu_i$. 
For $\mu_{a,b,c}$, we use $\mu^i_{a,b,c}$	.

\begin{prp}[Theorem \ref{thm quasi to homotopy type} for odd $p$]
	\label{prop odd}
	For any odd $p$, if $\mathcal{C}_{p,q_1}$ and $\mathcal{C}_{p,q_2}$ are quasi-equivalent, then 
	\[\pm q_2\equiv a^2q_1 \hspace{0.5em} (\operatorname{mod} \hspace{0.5em} p)\]
	for some $a\in\Z$.
\end{prp}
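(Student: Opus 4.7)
The plan is to combine the classification of strictly unital dga morphisms $\cC_{p,q}\to\cD$ from Section \ref{subsect dga morphisms} with the basis description of $H_{p,q}^{-2}$ from Proposition \ref{prop generators}. First, since each $\cC_{p,q_i}$ is semifree it is cofibrant in $\dgqe$ (and every object is fibrant), so the assumed quasi-equivalence is realised by dga morphisms $F\colon\cC_{p,q_1}\to\cC_{p,q_2}$ and $G\colon\cC_{p,q_2}\to\cC_{p,q_1}$ whose compositions are left homotopic to the respective identities.

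Consider the composite $\mu_2\circ F\colon\cC_{p,q_1}\to\cD$. By Section \ref{subsect dga morphisms} (with $p$ odd, so that $a_0^p=1$ forces $a_0=1$) it equals $\mu^1_{a,b,c}$ for some $a,b,c\in\Z$, and a direct evaluation on $\chi_1$ using $f_{q_1}(1)=q_1$ gives
\[\mu^1_{a,b,c}(\chi_1)=(a^2q_1+cp)\beta^2+pb\gamma.\]
On the other hand $F(\chi_1)$ is closed, so by Proposition \ref{prop generators} its class is $\sum_{i=0}^{p-1}\lambda_i[x_2^i\chi_2]$ for some $\lambda_i\in\Z$, and since $\mu_2(x_2)=1$ each $[x_2^i\chi_2]$ maps under $H^{-2}\mu_2$ to $q_2\beta^2+p\gamma\in H^*\cD=\cD$. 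Comparing the two expressions and reading off coefficients yields $b=\sum_i\lambda_i$ together with
\[a^2q_1+cp=bq_2,\qquad\text{i.e.,}\qquad a^2q_1\equiv bq_2\pmod p.\]

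It remains to show that $b=\pm 1$. Applying the same analysis to $G$ produces integers $a',b',c'$ with $\mu_1\circ G=\mu^2_{a',b',c'}$, and by Lemma \ref{lemma 1} together with the composition rule for the $\tilde{\mu}$'s we obtain
\[\mu_1\circ G\circ F=\tilde{\mu}_{a',b',c'}\circ\tilde{\mu}_{a,b,c}\circ\mu_1=\mu^1_{aa',\,bb',\,bc'+ca'^2}.\]
Since $G\circ F$ is left homotopic to $\mathrm{id}_{\cC_{p,q_1}}$, the natural isomorphism witnessing this is mapped by $\mu_1$ to a unit in $\cD^0=\Z$, which is central, so $\mu_1\circ G\circ F$ and $\mu_1$ induce the same map on $H^{-2}_{p,q_1}$. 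Evaluating both sides on $[\chi_1]$ and comparing the coefficients of $\gamma$ gives $pbb'=p$, whence $bb'=1$ and $b\in\{\pm 1\}$. Combined with $a^2q_1+cp=bq_2$, this is exactly $\pm q_2\equiv a^2q_1\pmod p$.

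The main obstacle is the first step, namely extracting from an abstract quasi-equivalence a pair of dga morphisms $F,G$ with $G\circ F\sim\mathrm{id}$; this rests on each $\cC_{p,q_i}$ being both cofibrant (by semifreeness) and fibrant in $\dgqe$, so that morphisms in $\Ho(\dgqe)$ are described by homotopy classes of dga morphisms via Proposition \ref{prp:model-hom}. Once this is in place, the argument reduces to the explicit computations with $\chi_i$ above.
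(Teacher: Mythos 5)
Your proof is correct, and while the overall framework (classify the strictly unital dga morphisms $\cC_{p,q}\to\cD$, compose with a genuine dga morphism $F$ realising the quasi-equivalence, and compare coefficients of $\beta^2$ and $\gamma$) coincides with the paper's, the key normalisation step is handled by a genuinely different mechanism. The paper chooses a closed $A\in\cC^{-2}_{p,q_1}$ with $[F(A)]=[\chi_2]$ and invokes Lemma \ref{lemma 5} --- the statement, proved via the $\Z$-module morphisms $f,g$ of Lemma \ref{lemma 3}, that $\mu_1$ of \emph{any} closed degree $-2$ element has the form $kq_1\beta^2+kp\gamma$ --- so that the single equation $kpb=p$ forces $k=b=\pm1$; only one direction of the quasi-equivalence is needed. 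You instead push $\chi_1$ forward, expand $[F(\chi_1)]$ in the basis of Proposition \ref{prop generators}, which yields $a^2q_1\equiv bq_2\ (\mathrm{mod}\ p)$ but not yet $b=\pm1$, and then obtain $bb'=1$ from the homotopy inverse $G$, the composition rule for the $\tilde\mu$'s, and the fact that the conjugating unit lies in $H^0\cD=\Z$ and hence acts trivially. Both routes are legitimate given what is already established: yours trades the lightweight Lemmas \ref{lemma 3} and \ref{lemma 5} for the heavier Proposition \ref{prop generators} (which rests on Proposition \ref{prop divided by p^n}) plus a two-sided model-categorical argument, whereas the paper's version is more self-contained at this point and avoids having to discuss how left homotopies interact with $H^*$. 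The one place where your write-up is terse is exactly that interaction --- that post-composition with $\mu_1$ preserves left homotopy and that the resulting natural isomorphism is conjugation by a unit of $H^0\cD=\Z$, i.e.\ by $\pm1$ --- but the claim is true and the computation $pbb'=p$ that follows from it is correct.
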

\begin{proof}
	First, note that $\cC_{p,q_1}$ and $\cC_{p,q_2}$ are semifree dg algebras, hence they are cofibrant objects (in the model category where the weak equivalences are quasi-equivalences). Then by Proposition \ref{prp:model-hom} and Proposition \ref{prp:weak-equiv}, since $\cC_{p,q_1}$ and $\cC_{p,q_2}$ quasi-equivalent, there exists a quasi-equivalence $F:\mathcal{C}_{p,q_1} \to \mathcal{C}_{p,q_2}$.
	The composition $\mu_2 \circ F$ is a dga morphism from $\mathcal{C}_{p,q_1}$ to $\mathcal{D}$. 
	By Section \ref{subsect Z module morphisms}, there exist $a, b, c \in \mathbb{Z}$ such that 
	\begin{gather}
		\label{eqn commuting}
		\mu_2 \circ F = \mu^1_{a,b,c}.
	\end{gather}
	
	Since $H^*F\colon H^*_{p,q_1}\to H^*_{p,q_2}$ is an isomorphism of graded $\Z$-modules, there is a closed element $A \in \mathcal{C}^{-2}_{p,q_1}$ such that $[F(A)]$ is the same as $[\chi_2]$ in the $H^{-2}_{p,q_2}$. 
	Then,
	\begin{align*}
		q_2 \beta^2 + p \gamma &\stackrel{(i)}{=} [\mu_2(\chi_2)] \\ 
		&\stackrel{(ii)}{=} [\mu^1_{a,b,c}(A)] \\
		&\stackrel{(iii)}{=} [(\tilde{\mu}_{a,b,c} \circ \mu^1_{1,1,0}) (A)] \\
		&\stackrel{(iv)}{=} [\tilde{\mu}_{a,b,c}(kq_1\beta^2 + kp\gamma)] \\
		&\stackrel{(iv)}{=}(kq_1a^2+kpc) \beta^2 +kpb\gamma.
	\end{align*}
	More precisely, the definitions of $\chi_2$ and $\mu^2$ give (i) together with the fact that the differential of $\mathcal{D}$ is zero, Equation \eqref{eqn commuting} gives (ii), (iii) comes from Lemma \ref{lemma 1}, (iv) comes from Lemma \ref{lemma 5}, the definition of $\tilde{\mu}_{a,b,c}$ in the proof of Lemma \ref{lemma 1} gives (v).
	
	Thus, $kpb =p$, or equivalently, $kb=1$, and $q_2=kq_1a^2 + kpc$. 
	The first equation induces that $k = \pm1$, and this concludes that 
	\[\pm q_2\equiv a^2q_1 (\operatorname{mod} \hspace{0.5em} p).\] 
\end{proof}

\subsection{The Case of Even $p$}
\label{subsect even p}
In the current subsection, we discuss the case of even $p$. 

Let assume that $p$ is even.
Then, in Equation \eqref{eqn dga morphism}, $\mu(x)$ could be $\pm 1$. 
This is because $\mu(x)$ is an integer satisfying $\big(\mu(x)\big)^p =1$. 
If $p$ is an even number, $\mu(x)$ is either $1$ or $-1$. 

Since there is a choice for $\mu(x)$, even for fixed $a,b,c \in \mathbb{Z}$, Equation \eqref{eqn dga morphism} is not enough to define a dga morphism.
Thus, the notation $\mu_{a,b,c}$ does not make sense. 
Also, the notation $\mu$ does not make sense too, since $\mu$ is defined as $\mu_{1,1,0}$. 

The notation can be remedied as follows:
Let $\mu_{a,b,c,}$ be the dga morphism such that 
\begin{gather}
	\label{eqn dga morphism for even}
	\mu(x) = 1, \mu(y) = a \beta, \mu(z) = b \gamma + c \beta^2.
\end{gather} 

With the notation $\mu_{a,b,c}$ (resp.\ $\mu$), Lemma \ref{lemma 1} (resp.\ Lemma \ref{lemma 3}) holds for even $p$. 
Moreover, Proposition \ref{lemma 5} also holds for even $p$.  

\begin{prp}[Theorem \ref{thm quasi to homotopy type} for even $p$]
	\label{prop even} 
	For any even $p$,  
	if $\mathcal{C}_{p,q_1}$ and $\mathcal{C}_{p,q_2}$ are quasi-equivalent, then 
	\[\pm q_2\equiv a^2q_1 \hspace{0.5em} (\operatorname{mod} \hspace{0.5em} p)\]
	for some $a\in\Z$.
\end{prp}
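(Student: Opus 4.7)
The plan is to reduce the even $p$ case to the argument of Proposition \ref{prop odd} by exploiting an extra dga automorphism of $\mathcal{C}_{p,q_1}$ available when $p$ is even. First, note that since $p$ is even and $(p,q_1)=1$, we must have $q_1$ odd (and likewise $q_2$). I will define a strictly unital graded algebra map $\sigma_1\colon \mathcal{C}_{p,q_1}\to \mathcal{C}_{p,q_1}$ by $\sigma_1(x_1)=-x_1$, $\sigma_1(y_1)=y_1$, $\sigma_1(z_1)=-z_1$, and verify that it is a dga morphism. The nontrivial checks are $\sigma_1(dy_1)=1-(-x_1)^p=1-x_1^p=d\sigma_1(y_1)$ (needing $p$ even) and $\sigma_1(dz_1)=(-x_1)^{q_1}y_1-y_1(-x_1)^{q_1}=-(x_1^{q_1}y_1-y_1x_1^{q_1})=d(-z_1)=d\sigma_1(z_1)$ (needing $q_1$ odd). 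Since $\sigma_1\circ\sigma_1=\operatorname{id}$, $\sigma_1$ is in particular a quasi-equivalence.

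Next, given that $\mathcal{C}_{p,q_1}$ and $\mathcal{C}_{p,q_2}$ are quasi-equivalent, and since they are semifree (hence cofibrant), there is a genuine quasi-equivalence $F\colon \mathcal{C}_{p,q_1}\to \mathcal{C}_{p,q_2}$ by Proposition \ref{prp:model-hom} and Proposition \ref{prp:weak-equiv}. I will then study the composite $\mu_2\circ F\colon \mathcal{C}_{p,q_1}\to \mathcal{D}$. By the analysis around Equation \eqref{eqn dga morphism for even}, $(\mu_2\circ F)(x_1)\in\{1,-1\}$. The key observation is that replacing $F$ with $F\circ\sigma_1$ still yields a quasi-equivalence and flips the sign: $(\mu_2\circ F\circ\sigma_1)(x_1)=-(\mu_2\circ F)(x_1)$. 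So after possibly composing with $\sigma_1$, we may assume $(\mu_2\circ F)(x_1)=1$.

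Under this normalisation, $\mu_2\circ F=\mu^1_{a,b,c}$ for some $a,b,c\in\mathbb{Z}$, exactly as in Equation \eqref{eqn commuting}. The paper records that Lemmas \ref{lemma 1}, \ref{lemma 3}, and \ref{lemma 5} all hold for even $p$ with the modified definition of $\mu_{a,b,c}$, and Proposition \ref{prop generators} holds for arbitrary $p$. Consequently, the computation of Proposition \ref{prop odd} transports verbatim: choose a closed $A\in\mathcal{C}^{-2}_{p,q_1}$ with $[F(A)]=[\chi_2]$ in $H^{-2}_{p,q_2}$, apply $\mu_2$, factor through $\tilde{\mu}_{a,b,c}$, and use Lemma \ref{lemma 5} to write $\mu^1_{1,1,0}(A)=kq_1\beta^2+kp\gamma$. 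The resulting chain of equalities yields
\[q_2\beta^2+p\gamma=(kq_1a^2+kpc)\beta^2+kpb\gamma,\]
so $kb=1$ forcing $k=\pm 1$, and $\pm q_2\equiv a^2q_1\pmod{p}$ as required.

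The main obstacle is the sign bookkeeping in checking that $\sigma_1$ is a dga morphism: this is precisely where one uses both parities ($p$ even and $q_1$ odd), and it is the reason the odd and even cases must be treated separately. Once $\sigma_1$ is in hand the $\epsilon=-1$ case collapses to the $\epsilon=+1$ case, and no further new computation is needed.
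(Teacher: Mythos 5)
Your proof is correct and follows essentially the same route as the paper: compose a putative quasi-equivalence $F$ with the sign-flip automorphism of $\mathcal{C}_{p,q_1}$ sending $x_1\mapsto -x_1$ so as to normalise $(\mu_2\circ F)(x_1)=1$, then rerun the odd-$p$ computation verbatim. In fact your automorphism $\sigma_1$ (with $\sigma_1(z_1)=-z_1$) is the correct one: the paper's map $\delta$ is written with $\delta(z_1)=z_1$, which does not commute with the differential since $\delta(dz_1)=(-1)^{q_1}\bigl(x_1^{q_1}y_1-y_1x_1^{q_1}\bigr)=-dz_1$ for $q_1$ odd, so your version repairs a sign slip in the paper's proof without changing the argument.
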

\begin{proof}
	Let $F: \mathcal{C}_{p,q_1} \to \mathcal{C}_{p,q_2}$ be a quasi-equivalence. 
	As similar to the proof of Proposition \ref{prop odd}, we consider $\mu_2 \circ F$. 
	This is a dga map, thus, $(\mu_2 \circ F)(x) = \pm 1$. 
	If $(\mu_2 \circ F)(x) = 1$, the proof of Lemma \ref{lemma 5} works for the case of even $p$. 
	Thus, let assume that $(\mu^2 \circ F)(x) = -1$. 
	
	We define a dga map $\delta : \mathcal{C}_{p,q_1} \to \mathcal{C}_{p,q_1}$ such that 
	\[\delta(x_1) = - x_1, \delta(y_1) = y_1, \delta(z_1)= z_1.\]
	It is easy to check that the above equations define a dga isomorphism $\delta$. 
	Thus, $F \circ \delta$ is a quasi-equivalence between $\mathcal{C}_{p,q_1}$ and $\mathcal{C}_{p,q_2}$.
	
	By definition, $(\mu_2 \circ F \circ \delta)(x_1) =1$.
	By considering $F \circ \delta$ instead of $F$, one can prove Proposition \ref{prop even}.
\end{proof}

\begin{proof}[Proof of Theorem \ref{thm quasi to homotopy type}]
	Propositions \ref{prop odd} and \ref{prop even} are Theorem \ref{thm quasi to homotopy type} for the case of odd and even $p$ respectively. 
\end{proof}

\clearpage
\bibliographystyle{amsalpha}
\bibliography{hocolim}

\end{document}